\newcommand\mylabel[1]{\label{#1}}
\newtheorem{theorem}{Theorem}[section]
\newtheorem{lemma}[theorem]{Lemma}
\newtheorem{proposition}[theorem]{Proposition}
\theoremstyle{definition}
\newtheorem{example}[theorem]{Example}
\newtheorem{remark}[theorem]{Remark}
\newtheorem{emp}[theorem]{}
\newtheorem*{acknowledgement}{Acknowledgement}
\theoremstyle{remark}
\DeclareFontFamily{U}{wncy}{}
\DeclareFontShape{U}{wncy}{m}{n}{<->wncyr10}{}
\DeclareSymbolFont{mcy}{U}{wncy}{m}{n}
\DeclareMathSymbol{\Sh}{\mathord}{mcy}{"58}
\newcommand{\ideala}    {\mathfrak{a}}
\newcommand{\idealb}    {\mathfrak{b}}
\newcommand  {\maxid}   {\mathfrak{m}}
\newcommand  {\primid}  {\mathfrak{p}}
\newcommand{\YYY}{Z}
\newcommand{\SB}{B}
\newcommand{\SC}{C}
\newcommand{\PeskinA}{A_0}
\newcommand{\R}{B}
\newcommand{\ZZ}	{\mathbb{Z}}
\newcommand{\QQ}	{\mathbb{Q}}
\newcommand{\RR}	{\mathbb{R}}
\newcommand{\FF}	{\mathbb{F}}
\newcommand{\PP}	{\mathbb{P}}
\newcommand{\GG}	{\mathbb{G}}
\newcommand  {\shF}     {\mathscr{F}}
\newcommand  {\calO}     {\mathcal{O}}
\newcommand  {\Bl}     {\operatorname{Bl}}
\newcommand  {\Cl}      {\operatorname{Cl}}
\newcommand  {\depth}   {\operatorname{depth}}
\newcommand  {\Frac}    {\operatorname{Frac}}
\newcommand  {\Hom}     {\operatorname{Hom}}
\newcommand  {\lra}     {\longrightarrow}
\newcommand  {\Mat}     {\operatorname{Mat}}
\renewcommand{\O}       {\mathscr{O}}
\newcommand  {\Proj}    {\operatorname{Proj}}
\newcommand  {\quadand} {\quad\text{and}\quad}
\newcommand  {\ra}      {\rightarrow}
\newcommand  {\rank}    {\operatorname{rank}}
\newcommand  {\red}     {{\operatorname{red}}}
\newcommand  {\Spec}    {\operatorname{Spec}}
\newcommand  {\lcm}  	{\operatorname{lcm}}
\newcommand{\ttt}{\mu}
\newcommand{\AAA}{\widehat{A_\maxid}}
\def\mydate{\number\day\space\ifcase\month \or January\or February\or March\or 
April\or May\or June\or July\or
August\or September\or October\or November\or December\fi \space\number\year}
\DeclareFontFamily{U}{wncy}{}
\DeclareFontShape{U}{wncy}{m}{n}{<->wncyr10}{}
\DeclareSymbolFont{mcy}{U}{wncy}{m}{n}
\DeclareMathSymbol{\Sh}{\mathord}{mcy}{"58}
\numberwithin{equation}{section}
    \let\c@equation\c@thm
    \let\c@figure\c@thm
   \let\c@table\c@thm
\numberwithin{equation}{section}
\begin{document}

\title[Discriminant groups of   quotient singularities]
      {Discriminant groups of wild cyclic quotient singularities} 

\author[Dino Lorenzini]{Dino Lorenzini}
\address{Department of Mathematics, University of Georgia, Athens, GA 30602, USA}
\curraddr{}
\email{lorenzin@uga.edu}

\author[Stefan Schr\"oer]{Stefan Schr\"oer}
\address{Mathematisches Institut, Heinrich-Heine-Universit\"at,
40204 D\"usseldorf, Germany}
\curraddr{}
\email{schroeer@math.uni-duesseldorf.de}

\subjclass[2010]{14J17, 14B05, 13A50, 14E15}

\dedicatory{21 January 2021} 

\begin{abstract}
Let $p$ be prime. We describe explicitly the resolution of singularities of several families
of wild ${\mathbb Z}/p{\mathbb Z}$-quotient singularities in dimension two, 
including   families that generalize the quotient singularities of type $E_6$, $E_7$, and $E_8$ from $p=2$ to arbitrary characteristics.
We prove that for odd primes, any power of $p$ can appear as the determinant of the intersection matrix of a wild ${\mathbb Z}/p{\mathbb Z}$-quotient singularity.
We also provide evidence towards the conjecture that in this situation one may choose the wild action to be ramified precisely at the origin.
\end{abstract}

\maketitle
\tableofcontents


\section*{Introduction}
\mylabel{Introduction}

The goal of this paper is to study \emph{wild quotient singularities} which arise from 
actions of $G:=\ZZ/p\ZZ$ on the formal power series ring $A=k[[u,v]]$ when $k$ is an algebraically closed field
of characteristic $p>0$.  Here the term ``wild'' refers to the fact that the order of the group $G$
is not coprime to the characteristic exponent of the ground field $k$.
The resulting quotient singularity is the  ring of invariants $A^G$
or, more precisely, the closed point of $\Spec(A^G)$.

Let $X \to \Spec(A^G)$ be a resolution of the singularity. Let $C_i$, $i=1,\dots, r$, denote the irreducible components
of the exceptional divisor, and form the   \emph{intersection matrix}
$$
N:=((C_i\cdot C_j)_X)_{1\leq i,j\leq r}\in \Mat_r(\ZZ).
$$
This matrix is negative-definite. The \emph{discriminant group} $\Phi_N:=\ZZ^r/N\ZZ^r$
attached to $N$ is a finite group of order $|\text{det}(N)|$, independent of the resolution. The group $\Phi_N$  appears 
as a natural quotient of the class group $\Cl(A^G)$
(\ref{remark.classgroup}).
Attached to the resolution is its \emph{dual graph} $\Gamma_N$,  with vertices $v_1,\dots, v_r$, where $v_i$ and $v_j$ are linked by 
$(C_i\cdot C_j)_X$ distinct edges when $i \neq j$.
Our ultimate, long term,  goal is to characterize the  intersection matrices $N$, discriminant groups $\Phi_N$,
and dual graphs $\Gamma_N$, that can arise from such wild quotient singularities.

The {\it fixed point scheme} of the action of $G$ on $\Spec A$ is defined by the ideal $I:=(\sigma(a)-a \mid a \in A, \sigma \in G)$.
We say that the action is {\it ramified precisely at the origin} if the radical of $I$ is the maximal ideal $(u,v)$; 
in this case, the closed point of $\Spec(A^G)$ is singular. Otherwise, we say that the action is {\it ramified in codimension $1$}. 
When $I$ is principal,  $A^G$ is regular (\cite[Theorem 2]{K-L}), and when $A^G$ is regular, $I$ is conjectured to be principal \cite[Conjecture 9]{K-L}.

It is known that when the exceptional divisor has smooth components with normal crossings, all components $C_i$ are smooth projective lines
and   the dual graph $\Gamma_N$ is a tree \cite[Theorem 2.8]{Lorenzini 2013}.
It is also known that the discriminant group $\Phi_N$ is an elementary abelian $p$-group \cite[Theorem 2.6]{Lorenzini 2013}, so that in particular
we may write 
$$
|\Phi_N|=|\text{det}(N)|=p^s
$$
for some integer $s\geq 0$.
In this article, we consider which exponents $s\geq 0$ can arise in this way.
By studying diagonal actions on products of curves, the first author \cite[Theorem 3.15]{Lorenzini 2018}
produced wild quotient singularities with  $|\Phi_N|=p^s$ for all exponents $s\geq 2$ with $s\not\equiv 1$ modulo $p$.
Mitsui \cite{Mit} later explicitly resolved all wild quotient singularities arising from product of curves, and showed that the previous list is the complete list of exponents arising from product of curves.
The {\it missing exponents} are then $s=0$, as well as all $s$ with $s\equiv 1 \mod p$.

\begin{emp} \label{conjecture}
\emph{We conjecture that for $p$ odd, all exponents $s \geq 0$ arise in this way from wild $\ZZ/p\ZZ$-quotient singularities associated with an action that is ramified precisely at the origin}.
\end{emp}

In this article, we prove this conjecture for $s=0$ and $s=1$ by explicitly resolving certain wild quotient singularities of independent interest.
We also exhibit singularities as in the conjecture that are likely to produce a group $\Phi_N$ with $|\Phi_N|=p^s$ for all other missing values $s>1$ (\ref{p.odd.conjecture}). When the condition that the action be ramified precisely at the origin is relaxed, we can prove the following result.

\medskip
\noindent
{\bf Theorem (see \ref{thm.exhibit}}).
{\it For $p$ odd, all missing exponents $s \geq 0$ arise from wild $\ZZ/p\ZZ$-quotient singularities associated with an action that is ramified in codimension $1$.}
\medskip

Let $c,d,e \geq 2$ be integers. Recall that the equation $x^c+y^d+z^e=0$ is said to define a {\it Brieskorn surface singularity}.
The missing exponents $s$ are exhibited to arise from wild quotient singularities
with the help of well-chosen Brieskorn singularities, as in our next theorem.

\medskip
\noindent
{\bf Theorem (see \ref{thm.BrieskornResolution} and \ref{Brieskorn quotient sing}).}
{\it Let $B:=k[[x,y,z]]/(z^p+x^c+y^d)$. Assume that $p$ does not divide $cd$. Let $g:=\gcd(c,d)$.  Any resolution of $\Spec B$ has an intersection matrix whose associated discriminant group 
has order $p^{g-1}$ and is killed by $p$. When $c=pm+1$ and $d=pn+1$ for some $m,n \geq 1$, then $\Spec B$ is a wild $\ZZ/p\ZZ$-quotient singularity.
}
\medskip

The resolutions of the Brieskorn singularities in the previous theorem are found in Theorem \ref{thm.BrieskornResolution}, and coincide with 
the known resolutions in characteristic $0$ (\cite[Theorem, page 232]{H-J}, \cite{O-W}).  
The theorem is valid when $p=2$, but in this case, the order $p^{g-1}$ is always an even power of $2$, and thus provides no examples of missing odd exponents. The theorem shows that when $p=2$ and $\gcd(p,cd)=1$, all  singularities $z^p+x^c+y^d=0$ are wild $\ZZ/p\ZZ$-quotient singularities.
It would be of interest to determine whether this fails to be the case when $p>2$. 

Let now $C_n$ denote the $n$-th Catalan number, and let $p \geq 3$.
To produce singularities associated with an action that is ramified precisely at the origin and which have  $|\Phi_N|=p$, we expand on the work of Peskin \cite{Peskin 1983} and consider the ring $B_\ttt:=k[[x,y,z]]/(h)$, where $\mu \in k[y]$ and  
\begin{equation*}
h:=z^p +   2y^{p+1} - x^2 + \sum_{n=2}^{(p+1)/2} (-1)^nC_{n-1} (\ttt y)^{2p-2n}z^n.
\end{equation*}
When $\ttt=1$, this equation defines a wild quotient singularity that can be regarded as an analogue of the $E_6^1$-singularity (notation as in Artin's classification in \cite{Artin 1977}). We compute explicitly its resolution in our next theorem.
When $p=3$, the graph $\Gamma_N$ below reduces to the Dynkin diagram $E_6$. When drawing a dual graph, we adopt in this article the usual convention that 
a vertex is adorned with the associated self-intersection number, unless this self-intersection number is $-2$, in which case it is suppressed.

\medskip
\noindent
{\bf Theorem (see \ref{resolution peskin}}).
{\it Let $p$ be an odd prime. Let $B_\ttt$ be as above. Then $\Spec B_\ttt$ has a resolution independent of $\ttt$, with dual graph $\Gamma_N$ of the following form
$$
\begin{tikzpicture}
[node distance=1cm, font=\small] 
\tikzstyle{vertex}=[circle, draw, fill, inner sep=0mm, minimum size=1.1ex]
\node[vertex]	(v1)  	at (0,0) 	[label=below:{}] 		{};
\node[]	(dummy1)		[right of=v1, label=above:{}]	{};
\node[vertex]	(v2)			[right of=dummy1, label=below:{}]	{};
\node[vertex]	(v2)			[right of=dummy1, label=below:{}]	{};
\draw [decorate,decoration={brace, raise=6pt}] (v1)--(v2) node [black, midway,xshift=-0cm,yshift=0.5cm] {$^{p-1}$};
\node[vertex]	(v3)			[right of=v2, label=below:{}]	{};
\node[vertex]	(v)			[above of=v3,  label=above:{ $^{-(p+1)/2}$}]	{};
\node[vertex]	(v4)			[right of=v3, label=below:{}]	{};
\node[]	(dummy2)		[right of=v4, label=above:{}]	{};
\node[vertex]	(v5)			[right of=dummy2, label=below:{}]	{};
\draw [decorate,decoration={brace, raise=6pt}] (v4)--(v5) node [black, midway,xshift=-0cm,yshift=0.5cm] {$^{p-1}$};
\draw [thick, dashed] (v1)--(v2);
\draw [thick] (v2)--(v3)--(v4);
\draw [thick] (v)--(v3);
\draw[thick, dashed] (v4)--(v5);
\end{tikzpicture}
$$
The associated discriminant group $\Phi_N$ has order $p$.
}

\medskip
\begin{emp} \label{moderately ramified}
To treat the case where $\Phi_N$ is the trivial group in Conjecture \ref{conjecture}, we use a family of hypersurface singularities introduced in  \cite{LS1} and 
which is of independent interest. 
Fix a system of parameters $a,b$ in $k[[x,y]]$. Let $\mu \in k[[x,y]]$, and consider the equation
\begin{equation}
\label{special normal form}
z^p - (\mu ab)^{p-1}z - a^py + b^px=0,
\end{equation} 
and the associated ring 
$$
\R_{\mu}=\R:=k[[x,y,z]]/(z^p - (\mu ab)^{p-1}z - a^py + b^px).
$$

\smallskip
(a) {\it Assume that $\mu$ is a unit in $ k[[x,y]]$.}
It is shown in \cite{LS1}, 7.1, that $\R$ is isomorphic to the ring of invariants $A^G$ of an explicit wild action of ${\mathbb Z}/p{\mathbb Z}$  on $A=k[[u,v]]$ ramified precisely at the origin.
More precisely, after identifying $A$ with the ring 
$$k[[x,y]][u,v]/(u^p-(\mu a)^{p-1}u -x, v^p-(\mu b)^{p-1}v -y),$$
the action is determined by the automorphism $\sigma$ with $\sigma(u)= u+\mu a$ and $\sigma(v)=v+\mu b$.
The morphism $\Spec A \to \Spec A^G$ is ramified only at the maximal ideal ${\maxid}$, and we find that the \'etale fundamental group 
$\pi_1^{\rm loc}(A^G)$ 
of the punctured
spectrum $U:=\Spec A^G \setminus \{\maxid\}$ is isomorphic to ${\mathbb Z}/p{\mathbb Z}$. Such actions are called {\it moderately ramified}  
in \cite{LS1}, and we refer the reader to \cite{LS1} for further information on these actions. 
 
(b) {\it Assume that $\mu \neq 0$, and that it is coprime to both $a$ and $b$.} Then $\R$ is again isomorphic 
to the ring of invariants $A^G$ for the action on $A:=k[[u,v]]$ described above. 
However, in this case the morphism $\Spec A \to \Spec A^G$ is ramified in codimension $1$ and the 
group $\pi_1^{\rm loc}(A^G)$ is trivial.

In this article, we restrict our attention to the case
where $a=y^n$ and $b=x^m$. The case $\mu=0$ is then also of interest:

(c) {\it Assume that $\mu=0$, with $a=y^n $ and $b=x^m$.} The resulting hypersurface is a Brieskorn singularity of type $z^p-y^{pn+1}+x^{pm+1}$. 

\smallskip
In the specialized case where $a=y^n $ and $b=x^m$, preliminary computations with Magma \cite{Magma} and Singular \cite{Singular}
show that the resolution of singularities in all three cases above has the same combinatorial type, independent of $\mu$. 
We prove that this is indeed the case in two instances in this article, when  $a=y $ and $b=x$ in Theorem \ref{a=y, b=x}, and when $a=y^2 $ and $b=x$ in 
Theorem \ref{antidiagonal E8}.
In the latter case,  Artin \cite{Artin 1977} (see also \cite{Peskin 1980}) shows when $p=2$
that the values $\mu=0$, $\mu=1$, and $\mu =y$, produce the rational double points
$E_8^0$, $E_8^2$, and $E_8^1$, respectively.  These singularities are not isomorphic but  have the same resolution graph, the Dynkin diagram $E_8$. 
Our generalization  of these singularities to any odd prime $p$ has a resolution with the following dual graph.
\end{emp}

\noindent
{\bf Theorem (see \ref{antidiagonal E8}).} {\it Let $p$ be an odd prime. Let $B_\ttt$ be as in {\rm \ref{moderately ramified}}. Assume that $a=y^2 $ and $b=x$. 
Then $\Spec \R_{\mu}$ has a resolution of singularities independent of $\ttt$, 
with dual graph $\Gamma_N$

\begin{equation*}
\begin{gathered}
\begin{tikzpicture}
[node distance=1cm, font=\small]
\tikzstyle{vertex}=[circle, draw, fill,  inner sep=0mm, minimum size=1.1ex]
\node[vertex]	(E1)  	at (0,0) 	[label=below:{}]               {};
\node[]	        (E11)  	         	[right of=E1,label=above:{}] {};
\node[]	        (E2)  	         	[right of=E11,label=above:{}]               {};
\node[vertex]	(E3)  	         	[right of=E2,label=below:{}]               {};

\node[vertex]	(D)			[right of=E3, label=right:{ }]    {};

\node[vertex]	(E4)			[above of=D, label=above:{$^{ -(p+1)/2  \quad \ }$}]    {};
\node[vertex]	(E5)			[right of=E4, label=above:{$^{-4}$}]    {};

\node[vertex]	(E6)			[right of=D, label=below:{}]    {};
\node[]	        (E7)			[right  of=E6, label=above:{}]    {};
\node[vertex]	(E8)			[right  of=E7, label=below:{}]    {};
\draw [decorate,decoration={brace, raise=6pt}] (E6)--(E8) node [black, midway,xshift=-0cm,yshift=0.5cm] {$^{p-1}$};

\draw [decorate,decoration={brace, raise=6pt}] (E1)--(E3) node [black, midway,xshift=-0cm,yshift=0.5cm] {$^p$};
 
\draw [thick,dashed] (E1)--(E3);
\draw [thick,dashed] (E6)--(E8);

\draw [thick] (E3)--(D)--(E4)--(E5);
\draw [thick] (D)--(E6);

\end{tikzpicture}
\end{gathered}
\end{equation*}
The associated discriminant group $\Phi_N$ is trivial.}

\begin{emp} \label{p.odd.conjecture}
Let $p$ be odd. Recall that when $\mu=1$, the associated quotient singularity $\Spec B_{\mu=1}$ is induced by an action that is ramified precisely at the origin. 
It is likely that by varying the exponents $m$ and $n$ in $a=y^n$ and $b=x^m$,
one will obtain examples of resolutions of $\Spec B_{\mu=1}$ with associated discriminant group $\Phi_N$ 
of order $p^s$ for any power $s$ with $s \not \equiv -1 \mod p$. In particular, we exhibit in \ref{alls} the appropriate exponents $m$ and $n$
that would cover all remaining open cases in our conjecture \ref{conjecture} (that is, all values of $s$ with $s \equiv 1 \mod p$). 
\end{emp}

Peskin's singularity with $\mu=1$ introduced above, and all the singularities considered in \cite{Lorenzini 2018} or \cite{Mit}, are also induced by an action that is ramified precisely at the origin. When $p=2$, none of the known explicit 
resolutions for  examples in these classes of singularities
 produce an associated discriminant group $\Phi_N$ with order $2^s$ and $s$ {\it odd}. This lack of examples might 
 indicate that there is a serious obstruction to exhibiting such examples. It is natural to wonder whether such examples in fact do not exist for actions ramified precisely at the origin.
 
Let $p=2$. The Dynkin diagram $E_7$, with discriminant group $\Phi_{E_7}$ of order $2$, might be the most ubiquitous  graph 
with  discriminant group of order $2^s$ with $s$ odd. Many other such examples are exhibited in \ref{ex.star}. Artin \cite{Artin 1977} showed that there exists a wild ${\mathbb Z}/2{\mathbb Z}$-action 
on $A=k[[u,v]]$, ramified in codimension $1$, such that $\Spec A^{{\mathbb Z}/2{\mathbb Z}}$ 
is a rational double point of type $E_7$. 
He also showed that any such surface  singularity 
must have a trivial local fundamental group. In other words, there cannot exist a wild ${\mathbb Z}/2{\mathbb Z}$-action 
on $A=k[[u,v]]$, ramified precisely at the origin, such that $\Spec A^{{\mathbb Z}/2{\mathbb Z}}$ has a resolution of combinatorial type $E_7$.

Inspired by Artin's considerations, we define in Section \ref{analogues E7}
some explicit wild ${\mathbb Z}/p{\mathbb Z}$-actions 
on $A=k[[u,v]]$ ramified in codimension $1$. When $p=2$, we exhibit for each $s$ {\it odd} an explicit example conjectured to have discriminant group of order $2^s$.  In Section \ref{The Ap-1 singularity},  for any prime $p$,
we exhibit a wild ${\mathbb Z}/p{\mathbb Z}$-action  
on $A=k[[u,v]]$ ramified in codimension $1$  which results in an $A_{p-1}$-singularity. 

\medskip
\noindent
{\bf Theorem (see \ref{Ap-1}).} {\it Let $k$ be a field of characteristic $p>0$. Let $A:=k[[u,v]]$. Then there exists an automorphism $\sigma: A \to A$
of order $p$ such that $\Spec A^{\langle \sigma \rangle}$ 
is a rational double point of type $A_{p-1}$, which has discriminant group $\Phi_{A_{p-1}}$ of order $p$. 
Any such automorphism induces a morphism $\Spec A \to \Spec A^{\langle \sigma \rangle}$ that must be ramified in codimension $1$.
}
\medskip

It is natural to wonder whether the same result holds for any {\it Hirzebruch--Jung chain}  whose discriminant group has order $p$ (definition recalled in \ref{chain}).
The last statement in the above theorem follows from a result of Ito and Schr\"oer \cite{IS}, which states that 
if the action is ramified precisely at the origin, then the resolution of the resulting quotient singularity has a dual graph $\Gamma_N$ 
which must have a vertex of valency at least $3$. 

Artin shows in \cite{Artin 1975} that in characteristic $p=2$,  all wild quotient singularities $A^G$ with $\Spec A \to \Spec A^G$ ramified precisely at the origin
can be described by an equation of the form \eqref{special normal form} with $\mu=1$. 
In particular, all such singularities are complete intersection.
We show in Proposition \ref{pro.p=2completeintersection} that when $p=2$, any wild quotient singularity $A^G$ is a  complete intersection, even when 
$\Spec A \to \Spec A^G$ ramifies in codimension $1$. 
When $A^G $ is a complete intersection, it is then also Gorenstein, with an intersection matrix 
which is {\it numerically Gorenstein}. The purely linear algebraic definition of numerically Gorenstein 
is recalled in \ref{NumericallyGor}, and it is natural to wonder whether this condition imposes a new restriction on intersection matrices associated with ${\mathbb Z}/2{\mathbb Z}$-quotient singularities. The answer to this question is negative, and we show 
in Proposition \ref{thm.p=2NumGor} that any intersection matrix $N$ such that $\Phi_N$ is killed by $2$ is always numerically Gorenstein.

\if false
\medskip
\noindent
{\bf Theorem (see \ref{thm.p=2NumGor}).} {\it Let $p=2$.  The intersection matrix associated with a resolution of a wild quotient singularity $\Spec A^G$
 is always numerically Gorenstein.
}
\medskip
\fi

The paper is organized as follows.
Section \ref{intersection matrices} contains several useful facts concerning the linear algebra of 
intersection matrices $N$, in particular formulas for the order of $\Phi_N$
when the dual graph $\Gamma_N$ is star-shaped.
Sections \ref{Computation self-intersections} and \ref{Generalities blowing-ups} are preparatory sections, where we recall basic facts regarding how to compute self-intersection numbers on a resolution of a singularity using data coming from intermediate blow-ups. This will be applied in later sections to the resolution of $\Spec B_{\mu}$, where we found it useful, 
instead of starting the resolution process by blowing up the maximal ideal, 
to first blow up an ideal naturally related to the ideal defining the fixed scheme of the action.
We provide in Section \ref{weighted homogeneous singularities} 
the explicit resolution of certain weighted homogeneous singularities of the form
$W^q-U^aV^b(V^d-U^c)=0$,
with $p,q,a,b,c,d$ subject to certain mild restrictions. 
Over ${\mathbb C}$,   
such resolution has already been obtained by Orlik and Wagreich (\cite{O-W}, \cite{Orlik; Wagreich 1971b}, \cite{Orlik; Wagreich 1977}) in full generality. 
The proofs of the theorems presented in this introduction are found in Sections \ref{brieskorn singularities} to \ref{numerically gorenstein}.

\begin{acknowledgement}
The authors gratefully acknowledge funding support from the 
 Research and Training Group in Algebraic Geometry, Algebra, and Number Theory
at the University of Georgia, from the National Science Foundation RTG grant DMS-1344994 and from the Simons Collaboration Grant 245522,
as well as from  the   Research and Training Group
GRK 2240: Algebro-geometric Methods in Algebra, Arithmetic and Topology, funded
by the Deutsche Forschungsgemeinschaft. 
\end{acknowledgement}

\section{Intersection matrices}
\mylabel{intersection matrices}

Let $B$ be a complete noetherian local  ring that is two-dimensional and normal.
Let $C_i$, $i=1,\dots, n$,  
denote the irreducible components of the exceptional divisor of a resolution of singularities of $\Spec B$, with associated intersection matrix $N:=((C_i\cdot C_j))_{1 \leq i,j \leq n}$.
This section collects some facts that depend only on the linear algebra of the   matrix $N$
and which are used in later sections.

An $n \times n$  {\it intersection matrix} $N=(c_{ij})$ is a symmetric 
negative-definite integer matrix with  negative coefficients on the diagonal, and
non-negative coefficients off the diagonal. 
The \emph{discriminant group}   $\Phi=\Phi_N$ is defined as the finite abelian group $\ZZ^n/N\ZZ^n$, 
which has order  $|\det(N)|$.
The  associated  \emph{graph} $\Gamma=\Gamma_N$ arises as follows:
Introduce vertices
$v_1,\dots, v_n$ corresponding to the standard basis vectors in $\ZZ^n$. 
Two vertices $v_i\neq v_j$ are linked by exactly $c_{ij}\geq 0$ edges.
If not stated otherwise, we   tacitly assume that $\Gamma$ is connected.
 
The \emph{degree} or \emph{valency}   of a vertex  $v\in\Gamma$ is the number of edges attached to $v$.
A vertex $v$ with valency at least three is called a \emph{node},
and a vertex $v$ with valency one is called \emph{terminal}. 
A graph is  a {\it chain} if it is connected and does not contain any node.
It is  called {\it star-shaped} if it is a  tree with a unique node.
Given a star-shaped graph $\Gamma$ with node $v_0$, 
we can consider the subgraph $\Gamma \smallsetminus \{ v_0 \}$ obtained by removing   the vertex $v_0$ and all the edges 
containing $v_0$. 
This complement is  the disjoint union of $m\geq 3$ chains $\Delta_1, \dots, \Delta_m$  
that we call the {\it terminal chains} of $\Gamma$.

\begin{emp} \label{chain}
Suppose that $N$ is an intersection matrix whose graph $\Gamma_N$ is a 
chain, with $\ell\geq 1$ consecutive vertices  $v_1, \dots, v_\ell$. For convenience, we label the diagonal entries of $N$ by $c_{ii}=-s_i$,
and we assume below that $s_i \geq 2$ for $i=1,\dots, \ell$. We associate to $N$ with this ordering of the vertices
a unique sequence of positive integers $1=r_\ell < \dots < r_{1}<r_0$  such that the following matrix equality holds,
where the square matrix on the left is $N$:
\begin{equation*}
\begin{pmatrix}
-s_1	& 1\\
1	& -s_2		& \ddots\\
	& \ddots	& \ddots	& 1\\
			& 		& 1	& -s_\ell
\end{pmatrix}
\begin{pmatrix}
r_1\\
\vdots\\
r_{\ell-1}\\
r_\ell
\end{pmatrix}
=
\begin{pmatrix}
-r_0\\
0\\
\vdots\\
0
\end{pmatrix}.
\end{equation*}
When needed, we will denote 
$R=R_N$ the transpose of the vector $(r_1, \dots, r_\ell)$, so that $NR$ is the transpose of $(-r_0,0,\dots,0)$.
It is  known that $|\det(N)|=r_0$, and that $\Phi_N$ is cyclic of order $r_0$ (\cite{Lorenzini 2013}, 3.13). 
To be able  to refer to $r_0$ and $r_1$ without indices, we will relabel them as $r_0:=a$ and $r_1:=b$. Note that by construction, 
$\gcd(a,b)=1$, 
and that we can express the reduced fraction $a/b$ completely in terms of $s_1,\dots, s_\ell$ as a continued fraction
\begin{equation}
\label{continued fraction}
\frac{a}{b} = 
[s_1,s_2,\ldots,s_\ell] :=
s_1- 
  \cfrac {1}{s_2-
    \cfrac{1}{\ddots -
      \cfrac{1}{s_\ell}}}. 
\end{equation}
Clearly, any reduced fraction $a/b$  with $a>b$ determines an intersection matrix $N$  as above. The reduced fraction $a/b=1/1$ determines the matrix $N=(-1)$.
We note that $-a/b= \det(N)/\det(N')$, 
where $N'$ is obtained from  $N$ by removing its first line and first column (recall that the determinant of the empty matrix is $1$ by convention).

As is customary, the vertices of the graph $\Gamma_N$ of an intersection matrix $N=(c_{ij})$ are
labeled with the \emph{self-intersection numbers} $-s_i:=c_{ii}$, and self-intersection numbers $-s_i=-2$ are usually omitted.
For a chain $\Gamma_N$ as above, we get the following drawing:
$$
\begin{tikzpicture}
[node distance=1cm, font=\small] 
\tikzstyle{vertex}=[circle, draw, fill,  inner sep=0mm, minimum size=1.0ex]
\node[vertex]	(v1)  	at (0,0) 	[label=below:{$-s_1$}] 		{};
\node[vertex]	(v2)			[right of=v1, label=below:{$-s_2$}]	{};
\node[]	(dummy)			[right of=v2, label=below:{}]	{};
\node[vertex]	(v3)			[right of=dummy, label=below:{$-s_{\ell-1}$}]	{};
\node[vertex]	(v4)			[right of=v3, label=below:{$-s_\ell$}]	{};
\draw [thick] (v1)--(v2);
\draw[dashed] (v2)--(v3);
\draw [thick] (v3)--(v4);
\end{tikzpicture}
$$
We call such  chain a {\it Hirzebruch--Jung chain}. Recall that $p/(p-1)=[2,\dots,2]$ and that the corresponding intersection matrix of size $p-1$ and determinant $(-1)^{p-1} p$ 
is  denoted by $A_{p-1}$. This intersection matrix will be shown to arise in the context of ${\mathbb Z}/p{\mathbb Z}$-singularities in \ref{Ap-1}.
\end{emp}
\begin{emp} \label{notation.starshaped}
Let $m\geq 3$. Let $a_1/b_1, \dots, a_m/b_m$ be reduced fractions with $a_i/b_i \geq 1$ for $i =1,\dots, m$. Let $s_0\geq 1$ be any integer. 
We denote by $N=N(s_0\mid a_1/b_1,\dots, a_m/b_m)$ the following matrix. Its graph $\Gamma=\Gamma_N=\Gamma(s_0\mid a_1/b_1,\dots, a_m/b_m)$ is star-shaped with $m$ terminal chains attached to a central node $v_0$ having  self-intersection number $-s_0$.
Let $\Delta_1,\ldots,\Delta_m$ be the Hirzebruch--Jung chains determined by the fractions $a_1/b_1, \dots, a_m/b_m$. The graph $\Gamma$ is obtained by attaching to $v_0$ with a single edge 
the initial vertex of each chain $\Delta_i$. In this article, when referring to a matrix of the form  $N=N(s_0\mid a_1/b_1,\dots, a_m/b_m)$, we will always assume that it is an intersection matrix, i.e., that $N$ is negative-definite.
\end{emp}

\begin{proposition}
\mylabel{determinant star-shaped} 
Let $N=N(s_0\mid a_1/b_1,\dots, a_m/b_m)$ be an $n \times n$ intersection matrix as above, with star-shaped graph $\Gamma_N$.
Then  $s_0>\sum_{j=1}^m b_j/a_j$, and the following hold:
\begin{enumerate}[\rm (i)]
\item 
We have $\det(N)=(-1)^n(\prod_j a_j)(s_0-\sum_j b_j/a_j)$. In particular, 
there is an  integer factorization 
$$
|\det(N)|= \left(\frac{\prod_j a_j}{{\rm lcm}(a_1,\dots, a_m)}\right) \left({\rm lcm}(a_1,\dots, a_m)(s_0-\sum_j b_j/a_j)\right).
$$
\item 
In the discriminant group $\Phi_N$,
the class of the standard basis vector $e_{v_0}\in\ZZ^n$ corresponding to the central node $v_0$ has order 
${\rm lcm}(a_1,\dots, a_m)(s_0-\sum_j b_j/a_j)$. 
\item 
Let $w_j$ denote the terminal vertex of the chain $\Delta_j$. 
Then $\Phi_N$ is generated by the classes of $e_{w_j}$, $j=1,\dots, m$. Moreover, 
the class of $e_{v_0}$ is equal to the class of $a_je_{w_j}$, and the  
group $\Phi_N$ is killed by ${\rm lcm}(a_1,\dots, a_m)^2(s_0-\sum_j b_j/a_j)$.
\item  
If $a_j=p$ for all $j$ and $ps_0-\sum_j b_j=1$, then $\Phi_N$ is killed by $p$ and has order $p^{m-1}$.
\item 
Assume that $\Phi_N$ is killed by a prime $p$. 
If $p$ divides $a_j$ for some $j$, then the class of $e_{v_0}$  is trivial in $\Phi_N$.
\end{enumerate}
\end{proposition}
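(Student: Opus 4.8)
The plan is to deduce statement (v) directly from part (iii). Recall that (iii) provides, for every index $j\in\{1,\dots,m\}$, the identity
$$
e_{v_0} = a_j e_{w_j} \qquad\text{in } \Phi_N,
$$
where $w_j$ denotes the terminal vertex of the chain $\Delta_j$; here and below we abusively write $e_v$ both for a standard basis vector of $\ZZ^n$ and for its class in $\Phi_N$, as in the statement of the proposition.

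First I would use the hypothesis to fix an index $j$ with $p \mid a_j$, and write $a_j = p c$ for some integer $c\geq 1$. Working in $\Phi_N$, one then computes
$$
e_{v_0} = a_j e_{w_j} = c\,(p\, e_{w_j}).
$$
Since $\Phi_N$ is killed by $p$ by assumption, $p\, e_{w_j} = 0$, and therefore $e_{v_0} = 0$ in $\Phi_N$, which is exactly the assertion that the class of $e_{v_0}$ is trivial.

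Since part (iii) is already established by the time one reaches (v), there is essentially no obstacle here: the statement is a purely formal consequence of the relation $e_{v_0} = a_j e_{w_j}$ together with the hypothesis $p\,\Phi_N = 0$. If one wished to avoid invoking (iii), one could instead start from (ii), which says the class of $e_{v_0}$ has order ${\rm lcm}(a_1,\dots,a_m)(s_0-\sum_j b_j/a_j)$, and argue that under the hypothesis this order must be $1$ rather than $p$; but that route effectively re-derives the content of (iii), so the short argument above is the efficient one.
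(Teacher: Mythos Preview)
Your argument is correct and is essentially identical to the paper's own proof of (v): the paper also invokes (iii) to write $e_{v_0}=a_je_{w_j}$ and then uses that $p\mid a_j$ together with $p\,\Phi_N=0$ to conclude $e_{v_0}=0$.
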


\proof
Without loss of generality, we may assume that  $N$ equals the block matrix
$$
N=\begin{pmatrix}
-s_0	& * 	& \cdots 	& *\\
*	& N_1	\\
\vdots	& 	& \ddots\\
*	&	&	& N_m
\end{pmatrix}
\in \Mat_n(\ZZ),
$$
where $N_i$ is the intersection matrix with graph $\Delta_i$,
with vertices numbered consecutively starting from the vertex adjacent to the node $v_0$. 
The $*$'s in the above matrix   stand  for  sequences of appropriate size, starting with $1$   followed by zeros. 
Let $R_i$ denote the positive integer vector
associated to $N_i$, such that 
$$
N_iR_i={}^t(-a_i,0,\ldots,0).
$$
Form the block column integer vector $R$ in $\ZZ^n$ given as
$$R:={\rm lcm}(a_1,\dots, a_m)  \ {}^t(1,a_1^{-1}R_1,\ldots,a_m^{-1}R_m).$$
By construction, the greatest common divisor of the entries in $R$ is $1$, since, 
given a prime $p$ such that $p^s$ exactly divides ${\rm lcm}(a_1,\dots, a_m)$,
there exists at least one index $i$ such that $a_i$ is exactly divisible by  $p^s$. In particular, the coefficient of $R$ corresponding to the last vertex on the chain $\Delta_i$ is coprime to $p$. Let $x:= s_0-\sum_jb_j/a_j$.
Then $$NR= {\rm lcm}(a_1,\dots, a_m) \ {}^t(-x,0,\ldots,0).$$ 
Note that $x>0$, because $N$ is negative-definite, so the integer ${}^tRNR$ must be negative. 
By negative-definiteness, we also know that $\det(N)$ has sign $(-1)^n$.
Using \cite{Lorenzini 2013}, Theorem 3.14  with the matrix $N$ and the vector $R$, we get
$$
\det(N) = (-1)^n (s_0-\sum_jb_j/a_j) \cdot  (\prod_j a_j )
$$
and the assertion (i) follows.
The assertion in (ii) follows immediately from the equality 
$$NR= {\rm lcm}(a_1,\dots, a_m) \ {}^t(-x,0,\ldots,0)$$ 
and the fact that the greatest common divisor of the coefficients of $R$ is $1$. 
For (iii), to show that $e_{v_0}-a_je_{w_j}$ is in the image of $N$, consider the unique positive vector $S_j$ whose first component is $1$ and such that $N_jS_j$ is equal to the transpose of $(0,\dots, 0,-a_j)$. Extend this vector to a vector $\overline{S}_j \in {\mathbb Z}^n$ by setting all other components to $0$. Then $N\overline{S}_j= e_{v_0}-a_je_{w_j}$. The proof that for any vertex $w$ on the chain $\Delta_j$, there exists an integer $c_w$ such that
$e_{w}-c_we_{w_j}$ is in the image of $N$ is similar, and is left to the reader. Using (ii) to find the order of the class of $e_{v_0}$, 
it follows immediately that the class of $e_{w_j}$  is killed by ${\rm lcm}(a_1,\dots, a_m)^2(s_0-\sum_j b_j/a_j)$, for all $j$.
Part (iv) is immediate from (i) and (iii). In Part (v), assume that $p$ divides $a_j$. 
As the class of $e_{w_j}$ is killed by $p$ by hypothesis, we find from (iii) that the class of $e_{v_0}$ is trivial. 

\if false
(v) Let us assume now that the group $\Phi$ is killed by a prime $p$. 
Assume by contradiction that $p^m$ divides $\det(N)$. Let $N^*$ denote the adjoint matrix of $N$, so that $N N^*=N^* N= \det(N) {\rm Id}_n$.
The vector $R$, up to scalar, is the first column of $N^*$. Let $N^*_i$ denote the $i$-th column vector of the matrix $N^*$ divided by the greatest common divisor of its coefficients. Then  Theorem 3.14 in \cite{Lorenzini 2013} produces a formula for $\det(N)$ in terms of the coefficients of $N^*_i$. 
It follows from this formula and from the facts that $\Phi$ is killed by $p$ and $p^m$ divides $\det(N)$ that the coefficient corresponding to the node in the vector $N_i^*$ must be divisible by $p^2$.  Under our assumptions on the form of $N$, the coefficient of the node 
is the first coefficient of the vector $N_i^*$. Since the matrix $N$ is symmetric, so is the matrix $N^*$, and thus each coefficient of the vector $R$ is divisible by $p^2$ except possibly for its first coefficient. 
Hence, for each $1\leq i\leq m$, we find that ${\rm lcm}(a_1,\dots, a_m) /a_i$ is divisible by $p^2$. 
This is a contradiction, since at least one of the integers ${\rm lcm}(a_1,\dots, a_m) /a_i$ is not divisible by $p$.
\fi
\qed

\section{Computation of self-intersections}
\mylabel{Computation self-intersections}

Let $B$ be a complete   local  noetherian ring that is two-dimensional and normal.
It is known that a resolution of singularities $X\ra\Spec(B)$ exists, and that it can be obtained from the sequence 
$$
X=Y_t \lra Y_{t-1} \lra \dots \lra Y_1 \lra Y_0=\Spec(B),
$$
where each $Y_i \to Y_{i-1}$ is the normalization of the blow-up of the finitely many singular points of $Y_{i-1}$ 
(see, e.g., \cite{Lip2}, Theorem on page 151 and Remark B on page 155).   
In this section we develop a method for computing the self-intersection of particular irreducible components
of the exceptional divisor on $X$.
This information is needed in the proofs of  each of our explicit computation of resolutions in 
Theorems \ref{intersection graph}, \ref{resolution peskin},   \ref{antidiagonal E8}, and \ref{a=y, b=x}.
For the sake of exposition, we assume that the residue field $k=B/\maxid_B$ is algebraically closed.

Note that the process described above usually  does not produce
the minimal desingularization, as some irreducible components of the exceptional divisor on $X$ might be $(-1)$-curves,
and thus contract  to smaller resolutions of singularities.
This may even happen for the strict transforms of the exceptional divisors on the first blow-up $Y_1$
(see Example in \cite[page 205]{Lip}).  

\begin{emp} \label{notation resolution}
Let $X\ra\Spec(B)$ be any resolution of singularities,   and write $C_1,\ldots,C_n$
for the  irreducible components of the exceptional divisor. 
We then have intersection numbers 
$$
c_{ij}=(C_i\cdot C_j)_X:=\chi(\O_{C_j}(C_i))-\chi(\O_{C_j}) = \deg(\O_{C_j}(C_i)),
$$
and can form the resulting intersection matrix
$N=(c_{ij})_{1\leq i,j\leq n}$.
Associated with $N$ is the connected graph $\Gamma=\Gamma_N$
with vertices
$v_1,\dots, v_n$, and a pair of vertices $v_i\neq v_j$ is linked by exactly
$c_{ij}$ edges. We call $\Gamma$ the {\it resolution graph} or the {\it dual graph} attached to $X \to \Spec B$.

Now consider a factorization $X\ra Y\ra\Spec B$, where $\pi:X\ra Y$ is
the contraction of certain exceptional curves, say $C_{s+1}\cup\ldots\cup C_n$. We regard the induced morphism
$Y\ra\Spec(B)$ as a partial
resolution of singularities, and by definition of contraction, $Y$ is {\it normal}.
Write $D_1,\ldots,D_s\subset Y$ for the images in $Y$ of the non-contracted curves $C_1,\ldots,C_s\subset X$.
These images are Weil divisors which are not necessarily Cartier.  
Following Mumford \cite{Mumford 1961}, page 17 (see also \cite{Ful}, 7.1.16, or \cite{Schroeer 2019}, Theorem 1.2), 
one has  \emph{rational  intersection numbers} $(D_i\cdot D_j)_Y\in\QQ$ obtained as follows:
First define the rational pull-back
$\pi^*(D_i):=C_i+\sum_{k>s}\lambda_kC_k$,
where $\lambda_{s+1}, \dots, \lambda_n\in\QQ$  are the fractions uniquely determined by the conditions $(\pi^*(D_i)\cdot C_k)_X=0$ for all $s< k\leq n$.
One then sets 
$$
(D_i\cdot D_j)_Y :=  (\pi^*(D_i)\cdot C_j)_X=(\pi^*(D_i)\cdot \pi^*(D_j))_X.
$$
These numbers actually do not depend on the choice of resolution $\pi:X\ra Y$.

{\it Suppose now that $\pi:X\ra Y$ is the contraction of all but the first curve $C_1$. 
Assume furthermore  that $\Gamma$ is a tree.} Let $v$ be the vertex corresponding to $C_1$,
and consider the graph 
$\Gamma\smallsetminus\left\{v\right\}$   obtained from $\Gamma$ by removing the vertex 
$v$ and all the edges   attached to $v$.
The graph $\Gamma\smallsetminus\left\{v\right\}$ decomposes into connected components 
$\Gamma\smallsetminus\left\{v\right\}= \Delta_1 \cup \ldots \cup \Delta_r$,
with corresponding intersection matrices $N_1,\ldots,N_r$ for each component. 
Since $\Gamma$ is a tree, there exists a unique  vertex $w_i\in \Delta_i$   
which is adjacent to $v$ in $\Gamma$. 
Define $\Delta_i':=\Delta_i\smallsetminus\left\{w_i\right\}$, with intersection matrix $N'_i$.
We call 
$$
\delta_i:=-\frac{\det(N_i')}{\det(N_i)}\in\QQ_{>0} 
$$
the \emph{correction term} at $w_i$
(recall that the determinant of the empty matrix is $1$, and we use this  convention if 
$\Delta_i$ is reduced to the single vertex $w_i$). 
The correction terms $\delta_i$   are indeed positive, since the signs of $\det(N_i)$ and $\det(N_i')$
are given by $(-1)^{r_i}$ and $(-1)^{r_i-1}$, where $r_i$ is the number of vertices of $\Delta_i$. 
When $\Delta_i$ is a chain as in \ref{chain} corresponding to a fraction $a_i/b_i$, 
we have $\delta_i=b_i/a_i$.
The geometric meaning of the correction terms is as follows:
\end{emp}

\begin{proposition}
\mylabel{correction self-intersection} 
In the above situation, the integral self-intersection and the rational self-intersection are related by the formula
$$ 
(C_1 \cdot C_1)_X = (D_1 \cdot D_1)_Y - \sum_{i=1}^r\delta_i.
$$
\end{proposition}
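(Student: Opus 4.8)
The plan is to unwind the definitions of the rational pull-back $\pi^{*}$ and of the rational intersection numbers, and then to exploit that $\Gamma$ is a tree in order to reduce the system defining $\pi^{*}(D_1)$ to one decoupled block per component $\Delta_i$. Recall that $\pi^{*}(D_1)=C_1+\sum_{k=2}^{n}\lambda_k C_k$, where $\lambda_2,\dots,\lambda_n\in\QQ$ are uniquely determined by the conditions $(\pi^{*}(D_1)\cdot C_k)_X=0$ for $k=2,\dots,n$. Pairing $\pi^{*}(D_1)$ with $C_1$ and using $(D_1\cdot D_1)_Y=(\pi^{*}(D_1)\cdot C_1)_X$ together with the bilinearity of the intersection product on $X$ gives
$$
(D_1\cdot D_1)_Y=(C_1\cdot C_1)_X+\sum_{k=2}^{n}\lambda_k\,(C_k\cdot C_1)_X.
$$
Thus the proposition is equivalent to the identity $\sum_{k=2}^{n}\lambda_k\,(C_k\cdot C_1)_X=\sum_{i=1}^{r}\delta_i$, and the whole task is to identify the left-hand side.

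Next I would decouple the linear system that determines the $\lambda_k$. Partition the indices $k\in\{2,\dots,n\}$ according to which component $\Delta_i$ of $\Gamma\smallsetminus\{v\}$ contains the vertex $v_k$, and let $\Lambda_i$ denote the column vector formed by the $\lambda_k$ with $v_k\in\Delta_i$. Since $\Gamma$ is a tree, removing $v$ disconnects the components $\Delta_1,\dots,\Delta_r$ from one another, so no curve of $\Delta_i$ meets a curve of $\Delta_{i'}$ when $i\neq i'$; moreover $C_1$ meets, among the curves of $\Delta_i$, only $C_{w_i}$, and it does so in a single point transversally, because a tree has no multiple edges, i.e. $(C_1\cdot C_{w_i})_X=1$. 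Consequently the equation indexed by a vertex $v_k\in\Delta_i$ reads $(C_1\cdot C_k)_X+(N_i\Lambda_i)_k=0$, i.e. $N_i\Lambda_i=-e_{w_i}$, the negative of the standard basis vector of the $\Delta_i$-block supported at $w_i$. As $N$ is negative-definite, its principal submatrix $N_i$ is negative-definite, hence invertible, and $\Lambda_i=-N_i^{-1}e_{w_i}$.

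Finally I would read off the one coefficient that matters. By Cramer's rule, $\lambda_{w_i}=-\bigl(N_i^{-1}\bigr)_{w_iw_i}=-\det(N_i')/\det(N_i)=\delta_i$, where $N_i'$ is the matrix obtained from $N_i$ by deleting the row and column corresponding to $w_i$, which is exactly the intersection matrix of $\Delta_i'=\Delta_i\smallsetminus\{w_i\}$. Since $(C_k\cdot C_1)_X=0$ for every curve of $\Delta_i$ other than $C_{w_i}$ and $(C_{w_i}\cdot C_1)_X=1$, summing over all components yields $\sum_{k=2}^{n}\lambda_k\,(C_k\cdot C_1)_X=\sum_{i=1}^{r}\lambda_{w_i}=\sum_{i=1}^{r}\delta_i$, which is the identity we needed, and the proposition follows.

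I expect the only delicate point to be the second step: one has to invoke the tree hypothesis twice, once to see that the system is block-diagonal across the components $\Delta_i$ and once to see that $C_1$ meets each $\Delta_i$ transversally in a single point, so that the right-hand side of the $i$-th block is the clean vector $-e_{w_i}$ rather than some general vector. After that, matching the minor $\det(N_i')$ produced by Cramer's rule with the determinant appearing in the definition of $\delta_i$ is immediate, and the rest is formal manipulation with the definitions.
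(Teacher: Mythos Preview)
Your proof is correct and follows essentially the same approach as the paper's: both unwind the definition of $(D_1\cdot D_1)_Y$ via $\pi^*$, use the tree hypothesis to see that the linear system for the $\lambda_k$ is block-diagonal with one block per $\Delta_i$ and right-hand side $-e_{w_i}$, and then read off $\lambda_{w_i}$ as the diagonal entry of $N_i^{-1}$ via the cofactor formula. Your write-up is in fact a bit more explicit about the role of the tree hypothesis than the paper's.
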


\proof 
For ease of notation, we let in this proof $C=C_1$ and $D=D_1$.
Let $N_0$ denote the lower-right principal minor of $N$. Recall from our earlier description
that $N_0$ is a block diagonal matrix with $\det(N_0)=\prod_{i=1}^r \det(N_i)$. Then 
$$
(\lambda_2,\dots,\lambda_n)=-((C\cdot C_2)_X, \dots, (C\cdot C_n)_X)N_0^{-1}.
$$ 
It follows that 
$$
(D\cdot D)_Y=(\pi^*(D) \cdot C)_X= (C \cdot C)_X + \sum_{i=2}^n\lambda_i(C_i \cdot C)_X.
$$
Since $\Gamma_N$ is a tree, we find that if $(C\cdot C_i)_X \neq 0$, then $(C\cdot C_i)_X =1$. 
We only need to compute explicitly $\lambda_i$ when $(C\cdot C_i)_X \neq 0$. 
According to our definitions, there are $r$ such indices $i$, 
and in each case,  the coefficient $\lambda_i$ is the top left corner of 
the corresponding matrix $N_i^{-1}$, that is, $\det(N_i')/\det(N_i)$, as desired.
\qed

\medskip
We will use Proposition \ref{correction self-intersection}  in the following situation. 
Let  $\idealb $ be an ideal in $B$, and let $Z \to \Spec B$ denote the blowing-up with center $V(\idealb)$.
Denote by $E\subset Z$ the schematic preimage of the center. Let $\nu:Y\ra Z$ be the normalization map and denote by 
$D=\nu^{-1}(E)$ the schematic preimage of $E$. {\it Assume that $D$, and  hence  $E$, are irreducible.}
Let $D_{\red}$ denote the support of $D$ endowed with its induced reduced structure. Letting $D_{\red}$ play the role of $D_1$ in Proposition \ref{correction self-intersection}, we find a formula 
for the rational intersection number $(D_\red\cdot D_\red)_Y$ in term of data from a resolution $X \to Y$. 
 Our next proposition shows how to obtain $(D_\red\cdot D_\red)_Y$ from data associated with the blowing-up $Z \to \Spec B$.
 
The exceptional divisor $E\subset Z$
is given by the sheaf of ideals $\O_Z(1)\subset\O_Z$. The reduction $E_\red$ is a projective curve
over the residue field $k$, allowing us to define the integral intersection number 
$$
(E\cdot E_\red)_Z :=  \chi(\O_{E_\red}(E))-\chi(\O_{E_\red})=\deg\O_{E_\red}(-1).
$$
In practice, $(E\cdot E_\red)_Z$ can often be computed, and such computation is done for instance in 
Proposition \ref{exceptional divisor}.

\begin{proposition}
\mylabel{correction degree}
In the above situation where $D$, and  hence  $E$, are assumed irreducible, write $E=mE_\red$ and let $d\geq 1$ be the degree
of the induced map  $\nu:D_\red\ra E_\red$. Then we have 
$$
(D_\red\cdot D_\red)_Y = \frac{d^2}{m} (E\cdot E_\red)_Z.
$$
\end{proposition}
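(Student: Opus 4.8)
The plan is to pass to a resolution $\pi\colon X\to Y$ and express both sides of the asserted identity through honest line-bundle degrees, exploiting that $\nu^{*}E=\nu^{-1}(E)$ is a \emph{Cartier} divisor on $Y$ with $\O_Y(\nu^{*}E)=\nu^{*}\O_Z(-1)$, the ideal sheaf of $E$ being $\O_Z(1)$.

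First I would compute $(\nu^{*}E\cdot D_\red)_Y$. Because $\nu^{*}E$ is Cartier, its rational pullback to $X$ coincides with the ordinary (Cartier) pullback: the latter already has intersection number $0$ with every $\pi$-exceptional curve $C_k$, since $\pi$ contracts $C_k$ to a point and $\deg$ of a pulled-back line bundle on a contracted curve vanishes. Hence, writing $C_1$ for the strict transform of $D_\red$ on $X$ and using the definitions recalled in \ref{notation resolution} together with $\QQ$-bilinearity in the Weil slots, $(\nu^{*}E\cdot D_\red)_Y$ equals the ordinary pullback of $\nu^{*}E$ intersected with $C_1$, which is $\deg\bigl(\O_{D_\red}(\nu^{*}E)\bigr)$ since $\pi|_{C_1}\colon C_1\to D_\red$ is a finite birational morphism of integral projective curves and such maps preserve degrees of line bundles. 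Now $\O_{D_\red}(\nu^{*}E)$ is the pullback of $\O_{E_\red}(-1)=\O_{E_\red}(E)$ along the finite degree-$d$ map $\nu|_{D_\red}\colon D_\red\to E_\red$, and the degree of a line bundle is multiplied by $d$ under a finite degree-$d$ map of integral projective curves; therefore $(\nu^{*}E\cdot D_\red)_Y=d\,(E\cdot E_\red)_Z$.

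Next I would identify the Weil divisor $\nu^{*}E$. Its support is the prime divisor $D_\red$, so $\nu^{*}E=m'D_\red$ for some integer $m'\ge 1$, whence $(\nu^{*}E\cdot D_\red)_Y=m'\,(D_\red\cdot D_\red)_Y$ by bilinearity of Mumford's pairing. It remains to prove $m'=m/d$. Let $R=\O_{Z,\eta}$ and $S=\O_{Y,\eta'}$ be the local rings at the generic points $\eta\in E_\red$ and $\eta'\in D_\red$. As $Y$ is normal, $S$ is a discrete valuation ring; as $D$ is irreducible, $\eta'$ is the only point of $Y$ over $\eta$, so $S$ is the normalization of $R$ inside $\Frac(R)$, module-finite over $R$ (recall $B$ is excellent), with residue field extension of degree equal to the degree $d$ of $\nu|_{D_\red}$. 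Choose a local equation $t\in R$ for $E$, so that $m=\operatorname{length}_R(R/tR)$ and $m'=\operatorname{length}_S(S/tS)$. The snake lemma applied to $0\to R\to S\to S/R\to 0$ and multiplication by $t$ — legitimate since $S/R$ has finite $R$-length, so multiplication by $t$ on it has kernel and cokernel of equal length — gives $\operatorname{length}_R(R/tR)=\operatorname{length}_R(S/tS)$, while $\operatorname{length}_R(S/tS)=d\cdot\operatorname{length}_S(S/tS)$ because every composition factor of the $S$-module $S/tS$ is the residue field of $S$, a $d$-dimensional vector space over the residue field of $R$. Hence $m=d\,m'$. (Equivalently, this is the coefficient of $[E_\red]$ in the projection-formula identity $\nu_{*}[\nu^{*}E]=[E]$ for the proper birational morphism $\nu$.) Combining the two computations,
\[
(D_\red\cdot D_\red)_Y=\frac{1}{m'}\,(\nu^{*}E\cdot D_\red)_Y=\frac{d}{m'}\,(E\cdot E_\red)_Z=\frac{d^{2}}{m}\,(E\cdot E_\red)_Z .
\]

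I expect the multiplicity comparison $m=d\,m'$ to be the only genuine obstacle, precisely because $Z$ need not be normal along $E_\red$: the ring $R$ is generally not a discrete valuation ring, and the factor $d$ enters exactly as the residue-degree discrepancy between $R$ and its normalization $S$. Everything else is bookkeeping with the definitions recalled in \ref{notation resolution} and the elementary behaviour of line-bundle degrees under finite morphisms of integral projective curves.
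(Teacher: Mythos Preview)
Your proof is correct, and it follows a somewhat different route from the paper's.

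The paper first establishes the identity $(D\cdot D)_Y=(E\cdot E)_Z$ by comparing lengths at closed points of $E\cap F$ and $D\cap\nu^{-1}(F)$ for auxiliary Cartier divisors $F$ on $Z$ (invoking \cite{EGA}, IV, Lemma 21.10.13), and then expands both sides as $h^2(D_\red\cdot D_\red)_Y$ and $m(E\cdot E_\red)_Z$ with $D=hD_\red$. The key relation $d=m/h$ is obtained via Kleiman's theory of rational degrees for morphisms between not-necessarily-reduced proper schemes, using the commutative square with vertices $D_\red,D,E_\red,E$.

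You instead compute $(\nu^{*}E\cdot D_\red)_Y$ directly as a line-bundle degree pulled back along the degree-$d$ map $D_\red\to E_\red$, obtaining $d\,(E\cdot E_\red)_Z$, and separately write $\nu^{*}E=m'D_\red$ to get $m'(D_\red\cdot D_\red)_Y$. Your snake-lemma computation at the generic point, comparing $R$-length and $S$-length of $S/tS$, yields $m=dm'$ (equivalently $h=m/d$) without appealing to Kleiman's rational degrees. This is arguably more elementary and self-contained; the paper's approach has the advantage of packaging the multiplicity relation into a single citation. Both arguments hinge on exactly the same numerical identity $h=m/d$, reached by different means.
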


\proof
First, we check that $(D\cdot \nu^{-1}(F))_Y = (E\cdot F)_Z$ for every effective Cartier divisor $F\subset Z$
that does not contain 
the support of $E$.
The two intersection numbers are the $k$-degrees of 
the finite schemes $D\cap \nu^{-1}(F)$ and $E\cap F$, respectively.
Fix a point $z\in E\cap F$, consider the  local ring $A=\O_{F,z}$ and choose an  element $t\in \maxid_A$
defining $F\cap E\subset F$ locally.
Then $A$ is a local noetherian ring of dimension one without embedded components,  and
$M=\O_{\nu^{-1}(F),z} $ is a finite $A$-module of rank one for which the multiplication map $t:M\ra M$ is injective.
According to \cite{EGA}, Chapter IV, Lemma 21.10.13, the modules $A/tA$ and $M/tM$ have the same $A$-length,
hence also the same $k$-vector space dimension.
Applying this with a difference $F-F'$ of effective Cartier divisors that are linearly equivalent to $E$,
we conclude $(D\cdot D)_Y = (E\cdot E)_Z$.

To simplify notation write  $E'=E_\red$ and $D'=D_\red$. With $D=hD'$ we get 
\begin{equation}
\label{kleiman degrees}
h^2(D'\cdot D')_Y = (D\cdot D)_Y = (E\cdot E)_Z = m(E\cdot E')_Z.
\end{equation}
We now use Kleiman's theory of rational degrees $\deg(V'/V)\in\QQ_{\geq 0}$ for  morphisms $V'\ra V$ between irreducible 
proper  schemes that are not necessarily integral  (\cite{Kleiman 1966}, Definition on page 277). 
According to \cite[Lemma 2]{Kleiman 1966}, the  commutative diagram
$$
\begin{CD}
D'	@>>>	D\\
@VVV			@VVV\\
E'	@>>> 	E
\end{CD}
$$
gives the equation
$\deg(D'/E')\cdot\deg(E'/E) =\deg(D'/D)\cdot \deg(D/E) $, and furthermore we have 
$\deg(E'/E)=1/m$ and $\deg(D'/D)=1/h$. Thus $\deg(D'/E')= m/h$.
Inserting this into \eqref{kleiman degrees} yields the assertion.
\qed

\if false
\medskip
Combining Proposition \ref{correction degree} and \ref{correction self-intersection}, 
we obtain the self-intersection  $(C\cdot C)_X<0$, given the
intersection number $(E\cdot E_\red)_Z<0$,  the multiplicity in $E=mE_\red$,
 the degree   $d=\deg(D_\red/E_\red)$, and the correction terms $\delta_i>0$.
 \fi


\section{Blowing up non-reduced centers}
\mylabel{Generalities blowing-ups}

We begin this section with some general facts on the computation of
blowing-ups, needed for instance to fully justify the explicit computations done in Proposition \ref{exceptional divisor}.
Let $B$ be a noetherian ring,  and let $\idealb\subset B$ be an ideal. Endow the associated \emph{Rees ring}
$$
 B[\idealb T]:=B\oplus\idealb T\oplus\idealb^2 T^2\oplus\ldots \subset B[T]
$$
with the grading induced by the standard grading on $B[T]$.
The morphism $\Proj(B[\idealb T]) \to \Spec B$ is called the \emph{blowing-up} of $\Spec(B)$ with center 
$\Spec(B/\idealb)$. We denote $\Proj(B[\idealb T])$ by $\Bl_\idealb(B)$ or, when no confusion may ensue, simply by $Z$.
Let $E$ denote the schematic preimage in $Z$ of the center of the blowing-up.

Assume now that $R$ is a noetherian ring with a surjection $R\to B$.
Let $\ideala$ denote the preimage in $R$ of the ideal $\idealb$.
Consider the blowing-up $Z':=\Bl_\ideala(R)$ with center $V(\ideala)$, and the commutative diagram 
induced by the surjection  $R[\ideala T]\ra B[\idealb T]$ of Rees rings:

$$
\begin{CD}
Z	@>>>	Z'\\
@VVV			@VVV\\
\Spec B	@>>> 	\Spec R.
\end{CD}
$$
The horizontal morphisms are closed immersions. 

Recall that an element $f\in R$ is called \emph{regular} if multiplication by $f$ on $R$ is an injective map.
Assume now that  the kernel of $R\to B$   is generated by a regular element $f\in R$.
Then $\Spec(B)$ is 
an effective Cartier divisor in $\Spec(R)$, and our next proposition provides a criterion for checking whether
the closed subscheme $Z$ is an effective Cartier divisor in $Z'$, when $Z'$ and $V(\ideala)$ are `nice'. This criterion is explicit and in general not very difficult to verify.

Each element $g\in\ideala$ defines a basic open set 
$D_+(g):=\Spec R[\ideala T]_{(gT)}$ of $Z'$
called the \emph{$g$-chart}. When $\ideala=(g_1,\dots, g_r)$, the union $\cup_{i=1}^r D_+(g_i)$ is an affine open cover of $Z'$.

\begin{proposition} 
\mylabel{blowing-up computation}
Let $R$ be a noetherian ring, locally of 
complete intersection\footnote{Recall that  $g_1,\ldots,g_d\in R$ is called a \emph{regular sequence} if   the class of $g_i$ is a regular element
in the ring $R/(g_1,\ldots,g_{i-1})$, for each $1\leq i\leq d$.
The ring $R$ is called \emph{locally of complete intersection}
if for each $\primid \in \Spec R$, the completion  of $R_\primid$ is isomorphic to a ring of the form $A/(a_1,\ldots,a_s)$,
where $A$ is a regular complete local ring, and $a_1,\ldots, a_s$ is a regular sequence.}.
Let $g_1,\ldots,g_r\in R$ be a regular sequence, and set $\ideala:=(g_1,\dots, g_r)$.
Let $f \in R$ be a regular element contained in $\ideala$, and set $B:=R/(f)$ and $\idealb:=\ideala B$. Consider as above the blowing-ups 
$Z \to \Spec B$ and $Z'\to \Spec R$.

For each $i=1,\dots,r$, choose a factorization $f/1=(g_i/1)^{s_i}h_i$ in $R[\ideala T]_{(g_iT)}$, with $s_i \geq 0$ and $h_i\in R[\ideala T]_{(g_iT)}$.
Assume that for each $i$, the closed subscheme $V(h_i,g_i/1)$ of $D_+(g_i)$ has  codimension two in $D_+(g_i)$.
Then
\begin{enumerate}[\rm (a)]
\item 
The closed subscheme $Z$ of $Z'$ is an effective Cartier divisor.
Its restriction on the $g_i$-chart $D_+(g_i)$ is the closed subscheme $V(h_i)$. 
\item The scheme $Z$ is locally of complete intersection.
\end{enumerate}
\end{proposition}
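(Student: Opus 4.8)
The plan is to check both assertions chart by chart on the affine open cover $Z'=\bigcup_{i=1}^{r}D_+(g_i)$, using that $Z\to Z'$ is a closed immersion. Write $R_i:=R[\ideala T]_{(g_iT)}$ for the coordinate ring of $D_+(g_i)$. Since $g_1,\dots,g_r$ is a regular sequence, the Rees algebra coincides with the symmetric algebra, $R[\ideala T]\cong R[T_1,\dots,T_r]/(g_iT_j-g_jT_i)_{i<j}$, so that $R_i\cong R[u_j\mid j\neq i]/(g_iu_j-g_j\mid j\neq i)$ with $u_j$ the image of $g_jT/(g_iT)$. This presents $R_i$ as a quotient of the Cohen--Macaulay ring $R[u_j\mid j\neq i]$ by an ideal generated by $r-1$ elements and of height $r-1$ (compare dimensions, using that $\Bl_\ideala(R)\to\Spec R$ is proper and birational), hence by a regular sequence; therefore $R_i$ is again locally of complete intersection, in particular Cohen--Macaulay. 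Moreover $\ideala R_i=(g_i/1)R_i$, and since the exceptional divisor $E$ of $Z'\to\Spec R$ is an effective Cartier divisor, $g_i/1$ is a regular element of $R_i$ with $E\cap D_+(g_i)=V(g_i/1)$. Finally $R_i$ embeds into $R_i[(g_i/1)^{-1}]=R_{g_i}$, in which $f/1$ is a regular element since $f$ is one of $R$; so $f/1$ is a regular element of $R_i$ as well.

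Next I identify $Z\cap D_+(g_i)$. Because $B=R/(f)$ with $f$ regular and $\idealb=\ideala B$, the blowing-up $Z=\Bl_\idealb(B)$ is the strict transform of the closed subscheme $\Spec B\subset\Spec R$ inside $Z'$ (see, e.g., \cite{EGA}); hence on $D_+(g_i)$ its ideal is the saturation $(f/1):_{R_i}(g_i/1)^\infty$. From the chosen factorization $f/1=(g_i/1)^{s_i}h_i$ and the regularity of $f/1$ and $g_i/1$, the element $h_i$ is a regular element of $R_i$ or a unit, and cancelling the regular element $(g_i/1)^{s_i}$ yields
$$
(f/1):_{R_i}(g_i/1)^\infty \;=\; (h_i):_{R_i}(g_i/1)^\infty .
$$
Here the hypothesis enters. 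If $h_i$ is a unit this ideal equals $R_i=(h_i)$. Otherwise $R_i/(h_i)$ is Cohen--Macaulay, being the quotient of the Cohen--Macaulay ring $R_i$ by the regular element $h_i$, so its zero-divisors form the union of its minimal primes, each of which is a height-one prime of $R_i$ containing $h_i$; a height-one prime of $R_i$ containing both $h_i$ and $g_i/1$ would contradict the assumption that $V(h_i,g_i/1)$ has codimension two in $D_+(g_i)$. Hence $g_i/1$ is a non-zero-divisor on $R_i/(h_i)$ and $(h_i):(g_i/1)^\infty=(h_i)$. Therefore $Z\cap D_+(g_i)=V(h_i)$.

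This proves the second sentence of (a), and since on each chart $Z$ is cut out by the single regular element $h_i$ (with $Z\cap D_+(g_i)=\emptyset$ when $h_i$ is a unit), $Z$ is an effective Cartier divisor in $Z'$, which is the first sentence of (a). For (b), $Z$ is covered by the affine opens $\Spec R_i/(h_i)$, and the quotient of the locally complete intersection ring $R_i$ by the regular element $h_i$ is again locally of complete intersection: at a prime $\idealq$ of $R_i/(h_i)$, writing $\widehat{(R_i)_\idealq}=A/(a_1,\dots,a_s)$ with $A$ a regular complete local ring and $a_1,\dots,a_s$ a regular sequence, a lift $\tilde h_i$ of $h_i$ remains a non-zero-divisor on $A/(a_1,\dots,a_s)$ by flatness of completion, so $\widehat{(R_i/(h_i))_\idealq}=A/(a_1,\dots,a_s,\tilde h_i)$ is a complete intersection. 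Hence $Z$ is locally of complete intersection.

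The delicate step is the identification of $Z$ with the strict transform, and the attendant claim that its local ideal on $D_+(g_i)$ is exactly $(h_i)$ and not some strictly larger ideal: one has to exclude extra or embedded components of $Z$ lying over $V(\idealb)$, and this is precisely where the Cohen--Macaulayness of the charts $R_i$ and the codimension-two hypothesis on $V(h_i,g_i/1)$ are used. Without that hypothesis the saturation $(h_i):(g_i/1)^\infty$ can be strictly larger than $(h_i)$, so $V(h_i)$ need not be the strict transform and the stated description of $Z$ fails.
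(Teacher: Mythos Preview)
Your proof is correct and follows essentially the same route as the paper. The paper organizes the argument into two auxiliary results: one showing that $\Bl_\ideala(R)$ inherits the locally complete intersection (and $(S_2)$) property from $R$ via the regular-sequence presentation of the Rees algebra, and another showing $Z\cap D_+(g_i)=V(h_i)$ by a schematic-closure argument (the closure of $V(h_i)\smallsetminus V(g_i/1)$ equals $V(h_i)$ because $V(g_i/1)\cap V(h_i)$ is Cartier on $V(h_i)$). Your saturation computation $(f/1):(g_i/1)^\infty=(h_i)$ is the algebraic rephrasing of that schematic-closure step, and the key point---that $g_i/1$ is a non-zero-divisor on $R_i/(h_i)$ because $R_i/(h_i)$ has no embedded primes and $V(h_i,g_i/1)$ has codimension two---is identical in both arguments.
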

 
\proof Part (a) follows from Proposition \ref{blowing-up local}. Part (b) follows from Proposition \ref{blowing-up serre condition}. \qed

\begin{proposition} \mylabel{blowing-up local} Keep the notation introduced at the beginning of this section. Let $g \in \ideala$. 
Suppose that we have a factorization  $f/1=(g/1)^s h$ in $R[\ideala T]_{(gT)}$,
for some $s\geq 0$ and some element $h\in R[\ideala T]_{(gT)}$.
Suppose also that the following two assumptions hold:
\begin{enumerate}
\item
The closed subscheme $V(h,g/1)$ of $ D_+(g)$ has codimension at least two. 
\item
The basic open set $D_+(g) \subset Z'$ satisfies Serre's Condition $(S_2)$.
\end{enumerate}
Then $Z\cap D_+(g) = V(h)$ 
as  closed subschemes of the $g$-chart $D_+(g)$.
\end{proposition}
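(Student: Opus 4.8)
The plan is to realize the closed subscheme $Z\subseteq Z'$ as the \emph{strict transform} of the effective Cartier divisor $\Spec(B)=V(f)\subseteq\Spec(R)$ under the blow-up morphism $\pi\colon Z'=\Bl_\ideala(R)\to\Spec(R)$, and then to compute this strict transform explicitly on the $g$-chart. For the identification, observe that by construction $B[\idealb T]=\bigoplus_{n\geq 0}\idealb^nT^n$ with $\idealb^n=\ideala^nB=(\ideala^n+(f))/(f)$ equal to the image of $\ideala^n$ in $B$; this is exactly the graded ring whose $\Proj$ represents the strict transform of $V(f)$. Hence $Z=\Bl_\idealb(B)$ is the scheme-theoretic closure in $Z'$ of $\pi^{-1}(V(f))$ with the exceptional divisor $E'\subseteq Z'$ removed --- the standard relation between Rees algebras and strict transforms, which I would simply quote.

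Next I would pass to the $g$-chart $D_+(g)=\Spec(S)$, writing $S:=R[\ideala T]_{(gT)}$ and, abusing notation, $g$, $f$, $h$ for the images in $S$. One checks $\ideala S=gS$, so the exceptional divisor restricts to the Cartier divisor $V(gS)$, while the total transform $\pi^{-1}(V(f))$ restricts to $V(fS)$. On the complement $D(gS)=\Spec(S_g)$ of the exceptional divisor the hypothesis $f/1=(g/1)^sh$ gives $fS_g=hS_g$, so there $\pi^{-1}(V(f))\smallsetminus E'$ is the closed subscheme $\Spec(S_g/hS_g)$; taking its scheme-theoretic closure in $\Spec(S)$ one obtains $Z\cap D_+(g)=V(\,hS:_Sg^\infty\,)$. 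Thus the proposition reduces to the equality $hS:_Sg^\infty=hS$, equivalently to the assertion that $g$ is a non-zero-divisor on $S/hS$, i.e.\ that $g$ lies in no associated prime of $S/hS$.

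To prove this last assertion I would argue as follows. First, $f$ is a non-zero-divisor on $R$, hence on $R[\ideala T]\subseteq R[T]$ (compare coefficients in a vanishing product), hence on the localization $R[\ideala T][(gT)^{-1}]$ and on its degree-zero subring $S$; as $f=g^sh$ in $S$, the factor $h$ is then also a non-zero-divisor on $S$. Now I invoke hypothesis (2): since $S$ is noetherian and satisfies $(S_2)$ and $h$ is a non-zero-divisor, the quotient $S/hS$ satisfies $(S_1)$, so it has no embedded primes, and every associated prime $\mathfrak{p}$ of $S/hS$ has height exactly one in $S$ --- at most one by Krull's principal ideal theorem, and at least one because $h\in\mathfrak{p}$ is a non-zero-divisor on $S$. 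If some such $\mathfrak{p}$ contained $g$, then $\mathfrak{p}$ would be a height-one prime with $\mathfrak{p}\supseteq(h,g/1)$, contradicting hypothesis (1) that $V(h,g/1)$ has codimension at least two in $D_+(g)$. Hence $g$ lies in no associated prime of $S/hS$, so $hS:_Sg^\infty=hS$, and therefore $Z\cap D_+(g)=V(h)$, as claimed.

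The delicate point is the first step: correctly identifying $Z$ with the strict transform and reading off its scheme structure on the $g$-chart as the $g$-saturation of $hS$. One must bear in mind that the Rees algebra of a quotient is \emph{not} the quotient of the Rees algebra, so $Z$ is genuinely the strict transform and in general a proper closed subscheme of the total transform $V(fS)$; the two hypotheses --- $(S_2)$ for $D_+(g)$ and the codimension bound on $V(h,g/1)$ --- enter precisely to force $V(h)$ and $V(fS)$ to agree on this chart. Once the setup is in place, the remainder is a routine associated-primes computation.
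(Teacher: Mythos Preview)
Your proof is correct and follows essentially the same route as the paper: both arguments identify $Z\cap D_+(g)$ as the schematic closure (equivalently, strict transform) of its restriction away from the exceptional divisor, then reduce to showing that $g$ is a non-zero-divisor on $S/hS$, and establish this by combining hypothesis~(ii) (so $S/hS$ satisfies $(S_1)$, hence has no embedded primes) with Krull's principal ideal theorem and hypothesis~(i). The only cosmetic difference is that you phrase the closure computation via the saturation $hS:_S g^\infty$, whereas the paper packages the same step as ``$V(g/1)\cap V(h)$ is an effective Cartier divisor on $V(h)$'' and invokes its Lemma~\ref{blowing-up is image}; your closing remark that the hypotheses force $V(h)$ and $V(fS)$ to agree is a slip --- they force $V(h)$ to agree with the strict transform, not with the total transform $V(fS)$ --- but this does not affect the argument.
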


\proof By hypothesis, $g/1$ and $h$ define two closed subschemes $V(g/1)$ and $V(h)$ in $D_+(g)$. 
All schemes below are viewed as subschemes in $Z' :=\Bl_\ideala(R)$.
The conclusion of the proposition is implied by the following two claims:

(a) The open subsets $D_+(g)  \cap (Z \setminus E)$ and $V(h) \setminus V(g/1)$ are equal.

(b) The closed subscheme $V(h) \cap V(g/1)$ is an effective Cartier divisor on $V(h)$.

Then, on one hand  the schematic closure of the inclusion $D_+(g)  \cap (Z \setminus E) \to  D_+(g)  \cap Z $ is equal to $D_+(g)  \cap Z$ by Lemma \ref{blowing-up is image}, and on the other hand the schematic closure of the inclusion $V(h) \cap V(g) \to V(h)   $ is equal to $V(h),$ also by Lemma \ref{blowing-up is image}. 

We leave it to the reader to verify (a). To prove (b), note that since $f$ is regular in $R$, the element $f/1$ is regular in $R[\ideala T]_{(gT)}$.
Thus $V(h)$ and $V(g/1)$ are two Cartier divisors in $D_+(g)$. We need to show that the image of $g/1$ is not a zero-divisor in $R[\ideala T]_{(gT)}/(h)$.
Assumption (ii) implies 
that any effective Cartier divisor on $D_+(g)$ satisfies Serre's Condition $(S_1)$. 
In particular, the ring $R[\ideala T]_{(gT)}/(h)$ has no embedded primes
and thus the zero divisors in $R[\ideala T]_{(gT)}/(h)$ are contained in the minimal primes ideals. 
Krull's Principal Ideal Theorem shows  the irreducible components of $V(h)$ all have codimension one in $D_+(g)$.
Assumption (i) implies then that $g/1$ cannot be contained in a minimal prime ideal of $R[\ideala T]_{(gT)}/(h)$. Thus $g/1$ is regular in $R[\ideala T]_{(gT)}/(h)$.
\qed
\if false
Write $U=D_+(g)$ for the $g$-chart, and consider the effective Cartier divisors
$$
D=V(h)\quadand E=V(g/1)\quadand H=V(f/1).
$$
Then $H=sE+D$ as Cartier divisors. 

Assumption (ii) implies 
that any effective Cartier divisor on $D_+(g)$ satisfies Serre's Condition $(S_1)$. In particular, the 
in other words, contains no embedded component 
By Krull's Principal Ideal Theorem, the irreducible components of $V(h)$ and $V(g/1)$ all have codimension one in $D_+(g)$.
With the first assumption, we infer that $g/1$ becomes invertible in $\O_{D,\eta}$ for every generic point $\eta\in D$.
We infer that the scheme $E\cap D$ must be an effective Cartier divisor on $D$.
So Lemma \ref{blowing-up is image} (a) ensures that the schematic closure of $D\smallsetminus E\subset U$ coincides with $D$.

Now recall that we have a canonical embedding  $\Spec(R')\smallsetminus V(\ideala')\ra D\subset \Bl_\ideala(R)$, and its schematic image
is $\Bl_{\ideala'}(R')$, by Proposition \ref{blowing-up is image}. 
The structure morphism $\Bl_\ideala(R)\ra\Spec(R)$ gives an 
identification $\Spec(R')\smallsetminus V(\ideala')= D\smallsetminus E$, and we saw in the preceding paragraph
that its schematic closure on the $g$-chart equals $D$.
Our assertion follows, because schematic closures can be computed locally.
\fi

\begin{lemma}
\mylabel{blowing-up is image}
Let $V$ be the complement of an effective Cartier divisor $F$ on a noetherian scheme $Y$.
Then the schematic image in $Y$ of the open embedding $V\ra Y$ coincides with $Y$.
\end{lemma}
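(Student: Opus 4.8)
The plan is to compute the schematic image via the standard recipe. For the open immersion $j\colon V\to Y$, which is quasi-compact since $Y$ is noetherian, the sheaf $j_*\O_V$ is quasi-coherent, and the schematic image of $j$ is the closed subscheme of $Y$ defined by the quasi-coherent ideal $\calI:=\Ker(\O_Y\to j_*\O_V)$. So the assertion is equivalent to the vanishing $\calI=0$, and since this is a local statement on $Y$, it may be checked on an affine open cover.

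First I would pass to the affine local situation. Cover $Y$ by affine opens $U=\Spec A$ small enough that the effective Cartier divisor $F$ restricts on $U$ to a closed subscheme of the form $V(f)$ for some regular element $f\in A$; this is possible because the ideal sheaf of $F$ is an invertible $\O_Y$-submodule of $\O_Y$, hence locally free of rank one, with a local generator that is necessarily a non-zero-divisor. On such a $U$, the open subscheme $V\cap U$ is the basic open subset $\Spec A_f$ of $\Spec A$, so that $\Gamma(U,j_*\O_V)=\Gamma(V\cap U,\O_V)=A_f$ and the canonical map $\Gamma(U,\O_Y)\to\Gamma(U,j_*\O_V)$ is nothing but the localization homomorphism $A\to A_f$. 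Since $f$ is a non-zero-divisor, so is every power $f^n$, and therefore $A\to A_f$ is injective. Hence $\calI|_U=0$; as such $U$ cover $Y$, we conclude $\calI=0$, i.e. the schematic image of $j$ equals $Y$.

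There is no serious obstacle here; the two points that deserve a word of care are the appeal to quasi-compactness of $j$ — automatic from the noetherian hypothesis on $Y$ — which legitimizes the description of the schematic image as the vanishing locus of $\calI$, and the observation that an effective Cartier divisor is, locally, of the form $V(f)$ with $f$ regular, which is precisely its definition and immediately identifies $V\cap U$ with $\Spec A_f$.
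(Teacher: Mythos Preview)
Your proof is correct and follows essentially the same approach as the paper: reduce to the affine case $Y=\Spec A$ with $F=V(f)$ for $f$ regular, and note that the localization map $A\to A_f$ is injective. Your version is slightly more detailed in justifying why the schematic image is computed by the kernel ideal sheaf (via quasi-compactness of $j$), but the core argument is identical.
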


\proof
(a) 
The assertion is local, so we may assume that $Y=\Spec(A)$ and $F=V(g)$, where $g\in A$ is a regular element.
The schematic image is defined by the kernel of the localization map $A\ra A_g$, with $a\mapsto a/1$.
Since $g $ is regular, this kernel is the zero ideal.
\qed

\medskip
In the context of Proposition \ref{blowing-up local}, we say that the equation $h=0$ is the \emph{strict transform} of $f=0$ on the $g$-chart.
One easily sees that
condition (i)   ensures that the exponent $s\geq 0$ is the maximal exponent.
Note that in any case there is  a factorization $f/1=(g/1)^s h$ with   maximal $s\geq 0$, by Krull's Intersection Theorem,
and the resulting factor  $h$ is   unique  because $g/1$ is regular.  
In light of Krull's Principal Ideal Theorem,  when $V(h,g/1)$ of $D_+(g)$ has  codimension at least two in $D_+(g)$,
it has codimension exactly two.
This condition depends only on the radical ideal $\sqrt{(h,g/1)}$,
a remark which usually substantially simplifies the computations. 

\begin{proposition}
\mylabel{blowing-up serre condition}
Suppose the  ideal $\ideala\subset R$ is generated
by a regular sequence $g_1,\ldots,g_d\in R$. If the scheme $S=\Spec(R)$ 
satisfies Serre's Condition $(S_m)$, or is locally of complete intersection, the same holds for the blowing-up $\Bl_{\ideala}(R)$. 
\end{proposition}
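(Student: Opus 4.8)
\medskip
\noindent\emph{Proof proposal.} Both conditions are local on $Z=\Bl_\ideala(R)$ and can be tested on the affine charts $D_+(g_i)=\Spec A_i$, so the plan is to analyse a single such chart. First I would reduce to the case where $R$ is local with $\ideala\subseteq\maxid_R$: blowing-up commutes with the base change $R\to R_\primid$, and for $\primid\not\supseteq\ideala$ the morphism $Z\to\Spec R$ is an isomorphism over $\Spec R_\primid$, so there is nothing to prove there. In the remaining, local, case $g_1,\dots,g_d$ is a regular sequence in $R$, and hence so is every permutation of it.

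Next I would make the chart explicit. Since $g_1,\dots,g_d$ is a regular sequence (hence of linear type), the Koszul relations $g_ix_j-g_jx_i$ generate the kernel of $R[x_1,\dots,x_d]\to R[\ideala T]$, $x_i\mapsto g_iT$; passing to the degree-zero part of the localization at $x_iT$ gives
$$
A_i\;\cong\;R[u_j:j\neq i]\big/\,(h_j:j\neq i),\qquad h_j:=g_j-g_iu_j,
$$
i.e.\ a polynomial ring $C_i:=R[u_j:j\neq i]$ in $d-1$ variables over $R$ modulo the $d-1$ elements $h_j$. The crucial algebraic input is that the $h_j$ ($j\neq i$) form, in any order, a regular sequence in $C_i$, and that $g_i$ is a nonzerodivisor on $A_i$. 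The first follows by an induction that at each step reduces to the remark that $h_j=g_j-g_iu_j$, being of degree one in the fresh variable $u_j$ with leading coefficient the nonzerodivisor $-g_i$, is a nonzerodivisor over any base in which $g_i$ remains a nonzerodivisor; that $g_i$ stays a nonzerodivisor throughout uses that $g_i,g_{j_1},\dots,g_{j_\ell}$ is again a regular sequence in the local ring $R$. The second is the general fact that the exceptional divisor of a blowing-up along an ideal containing a nonzerodivisor is an effective Cartier divisor, cut out on the $g_i$-chart by $g_i$.

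With this in hand the locally complete intersection case is immediate: $C_i$ is locally of complete intersection, being smooth over $R$, and a quotient of a locally complete intersection ring by a regular sequence is again one; applying this to $A_i=C_i/(h_j:j\neq i)$ settles that case chart by chart.

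For the condition $(S_m)$ the plan is as follows: the principal open set $D(g_i)\subseteq\Spec A_i$ is identified via $u_j\mapsto g_j/g_i$ with the open set $D(g_i)\subseteq\Spec R$, hence inherits $(S_m)$, so it remains to treat the points on the exceptional divisor $V(g_i)\subseteq\Spec A_i$. Because $g_i$ is a nonzerodivisor lying in every prime of $V(g_i)$, passing from $A_i$ to $A_i/(g_i)\cong(R/\ideala)[u_j:j\neq i]$ lowers both depth and dimension by exactly one at each such prime, so that $(S_m)$ for $A_i$ along $V(g_i)$ amounts to $(S_{m-1})$ for $R/\ideala$. I expect this last point---the lower bound on $\depth(R/\ideala)_\primid$---to be the main obstacle. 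What should make it go through is that an ideal generated by a length-$d$ regular sequence forces $\depth R_\primid\ge d$ whenever $\primid\supseteq\ideala$, so that $\dim(R/\ideala)_\primid=\dim R_\primid-d$ and $\depth(R/\ideala)_\primid=\depth R_\primid-d$ hold simultaneously; substituting these into the definition of $(S_{m-1})$ and comparing with $(S_m)$ for $R$ is the remaining computation. This is transparent when $R$ is Cohen--Macaulay (in particular in the locally complete intersection situation used in the sequel, where $R/\ideala$ is again Cohen--Macaulay), and is the delicate point in general.
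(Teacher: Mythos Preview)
Your locally-complete-intersection argument is correct and is essentially the paper's: both present the $g_i$-chart as $R[u_j:j\neq i]$ modulo the regular sequence $(g_j-g_iu_j)_{j\neq i}$, the paper citing Micali and SGA~6 for this presentation while you argue it directly.

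For $(S_m)$ you rightly isolate the crux as needing $R/\ideala$ to satisfy $(S_{m-1})$. The paper's route is slightly different---it works in the ambient $P=\PP^{d-1}_R$ and asserts $\dim\O_{P,x}=\dim R+c$ and $\depth\O_{P,x}=\depth R+c$ with $c=d-1$---but these formulas hold only for \emph{closed} points $x$ of the fiber $P_s=\PP^{d-1}_k$; at a point of codimension $c'<c$ in $P_s$ the correct value is $c'$ in place of $c$, and then $\depth\O_{X,x}=\depth R+c'-c$ need not satisfy the $(S_m)$ inequality. So the paper's argument has the same gap you flag, just hidden at non-closed points of the fiber.

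In fact the gap cannot be closed: the $(S_m)$ assertion can fail for $m,d\ge 2$. Take $R$ a normal local domain with $\dim R=3$ and $\depth R=2$ (for instance the completed local ring at the vertex of the cone over an abelian surface in a projectively normal embedding), so $R$ is $(S_2)$. For any regular sequence $g_1,g_2\in\maxid_R$ the quotient $R/(g_1,g_2)$ has dimension $1$ and depth $0$, hence is not $(S_1)$; by your own reduction---which is in fact an equivalence---$\Bl_{(g_1,g_2)}(R)$ then fails $(S_2)$ at the generic point of the exceptional $\PP^1$ (one checks directly that the local ring there has dimension $2$ and depth $1$). This does no harm to the paper: the proposition is only invoked with $R$ locally of complete intersection, hence Cohen--Macaulay, where---as you note---everything goes through.
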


\proof
The canonical module surjection $R^{\oplus d}\ra\ideala$  coming from the regular sequence 
yields a  closed embedding $\Bl_\ideala(R)\subset\PP^{d-1}_R$. 
Consider the short exact sequence
$$
0\lra\shF\lra\O_P^{\oplus d}\stackrel{(g_iT)}{\lra} \O_P(1)\lra 0
$$
of locally free sheaves on $P=\PP^r_R$. The kernel has $\rank(\shF)=d-1$. Using the inclusion $\O_P(1)\subset\O_P$,
we get a composite map $\shF\ra\O_P$. According to \cite{SGA 6}, Expos\'e VII, Proposition 1.8,
the image is the quasicoherent ideal corresponding to the closed subscheme $X=\Bl_\ideala(R)$.
Moreover, for each point  $x\in X$, the image of any basis in $\shF_x$
in the local ring $\O_{P,x}$ is a regular sequence contained in the maximal ideal $\maxid_x$.
More explicitly, we have
\begin{equation}
\label{blowing-up ring}
R[\ideala T]_{(Tg_j)} = R[S_1,\ldots,S_d]/(S_1g_j-g_1,\ldots,S_d g_j-g_d),
\end{equation}
where the identification is given by  $S_i=g_iT/g_jT$, and the generators in the above ideal form a regular sequence in the
polynomial ring. This result is due to Micali \cite{Micali}, Theorem 1.
It follows that the scheme $\Bl_{\ideala}(R)$ is locally of complete intersection if this holds for the ring $R$.

Fix a point $x\in X$ and consider the local ring $A=\O_{X,x}$. It remains  to show that   $\depth(A)\geq m$
or $\depth(A)=\dim(A)<m$. For this we may assume that $S=\Spec(R)$ is local, and that $x$ lies over the closed point
$s\in S$. Set $c=d-1$.
The local ring $A'=\O_{P,x}$ has $\dim(A')=\dim(R)+c$ and $\depth(A')=\depth(R)+c$. Moreover, 
the residue class ring $A$ has $\dim(A)=\dim(A')-c$ and $\depth(A)=\depth(A')-c$, 
the former by Krull's Principal Ideal Theorem, the latter by \cite{}, Proposition.
The assertion on the Serre Condition is immediate.
\qed

\medskip
Note that the relation $S_jg_j-g_j=0$ is equivalent to $S_j=1$, because $g_j$ is regular. In other words, in \eqref{blowing-up ring}
one may simply omit the indeterminate $S_j$.
Also note that if $R$ is integral, so is the Rees ring,
and we may regard \eqref{blowing-up ring} as the $R$-subalgebra in   $\Frac(R)$ generated by
the fractions $g_1/g_j,\ldots,g_d/g_j$.

 \if false

 The schematic preimage $E=\Bl_\ideala(R)\otimes_RR/\ideala$ is an effective Cartier divisor.
Indeed,  the blowing-up  has the following universal property as $R$-scheme (\cite{Hartshorne 1977}, Chapter II, Proposition 7.14): 
Its $A$-valued points correspond to those $R$-algebras
$A$ such that the ideal $\ideala A$ is invertible. In particular,  the blowing-up is empty if and only $\ideala$ is nilpotent.

Now suppose that $R'=R/\idealb$ is a residue class ring, and set $\ideala'=\ideala R'$.
The induced surjection  $R[\ideala T]\ra R'[\ideala'T]$ of Rees rings
yields as an inclusion $\Bl_{\ideala'}(R')\subset\Bl_\ideala(R)\otimes_RR'$ as closed subschemes
inside $\Bl_\ideala(R)$. By the universal property, the structure morphism $\Bl_{\ideala'}(R')\ra\Spec(R')$
admits a canonical partial section, defined  over the complement of the closed set $V(\ideala')$.
This leads to the following   description:

In general, it is rather difficult to compute schematic images, but  under suitable conditions one may say more.
Recall that an element $f\in R$ is called \emph{regular} if multiplicatin by $f$ on $R$ is an injective map.
Now assume that  $\idealb$    is generated by a regular element $f\in R$ contained in $\ideala$.
In other words $\Spec(R')$ is 
an effective Cartier divisor in $\Spec(R)$ containing the center. Each element $g\in\ideala$ defines a basic open set 
$$
D_+(g)=\Spec R[\ideala]_{(gT)}\subset\Bl_\ideala(R)
$$
 called the \emph{$g$-chart}. 
For each $g'\in \ideala$ we have $g'=g\cdot g'T/gT$, and 
by abuse of notation we say that the exceptional divisor is defined on the $g$-chart by the equation $g=0$.
This is indeed an effective Cartier divisor, because $g$ becomes a unit in the ring extension $R[T]_{gT}$.
Furthermore, the blowing-up $\Bl_{\ideala'}(R')$ is contained in the effective Cartier divisor $\Bl_\ideala(R)\otimes_RR/(f)$.

Now  suppose we have a factorization  $f/1=(g/1)^s h$ for some exponent $s\geq 0$ and some element $h\in R[\ideala T]_{(gT)}$.
The latter is regular, because it is a factor of a regular element.
The following observation is very useful when it comes to  explicit computations:
\fi 

\begin{emp} \label{emp.notation}
Let us return now to the wild quotient singularities  
recalled in \ref{moderately ramified}.
Let  $R=k[[x,y,z]]$  be a formal power series ring over  a field $k$  of characteristic $p>0$, and consider the element
\begin{equation*}
f:=z^p - (\mu ab)^{p-1}z - a^py + b^px.
\end{equation*}
Here $a,b\in k[[x,y]]$ is a system of parameters, and $\mu\in k[[x,y]]$ is a non-zero element
that is coprime to both $a$ and $b$. Let $B:=R/(f)$. 

Let $\ideala:=(a,b,z) \subset R$. We call $Z:=\Bl_{\ideala B}(B) \to \Spec B$ the \emph{initial blowing-up}.
In Theorem \ref{antidiagonal E8} and Theorem \ref{a=y, b=x}, we will later compute a complete resolution $X \to Z \to \Spec B$ of this initial blowing-up in two special cases. Recall that the exceptional divisor $E\subset Z$
is given by the sheaf of ideals $\O_Z(1)\subset\O_Z$.
Our next proposition computes the term $(E \cdot E_{\red})_Z$, needed for instance when applying Proposition \ref{correction degree}.
\end{emp}

\begin{proposition}
\mylabel{exceptional divisor} 
Keep the assumptions of \ref{emp.notation}. Then the following holds:
\begin{enumerate} 
\item The reduction $E_\red$ is  isomorphic to the projective line $\PP^1_k$.
\item The $z$-chart on $Z$ is disjoint from the exceptional divisor, and thus is regular.
\item The scheme $Z$ is locally of complete intersection.
\item We have $(E\cdot E_\red)_Z=-1$.
\item The local ring  $\O_{E,\eta}$  at the generic point $\eta$ of $E$ has length $p \dim_k  k[[x,y]]/(a,b)$.
\end{enumerate}
\end{proposition}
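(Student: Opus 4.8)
\emph{Proof plan.} The plan is to work on the three standard affine charts $D_+(a)$, $D_+(b)$, $D_+(z)$ of $Z':=\Bl_{\ideala}(R)$ and to transport the outcome to $Z=\Bl_{\ideala B}(B)$ by means of Proposition~\ref{blowing-up computation}. Write $A_0:=k[[x,y]]/(a,b)$; since $a,b$ is a system of parameters, $A_0$ is Artinian local with residue field $k$ and nilpotent maximal ideal $\maxid_0$ (the image of $(x,y)$), and $B/\ideala B=A_0$, so $V(\ideala B)$ is the closed point $\maxid_B$. First I would verify the hypotheses of Proposition~\ref{blowing-up computation}: that $a,b,z$ is a regular sequence in $R$ (using that $k[[x,y]]$ is a UFD in which no irreducible divides both $a$ and $b$, as $V(a,b)$ is the closed point), that $f\in\ideala$ is a nonzerodivisor, and that on each chart, substituting the chart relations ($b=S_2a$, $z=S_3a$ on $D_+(a)$, where $S_i=g_iT/gT$) gives a factorization $f/1=(g/1)^p h_g$. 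The decisive feature is that reducing $h_g$ modulo $g$ --- which, by the chart relations, also kills the other two generators --- yields $\bar h_a=S_3^p-y+S_2^p x$ on $D_+(a)$, $\bar h_b=S_3^p-S_1^p y+x$ on $D_+(b)$, and $\bar h_z=1-S_1^p y+S_2^p x$ on $D_+(z)$ (the $\mu$-terms each carry a positive power of $g$ and so disappear), and that $V(h_g,g/1)=V(\bar h_g)$ inside $V(g/1)=\Spec A_0[S_i,S_j]$. Since $x,y\in\maxid_0$ are nilpotent, $\bar h_z$ is a unit, so $E\cap D_+(z)=\emptyset$; and $\bar h_a\equiv S_3^p\pmod{\maxid_0}$, so $A_0[S_2,S_3]/(\bar h_a)$ has nilradical $(\maxid_0,S_3)$ with reduction $k[S_2]$, whence $V(h_a,a/1)$ has dimension $1$, i.e.\ codimension $2$ in $D_+(a)$, and likewise on $D_+(b)$ (and vacuously on $D_+(z)$). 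Proposition~\ref{blowing-up computation} then gives (iii), that $Z$ is locally of complete intersection, together with $Z\cap D_+(g)=V(h_g)$; and (ii) follows, since $E\cap D_+(z)=\emptyset$ and therefore the $z$-chart lies inside $Z\smallsetminus E$, which maps isomorphically onto $\Spec B\smallsetminus\{\maxid_B\}$ (a blowing-up is an isomorphism over the complement of its center) and is regular because $B$, being a normal two-dimensional ring, is regular in codimension one.

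For (i) and (v): by the above, $E$ is covered by $E\cap D_+(a)=\Spec A_0[S_2,S_3]/(\bar h_a)$ and $E\cap D_+(b)=\Spec A_0[S_1,S_3]/(\bar h_b)$, glued along $S_1S_2=1$. Passing to reductions kills $\maxid_0$ and $S_3$, giving $\Spec k[S_2]$ and $\Spec k[S_1]$ glued along $S_1=1/S_2$, so $E_\red\cong\PP^1_k$ and in particular $E$ is irreducible; this is (i). For (v), write $\bar h_a=S_3^p-w$ with $w=y-S_2^p x\in\maxid_0A_0[S_2]$ nilpotent, so that $A_0[S_2,S_3]/(\bar h_a)$ is free of rank $p$ over $A_0[S_2]$; localizing at the generic point $\mathfrak{q}=(\maxid_0,S_3)$ of $E$, the ring $\O_{E,\eta}$ is free of rank $p$ over the Artinian local ring $(A_0[S_2])_{\maxid_0A_0[S_2]}$, which has residue field $k(S_2)$ and, by base change of a composition series of $A_0$, length $\dim_k A_0$. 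As $\O_{E,\eta}$ has the same residue field, its length is $p\,\dim_k k[[x,y]]/(a,b)$, which is (v).

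For (iv): $\O_Z(1)$ is free on $D_+(a)$ with generator $aT$ and on $D_+(b)$ with generator $bT$, with transition $aT=(a/b)\,bT$; restricting to $E_\red\cong\PP^1_k$, where $a/b=1/S_2$, a short computation shows $\O_Z(1)|_{E_\red}$ has a two-dimensional space of global sections, hence is $\O_{\PP^1}(1)$. Therefore $\O_{E_\red}(E)=\O_Z(-1)|_{E_\red}=\O_{\PP^1}(-1)$, and $(E\cdot E_\red)_Z=\chi(\O_{\PP^1}(-1))-\chi(\O_{\PP^1})=-1$.

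The main work --- and the only genuine difficulty --- is the chart computation in the first step, above all the use of the nilpotence of $\maxid_0$ to verify the codimension-two hypotheses of Proposition~\ref{blowing-up computation}. Once the two relevant charts $D_+(a)$ and $D_+(b)$ are written out, the description of $E_\red$, the length of $\O_{E,\eta}$, and the degree of $\O_Z(1)|_{E_\red}$ all follow by direct inspection.
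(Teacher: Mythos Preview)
Your proposal is correct and follows essentially the same approach as the paper: work on the three charts of the ambient blow-up $\Bl_\ideala(R)$, apply Proposition~\ref{blowing-up computation} to identify $Z\cap D_+(g)$ with $V(h_g)$, and then read off $E_\red$, the cocycle for $\O_Z(1)|_{E_\red}$, and the length of $\O_{E,\eta}$ directly from the reductions $\bar h_g$. The paper argues the codimension-two condition by showing the radical of $(h,a)$ contains $x,y,z/a$, whereas you phrase it via the nilpotence of $\maxid_0$ in $A_0[S_2,S_3]$; and for (iv) the paper reads off $\deg\O_Z(1)|_{E_\red}=1$ directly from the cocycle $a/b$ rather than via $h^0$ --- but these are cosmetic differences, not substantive ones.
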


\proof
The blowing-up $\Bl_\ideala(R)$ is covered by the $a$-chart, the $b$-chart and the $z$-chart.
We start by examining the $a$-chart, which is the spectrum of the ring
$$
R[\ideala T]_{(aT)} = R[b/a,z/a]/(b/a\cdot a-b, z/a\cdot a -z).
$$
Consider the factorization $f=a^ph$ with  
\begin{equation*}
h:=\left(\frac{z}{a}\right)^p -\mu^{p-1} a^{p-1}\left(\frac{b}{a}\right)^{p-1}\left(\frac{z}{a}\right) - y + \left(\frac{b}{a}\right)^px.
\end{equation*}
The radical $J$ of the ideal generated by $h$ and $a$ in $R[\ideala T]_{(aT)}$ clearly contains $b$.
It thus also contains $x$ and $y$, because $a,b$ is a system of parameters in $k[[x,y]]$. Hence, $J$ also contains
$z/a$ and $z$. It follows that the subscheme $V(h,a)$ of the $a$-chart is one-dimensional. According to Proposition \ref{blowing-up computation},
the scheme $\Bl_{\ideala B}(B)$ coincides  on the $a$-chart with the effective Cartier divisor defined  by the 
equation $h=0$.
The exceptional divisor   is given by the additional equation $a=0$, thus equals $\Spec A$, where $A$ is the quotient of  $k[[x,y,z]][b/a,z/a]$ modulo the ideal generated by $a$, $b$, $z$, and $(z/a)^p-y+(b/a)^px$. Let $Q:=(x,y,z/a) \subset A$. Since the classes of $x,y,z/a$ are nilpotent, 
and since the quotient $A/Q$ is isomorphic to the domain $k[b/a]$, we find that $Q$ is the minimal prime ideal of $A$.

One easily sees that the $z$-chart on $\Bl_\ideala(R)$ is disjoint from the exceptional divisor.
The situation for the $b$-chart is similar to the $a$-chart
and it follows that $\Bl_{\ideala B}(B)$ is locally of complete intersection.
Moreover,   the reduced exceptional divisor
$E_\red=\Spec k[b/a]\cup\Spec k[a/b]$ is a copy of $\PP^1_k$.

The restriction to $E_\red$ of the invertible sheaf $\O_Z(1)=\O_Z(-E)$ 
is generated by the elements $aT/1$ and $bT/1$ on the two charts, respectively.
Viewing $a/b\in k[a/b,b/a]^\times$ as a cocycle, one   deduces that $\O_Z(1)$ has degree
$1$ on $E_\red$, so that $(E\cdot E_\red)_Z=-1$. 

It remains to compute the length of $\O_{E,\eta}$.  The coordinate ring of the exceptional divisor $E$
on the $a$-chart is given by 
$$
R[b/a,z/a]/(b/a\cdot a - b, z/a\cdot a-z, h, a).
$$
Clearly, the ideal on the right is also generated by $b, z, h,a$.
In turn, the above ring is isomorphic to $k[x,y,b/a,z/a]/(a,b,h)$. Regard the latter
as $\Lambda[z/a]/(h)$, where $\Lambda$ is the polynomial ring in the indeterminate $b/a$ over the local Artin ring $ k[x,y]/(a,b)$.
The ring extension $\Lambda\subset\Lambda[z/a]/(h)$ if finite and free,
because $h$ is a monic   in $z/a$. All  coefficients of $h$ except the leading one are nilpotent
in $\Lambda$, consequently $z/a$ becomes nilpotent modulo $h$. 
It follows that $\Lambda\subset\Lambda[z/a]/(h)$ induces bijections on all residue fields.
Clearly, the minimal prime $\primid\subset\Lambda$ is generated by $x$ and $y$. In turn, the local Artin ring 
$\Lambda_\primid$ has length $\dim_k k[x,y]/(a,b)$, whereas 
the local Artin ring  $\O_{E,\eta}=\Lambda_\primid[z/a]/(h)$ has length $\deg(h)\cdot\operatorname{length}(\Lambda_\primid) = p\cdot\dim_k k[x,y]/(a,b)$.
%
%
\qed

\if false
Proof in the special case where a=x.
When $a=x$, we find that in $k[[x,y]]$, $(a,b)=(x,y^r)$ with $r=\dim_k  k[[x,y]]/(a,b)$. 
We can then identify $A$ with $ k[y]/(y^r) [b/a][z/a]$ modulo $((z/a)^p-y)$, and the ideal $Q=(x,y,z/a)$ corresponds to the ideal generated by $z/a$.
We are reduced to computing the length of the maximal ideal $(y,z/a)$ in the local ring  $k[y,z/a]/(y^r, (z/a)^p-y)$, and we leave it to the reader to check that this length is $pr$, as desired. When $a=y$, we use the $b$-chart and a similar argument to conclude.
\fi

\begin{remark}
The ring $B=k[[x,y,z]]/(f)$ can be identified with the ring of invariants $A^G$
for an  action of the group $G=\ZZ/p\ZZ$ on the ring $A:=k[[u,v]]$, as recalled in \ref{moderately ramified},
where the generator  acts via  $u\mapsto u+\mu a$ and $v \mapsto v+\mu b$. We note below
that   the initial blowing-up $\Bl_{\ideala B}(B)\ra\Spec(B)$ considered in \ref{exceptional divisor}  is canonically associated to the 
action. 

Indeed,
the fixed scheme of the action is by definition the largest closed subscheme of $\Spec A$ on which the action is trivial, and we find that 
for the above action it corresponds to the ideal $I:=(\sigma(u)-u, \sigma(v)-v)= (\mu a,\mu b)$ in $A$.  
Under the above identification $B=A^G$ we have $z=ub-va$, and therefore $\mu z\in I$.
We find that $(\mu a,\mu b,\mu z) \subseteq I \cap B $. 
The reverse inclusion also holds since $A$ is flat over $k[[x,y]]$ (same proof as in \cite{Schroeer 2009}, Lemma 1.5, when $p=2$ and
a similar choice of initial blow-up was also used). 
Thus the ideals $I\cap B$ and $\ideala B=(a,b,z)$ coincide up to the factor $\mu$ and, hence, the total spaces of the
resulting blowing-ups coincide. 
\end{remark}

\section{Some weighted homogeneous singularities}
\mylabel{weighted homogeneous singularities}

Let $k$ be an algebraically closed field of characteristic exponent $p \geq 1$. 
The goal of this section is to describe a resolution of the singularity at the origin on 
the hypersurface given by the equation
\begin{equation*}
W^q - U^aV^b(V^d-U^c) = 0
\end{equation*}
when the   integers $p, q,a,b,c,d\geq 1$ are subject to certain mild restrictions. 
This is achieved in Theorem \ref{intersection graph}. Note that this   singularity is not necessarily isolated.
The above polynomial is weighted homogeneous, and resolutions of such singularities
were already studied by  Orlik and Wagreich in \cite{O-W}, \cite{Orlik; Wagreich 1971b} and \cite{Orlik; Wagreich 1977},
exploiting $\GG_m$-actions corresponding to the weights.
The former two papers rely on transcendental methods, and  the latter  mainly treats   the case of isolated singularities.
Our method is completely algebraic, and relies on the theory of toric varieties and Hirzebruch--Jung singularities.

To compute a resolution of our surface singularity, 
we first make an initial blow-up that separates the irreducible components
of the plane curve $U^aV^b(V^d-U^c) = 0$. We then pass to certain nicer subrings of the charts,
and identify their formal completions with suitable monoid rings.
This necessitates taking roots of power series along the way, requiring 
some restrictions on the integers $p, q,a,b,c,d$ as in \ref{weighted homogeneous}. 

\begin{emp} \label{emp.HJ}
Let us start with a brief review of the theory of   \emph{Hirzebruch--Jung singularities}.
Suppose that $t,r\geq 1$ and $ s\geq 0$ 
 are integers such that 
$\rho:=\gcd(t,r,s)$ is   prime to $p$. 
Consider the ring 
$$
R:=k[U,V,W]/(W^t-U^rV^s).
$$
We have a factorization $W^t-U^rV^s=\prod (W^{t/\rho}-\zeta U^{r/\rho}V^{s/\rho})$, where the product runs over the $\rho$-th
roots of unity $\zeta$ in $k$. The corresponding minimal primes $\primid_1,\ldots,\primid_\rho\subset R$
define a partial normalization $R\subset \prod R/\primid_i$, and it usually suffices to understand the 
rings $R/\primid_i$.

Assume from now on that $\rho=1$, so that $R$ is an integral domain. Let $R'$ be its normalization.
Let $D_U$ and $D_V$ denote the preimages in $\Spec R'$  of the closed subsets of $\Spec R$ defined by  $U=W=0$ and $V=W=0$, respectively.
To describe the resolution of the singularity of $\Spec R'$ at the maximal ideal $(U,V,W)$ when $\Spec R'$ is singular at this point, it is standard to first 
express $R'$ as the normalization of a different domain $R_0$, as we now recall. Given the triple $(t,r,s)$, 
we associate below a unique new triple $(t',1,s')$
such that $R'$ can be identified with the normalization of the ring $R_0:=k[u,v,w]/(w^{t'}-uv^{s'})$. {\it This identification is such that the 
closed subsets $D_U$ and $D_V$ on $\Spec R'$ are again equal to the preimages under the new normalization map $\Spec R' \to \Spec R_0$ of the 
closed subsets   of $\Spec R_0$ defined by  $u=w=0$ and $v=w=0$, respectively.} We leave it to the reader to check this claim using the explicit description of $R_0$ recalled below.

Write $r=r_0+ct$ and  $s=s_0+dt$ for some integers $r_0,s_0,c,d \geq 0$ with $r_0,s_0<t$. Then the fraction $W/(U^cV^d)$ is integral over $R$ since it satisfies the equation
$(W/(U^cV^d))^t=U^{r_0}V^{s_0}$. We can thus replace $R$ by $R[W/(U^cV^d)]$. In particular, if either $r$ or $s$ is divisible by $t$, then $R'$ is regular above $(U,V,W)$. We define in this case the {\it fraction type} of $R$ or $R'$ to be $0$. If $R'$ is not regular, then upon replacing $R$ with $R[W/(U^cV^d)]$
we may assume that $0<r,s<t$.

Let $h:=\gcd(t,r)$ and $h':=\gcd(t,s)$. Since $\gcd(t,r,s)=1$, we find that $\gcd(r,h')=\gcd(s,h)=1$. Thus we can write $ar=1+bh'$ and $cs=1+dh$ for some non-negative integers $a,b,c,d$. 
Let $U_1:=W^{at/h'}/(U^{(ar-1)/h'}V^{as/h'})$ and $V_1:=W^{ct/h}/(U^{cr/h}V^{(cs-1)/h})$. We find that $U_1^{h'}=U$ and $V_1^{h}=V$. In the integral extension
$R[U_1,V_1]$, we find that $W^{t/(hh')}=U_1^{r/h}V_1^{s/h'}$. If $t/h'$ divides $r$, or if $t/h$ divides $s$, we find that $R'$ is regular above $(U,V,W)$, and we define again in this case the {\it fraction type} of $R$ or $R'$ to be $0$.

 
Assume then that $R'$ is not regular. Replacing $R$ with $R[U_1,V_1]$, we may assume now that $h=h'=1$, and upon replacing $R$ by a larger integral extension if necessary, we can also assume that
$0<r,s<t$.
 
There exists a unique integer $e$ with  $0< e< t$ and $er=s+ct$ for some integer $c$.
Since $s <t$ by assumption, we find that $c\geq 0$.
Consider the ring $R_1:=k[U,V,Z]/(Z^t-U^rV^{s+ct})$. We find that this ring has two natural integral extensions. 
Indeed, $R_1[Z/V^c]$ is isomorphic to the ring $R$. Writing $r\rho=1+ft$ for some integers $\rho, f \geq 0$,  we find that $w:=Z^\rho/(UV^e)^f$
is such that $w^r=Z$ and $w^t=UV^e$. Thus $R_1[w]$ is integral over $R_1$ and  isomorphic to $R_0:=k[U,V,W]/(W^t-UV^e)$.
We define in this case the {\it fraction type} of $R$ or $R'$ to be $(t-e)/t$, with $0<(t-e)/t<1$ and $\gcd(e,t)=1$.

Given a resolution of singularities $X\ra\Spec R'$, we write $C\subset X$ for the exceptional curve,
and $C_U$ and $C_V$ for the strict transforms in $X$ of the Weil divisors  
$D_U$ and $D_V$ on $\Spec R'$,
respectively.
We endow all these closed subsets with the induced reduced structure of scheme.  
The following theorem is well-known (see, e.g., the pictures in \cite[page 37]{Kempf et al. 1973} or \cite[Theorem 2.4.1]{CES}), but we did not find a suitable reference in the literature which also proved the statement regarding the divisors $C_U$ and $C_V$.  We include a sketch of proof below, with references, for the convenience of the reader.
\end{emp}

\if false
The key observation is that the ring extension $R\subset R'$ can be described in terms
of monoids:
Let $S\subset \QQ^2$ denote the additive submonoid generated by the standard basis vectors $(1,0)$ and $(0,1)$ together
with the vector $\frac{1}{t}(r,s)$. Note that this latter vector uniquely determines
the integers $t,r,s$ since we assume that $\gcd(t,r,s)=1$.
Let $k[[S]]$ be the formal completion of the monoid ring $k[S]$ with respect to the maximal ideal
generated by the non-invertible monoid elements, which
here coincide with the non-zero elements. 
Then $U\mapsto (1,0)$, $V \mapsto (0,1)$, and $ W \mapsto \frac{1}{t}(r,s)$, 
defines a surjective homomorphism of complete local rings $R\ra k[[S]]$. Both rings are integral domains and two-dimensional,
so Krull's Principal Ideal Theorem ensures that the map is bijective.
 
Let $S^\text{grp}$ denote the subgroup of $\QQ^2$
generated by $S$. Let  $S'$ be the \emph{saturation} of $S$ in $S^\text{grp}$, i.e., $S'$ is the set of elements of  $S^\text{grp}$ having a non-zero multiple in $S$. 
According to   \cite[Proposition 1.3.8]{Cox; Little; Schenck 2011} 
the normalization $R\subset R'$ is given by the inclusion $k[[S]]\subset k[[S']]$.
Abusing notation slightly, we say that the equation $W^t-U^rV^s=0$, or the ring $R$, or its normalization $R'$,
define a \emph{Hirzebruch--Jung singularity of monoid type $\frac{1}{t}(r,s)$}.

Note that different submonoids $S_1,S_2\subset\QQ^2$ may yield the same saturated submonoid $S'\subset\QQ^2$.
The following  two steps   described in \cite{Barth; Peters; Van de Ven 1984}, page 83,
may change   $S$  without changing  its saturation $S'$:

\begin{enumerate}
\item
Replace $t,r,s$  by the integers $t/hh',r/h,s/h'$, where $h:=\gcd(t,r)$ and $h':=\gcd(t,s)$.

\item
When  $h=h'=1$, replace $t,r,s$ by the integers $t,1,e$,  where $0\leq e\leq t-1$  is 
the unique integer with $er\equiv s$ modulo $t$.  
\end{enumerate}

\noindent
In this situation where the monoid type is reduced to $\frac{1}{t}(1,s)$ with $\gcd(t,s)=1$ and $0\leq s<t$, we also say that the normal Hirzebruch--Jung singularity $R'=k[[S']]$
has \emph{fraction type} $t/(t-s)\in\QQ_{\geq 1}$. Note that  case $t/(t-s)=1$ occurs only for $t=1$ and $s=0$, in which case the
ring $R=k[U,V]$ is regular.
\fi

\begin{theorem}
\mylabel{orientation dual graph}  Let $s$ and $t$ be coprime integers with   $0< s<t$.
Let $R:=k[U,V,W]/(W^t-UV^s)$ and denote by $R'$ its normalization.
There is a resolution of singularities $X\ra\Spec R'$ such that  $C_U\cup C\cup C_V$
is  a divisor with simple normal crossings having the following dual graph:
$$
\begin{tikzpicture}
[node distance=1cm, font=\small]
\tikzstyle{vertex}=[circle, draw, fill,  inner sep=0mm, minimum size=1.0ex]
\node[vertex]	(v1)  	at (0,0) 	[label=below:{$-s_1$}]               {};
\node[vertex]	(v2)			[right of=v1, label=below:{$-s_2$}]    {};
\node[]	(dummy)			[right of=v2]	                       {};
\node[vertex]	(v3)			[right of=dummy, label=below:{$-s_{\ell-1}$}]       {};
\node[vertex]	(v4)			[right of=v3, label=below:{$-s_\ell$}]          {};
\draw [thick] (v1)--(v2);
\draw[dashed] (v2)--(v3);
\draw [thick] (v3)--(v4);
\node[vertex]	(vU)			[left of=v1, label=left:{$C_U$}]          {};
\node[vertex]	(vV)			[right of=v4, label=right:{$C_V$}]          {};
\draw [thick] (vU)--(v1);
\draw [thick] (v4)--(vV);
\end{tikzpicture}
$$
The integer $\ell\geq 1$ and the self-intersection numbers $-s_i$ are computed 
from the continued fraction expansion $t/(t-s)=[s_1,\ldots,s_\ell]$
as described in \eqref{continued fraction}. Moreover, the irreducible components of $C$
are isomorphic to $\PP^1_k$.
\end{theorem}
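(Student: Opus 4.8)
The plan is to realize $\Spec R'$ as an affine toric surface, quote the classical toric resolution of a normal affine toric surface, and then track the two distinguished divisors $D_U$ and $D_V$ through the subdivision — this last point is where the real content lies. First I would give a monoid model of the ring: the assignment $U\mapsto(1,0)$, $V\mapsto(0,1)$, $W\mapsto\frac1t(1,s)$ extends to a surjection of $R$ onto the monoid ring $k[S]$, where $S\subset\QQ^2$ is the submonoid generated by these three vectors, and since source and target are two-dimensional integral domains finitely generated over $k$ this surjection is an isomorphism. By \cite[Proposition 1.3.8]{Cox; Little; Schenck 2011} the normalization is then $R'\cong k[S']$, with $S'$ the saturation of $S$ inside the group $M'=\ZZ^2+\ZZ\cdot\frac1t(1,s)$. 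As $\gcd(s,t)=1$ one has $[M':\ZZ^2]=t$, and since $\frac1t(1,s)$ lies in the interior of the first quadrant, $S'$ is simply the intersection of the first quadrant with $M'$. Thus $\Spec R'$ is the affine toric surface $U_\tau$ attached to the cone $\tau$ dual to the first quadrant in the lattice $N'=\Hom(M',\ZZ)=\{(p,q)\in\ZZ^2:p+sq\equiv0\bmod t\}$; choosing a convenient basis of $N'$ identifies $U_\tau$ with the cyclic quotient surface singularity whose Hirzebruch--Jung fraction, in the sense of \ref{emp.HJ}, is $t/(t-s)$.

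Next I would quote the standard equivariant resolution of such a singularity (see the pictures in \cite[page 37]{Kempf et al. 1973}, or \cite[Theorem 2.4.1]{CES}): inserting into $\tau$ the rays through the primitive lattice points on the boundary of the convex hull of $(\tau\cap N')\smallsetminus\{0\}$ gives a regular refinement of $\tau$, hence a resolution $X\to U_\tau$. The inserted rays $\rho_1,\dots,\rho_\ell$, taken in order, produce torus-invariant curves $C_i=V(\rho_i)$, each glued from two copies of $\AA^1_k$ along $\GG_m$ and hence a copy of $\PP^1_k$; the relations $u_{\rho_{i-1}}+u_{\rho_{i+1}}=s_i\,u_{\rho_i}$ among consecutive primitive ray generators read off the self-intersections $-s_i$, which are exactly the partial quotients of the continued fraction $t/(t-s)=[s_1,\dots,s_\ell]$ as in \eqref{continued fraction}. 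The configuration $C_U\cup C\cup C_V$ sits inside the toric boundary of the smooth surface $X$, hence is a divisor with simple normal crossings, and the dual graph of $C$ alone is the Hirzebruch--Jung chain $[s_1,\dots,s_\ell]$ of \ref{chain}.

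The remaining — and I expect principal — difficulty is to identify $D_U$ and $D_V$ with the two boundary divisors of $U_\tau$ and to fix the orientation, i.e.\ to show that the strict transform $C_U$ meets the curve $C_1$ labelled $-s_1$ while $C_V$ meets $C_\ell$ labelled $-s_\ell$. Here $U=\chi^{(1,0)}$ vanishes on exactly one of the two boundary divisors of $U_\tau$ (its exponent lies on an edge of $\tau^\vee$), $V=\chi^{(0,1)}$ on exactly the other, and $W=\chi^{\frac1t(1,s)}$ on both (its exponent lies in the interior of $\tau^\vee$); consequently $D_U$ and $D_V$ are precisely the two distinct reduced boundary divisors of $U_\tau$, attached after refinement to the first and last inserted curves. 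What needs care is the claim that, read from the $C_U$ end, the chain is governed by $t/(t-s)$ rather than by its reverse $t/s$; comparing the explicit description of $N'$ above with the standard picture in \cite[page 37]{Kempf et al. 1973} pins this down. Adjoining the terminal vertices for $C_U$ and $C_V$ to the chain in this order then yields the displayed dual graph.
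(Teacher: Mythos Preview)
Your proposal is correct and follows essentially the same approach as the paper: both realize $\Spec R'$ as an affine toric surface and invoke the standard toric resolution of two-dimensional toric singularities, then identify $D_U$ and $D_V$ with the two torus-invariant boundary divisors. The only difference is cosmetic: the paper places the monoid $S$ inside the standard lattice $M=\ZZ^2$ (taking $U\mapsto(t-s)e_1^*+te_2^*$, $V\mapsto e_1^*$, $W\mapsto e_1^*+e_2^*$) and works with a cone $\sigma\subset N$ in normal form, citing \cite[Theorem 10.2.3]{Cox; Little; Schenck 2011} directly, whereas you enlarge the lattice to $M'=\ZZ^2+\ZZ\cdot\frac1t(1,s)$ and keep the first-quadrant cone --- the two descriptions are related by the lattice isomorphism sending $e_1^*\mapsto(0,1)$, $e_1^*+e_2^*\mapsto\frac1t(1,s)$, under which the paper's $\alpha,\beta,\gamma$ go to your $(1,0),(0,1),\frac1t(1,s)$.

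One small inaccuracy: the continued fraction read from the $C_V$ end is $t/(t-s')$ with $s'(t-s)\equiv 1\bmod t$, not $t/s$ as you wrote; but this does not affect the argument, and in any case the paper's own proof is no more explicit here --- it literally ends with ``It remains to show that $D$ is the reduced preimage of the Weil divisor $U=W=0$\ldots'' followed by \qed. Your computation of the vanishing orders of $\chi^{(1,0)}$, $\chi^{(0,1)}$, $\chi^{(1/t,s/t)}$ along the two boundary rays actually supplies this missing step.
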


\proof
\newcommand{\Temb}{\operatorname{Temb}}
\newcommand{\bO}{\bar{O}}
The proof relies on the theory of toric varieties, and we refer the reader to the monographs \cite{Cox; Little; Schenck 2011}, \cite{Danilov}, or
\cite{Kempf et al. 1973}, for the general theory. The book \cite{Cox; Little; Schenck 2011} assumes from the onset that the characteristic of $k$ is $0$, but
the proofs of the results quoted below are valid in all characteristics and can be applied to our purposes. We identify $Z=\Spec R$ as an explicit (non-normal) toric variety, and use the general theory of toric varieties to describe the normalization $Y \to Z$ and the toric resolution $X_{\Sigma} \to Y$ attached to an explicit fan $\Sigma$.

Consider the lattices $N:=\ZZ^2$ and 
$M:=\Hom(N,\ZZ)$. Write $e_1,e_2\in N$ for the standard basis of $N$, and $e_1^*,e_2^*\in M$ for the dual basis.
Let $\sigma\subset N_\RR:=N \otimes_{\ZZ} \RR$ be the closed convex cone generated by the vectors $e_2$ and $te_1 - (t-s)e_2$. 
The dual cone $\sigma^\vee\subset M_\RR$ is generated by    $\alpha:=(t-s)e_1^*+te_2^*$ and $\beta:=e_1^*$.
Let  $\gamma:=e_1^*+e_2^*$, and let $S\subset M$ be the submonoid generated by $\alpha,\beta,\gamma$. We have the relation 
 $t\gamma=\alpha+s\beta$, and can identify $k[U,V,W]/(W^t-UV^s)$ with  the monoid ring $k[S]$ via $U\mapsto \alpha$, $V\mapsto \beta$, and $W\mapsto \gamma$.

Let $S':=\sigma^\vee\cap M$.
Clearly, the abelian group $M$ is generated by $\beta$ and $\gamma$. It follows that $\alpha \in M$ and, hence, $S \subseteq S'$. 
Since $S'$ is always saturated, $S'$ is equal to the saturation of the monoid $S$. It follows that the normal toric variety $Y$ attached to $N$ and $\sigma$, namely $Y:=\Spec k[\sigma^\vee\cap M]$, is 
 the normalization of the non-normal toric variety $Z:=\Spec k[S]$.
 
 The cone $\sigma$ is in normal form, and when $t>s>0$, \cite[Theorem 10.2.3]{Cox; Little; Schenck 2011} provides an explicit description of a refinement fan $\Sigma$ of $\sigma$
 such that the induced morphism $X_{\Sigma}\to Y$ is a toric resolution of singularities. Using the Hirzebruch-Jung continued fraction $[s_1,\dots, s_\ell]$ of $t/(t-s)$, 
 one constructs a sequence of vectors $u_0:=e_2$, $u_1,\dots, u_\ell$, $u_{\ell+1}:=te_1 - (t-s)e_2$ such that $\sigma= \cup_{i=1}^{\ell+1} \sigma_i$ with $\sigma_i$ the cone generated by $u_{i-1}$ and $u_i$. The fan $\Sigma$ consists in the cones $\sigma_i$ and their faces.
 
Using the Orbit-Cone Correspondence \cite[Theorem 3.2.6]{Cox; Little; Schenck 2011}, we find that the ray generated by $u_i$, $i=0,\dots,\ell+1$, corresponds to a curve $C_i$ on $X_{\Sigma}$. Since $\Sigma$ is a simplicial fan, the intersection products $(C_i \cdot C_j)_{X_{\Sigma}}$ with $0\leq i \neq j \leq \ell+1$ can be computed as in \cite[Corollary 6.4.3]{Cox; Little; Schenck 2011}, and are found to equal $1$. The self-intersections $(C_i \cdot C_i)_{X_{\Sigma}}$ for $i=1,\dots ,\ell$ are computed to equal $-s_i$ using \cite[Theorem 10.2.5]{Cox; Little; Schenck 2011} along with \cite[Theorem 10.4.4]{Cox; Little; Schenck 2011}.

The curve $C_1 \cup \dots \cup C_\ell$ is the exceptional divisor of the toric desingularization $X_{\Sigma}\to Y$. Using the Orbit-Cone Correspondence
for the surface $Y$, we let $D$ and $D'$ denote the curves on $Y$ corresponding to the rays in the cone $\sigma$ generated $e_2$ and  
$te_1 - (t-s)e_2$, respectively. The natural properties of the map $X_{\Sigma}\to Y$ implies that $D$ is the image of $C_0$, and $D'$ is the image of $C_{\ell+1}$. It remains to show that $D$ is the reduced preimage of the Weil divisor $U=W=0$ on $Z$, and that similarly, $D'$ is the reduced preimage of the Weil divisor $V=W=0$ on $Z$. 
\qed

\if false 
Recall that the scheme $Y$ is regular if and only if our generators of the cone $\sigma$ form 
a basis of the lattice $N$, and each subdivision $\sigma=\bigcup\sigma_i$ into such cones gives 
a toric resolution of singularities
$$
f:X=\Temb_N(\Delta)\lra \Temb_N(\sigma)=Y,
$$
where $\Delta$ is the fan generated by the $\sigma_i$. Such a subdivision exists in all dimension, according
to \cite{Cox; Little; Schenck 2011}, Theorem 11.1.9. 

Write  $T= \Spec k[M]$ for the torus acting on our torus embeddings.
The quotient torus $O(\eta)=\Spec k[\eta^\perp\cap M]$ is an orbit for the $T$-action on $X=\Temb_N(\Delta)$,
and $\eta\mapsto O(\eta)$ gives a correspondence between cones and   orbits.
We have $\eta_1\subset\eta_2$ if and only if the reverse inclusion $\bO(\eta_1)\supset\bO(\eta_2)$
for the orbit closures holds. Moreover, two orbits closures $\bO(\eta_1),\bO(\eta_2)$ have
non-empty intersection if and only if the cones $\eta_1,\eta_2$ are contained in a common larger cone.

Our original affine toric variety $Y=\Temb_N(\sigma)$ has precisely four orbits.
Let $\rho_U$ and $\rho_V$ be the rays generated by $e_2$ and $te_1 - (t-s)e_2$, respectively.
According to \cite{Cox; Little; Schenck 2011}, page 121 the ideal for the orbit closure $\bO(\rho_V)\subset Y$ is generated
by the set
$\sigma^\vee\cap M \smallsetminus (\rho_V)^\perp=\sigma^\vee\cap M\smallsetminus \RR e_1^*$. This  
obviously contains $\alpha,\gamma$. Under the identification \eqref{monoid ring identification}, we see that $\bO(\rho_U)\subset Y$
is the reduced preimage of the Weil divisor $U=W=0$ on $Z=\Spec(B)$.
In a similar way, one checks that $\bO(\rho_V)$ is the reduced preimage of the Weil divisor $V=W=0$.

Following \cite{Cox; Little; Schenck 2011}, Proposition 10.2.2 we use the continued fraction development $t/(t-s)=[s_1,\ldots,s_\ell]$
to define coordinates $P_0=1,P_1=s_1$ and $Q_0=0,Q_0=1$ and 
$$
P_i=s_iP_{i-1} - P_{i-2},\quad Q_i=s_iQ_{i-1} - Q_{i-2}.
$$
The resulting vectors $u_i=P_{i-1}e_1+Q_{i-1} e_2\in M$ yield cones $\sigma_i=\operatorname{Cone}(u_{i-1},u_i)$,
which in turn generate a fan $\Delta$.  According to loc.\ cit., Theorem 10.2.3 
the resulting morphism 
$$
f:X=\Temb_N(\Delta)\lra \Temb_N(\sigma)=Y
$$
is a resolution of singularities. The orbit closures  $C_i=\bO(\sigma_i)$ are copies of $\PP^1_k$, and their
union is a chain of projective lines. We have $?$, hence  $C_i^2=-s_i$, by the discussion in loc.\ cit., 
Example 10.4.7.
By equivariance, the orbit for $\rho_U\in \Delta$ maps to the orbit for $\rho_U\subset \sigma$.
In turn, the reduced strict transform $C_U\subset X$ of the Weil divisor $U=W=0$ on $\Spec(B)$
intersects the curve $C_1$. Likewise, $C_V$ intersects $C_\ell$.
Moreover, the union $C_U\cup C_1\cup\ldots\cup C_\ell\cup C_V$ has simple normal crossings,
by \cite{Kempf et al. 1973}, Chapter II, Proposition 2.
\fi

\if false
\smallskip
We say that the normal Hirzebruch--Jung singularity $R'$
has \emph{fraction type} $t/(t-s)\in\QQ_{\geq 1}$ when $R'$ is the normalization of a ring $k[U,V,W]/(W^t-U^rV^s)$ where the triple $(t,r,s)$ is of the form $(t,1,s)$ with $\gcd(t,s)=1$ and $0\leq s<t$, . Note that the  case $t/(t-s)=1$ occurs only for $t=1$ and $s=0$, in which case the
ring is regular, isomorphic to $k[U,V]$.
\fi

\begin{emp} \mylabel{weighted homogeneous} 
Let 
$q,a,b,c,d\geq 1$ be integers. Set 
$$m:=ad+bc+cd \quadand g:=\gcd(c,d).$$
Noting that $m/g$ is an integer, we further set
$$w:=\gcd(q, m/g),\quadand w_a:=\gcd(q,m/g, a), \quad w_b:=\gcd(q,m/g, b).$$
In our main result below on the resolution  
of the hypersurface singularity $W^q - U^aV^b(V^d-U^c)=0$, we  assume  that
\begin{equation}
\label{gcd condition}
\gcd(a,c/g)=\gcd(b,d/g)=1\quadand \gcd(p,wg) =1.
\end{equation}
Note that the latter condition automatically holds when $p=1$. 
The reader will easily check that the condition $\gcd(a,c/g)=1$ is equivalent to the condition $\gcd(m/g,c/g)=1$. 
Similarly, $\gcd(b,d/g)=1$ if and only if $\gcd(m/g,d/g)=1$. 

Denote by  $\alpha,\beta,\gamma\in\QQ_{< 1}$  the fraction types
of the normal Hirzebruch--Jung singularities  associated with the triples
$(t,r,s)$ given by 
$$
(\frac{qc}{gw_a}, \frac{m}{gw_a}, \frac{a}{w_a}), \quad 
(\frac{qd}{gw_b} , \frac{m}{gw_b}, \frac{b}{w_b}), \quadand 
(q,\frac{m}{g},1),
$$
respectively.   
Finally, set
\begin{equation}
\label{selfintersection node}
s_0:= \frac{w^2 g^2}{qcd} + w_a\alpha+w_b\beta+g\gamma.
\end{equation}
We  are now ready to state the main result of this section. Three complements to Theorem
\ref{intersection graph} are given in \ref{genus central node}, \ref{strict transform weighted homogeneous}, and \ref{special intersection graph}.
\end{emp}

\begin{theorem}
\mylabel{intersection graph}
Set $B:=k[U,V,W]/(W^q - U^aV^b(V^d-U^c))$, and assume that   the conditions 
\eqref{gcd condition}   holds.  With the above notation,
we have the following:
\begin{enumerate}
\item The fraction $s_0>0$ is an integer.
\item The hypersurface singularity   has a  
resolution  of singularities $X\ra\Spec(B)$ where, using the notation in {\rm \ref{notation.starshaped}}, the exceptional divisor $C\subset X$ has star-shaped dual graph
$$
\Gamma=\Gamma(s_0\mid 
\underbrace{\alpha^{-1},\ldots,\alpha^{-1}}_\text{$w_a$ },
\underbrace{\beta^{-1},\ldots,\beta^{-1}}_\text{$w_b$ },
\underbrace{\gamma^{-1},\ldots,\gamma^{-1}}_{\text{$g$ }}).
$$
when $\alpha,\beta,\gamma>0$. When one  of $\alpha,\beta,\gamma,$ equals $0$ (e.g., when $q$ divides $m/g$), the graph $\Gamma $ is as above, except that the corresponding
$w_a$ chains (resp. $w_b$ or $g$ chains) are removed.
\item 
The curve $C$ has simple normal crossings.
All irreducible components  of $C$  are copies of $\PP^1_k$, except possibly for the central node.
When $w=1$, the central node is also isomorphic to $\PP^1_k$. 
\end{enumerate}
\end{theorem}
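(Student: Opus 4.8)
The plan is to obtain $X$ in two stages. First one performs a single \emph{initial blow-up} $Z\to\Spec B$, whose center is a (generally non-reduced) ideal adapted to the weights $(d,c,m/q)$ that make $W^q-U^aV^b(V^d-U^c)$ weighted homogeneous, chosen so that the strict transforms of the three types of branches of the plane curve $U^aV^b(V^d-U^c)=0$ --- the axis $U=0$, the axis $V=0$, and the $g$ branches of $V^d-U^c=0$ indexed by the $g$-th roots of unity --- meet the exceptional curve $C_0\subset Z$ in pairwise disjoint closed points, and so that the normalized blow-up is regular at the generic point of $C_0$ and at all but finitely many of its points. Second, one resolves the finitely many Hirzebruch--Jung singularities sitting on $C_0$ by the toric procedure of Theorem \ref{orientation dual graph}, which replaces each of them by a chain of copies of $\PP^1_k$ attached to (the strict transform of) $C_0$ by a single edge; this produces the star-shaped graph of part (ii).

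\textbf{Step 1: charts of the initial blow-up and identification with Hirzebruch--Jung singularities.} Using Proposition \ref{blowing-up computation} (or a direct chart-by-chart computation as in Section \ref{Generalities blowing-ups}) one writes down the affine charts of $Z$ over the $U$-axis, over the $V$-axis, and over each diagonal branch. In each chart the defining equation takes, near $C_0$, the shape $W^q=(\text{monomial in two local parameters})\cdot(\text{unit})$. After adjoining the evident fractions, passing to a suitable subring, completing, and --- this is the step where the hypothesis $\gcd(p,wg)=1$ from \eqref{gcd condition} is used --- extracting the relevant $w$-th, $w_a$-th, $w_b$-th or $g$-th roots of units and of monomials, one identifies the completed local ring at each singular point of $Z$ with (the completion of the normalization of) a ring $k[U,V,W]/(W^t-U^rV^s)$, i.e. with a Hirzebruch--Jung singularity. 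A bookkeeping of exponents, using the equivalences $\gcd(a,c/g)=1\Leftrightarrow\gcd(m/g,c/g)=1$ and $\gcd(b,d/g)=1\Leftrightarrow\gcd(m/g,d/g)=1$ noted in \ref{weighted homogeneous}, produces exactly the triples $(t,r,s)$ listed there: there are $w_a$ points of fraction type $\alpha$ over $U=0$, $w_b$ points of fraction type $\beta$ over $V=0$, and $g$ points of fraction type $\gamma$ over the diagonal branches, and $C_0$ passes through no further singular point of $Z$. One also checks which strict transforms $C_U,C_V$ of the axes meet $C_0$, which gives the complement \ref{strict transform weighted homogeneous}.

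\textbf{Step 2: toric resolution and the self-intersection of the node.} Applying Theorem \ref{orientation dual graph} to each of the $w_a+w_b+g$ Hirzebruch--Jung singularities replaces it by the Hirzebruch--Jung chain determined by $\alpha^{-1}$, $\beta^{-1}$ or $\gamma^{-1}$, attached to $C_0$ by one edge (a chain is simply omitted when the corresponding fraction type is $0$, e.g. when $q\mid m/g$), and all the new components are copies of $\PP^1_k$. For the self-intersection $-s_0=(C_0\cdot C_0)_X$ one applies Proposition \ref{correction self-intersection} with $C_1=C_0$: the correction terms at the $w_a+w_b+g$ feet sum to $w_a\alpha+w_b\beta+g\gamma$, using that a chain of fraction $a_i/b_i$ contributes $\delta_i=b_i/a_i$ (as recorded in \ref{chain} and \ref{notation resolution}), while the rational self-intersection $(D_0\cdot D_0)_Z$ of the image of $C_0$ on the normalized initial blow-up equals $-\,w^2g^2/(qcd)$. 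The latter is computed from Proposition \ref{correction degree} together with an explicit determination of $\deg\O_Z(1)$ on $(C_0)_\red$ (as in Proposition \ref{exceptional divisor}) and of the multiplicity of $C_0$ in the exceptional divisor; adding the two contributions yields formula \eqref{selfintersection node}. Finally, since $X\to\Spec B$ is a resolution of a normal surface singularity, its intersection matrix is negative-definite, so $(C_0\cdot C_0)_X=-s_0$ is a negative integer and $s_0$ is a positive integer, giving (i); the remaining part of (iii) follows because, up to contracting the chains, the central curve $C_0$ is $\Proj$ of $B$ for the weighted grading, a cyclic cover of $\PP^1_k$ which is isomorphic to $\PP^1_k$ precisely when $w=1$ (its genus in general being computed in \ref{genus central node}).

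\textbf{Main obstacle.} The delicate point is Step 1: correctly identifying each completed local ring of the initial blow-up with the Hirzebruch--Jung ring of the right fraction type. This forces one to extract roots of units (and of monomials) inside formal power series rings --- which is possible exactly because $\gcd(p,wg)=1$ --- and to track the exponents carefully enough to recover the precise triples $(t,r,s)$ and their multiplicities $w_a,w_b,g$, including in the degenerate cases where one or more of $\alpha,\beta,\gamma$ vanishes. The second delicate computation is the degree/Kleiman-multiplicity bookkeeping yielding the rational self-intersection $-w^2g^2/(qcd)$ of $C_0$, from which both the integrality in (i) and the exact value of $s_0$ in \eqref{selfintersection node} follow.
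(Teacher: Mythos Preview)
Your proposal is correct and follows essentially the same approach as the paper. The one concrete difference is that the paper's initial blow-up uses the specific ideal $\mathfrak{a}=(U^{c/g},V^{d/g})$ rather than a weighted ideal, giving just two charts; the $g$ Hirzebruch--Jung points of type $\gamma$ then all appear on the $U^{c/g}$-chart at the $g$-th roots of unity, and the paper passes to a three-generator subring with the same normalization before completing---but the subsequent identification of the singularities, the degree-$w$ cover $D_{\mathrm{red}}\to E_{\mathrm{red}}$, and the computations of $(D_{\mathrm{red}}\cdot D_{\mathrm{red}})_Y=-w^2g^2/(qcd)$ and $s_0$ proceed exactly as you outline.
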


\proof 
Since our ground field $k$ is algebraically closed,  we can rewrite  the defining polynomial   for our hypersurface singularity as
$$
f=W^q - U^aV^b\prod_\zeta (V^{d/g} -\zeta U^{c/g}),
$$
where  the product runs over the  $g$-th roots of unity $\zeta\in k$.
Assumption \eqref{gcd condition} ensures that   we  have exactly $g\geq 1$ distinct factors in the product. 
 
To construct the desired resolution of singularities $X\ra \Spec(B)$, we first make an initial blowing-up  $Z=\Bl_{\ideala B}(B) \to  \Spec(B)$, for the ideal 
$\ideala:=(U^{c/g},V^{d/g})$ in the polynomial ring $R=k[U,V,W]$. The ambient blowing-up $\Bl_\ideala(R)$ has two charts,
the $U^{c/g}$-chart and the $V^{d/g}$-chart. The former is given by 
four generators $U,V,W,V^{d/g}/U^{c/g}$ subject to the single relation
\begin{equation}
\label{relation on chart}
\left(\frac{V^{d/g}}{U^{c/g}}\right)\cdot U^{c/g} =V^{d/g},
\end{equation}
as recalled in Proposition \ref{blowing-up serre condition}. On this chart we   rewrite the defining polynomial as
\begin{equation}
\label{defining polynomial}
f= W^q-U^{a+c}V^b\cdot \prod_\zeta(V^{d/g}/U^{c/g}-\zeta).
\end{equation}
Clearly, the radical of the ideal generated by $f$ and $U^{c/g}$ contains $U,V,$ and $W$. Hence, its zero-locus is  one-dimensional, and
according to Proposition \ref{blowing-up computation} 
the blowing-up $Z=\Bl_{\ideala B}(B)$ on the $U^{c/g}$-chart of $\Bl_\ideala(R)$ is the effective Cartier divisor
with equation $f=0$. In other words, write 
$A'$ for the coordinate ring of  
the blowing-up $Z=\Bl_{\ideala B}(B)$ on the $U^{c/g}$-chart. Then this ring is generated by   four indeterminates
 $U,V,W,V^{d/g}/U^{c/g}$
subject to the two relations \eqref{relation on chart} and $f=0$ with $f$ as in \eqref{defining polynomial}.

\begin{emp} \mylabel{emp.First Self-Intersection} 
The exceptional divisor $E\subset Z$ is given by $f=U^{c/g}=0$ on this chart.
The reduction $E_\red$ is defined by $U=V=W=0$, and  $V^{d/g}/U^{c/g}$ can be regarded as a coordinate function.
The situation on the $V^{d/g}$-chart is symmetric, and we conclude that $E_\red=\PP^1_k$ is a projective line.
This description also yields  the intersection number:
Recall that the ambient $\Bl_\ideala(R)$ is the homogeneous spectrum of the Rees ring $R[\ideala T]$, so 
the invertible sheaf $\O_Z(1)$ is generated by $TU^{c/g}$ and $TV^{d/g}$ on our two charts.
In turn, the restriction to $E_\red=\PP^1_k$ is given by the cocycle
$U^{c/g}/V^{d/g}$, and it follows that $(E\cdot E_\red)_Z=-1$.
\end{emp}
\begin{emp} \mylabel{emp.Multiplicity} 
Let us note here also that the multiplicity of $E$ is $qcd/g^2$. This can be seen as follows. 
On the $U^{c/g}$-chart, the scheme $E_\red$ is defined by the ideal $Q:=(U,V,W)$.
Thus the multiplicity of $E$ can be computed as the length of the ring  $(A'/(U^{c/g}))_Q$.
It is easy to verify that the ring $A'/(U^{c/g})$ is $k$-isomorphic to the ring $\left( k[U,V,W]/(U^{c/g}, V^{d/g}, W^q)\right)[V^{d/g}/U^{c/g}]$, and the claim follows.
\if false
It is easy to verify that the class of any monomial $U^iV^jW^\ell$ with either $i \geq c/g$, $j \geq d/g$, or $\ell \geq q$, is trivial in $A'/(U^{c/g})$.
Thus we find that the multiplicity of $E$ is at most $qcd/g^2$.
Let $M$ denote the set of monomials $U^iV^jW^\ell$ in $A'/(u^{c/g})$ with $i < c/g$, $j < d/g$, and $\ell < q$, and endow this set with a total ordering such as the graded lexicographic order.  For each monomial $m \in M$, consider
the ideal $J_m:=\{U^iV^jW^\ell   \mid (i,j,\ell) \geq m\}$. Clearly, if $m>m_0$, then $J_m \subseteq J_{m_0}$. We need to show that there are exactly $qcd/g^2$ distinct ideals of the form $J_m$ with $m \in M$.
I AM STUCK HERE.
\fi
\end{emp} 

The ring $A'$ is locally of complete intersection,
but  usually fails to be normal.
\if false 
EXPLANATION
Indeed, if $A'_Q$ were normal, the multiplicity of $E$ could be computed as the valuation $v(U^{c/g})$ of $U^{c/g}$ in $A'_Q$. 
We find that $q(d/g)v(W)= [(a+c)d/g + bc/g]v(U)$. Recall that by assumption
$\gcd(b,d/g)=1=\gcd(a,c/g)$. 
It follows that $qd/(wg)$ divides $v(U)$ and $(ad+bc+cd)/(wg)$ divides $v(W)$. We then find that $qc/(wg)$ divides $v(V)$. 
Since $Q=(U,V,W)$, one of the valuations $ v(U), v(V), v(W)$ must equal $1$. This is clearly not possible for instance when $c/g,d/g>1$ and $w=1$. 
\fi
Let $\nu:Y\ra Z=\Bl_{\ideala B}(B)$ denote the normalization morphism.
To understand the normalization and   minimal resolution of the singularities of the chart $\Spec A'$ of $Z$, 
we  
pass to a subring $A$ of $A'$ with only three generators and one relation that has the same normalization as $A'$. 
It turns out that on formal completions, the resolution of singularities of $A$ is given by the theory of toric surface (i.e.,  Hirzebruch--Jung) singularities.
This formal passage to toric varieties requires the existence of certain roots of formal power series.
When $p>1$, their existence  follows from   Hensel's Lemma together with the conditions \eqref{gcd condition}, which imply that 
$\gcd(m/g,c/g)$ and $\gcd(m/g,d/g)$ are coprime to $p$.

We proceed as follows:  Let $A$ be the $k$-subalgebra of $A'$ generated by
the three elements $U,W$, and $V^{d/g}/U^{c/g}$.
The ring extension $A\subset A'$ is finite, because $A'=A[V]$ and the generator $V$ satisfies the integral equation $V^{d/g}-U^{c/g}(V^{d/g}/U^{c/g})=0$ in
\eqref{relation on chart}. Clearly, $V^{d/g} \in A$, and the relation \eqref{defining polynomial} shows that $V^b \in \Frac(A)$. 
Since we assume that $\gcd(b,d/g)=1$ in \eqref{gcd condition}, we find that $V$ can be written as 
rational function  in $V^b$ and $V^{d/g}$ and, hence, 
$V \in \Frac(A)$. 
It follows that the rings $A$ and $A'$ have the same integral closure in $\Frac(A)$.
The reduced exceptional divisor on $\Spec(A')$ is defined by the ideal $(U,V,W)$, and the restriction of $\Spec(A')\ra\Spec(A)$
to it is a closed embedding,
because $V^{d/g}/U^{c/g}\in A$ and thus the map $A \to A'/(U,V,W)=k[V^{d/g}/U^{c/g}]$ is surjective.

It turns out that the subring $A$ has a much nicer description than   $A'$, in particular when passing
to formal completions along the exceptional divisor.
Recall that $m:=ad+cd+bc$.
Taking the $d/g$-power of \eqref{defining polynomial} and using equation \eqref{relation on chart}  we get a single relation
\begin{equation}
\label{relation in subring}
W^{qd/g} = U^{m/g} \left(\frac{V^{d/g}}{U^{c/g}}\right)^{b}\cdot \prod_\zeta(V^{d/g}/U^{c/g}-\zeta)^{d/g}.
\end{equation}
Since  $b$ and $d/g$  are 
coprime by assumption \eqref{gcd condition}, we find that $w^{qd/g} = u^{m/g} z^{b}  \prod_\zeta(z-\zeta)^{d/g}$ is an irreducible polynomial in $k[u,w,z]$.
 By abuse of notation, we will also say that the \emph{equation \eqref{relation in subring} is irreducible}.
Using Krull's Principal Ideal Theorem, we conclude that the  algebra $A$ is generated by 
$U,W,V^{d/g}/U^{c/g}$ subject to the single relation \eqref{relation in subring}.

To understand the normalization of $A$, we pass to formal completions $\AAA$ 
with respect to maximal ideals $\maxid$ of the form $(U,W,V^{d/g}/U^{c/g}-\xi)$  
for various scalars $\xi\in k$.  Note that these maximal ideals correspond to points on the exceptional divisor.

Let us start with the simplest case where $\xi $ is neither zero nor a $g$-th root of unity;
here it turns out that the normalization of $\AAA$ is regular. Indeed,
the relation \eqref{relation in subring} now takes the form 
\begin{equation}
\label{xi is general}
W^{qd/g} = U^{m/g} \cdot\delta
\end{equation}
for some unit $\delta\in \AAA$. 
To proceed, we first verify that 
$\gcd(qd/g,m/g,p)=1$.
This is clear when $p=1$,
so let us assume that $p\geq 2$ is   prime.
Suppose that $p$ divides both $qd/g$ and $m/g$. Since $p$ does not divide $w:=\gcd(q,m/g)$ by hypothesis,
we have $p\nmid q$ and, hence, $p\mid d/g$, contradicting $\gcd(d/g,m/g)=1$, which we also assume in \eqref{gcd condition}. 

We conclude that there exist positive integers $r$ and $s$ such that
$\ell:= r(m/g) - s(qd/g) $  is coprime to $p\geq 1$.
With Hensel's Lemma we find roots $\delta_1:=\delta^{r/\ell}$
and $\delta_2:=\delta^{s/\ell}$ in $\AAA$, and obtain a factorization 
$\delta = \delta_1^{m/g}/\delta_2^{qd/g}$. It follows that 
$\AAA$ is isomorphic to the complete local ring described by the same three generators, but with
a modified  relation \eqref{xi is general} in which $\delta=1$.
This shows that $\AAA$ is   isomorphic to a complete local ring for a point on  
the product of a plane curve with the affine line. Consequently, the
normalization is indeed regular. Note that the plane curve is usually reducible, and
the number of irreducible components is our integer $w:=\gcd(q,m/g)=\gcd(qd/g,m/g)$.

Next, assume that $\xi=\zeta$ is one of the  $g$-th root of unity.
Rewrite \eqref{relation in subring} as 
\begin{equation}
\label{xi is zeta}
W^{qd/g} = U^{m/g} \left(\frac{V^{d/g}}{U^{c/g}}-\zeta\right)^{d/g}\cdot\delta
\end{equation}
for some unit $\delta\in \AAA$. As in the preceding paragraph, one reduces to the situation $\delta=1$.
Since $\gcd(d/g,m/g)=1$, the above relation is then irreducible.

Consider the triple $(t,r,s)=(qd/g,m/g,d/g)$. We can identify $\AAA$ with the completion of $k[u,v,w]/(w^t-u^rv^s)$ at $(u,v,w)$. 
Using the results reviewed in \ref{emp.HJ} and \ref{orientation dual graph} regarding the desingularization of $\Spec k[u,v,w]/(w^t-u^rv^s)$, we find that the singularity on $\AAA$  
is a Hirzebruch--Jung singularity
of fraction type $\gamma$.

\if false
Consider the submonoid $S\subset\QQ^2$ generated by the standard basis vectors, together with $\frac{1}{qd/g}(m/g, d/g)$.
We then obtain an identification $\AAA=k[[S]]$, defined by sending algebra generators to monoid generators via
\begin{equation}
\label{algebra monoid identification}
U\longmapsto (1,0)\quadand
\left(\frac{V^{d/g}}{U^{c/g}}-\zeta\right)\longmapsto (0,1)\quadand
W\longmapsto  \frac{1}{qd/g}(m/g,d/g).
\end{equation}
The normalization is given by $k[[S']]$, where $S'$ is the saturation of $S\subset S^\text{grp}$, which is a Hirzebruch--Jung singularity
of fraction type $\gamma$.
\fi

Finally, assume that $\xi=0$. Our relation becomes
$$
W^{qd/g} = U^{m/g} \left(\frac{V^{d/g}}{U^{c/g}}\right)^{b}\cdot \delta
$$
for some unit $\delta\in \AAA$, and again we reduce to the situation $\delta=1$. 
The above equation is usually not irreducible, and the number of irreducible factors is our
integer $w_b:=\gcd(q,m/g,b)$, which also equals $ \gcd(qd/g,m/g,b)$ since we noted in \ref{weighted homogeneous} that $\gcd(d/g,m/g)=1$. 
Let $\primid_1,\ldots,\primid_{w_b}\subset \AAA$ be the resulting minimal prime ideals.

Consider the triple $(t,r,s)=(qd/(gw_b),m/(gw_b),b/(gw_b))$. We can identify $\AAA/\primid_i$ with the completion of $k[u,v,w]/(w^t-u^rv^s)$ at $(u,v,w)$. 
Using the results reviewed in \ref{emp.HJ} and \ref{orientation dual graph} regarding the desingularization of $\Spec k[u,v,w]/(w^t-u^rv^s)$, we find that the singularity on $\AAA/\primid_i$  
has the resolution of a Hirzebruch--Jung singularity
of fraction type $\beta$.
\if false
Arguing as in the preceding paragraph, we see that  each $\AAA/\primid_i$ is isomorphic to the monoid ring $k[[S]]$, where
$S\subset\QQ^2$ is the submonoid generated by the standard basis vectors, together with
$\frac{1}{qd/(gw_b)}(m/(gw_b),b/w_b)$, and the normalization $k[[S']]$ yields a Hirzebruch--Jung singularity
of fraction type $\beta$. 
\fi 
The number of such singularities on the normalization of 
$\AAA$ is  $w_b\geq 1$.

The situation on the   $V^{d/g}$-chart is symmetric, where $w_a\geq 1$  Hirzebruch--Jung singularities
of fraction type $\alpha$ appear.
Summing up, we have described the singularities appearing on the normalization $\nu:Y\ra Z=\Bl_{\ideala B}(B)$.

Recall from \ref{emp.First Self-Intersection} that the exceptional divisor $E\subset Z$ has reduction  $E_\red=\PP^1_k$, 
with coordinate rings  $k[ V^{d/g}/U^{c/g}]$
and $k[ U^{c/g}/V^{d/g}]$.
Write $D:=\nu^{-1}(E)$ for the preimage of the exceptional divisor under the map $\nu$.
We now analyze the induced morphism $ D_\red\ra E_\red$. This morphism is flat, because $E_\red$ is regular.
The formal description of the normalization 
$\nu:Y\ra Z$ via inclusions $k[[S]]\subset k[[S']]$ of monoid rings shows that $D_\red$ is regular.
Equation \eqref{xi is general} implies that  
\begin{equation}
\label{degree is w}
\deg(D_\red/E_\red) = w =\gcd(q,m/g).
\end{equation}
 In a similar way, Equation  \eqref{xi is zeta} tells us    that $D_\red \ra E_\red$ 
is completely ramified over the points   where $V^{d/g}=\xi$ is a $g$-th root of unity.
Hence, the curve $D_\red$ is connected. Since it is also regular, it is in fact irreducible. We can then apply Proposition \ref{correction degree} along with \ref{emp.First Self-Intersection} and \ref{emp.Multiplicity}
and obtain that $$(D_\red\cdot D_\red)_Y = \frac{w^2}{(qcd/g^2)} (E\cdot E_\red)_Z= -w^2g^2/qcd.$$

Let $X\ra Y$ be the 
resolution of singularities obtained by resolving the Hirzebruch--Jung singularities
of fraction types $\alpha,\beta$ and $\gamma$ occurring on $Y$.   
The resulting dual graph $\Gamma$ is star-shaped, with the central node corresponding to 
the strict transform $C_0\subset X$ of  $D_\red \subset Y$. When $\gamma>0$, there are $g$ terminal chains obtained from the
continued fraction development of $1/\gamma=[s_1,\ldots,s_\ell]$. Using  the
identification of 
$\AAA$ with the completion of $k[u,v,w]/(w^{qd/g}-u^{m/g}v^{d/g})$ at $(u,v,w)$ discussed above, as well as Theorem \ref{orientation dual graph} and the identifications reviewed in \ref{emp.HJ}, one sees that the vertex of the terminal chain
adjacent to the central node has self-intersection $-s_1$. The situation for the other  Hirzebruch--Jung singularities
is similar.

It is now an easy matter to compute the  self-intersection $(C_0\cdot C_0)_X$ using Proposition 
\ref{correction self-intersection}, which asserts that 
$(C_0 \cdot C_0)_X = (D_\red \cdot D_\red)_Y - \sum_{i}\delta_i$. There are $w_a$ correcting terms $\alpha$, $w_b$ correcting terms $\beta$,
and $g$ correcting terms $\gamma$ (see just before \ref{correction self-intersection} for the correcting term of a chain). Hence, $|(C_0 \cdot C_0)_X|=s_0$, as desired.
Since $(C_0 \cdot C_0)_X$ is the self-intersection of a curve on a regular surface, we find that it must be a negative integer, proving (i).
To complete the proof of Theorem \ref{intersection graph} it remains to show in (iii) that the central node $C_0$ is a rational curve when $w=1$. This is done using the following proposition.
\qed


\begin{proposition} \mylabel{genus central node}
Keep the hypotheses of Theorem {\rm \ref{intersection graph}}.
Let $v_0\in\Gamma$ be the central node, and let $C_0\subset X$ be the corresponding curve on 
the resolution $X\to \Spec B$.
We have $h^1(\O_{C_0}) = ( g(w-1) +2 -w_a -w_b) /2$. In particular, when $w=1$, $h^1(\O_{C_0})=0$.
\end{proposition}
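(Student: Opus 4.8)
\proof
We outline the argument. By its construction in the proof of Theorem \ref{intersection graph}, the curve $C_0\subset X$ is the strict transform of the reduced preimage $D_\red\subset Y$ of the exceptional divisor $E\subset Z$ under the resolution $X\to Y$ of the Hirzebruch--Jung singularities occurring on $Y$. That proof shows that $D_\red$ is regular and connected; being also finite over $E_\red\iso\PP^1_k$, it is a smooth connected projective curve over the algebraically closed field $k$. Since $X\to Y$ is an isomorphism over the complement of the finitely many Hirzebruch--Jung singular points of $Y$, the induced morphism $C_0\to D_\red$ is a proper birational morphism of integral curves onto the normal curve $D_\red$, hence an isomorphism. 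Therefore $h^1(\O_{C_0})=h^1(\O_{D_\red})$ equals the genus $g(D_\red)$, and it suffices to compute that genus.

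To do so, I would analyze the finite morphism $\nu\colon D_\red\to E_\red=\PP^1_k$ using the chart-by-chart study already carried out in the proof of Theorem \ref{intersection graph}. There it is shown that $\nu$ has degree $w=\gcd(q,m/g)$, see \eqref{degree is w}, and that $\nu$ is completely ramified over the $g$ points of $E_\red$ at which the coordinate $V^{d/g}/U^{c/g}$ equals a $g$-th root of unity. Since $\gcd(p,w)=1$ by assumption \eqref{gcd condition}, the morphism $\nu$ is separable and tamely ramified. The same analysis of the charts of $Y$ identifies the remaining fibers: over the point of $E_\red$ where $V^{d/g}/U^{c/g}$ vanishes there are exactly $w_b$ points, namely those corresponding to the minimal primes $\primid_1,\dots,\primid_{w_b}$; over the point where $U^{c/g}/V^{d/g}$ vanishes on the $V^{d/g}$-chart there are exactly $w_a$ points; and over every other point of $E_\red$ the morphism is \'etale, with $w$ points in the fiber. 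By the symmetry permuting the $w_b$ (respectively $w_a$) branches, all ramification indices over these two points equal $w/w_b$ (respectively $w/w_a$), which are prime to $p$; this confirms tameness.

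It then remains to apply the Riemann--Hurwitz formula for the separable, tamely ramified cover $\nu\colon D_\red\to E_\red=\PP^1_k$. For a point $\xi\in E_\red$ the local contribution to the ramification divisor is $w-\#\nu^{-1}(\xi)$, which vanishes away from the two distinguished points above and the $g$-th roots of unity. Summing the contributions,
$$
2g(D_\red)-2 = -2w + g(w-1) + (w-w_a) + (w-w_b) = g(w-1)-w_a-w_b,
$$
so that $h^1(\O_{C_0})=g(D_\red)=\bigl(g(w-1)+2-w_a-w_b\bigr)/2$. When $w=1$, the integers $w_a=\gcd(q,m/g,a)$ and $w_b=\gcd(q,m/g,b)$ divide $w=1$ and hence equal $1$, and the formula gives $h^1(\O_{C_0})=0$.

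The step I expect to require the most care is making the ramification behavior of $\nu$ fully explicit: verifying the fiber cardinalities over the three distinguished loci, and checking that every ramification index is prime to $p$ so that the tame form of Riemann--Hurwitz applies. All of this is already latent in the analysis of the normalization $Y$ in the proof of Theorem \ref{intersection graph}; once it is spelled out, the genus computation is immediate. By contrast, the reduction $C_0\iso D_\red$ is routine.
\qed
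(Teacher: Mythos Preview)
Your proposal is correct and follows essentially the same approach as the paper: both compute the genus of $C_0\cong D_\red$ via the Riemann--Hurwitz formula applied to the degree-$w$ separable cover $D_\red\to E_\red=\PP^1_k$, using the ramification data extracted from the chart analysis in the proof of Theorem \ref{intersection graph}. Your write-up is in fact slightly more careful than the paper's in two respects: you explicitly justify the isomorphism $C_0\cong D_\red$, and your assignment of $w_b$ (resp.\ $w_a$) points over the origin of the $U^{c/g}$-chart (resp.\ the $V^{d/g}$-chart) matches the analysis in Theorem \ref{intersection graph}, whereas the paper's proof appears to swap these labels---harmlessly, since the formula is symmetric in $w_a$ and $w_b$.
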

\proof
Consider the ramified covering $C_0\ra E_\red=\PP^1_k$ induced from the morphism $X\ra Y$.
It follows from  \eqref{degree is w}  that the degree of this map is $w$. 
Assumption \eqref{gcd condition} ensures that this degree is coprime to the characteristic exponent, so that the map is 
separable. 
Let us regard the closed  points on $\PP^1_k$ as elements $\xi\in k\cup\{\infty\}$.
The description of the normalization of the rings $\AAA$ in the preceding proof shows
that $C_0\ra\PP^1_k$ is totally ramified over each of the $g$-th roots of unity in $k$, and therefore the ramification indices are coprime to $p$.
Furthermore, there are $w_a$ points in $C_0$ over $\xi=0$ and all these points
have the same ramification index $w/w_a$. Similarly, there are $w_b$ points in $C_0$ over $\xi=\infty$ with ramification index $w/w_b$.
Applying the Riemann--Hurwitz Formula $2h^1(\O_{C_0}) - 2 = w(2h^1(\O_{\PP^1_k})-2) +\sum_x(e_x-1)$, we get the desired formula.
\qed

\medskip
The scheme $\Spec(B)$ contains two copies   of the affine line, given by the   equations $U=W=0$
and $V=W=0$. 
Write $C_U$ and $C_V$ for their respective strict transforms in $X$ with respect to the resolution 
$X\ra\Spec(B)$. For  a later application 
in Theorem \ref{antidiagonal E8}, we explicitly determine below how these curves
intersect the exceptional divisor $C\subset X$  {\it when $w=1$}.  
Under this additional hypothesis, the partial resolution  $Y \to \Spec B$ contains  exactly one Hirzebruch--Jung singularity of fraction type $\alpha$ and one of type $\beta$.
Let $\Delta_\alpha$ and $\Delta_\beta$ be the terminal chains of $\Gamma$ resulting from resolving these two singularities.
Write  $C_\alpha$ and $C_\beta$ for the irreducible components of $C$ corresponding to the terminal vertices of $\Gamma$
lying on $\Delta_\alpha$ and $\Delta_\beta$, respectively.

\begin{proposition}
\mylabel{strict transform weighted homogeneous}
Keep the hypotheses of Theorem {\rm \ref{intersection graph}}. Assume that $w=1$.
Then the strict transform $C_V$ intersects the exceptional divisor $C$ only in $C_\beta$, with 
intersection number $(C_V\cdot C_\beta)_X=1$.
Likewise, $C_U$ intersects $C$ only in $C_\alpha$, with $(C_U\cdot C_\alpha)_X=1$.
\end{proposition}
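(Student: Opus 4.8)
The plan is to trace $C_V=\{V=W=0\}$ and $C_U=\{U=W=0\}$ through the three stages of the construction used in the proof of Theorem \ref{intersection graph} — the initial blowing-up $Z=\Bl_{\ideala B}(B)$, the normalization $\nu\colon Y\to Z$, and the resolution $X\to Y$ of the Hirzebruch--Jung singularities on $Y$ — and then, at the single point of $E_\red$ where each of these curves meets the exceptional divisor, to compare the local picture with the standard form treated in Theorem \ref{orientation dual graph}. Throughout we use $w=1$, which forces $w_a=w_b=1$; we also assume $\alpha,\beta>0$, since otherwise $Y$ is already regular over the relevant point of $E_\red$ and the associated terminal chain is empty.

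First I would locate $C_V\cap C$. Since $\ideala B=(U^{c/g},V^{d/g})B$ has radical $(U,V)$, which is $\maxid_B$-primary, the exceptional divisor of $Z\to\Spec B$, and hence all of $C$, maps to the origin of $\Spec B$; thus $C_V\cap C$ lies in the fibre over the origin. On the $U^{c/g}$-chart, with $\tau:=V^{d/g}/U^{c/g}$ the coordinate on $E_\red=\PP^1_k$, the condition $V=0$ forces $\tau=0$ away from the center, so the strict transform of $\{V=W=0\}$ is the affine line $\{W=\tau=0\}$, which meets $E_\red$ only at $\tau=0$; on the $V^{d/g}$-chart $\tau$ is invertible, so the strict transform of $C_V$ is disjoint from that chart. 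Hence $C_V$ meets $E_\red$ exactly at $\tau=0$, which, since $w_b=\gcd(q,m/g,b)=1$, carries (by the analysis in the proof of Theorem \ref{intersection graph}) a single Hirzebruch--Jung singularity of $Y$ of fraction type $\beta$, whose resolution produces the terminal chain $\Delta_\beta$.

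Next I would read off the local geometry at $\tau=0$. Completing the subring $A=k[U,W,\tau]$ at $(U,W,\tau)$ and clearing the unit $\prod_\zeta(\tau-\zeta)^{d/g}$ by extracting roots, exactly as in the proof of Theorem \ref{intersection graph} (permissible because $\gcd(p,wg)=1$), we identify $\AAA$, up to rescaling $U$ and $W$ by units, with the completion of $k[u,v,w]/(w^{qd/g}-u^{m/g}v^{b})$ at $(u,v,w)$, via $u\mapsto U$, $v\mapsto\tau$, $w\mapsto W$. Under this identification $E_\red=\{U=W=0\}$ becomes $\{u=w=0\}$, i.e.\ the curve $D_U$ of {\rm \ref{emp.HJ}} and Theorem \ref{orientation dual graph} (consistent with the central node $C_0$, the strict transform of $E_\red$, being adjacent to the initial vertex $-s_1$ of $\Delta_\beta$), whereas the strict transform of $C_V=\{W=\tau=0\}$ becomes $\{v=w=0\}$, i.e.\ $D_V$. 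Since by {\rm \ref{emp.HJ}} the reduction of the triple $(qd/g,m/g,b)$ to standard form $(t',1,s')$ preserves $D_U$ and $D_V$, Theorem \ref{orientation dual graph} applies: in $X\to Y$ the strict transform of $D_V$ meets the chain $\Delta_\beta$ only in its terminal vertex $C_\beta$, and since $C_U\cup C\cup C_V$ there has simple normal crossings, $(C_V\cdot C_\beta)_X=1$. As $C_V$ cannot meet $C_0$ (the curves $D_U$ and $D_V$ lie at opposite ends of the chain) and cannot meet any component of $C$ over a point of $E_\red$ other than $\tau=0$, this proves the claim for $C_V$. The assertion for $C_U$ follows by the same argument on the $V^{d/g}$-chart: there the strict transform of $\{U=W=0\}$ meets $E_\red$ only at $\tau=\infty$, which (using $w_a=1$) carries the unique Hirzebruch--Jung singularity of fraction type $\alpha$, with triple $(qc/g,m/g,a)$, and the analogous local identification puts $C_U$ in the $D_V$-direction of that model, yielding $(C_U\cdot C_\alpha)_X=1$.

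The step I expect to require the most care is the orientation check in the third paragraph: verifying that in the local Hirzebruch--Jung model the strict transform of $C_V$ is the $D_V$-curve — attached to the terminal vertex $C_\beta$ — and not the $D_U$-curve, which would instead be attached to the central node $C_0$. This rests on the explicit identification $u\mapsto U$, $v\mapsto V^{d/g}/U^{c/g}$, $w\mapsto W$, together with the fact, recorded in {\rm \ref{emp.HJ}}, that passing to the standard triple is orientation-preserving for the two boundary divisors $D_U$ and $D_V$.
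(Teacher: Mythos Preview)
Your proposal is correct and follows essentially the same route as the paper's proof: both arguments trace the divisor $V=W=0$ to the point $V^{d/g}/U^{c/g}=0$ on the exceptional divisor of the initial blow-up, identify this with the $D_V$-direction in the local Hirzebruch--Jung model, and invoke Theorem \ref{orientation dual graph}. Your version is more detailed than the paper's (which is quite terse), and in particular you spell out the orientation check---that the strict transform of $C_V$ lands at the \emph{terminal} end of $\Delta_\beta$ rather than the node-adjacent end---which the paper handles implicitly via Theorem \ref{orientation dual graph} and the identifications recorded in \ref{emp.HJ}.
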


\proof
By symmetry, it suffices to verify the first assertion.  
Let us first work with the effective Cartier divisor on $\Spec(B)$
given by $V^{d/g}=0$. Its strict transform $C_V'\subset X$ has the same support as $C_V$.
Using the notation from the proof of Theorem \ref{intersection graph}, we see that its image on $\Spec(A)$
is given by $V^{d/g}/U^{c/d}=0$. Using Theorem \ref{orientation dual graph}
 one infers that $C_V'$ intersects only $C_\beta$,
and that its reduction has intersection number $(C_V\cdot C_\beta)_X=1$.
\qed

 
\begin{proposition}
\mylabel{special intersection graph}
Keep the hypotheses of Theorem {\rm \ref{intersection graph}}, and suppose furthermore   that $p=q$.
Set $a_p:=1$ if $p\mid a$, and  $a_p=0$ otherwise. Similarly, set $b_p:=1$ if $p \mid b$, and $b_p=0$ otherwise.
Let $N$ denote the intersection matrix   of the resolution  of the hypersurface singularity
$$
W^p-U^aV^b(V^d-U^c)=0
$$
described in Theorem {\rm \ref{intersection graph}}.
Then $|\Phi_N|=p^{g+1-a_p-b_p}$, and the group $\Phi_N$ is killed by $p$.
\end{proposition}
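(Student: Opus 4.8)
The plan is to combine Theorem~\ref{intersection graph}, the Hirzebruch--Jung reduction reviewed in~\ref{emp.HJ}, and Proposition~\ref{determinant star-shaped}. First I would note that the hypotheses force $w=w_a=w_b=1$: with $q=p$ the integer $w=\gcd(p,m/g)$ divides $p$, so $\gcd(p,wg)=1$ from~\eqref{gcd condition} forces $w=1$, and then $w_a=\gcd(w,a)=1$, $w_b=\gcd(w,b)=1$; in particular $p\nmid m/g$ and $p\nmid g$. By Theorem~\ref{intersection graph} the dual graph is $\Gamma_N=\Gamma\bigl(s_0\mid\alpha^{-1},\ \beta^{-1},\ \underbrace{\gamma^{-1},\dots,\gamma^{-1}}_{g}\bigr)$, with the $\alpha$-chain (resp.\ $\beta$-chain) omitted when the fraction type $\alpha$ (resp.\ $\beta$) is $0$. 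Running the reduction of~\ref{emp.HJ} on the triples $(p,m/g,1)$, $(pc/g,m/g,a)$, $(pd/g,m/g,b)$, and using $\gcd(c/g,m/g)=\gcd(d/g,m/g)=1$ and $\gcd(c/g,a)=\gcd(d/g,b)=1$ from~\ref{weighted homogeneous}, one checks that the numerators of the reduced fractions are $a_\gamma=p$, $a_\alpha=p^{\,1-a_p}(c/g)$ and $a_\beta=p^{\,1-b_p}(d/g)$ --- with the convention that a chain is absent precisely when its numerator is $1$.

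Since the correction term of a chain of fraction type $\tau$ equals $\tau$, substituting $q=p$ and $w=w_a=w_b=1$ into~\eqref{selfintersection node} gives
\[
s_0-\sum_j\frac{b_j}{a_j}=s_0-(\alpha+\beta+g\gamma)=\frac{w^2g^2}{qcd}=\frac{g^2}{pcd}=\frac{1}{p\,(c/g)(d/g)}.
\]
Proposition~\ref{determinant star-shaped}(i) then yields
\[
|\Phi_N|=|\det N|=\Bigl(\prod_j a_j\Bigr)\Bigl(s_0-\sum_j b_j/a_j\Bigr)
=p^{g}\cdot p^{\,1-a_p}(c/g)\cdot p^{\,1-b_p}(d/g)\cdot\frac{1}{p\,(c/g)(d/g)}=p^{\,g+1-a_p-b_p}.
\]
The degenerate cases $\alpha=0$ or $\beta=0$ require no separate treatment, since there the missing numerator is $1$ and the corresponding summand drops out of $s_0$ and of $\sum_j b_j/a_j$ simultaneously.

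For the last assertion it is enough --- as $\Phi_N$ is a finite abelian $p$-group --- to prove $\dim_{\FF_p}(\Phi_N/p\Phi_N)=g+1-a_p-b_p$, i.e.\ that $\Kernel(N\bmod p)$ has that dimension over $\FF_p$. I would compute this kernel from the block form of $N$ recorded in the proof of Proposition~\ref{determinant star-shaped}: for $v=(x_0;v^{(1)},\dots,v^{(m)})$ in the kernel, the chain rows force $N_iv^{(i)}={}^t(-x_0,0,\dots,0)$ for each $i$, while the node row gives $-s_0x_0+\sum_i(v^{(i)})_1=0$. Using the standard fact that the initial vertex of a Hirzebruch--Jung chain also generates the cyclic group $\Phi_{N_i}$ of order $a_i$, the vector ${}^t(1,0,\dots,0)$ lies outside $\operatorname{Im}(N_i\bmod p)$ whenever $p\mid a_i$; since the $g$ copies of the $\gamma$-chain satisfy $p\mid a_\gamma=p$, this forces $x_0=0$, whence $v^{(i)}=0$ for $p\nmid a_i$ and $v^{(i)}$ is a scalar multiple of $R_{N_i}\bmod p$ for $p\mid a_i$ --- and the first entry $b_i$ of $R_{N_i}$ is prime to $p$. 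Therefore $\dim_{\FF_p}\Kernel(N\bmod p)=\#\{\,i:p\mid a_i\,\}-1$. Finally, $\gcd(a,c/g)=1$ shows $p\mid a\Rightarrow p\nmid c/g$ (and symmetrically for $b$, $d/g$), so $p\mid a_\alpha=p^{\,1-a_p}(c/g)$ exactly when $a_p=0$, and likewise for $\beta$; together with $p\mid a_\gamma$ this gives $\#\{\,i:p\mid a_i\,\}=g+(1-a_p)+(1-b_p)$, whence $\dim_{\FF_p}(\Phi_N/p\Phi_N)=g+1-a_p-b_p=\log_p|\Phi_N|$ and $\Phi_N$ is elementary abelian, hence killed by $p$.

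The main obstacle is the bookkeeping in the first paragraph: running the three Hirzebruch--Jung reductions carefully and correctly identifying when a fraction type degenerates to $0$ so that a terminal chain disappears. A secondary technical point is the fact, used in the nullity computation, that both endpoints of a Hirzebruch--Jung chain generate its (cyclic) discriminant group; if one prefers not to quote it, it follows from the continued-fraction symmetry $[s_1,\dots,s_\ell]\leftrightarrow[s_\ell,\dots,s_1]$ together with~\ref{chain}.
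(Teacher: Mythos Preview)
Your argument is correct. The computation of $|\Phi_N|$ is essentially identical to the paper's: both identify $w=w_a=w_b=1$, determine the chain numerators $p^{1-a_p}c/g$, $p^{1-b_p}d/g$, $p$ via the Hirzebruch--Jung reduction of~\ref{emp.HJ}, and feed them into Proposition~\ref{determinant star-shaped}(i) together with $s_0-\sum_j b_j/a_j=g^2/(pcd)$. For the assertion that $\Phi_N$ is killed by $p$, however, the paper takes a different route: it invokes a row-and-column reduction (\cite{Lor1992}, Lemma~2.5) collapsing each terminal chain to a single row, obtains a $(g+3)\times(g+3)$ block, and bounds its $\FF_p$-rank by observing that $g+a_p+b_p$ of its rows are equal. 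Your direct computation of $\Kernel(N\bmod p)$ from the block form of $N$ avoids that external lemma and is arguably cleaner, at the cost of the auxiliary fact that the vertex adjacent to the node generates the cyclic group $\Phi_{N_i}$---which, as you note, follows from the continued-fraction symmetry together with~\ref{chain}. One small omission: the paper disposes of the degenerate case $p=q=1$ (regular hypersurface, trivial $\Phi_N$) in a sentence at the outset, whereas your phrasing ``finite abelian $p$-group'' tacitly assumes $p\ge 2$.
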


\proof
First note that for $q=p=1$ the assertion is trivially true, because then our hypersurface singularity is actually regular.
So we may assume that $q=p\geq 2$ is a prime number.
The triples $(t,r,s)$ in \ref{weighted homogeneous} specialize to  
$(pc/g, m/g, a)$, 
 $(pd/g, m/g, b)$, and 
$(p,m/g,1)$.
\if false
$$
\frac{1}{pc/g} (\frac{ad+bc+cd}{g}, a)\quadand
\frac{1}{pd/g} ( \frac{ad+bc+cd}{g}, b) \quadand 
\frac{1}{p}(\frac{ad+bc+cd}{g},1).
$$
\fi
From our assumptions \eqref{gcd condition} one easily  sees that $m/g$ is coprime to   $p$,  $pc/g$ and  $pd/g$. 
In particular we have $w=w_a=w_b=1$.
Furthermore, the resulting reduced fractions   $\alpha,\beta,\gamma\in\QQ$ have as  denominators the integers
$p^{1-a_p}c/g$, $p^{1-b_p}d/g$,  and $p$, respectively. According to Theorem \ref{intersection graph}, the  graph $\Gamma_N$ is star-shaped.
Thus we may compute the determinant of the intersection matrix with Proposition \ref{determinant star-shaped}
and obtain
$$
|\det(N)|= (p^{1-a_p}c/g)(p^{1-b_p}d/g)p^g (s_0-\alpha -\beta -g\gamma).
$$
The last factor is $g^2/pcd$  in light of the formula \eqref{selfintersection node} for the self-intersection $-s_0$
of the central node in   Theorem \ref{intersection graph}. Thus
$|\Phi_N|=|\det(N)|=p^{g+1-a_p-b_p}$.

The group structure of $\Phi_N$ can be obtained by computing the Smith Normal Form of the matrix $N$, 
using a row and column reduction of $N$. 
Reducing the intersection matrix of each terminal chain as in 
\cite{Lor1992} Lemma 2.5, we find that the matrix $N$ is equivalent 
to a block diagonal matrix with two blocks, a square matrix $A$
of size $(g+3)\times(g+3)$ that we describe below, and an identity matrix:
$$
A:=\left( 
\begin{array}{cccccc}
-s_0 & *    & * & * & \dots & * \\
1    & -p^{1-a_p}c/g & 0 &  0 &       & 0 \\
1    & 0    & -p^{1-b_p}d/g& 0 &       & 0 \\
1& 0 & 0    & -p &  &        0 \\
\vdots&  &  &  & \ddots &   \vdots \\ 
1 & & & & & -p
\end{array}
\right).
$$
The matrix $N\otimes\FF_p$ has $g+a_p+b_p$ rows  equal to $(1,0,\dots,0)$, and we see that its rank
is at most $r=1+ b_p+a_p+1$. In turn, the vector space dimension of the cokernel
is at least $g+3-r=g+1-a_p-b_p$. It follows that $\Phi_N=\Phi_N\otimes\FF_p$.
\qed

\begin{remark}
The explicit resolution of $ W^p-UV(V-U^p)=0 $ is needed in the proof of Theorem \ref{antidiagonal E8}. 
In this case, the intersection matrix is $N=N(2 \mid p/(p-1), p/(p-1), p^2/(2p-1))$, with $|\Phi_N|=p^2$.
When $p$ is odd, we do not know if this intersection matrix can occur as the intersection matrix of the resolution of a ${\mathbb Z}/p{\mathbb Z}$-quotient singularity. When $p=2$, this equation defines the singularity $D^0_{6}$ with trivial local fundamental group  \cite{Artin 1977}. The singularity $D^1_6$ is a wild ${\mathbb Z}/2{\mathbb Z}$-quotient singularity (\ref{E7 homogeneous singularities}).

More generally, one might wonder whether every intersection matrix arising in Proposition \ref{special intersection graph} 
can occur as the intersection matrix of the resolution of a ${\mathbb Z}/p{\mathbb Z}$-quotient singularity. We discuss the case of
$W^p-U^pV^p(V^{pm+1}-U^{pn+1})$ and $W^p-UV(V^{pm-1}-U^{pn-1})$ in Theorem \ref{Brieskorn quotient sing}.
We note in \ref{E7 homogeneous singularities} how the intersection matrix of the resolution of the singularity defined by  $ W^p-UV(V^{pm}-U^{pn-1})=0 $
might occur as the intersection matrix of the resolution of a ${\mathbb Z}/p{\mathbb Z}$-quotient singularity.
\end{remark}

\begin{remark} The resolution $X \to Y \to \Spec B$ 
provided in Theorem {\rm \ref{intersection graph}} is not always minimal.
This can be seen already in the case where $q=1$, in which case $\Spec B$ is regular, but the exceptional divisor $C$ on $X$ is not reduced to a point.
Indeed, the graph $\Gamma$ consists in this case of a central node of self-intersection $-1$ with two terminal chains obtained by resolving Hirzebruch--Jung singularities associated with the triples $(c/g, m/g,a)$ and $(d/g,m/g,b)$. The fraction types of these triples are independent of $a$ and $b$. Indeed, let $\alpha, \beta >0$ be the unique positive integers such that $\alpha (d/g)+\beta(c/g)= 1 +(c/g)(d/g)$.
Then the triple $(c/g,m/g,a)$ reduces to $(c/g,1,\alpha)$, and $(d/g,m/g,b)$ reduces to $(d/g,1,\beta)$.

\if false
Explanation of the exceptional divisor having self-intersection -1 using the formula that we give:

Write 
$\alpha (m/g) = a + (c/g) \alpha'$, with $0<\alpha<c/g$. It follows that $\alpha' < m/g$.
$\beta (m/g) = b + (d/g) \beta'$, with $0<\beta<d/g$. It follows that $\beta' < m/g$.
The self-intersection is 
$s_0:= g^2/cd + (c/g -\alpha)/(c/g)+ (d/g-\beta)/(d/g)$.
We claim that $s_0=1$. In other words, we need $\alpha(d/g) + \beta(c/g)= (c/g)(d/g)+1$.
We simply need to show that 
 $\alpha(d/g)(m/g) + \beta(c/g)(m/g)= (m/g)(c/g)(d/g)+(m/g)$.
From 
$\alpha (m/g) = a + (c/g) \alpha'$, and $\beta (m/g) = b + (d/g) \beta'$,
we get
$\alpha(d/g)(m/g) + \beta(c/g)(m/g)=a(d/g) + b(c/g) + (c/g)(d/g)( \alpha'+ \beta')= (m/g) + (c/g)(d/g)( \alpha'+ \beta'-g)$.
Thus it suffices to show that 

$( \alpha'+ \beta'-g)= m/g$.
Clearly the quantity $g( \alpha'+ \beta'-g)$ is divisible by $m$.
Since $\alpha'g < m$ and $\beta' g< m$, we find that $( \alpha'g+ \beta'g-g^2)<2m$. Hence, $g( \alpha'+ \beta'-g)=m$, as desired. 

The fraction types are independent of $a$ and $b$. Indeed, let $\alpha, \beta >0$ be the unique integers such that $\alpha (d/g)+\beta(c/g)= 1 +(c/g)(d/g)$.
Then the triple $(c/g,m/g,a)$ gives $(c/g,1,\alpha)$ and the triple $(d/g,m/g,b)$ gives $(d/g,1,\beta)$.

\fi

Other examples where the resolution is not minimal 
can also be obtained when $q>1$; for instance, when $p=2$, the resolution graphs appearing in Example \ref{ex.second} are the resolution graphs associated with the cases $ W^2-U^2V^2(V^7-U^3)=0 $ and $ W^2-U^2V(V^4-U^3)=0 $, respectively.
\end{remark}

\if false
\begin{remark} \label{diagonal case}
Recall the quotient singularities $\Spec B_{\mu}$ introduced in \eqref{special normal form}, and defined by the equation
$$z^p - (\mu ab)^{p-1}z - a^py + b^px=0.$$
We consider in this article the case where $a=y^n$ and $b=x^m$. Clearly, the case where $a=x^m$ and $b=y^n$ is also of interest. We note however that in this latter case, the singularity on $\Spec B_{\mu}$ when $\mu \in k^*$ is obtained from a diagonal action on a product of curves, and thus its resolution is already completed understood thanks to the results in \cite{Mit}. Indeed, in this case the ring $A=k[[u,v]]$ is obtained as a tensor product of $k[[x]][u]/(u^p-(\mu a)^{p-1}u-x)$ and 
$k[[y]][v]/(v^p-(\mu b)^{p-1}v-y)$, and each of these one-dimensional rings is endowed with an action of ${\mathbb Z}/p{\mathbb Z}$, sending $u$ to $u+\mu a$
and $v$ to $v+\mu b$, respectively. We will call this case the {\it diagonal} case.

As in the case considered in this article, preliminary computations in the diagonal case also seem to indicate that the combinatorial type of the resolution is independent of $\mu$. 
In the case $\mu=0$, we obtain a singularity $\Spec B_{\mu=0}$ given by the equation $ z^p-xy(x^{pm-1}-y^{pn-1})=0 $. Let $g:=\gcd(pm-1,pn-1)$.
Theorem \ref{intersection graph} can be applied to this equation in characteristic $p$, and we thus obtain the explicit combinatorial type 
of the resolution of $\Spec B_{\mu=0}$, with intersection matrix $N$. Proposition \ref{special intersection graph}
shows that we have  $|\Phi_N|=p^{g+1}$. We note that in the diagonal case, the exponent $g+1$ is easily seen to  never be congruent to $1$ modulo $p$, 
since $p$ never divides $g$ in this case.
\end{remark}
\fi

\section{Brieskorn singularities}
\mylabel{brieskorn singularities}

Let $k$ be an algebraically closed field of characteristic exponent $p \geq 1$. Let $q,c,d \geq 2 $ be integers, with $q$ coprime to $cd$. Let 
$$B:=k[[x,y,z]]/(z^q+x^c+y^d).$$
We study in this section properties of the singularity $\Spec B$.
Let $g:=\gcd(c,d)$.

\begin{theorem} \label{thm.BrieskornResolution} Assume that $\gcd(p,g)=1$.
Then $\Spec B$ admits a star-shaped resolution of singularities $X \to \Spec B$ whose associated intersection matrix is 
$$
N=N(s_0\mid a_1/b_1, a_2/b_2, \underbrace{a_0/b_0,\ldots,a_0/b_0}_{\text{$g$ entries}}),
$$
where $N$ is specified as follows (notation as in {\rm \ref{notation.starshaped}}). Let
$$
a_1:=c/g, \quad a_2:=d/g, \ \text{\rm and \ } a_0:=q.
$$
Set 
$ 
\ell_0:= cd/g$, $\ell_1:= dq/g$, and $ \ell_2:= cq/g,
$  
and define $b_i$ by $b_i \ell_i \equiv -1 \mod a_i$.
Finally, set 
$$
s_0:= g^2   / cd q  + b_1/a_1 + b_2/a_2 + gb_0/q.
$$
In case $a_1=1$ (resp. $a_2=1$), in which case $b_1=0$ (resp. $b_2=0$),   we remove the term $a_1/b_1$ (resp. $a_2/b_2$) from the matrix $N$.

When $q=p$, the associated discriminant group $\Phi_N$ is killed by $p$ and has order $p^{g-1}$.
\end{theorem}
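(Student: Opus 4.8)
My plan is to read off both assertions from Proposition \ref{determinant star-shaped} applied to the matrix $N = N(s_0 \mid a_1/b_1, a_2/b_2, p/b_0, \dots, p/b_0)$ (with $g$ copies of $p/b_0$); the point of the hypothesis $q = p$ is that it turns $a_0$ into a prime and makes the combinatorial quantities feeding that proposition collapse. The preliminary remark is that, since $q = p$ is prime to $cd$, the prime $p$ divides none of $c/g$, $d/g$, $g$; moreover $\gcd(c/g, d/g) = \gcd(c,d)/g = 1$, so $\lcm(c/g, d/g) = (c/g)(d/g)$ and hence $\lcm(a_1, \dots, a_m) = \lcm(c/g, d/g, p) = (c/g)(d/g)\,p$.

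For the order I would substitute into Proposition \ref{determinant star-shaped}(i): the product of the $a_j$'s is $(c/g)(d/g)\,p^{g}$, while the displayed formula for $s_0$ (with $q = p$) gives at once $s_0 - \sum_j b_j/a_j = g^2/(cdp)$. Hence
$$
|\Phi_N| = |\det N| = (c/g)(d/g)\,p^{g}\cdot\frac{g^2}{cdp} = p^{g-1},
$$
so $\Phi_N$ is a finite abelian $p$-group of order $p^{g-1}$.

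For the exponent I would invoke the last clause of Proposition \ref{determinant star-shaped}(iii): the group $\Phi_N$ is killed by $\lcm(a_1,\dots,a_m)^2\big(s_0 - \sum_j b_j/a_j\big)$, which by the two computations above equals $\big((c/g)(d/g)\,p\big)^2\cdot g^2/(cdp) = (c/g)(d/g)\,p$. Being a $p$-group killed by $(c/g)(d/g)\,p$ with $(c/g)(d/g)$ prime to $p$, the group $\Phi_N$ is then killed by $p$; together with $|\Phi_N| = p^{g-1}$ this forces $\Phi_N \cong (\ZZ/p\ZZ)^{g-1}$.

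I do not expect a genuine obstacle: the whole argument reduces to a short computation built on the already-established Proposition \ref{determinant star-shaped}. The one identity to check by hand is $s_0 - \sum_j b_j/a_j = g^2/(cdp)$, immediate from the formula for $s_0$; and one verifies that whether or not $c/g$ or $d/g$ equals $1$ — in which case the corresponding chain and its term $b_i/a_i = 0$ simply drop out — the products and least common multiples above are unaffected. The only situation lying outside Proposition \ref{determinant star-shaped} is $c = d = 2$, where $g = 2$ and $\Gamma_N$ is a Hirzebruch--Jung chain rather than a star; there $\Phi_N$ is cyclic of order $p^{g-1} = p$ by \ref{chain}, hence again killed by $p$.
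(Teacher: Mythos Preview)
You have proved only the final sentence of the theorem---the two claims about $\Phi_N$ when $q=p$---and not the main assertion, namely that $\Spec B$ actually admits a resolution of singularities with intersection matrix $N$. Proposition~\ref{determinant star-shaped} is a purely linear-algebraic statement about a matrix handed to you in advance; it says nothing about whether that matrix arises geometrically from the Brieskorn hypersurface $z^q+x^c+y^d=0$. The paper supplies this geometric content by observing that $Z\mapsto zxy$ defines a finite birational morphism from $\Spec B$ to $\Spec C$ with $C=k[[x,y,Z]]/(Z^q-x^qy^q(y^d-x^c))$, so that $B$ is the normalization of $C$ and the two schemes share the same resolution; Theorem~\ref{intersection graph} then resolves $\Spec C$ explicitly, and one checks that the resulting star-shaped matrix coincides with the $N$ described in the statement. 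Without this step (or an independent resolution of the Brieskorn singularity in positive characteristic), there is no theorem.

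For the part you do address, your argument is correct and close in spirit to the paper's. The paper delegates the discriminant-group computation to Proposition~\ref{special intersection graph}, whose order calculation is exactly your use of Proposition~\ref{determinant star-shaped}(i). For the exponent, the paper argues via a row--column reduction of $N$ and a rank count over $\FF_p$, whereas you use the bound in Proposition~\ref{determinant star-shaped}(iii) together with the fact that a $p$-group killed by $(c/g)(d/g)p$ is killed by $p$; your route is a little slicker. Your handling of the degenerate chains when $a_1=1$ or $a_2=1$, and of the border case $c=d=2$, is adequate.
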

\proof
Theorem \ref{intersection graph}
provides a resolution of the weighted homogeneous singularity
$$C:=k[[x,y,Z]]/(Z^q-x^qy^q(y^d-x^c))$$
when $q$ is coprime to $cd$. The scheme $\Spec C$ is not normal, and the natural map
$C \to B$, with $Z\mapsto zxy$, induces a finite birational morphism $\Spec B \to \Spec C$. Hence, $\Spec B$ has the same resolution as $\Spec C$.
The reader will check that the matrix $N_C$ associated to the resolution of $\Spec C$ in Theorem \ref{intersection graph} is the same as the matrix $N$ appearing 
in the statement of Theorem \ref{thm.BrieskornResolution}. The discriminant group $\Phi_N$ is computed in Proposition \ref{special intersection graph}.
\qed

\begin{remark} 
A resolution of the Brieskorn singularity of the form 
$x^{c}+y^{d}+z^e=0$ is known over the complex numbers
thanks to the work of \cite[Theorem, page 232]{H-J} when $c,d$, and $e$ are pairwise coprime,  
and \cite{O-W} in general.
An explicit description for the intersection matrix $N$ and dual graph $\Gamma_N$ of a resolution is found for instance in \cite{Tom},
page 284, with a formula giving the self-intersection  $-s_0$ of the node given on page 287.

Let now $p>1$ be prime. When  $p$ is coprime to $cd$, the intersection matrix for the resolution  of $z^p+x^{c}+y^{d}=0$
obtained  in Theorem \ref{thm.BrieskornResolution} is the same as the intersection matrix obtained in characteristic $0$. 
Some characteristic $p>1$ examples appear explicitly already in the literature, such as the case of  $z^p+x^2+y^{p+2}=0$ when $p$ is odd, treated 
in \cite{M-R}, Lemma 3.13.

Assume that $p>1$ is prime and divides $ cd$.  The Brieskorn singularity $z^p+ x^c+y^d=0$ has then a resolution in characteristic $p$ which is quite different than in characteristic $0$. Indeed, assume that $c=p\gamma$ for some integer $\gamma$, and $\gcd(p,d)=1$. Then in characteristic $p$, $z^p+ x^c+y^d=(z+ x^\gamma)^p+y^d.$ It follows that the normalization of $k[[x,y]][z]/(z^p+ x^c+y^d)$ is regular when ${\rm char}(k)=p$.  
On the other hand, in the case for instance of  $z^2+ x^3+y^6=0$ in characteristic $0$ (a case which is not covered by Theorem \ref{thm.BrieskornResolution}),
 the minimal resolution is a smooth elliptic curve of self-intersection $-1$. This explicit example of 
a resolution in characteristic $0$ (and many others) is found for instance in \cite[page 1290]{Lau77}.
\end{remark}

\begin{theorem} \label{Brieskorn quotient sing} Let $B:=k[[x,y,z]]/(f)$, where $f(x,y,z)$ is a  weighted homogeneous polynomial of the following form, with $n,m \geq 1$:
\begin{enumerate}[\rm (i)]
\item $z^p+x^{pm+1}+y^{pn+1}$,
\item $z^p+xy(x^{pm-1}-y^{pn-1})$,   and
\item $z^p-x^{2}+2y^{p+1}$ when $p\geq 3$. 
\end{enumerate}
Then $\Spec B$ is a wild $\ZZ/p\ZZ$-quotient singularity. Moreover, the fundamental group of the punctured spectrum $\Spec B \setminus \{\maxid_B\}$ is trivial. 
\end{theorem}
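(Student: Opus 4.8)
The plan is to exhibit each of the three rings $B$ as an invariant ring $A^{\langle\sigma\rangle}$, where $A=k[[u,v]]$ and $\sigma$ is an automorphism of order $p$ whose fixed scheme $V(\sigma(u)-u,\sigma(v)-v)$ has a one-dimensional component, so that $\Spec A\to\Spec B$ is ramified in codimension one; the triviality of $\pi_1(\Spec B\setminus\{\maxid_B\})$ will then follow formally. It is useful to keep in mind that these families are the ``$\mu=0$ degenerations'' of hypersurfaces already studied in the paper: case (iii) is the member $\mu=0$ of Peskin's family of Theorem \ref{resolution peskin}, while, up to coordinate changes over the algebraically closed field $k$, cases (i) and (ii) are the members $\mu=0$ of the family $\R_\mu$ of \eqref{special normal form} with $(a,b)=(y^n,x^m)$ and $(a,b)=(x^m,y^n)$ respectively, and, as noted after Proposition \ref{special intersection graph}, are also (after a finite birational modification) the weighted homogeneous hypersurfaces $W^p-U^pV^p(V^{pm+1}-U^{pn+1})=0$ and $W^p-UV(V^{pm-1}-U^{pn-1})=0$, whose resolutions, dual graphs and discriminant groups were obtained in Theorem \ref{intersection graph}. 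Thus only the quotient structure is genuinely at issue.

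For cases (i) and (ii), the presentation of $A$ used in \ref{moderately ramified}(a),(b) degenerates at $\mu=0$ to the \emph{trivial} action on $k[[u,v]]$, so one must produce a different regular model $A$. I would build it as the integral closure of $B$ in a well-chosen Artin--Schreier extension $\Frac(B)[w]/(w^p-w-\phi)$ of $\Frac(B)$, with $\sigma$ acting by $w\mapsto w+1$; by the symmetry exchanging $x$ and $y$ it is enough to treat one of the two. The crux is to choose $\phi$ — a ratio built from $z$ and the two monomial generators of $B$ — in such a way that this integral closure is regular. The hypothesis that the exponents are congruent to $1$ modulo $p$ is what makes this possible: it is precisely what permits extraction, via Hensel's Lemma, of the $p$-th roots of power series needed to bring the local rings of $A$ into the standard shape $k[[u,v]]$, in the spirit of the root extractions in the proof of Theorem \ref{intersection graph}. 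Once $A$ is seen to be regular, one checks $A^{\langle\sigma\rangle}=B$ in the usual way — both rings are two-dimensional and normal, have the same fraction field, and $B\subseteq A^{\langle\sigma\rangle}$ — and one checks that $\sigma$ is not ramified solely at $\maxid_A$, so the action is ramified in codimension one; indeed otherwise $\Spec A\to\Spec B$ would be étale on punctured spectra, forcing $B$ to be regular by \cite[Theorem 2]{K-L}, which is absurd. For case (iii), the ring $B$ is the member $\mu=0$ of Peskin's family, and its realization as a $\ZZ/p\ZZ$-quotient singularity with the action ramified in codimension one is in Peskin \cite{Peskin 1983}; when $p=3$ this recovers the fact that the ``Brieskorn form'' of the $E_6$-singularity is a $\ZZ/3\ZZ$-quotient singularity.

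Granting that $B=A^{\langle\sigma\rangle}$ with the action ramified in codimension one, I would argue that $\pi_1(\Spec B\setminus\{\maxid_B\})=1$ as follows (this is the argument underlying the assertion quoted in \ref{moderately ramified}(b)). Since $A$ is a two-dimensional regular complete local ring with algebraically closed residue field, Zariski--Nagata purity shows that any finite cover of $\Spec A$ étale in codimension one is étale, hence split, so $\Spec A\setminus\{\maxid_A\}$ is simply connected. Let $W\to\Spec B\setminus\{\maxid_B\}$ be a connected finite étale cover. Its base change $W\times_{\Spec B}\Spec A$ is finite over $\Spec A\setminus\{\maxid_A\}$ and étale there — over the branch divisor of $\Spec A\to\Spec B$ this uses that $W$ is unramified over every codimension-one point of $\Spec B\setminus\{\maxid_B\}$ together with the stability of étale morphisms under base change — hence it is a split cover $\coprod_S(\Spec A\setminus\{\maxid_A\})$, on which $G=\langle\sigma\rangle$ permutes the components. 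An orbit of size $p$ would yield a component of $W$ mapping onto $\Spec B\setminus\{\maxid_B\}$ by the ramified degree-$p$ quotient map, contradicting étaleness of $W$; so $G$ fixes each component, $W\cong\coprod_S(\Spec B\setminus\{\maxid_B\})$, and connectedness forces $S$ to be a single point. Hence $\pi_1(\Spec B\setminus\{\maxid_B\})=1$, completing the proof once the realization $B=A^{\langle\sigma\rangle}$ has been established.

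The main obstacle is the construction of $A$ in cases (i) and (ii): pinning down the Artin--Schreier datum $\phi$ and, above all, proving that the resulting integral closure is regular. Once that is in hand, identifying $A^{\langle\sigma\rangle}$ with $B$ and running the fundamental-group argument are routine.
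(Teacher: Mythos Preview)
Your proposal has a genuine gap at exactly the point you flag as the ``main obstacle'': the construction of a regular $A$ with $A^{\langle\sigma\rangle}=B$ is not carried out for (i) and (ii), and your claim that case (iii) is already in Peskin is incorrect. Peskin's action $u\mapsto u+\mu v$, $v\mapsto v+\mu y$ degenerates to the identity at $\mu=0$ just as the action in \ref{moderately ramified} does, so the Brieskorn member $z^p-x^2+2y^{p+1}$ is \emph{not} directly exhibited as a quotient in \cite{Peskin 1983}; that reference only treats $\mu\neq 0$. Thus all three cases face the same difficulty, and you have not resolved it in any of them. (A side remark: your appeal to \cite[Theorem~2]{K-L} to rule out ramification solely at the origin is misapplied---that theorem needs the fixed ideal to be principal, and in any case many singular $A^G$ arise from actions ramified only at the origin, which is the main theme of the paper.)

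The paper's proof bypasses the construction entirely by a finite-determinacy argument. Each $f$ in (i)--(iii) defines an isolated singularity, so its Tjurina number $\tau=\dim_k k[[x,y,z]]/(f,f_x,f_y,f_z)$ is finite, and by the lemma of \cite{Greuel; Kroening 1990}, 2.6 (via \cite{B-L}), adding to $f$ any term of order larger than $2\tau$ yields a $k$-isomorphic complete local ring. Now observe that $f$ is the $\mu=0$ member of a family $B_\mu$ for which the \emph{nonzero}-$\mu$ members are already known to be $\ZZ/p\ZZ$-quotient singularities: the family of \ref{moderately ramified} with $(a,b)=(y^n,x^m)$ or $(x^m,y^n)$ for (i), (ii), and Peskin's family of Section~\ref{analogues E6} for (iii). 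One simply chooses $\mu$ homogeneous of degree large enough that the perturbing term $(\mu ab)^{p-1}z$ (resp.\ the sum in \eqref{peskin equation}) lies in $\maxid^{2\tau+1}$, and such that $\mu$ is a nonzero non-unit coprime to $a$ and $b$ (e.g.\ $\mu=x^t+y^t$), which forces the action to be ramified in codimension one by \ref{moderately ramified}(b). Then $B\cong B_\mu$ is a wild quotient singularity with action ramified in codimension one, and Artin's \cite[Corollary~1.2(ii)]{Artin 1977} gives the triviality of the local fundamental group. Your purity argument for $\pi_1$ is essentially Artin's and is fine, but it only applies once the quotient presentation exists---and that is what the determinacy trick supplies without ever building an explicit Artin--Schreier cover.
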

\proof
The proof of the theorem is similar for each of the three types of homogeneous polynomials. In each case, there exists a family of rings 
$B_{\ttt}$, $\ttt$ homogeneous in $ k[x,y]$,
such that the ring $B$ can be identified with the ring $ B_{\ttt=0}$, and such that when $\deg(\ttt)$ is large enough, there is an isomorphism
between $B_{\ttt=0}$ and $B_{\ttt}$. The family $B_\ttt$ is constructed such that when $\ttt \neq 0$ is chosen adequately, the ring $B_\ttt$ is a wild $\ZZ/p\ZZ$-quotient singularity.

For the weighted homogeneous form in (iii), we use the family $B_\ttt$ (with $\ttt \in k[y]$) described in Theorem \ref{resolution peskin}.
For the weighted homogeneous forms in (i) and (ii), we use the families discussed in \cite{LS1} and recalled in \ref{moderately ramified}.
More precisely, fix a system of parameters $a,b$ in $k[[x,y]]$. 
Consider the family of hypersurface singularities $\Spec B_{\ttt}$, $\ttt \in k[[x,y]]$, with 
$$
\R_{\mu}:=k[[x,y,z]]/(z^p - (\ttt ab)^{p-1}z - a^py + b^px).
$$
Let $G=\ZZ/p\ZZ$.  When $\mu$ is a unit in $k[[x,y]]$, $B_{\ttt}$ is isomorphic to the ring of invariants $A^G$ of an action of $G$ on $A=k[[u,v]]$,
and in this case the morphism $\Spec A \to \Spec A^G$ is ramified precisely at the closed point.
When $\mu $ is not zero and is coprime to $a$ and coprime to $b$, then again $B_{\ttt}$ is isomorphic to the ring of invariants $A^G$ of an action of $G$ on $A=k[[u,v]]$,
but in this case the morphism $\Spec A \to \Spec A^G$ is ramified in codimension $1$. 
The cases (i) and (ii) are obtained by setting $a=y^n$ and $b=x^m$, and $a=x^m$ and $b=y^n$, respectively. 

We now claim that it is possible to find a homogeneous polynomial $\mu$ of large enough degree such that $B:=k[[x,y,z]]/(f)$ is isomorphic over $k$ to $B_\ttt$.
In the cases (i) and (ii), we note that  the homogeneous polynomial $\mu:=x^t+y^t $ is coprime to both $a$ and $b$, so that 
the corresponding $\Spec B_\ttt$ is a quotient singularity associated with an action that is ramified in codimension $1$.

To prove the existence of a $k$-isomorphism from $B:=k[[x,y,z]]/(f)$  to $B_\ttt$, we use the Lemma in \cite{Greuel; Kroening 1990}, 2.6, page 345.
For the details of the proof of this Lemma, the authors of \cite{Greuel; Kroening 1990} refer the reader to the paper \cite{B-L}.
Recall that the Tjurina ideal of $f$ is $j(f):= (f, \frac{\partial f}{\partial x}, \frac{\partial f}{\partial y}, \frac{\partial f}{\partial z})$, 
and that there exists an integer $s>0$ such that $(x,y,z)^s \subseteq j(f)$ if and only if the Tjurina number $\tau:={\rm dim}_k(k[[x,y,z]]/j(f))$ is finite. 
Then the Lemma in \cite{Greuel; Kroening 1990}, 2.6, implies that if $\deg(\ttt g) > 2\tau$ (with $g\in k[[x,y,z]]$), then $B:=k[[x,y,z]]/(f)$ is isomorphic over $k$ to 
$k[[x,y,z]]/(f+\ttt g)$. 

In each case above, we have shown that $\Spec B$ is isomorphic to a  quotient singularity $\Spec B_\mu$ such that $B_\mu$ is the ring of invariants of an action of ${\mathbb Z}/p{\mathbb Z}$ on the ring $A:=k[[u,v]]$ such that the morphism $\Spec A \to \Spec B_\mu$ is ramified in codimension $1$. 
Corollary 1.2 (ii) in  \cite{Artin 1977} shows that the fundamental group of the punctured spectrum of $\Spec B$ is trivial.
\qed

\begin{remark} Consider the equation $f:=z^q+x^c+y^d$ with $q,c,d$ three distinct primes. Let $k$ be a field of characteristic $p$.  
Let $B:=k[[x,y,z]]/(f)$. Theorem \ref{intersection graph} shows that the intersection matrix
of the resolution of $\Spec B$ is the same in all three characteristics $p=q,c,d$, and has determinant $1$. It is natural to wonder whether this matrix can occur in 
more than one 
characteristic as the intersection matrix attached to a resolution of a wild ${\mathbb Z}/p{\mathbb Z}$-quotient singularity. 

Consider the intersection matrix with resolution graph $E_8$. 
In Artin's notation  in \cite{Artin 1977}, $f:=z^2+x^3+y^5$ defines the singularity $\Spec B$ denoted by $E_8^0$,
with resolution graph $E_8$. This singularity is a wild ${\mathbb Z}/p{\mathbb Z}$-quotient singularity when $p=2$ (see Theorem \ref{Brieskorn quotient sing}, (i)).  
When $p=5$,  a different singularity, denoted by $E^1_8$ in \cite{Artin 1977}, also has resolution graph $E_8$ and is a wild ${\mathbb Z}/5{\mathbb Z}$-quotient singularity.
\end{remark} 

\if false
\medskip
Recall that the equation $z^2+x^3+y^5=0$ defines the rational double point of type $E_8^0$ in characteristics $2,3,5$, and 
$z^3+x^2+y^4=0$ defines the rational double point of type $E_6^0$ in characteristic $3$.
\begin{proposition} The singularity $E_8^0$ in characteristic $p=2$ (resp. $3,5$) is a wild 
${\mathbb Z}/p{\mathbb Z}$-quotient singularity (resp. ${\mathbb Z}/2p{\mathbb Z} \times {\mathbb Z}/2{\mathbb Z}$,  ${\mathbb Z}/2p{\mathbb Z}$). The singularity $E_6^0$ in characteristic $p=3$ is a wild ${\mathbb Z}/2p{\mathbb Z}$-quotient singularity.
\end{proposition}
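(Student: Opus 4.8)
I would follow the template of the proof of Theorem \ref{Brieskorn quotient sing}. For each of the four cases the plan is to produce a one-parameter family $B_\ttt$, with $\ttt$ a homogeneous element of $k[x,y]$, of weighted-homogeneous hypersurface singularities in $k[[x,y,z]]$, such that $B_{\ttt=0}$ is the ring defining the given rational double point while, for a suitable nonzero $\ttt$, the ring $B_\ttt$ is the ring of invariants $A^G$ of an explicit action of the stated finite abelian group $G$ on $A=k[[u,v]]$. Since $E_8^0$ and $E_6^0$ are rational double points their Tjurina numbers $\dim_k k[[x,y,z]]/j(f)$ are finite (for instance one computes $10$ for $E_8^0$ in characteristic $5$, and $9$ for $E_6^0$ in characteristic $3$), so the finite-determinacy Lemma of \cite{Greuel; Kroening 1990}, 2.6 --- already used in the proof of Theorem \ref{Brieskorn quotient sing} --- provides a $k$-algebra isomorphism $B_{\ttt=0}\cong B_\ttt$ once $\deg(\ttt)$ is large enough. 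As the $p$-Sylow subgroup of $G$ equals $\ZZ/p\ZZ$ while $|G|$ is not prime to $p$, the singularity $\Spec B$ is then a wild $G$-quotient singularity.

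The case $p=2$ of $E_8^0$ requires no family: in characteristic $2$ the polynomial $z^2+x^3+y^5$ is the polynomial $z^p+x^{pm+1}+y^{pn+1}$ of Theorem \ref{Brieskorn quotient sing}(i) with $m=1$ and $n=2$, so $\Spec B$ is already a wild $\ZZ/2\ZZ$-quotient singularity; here $G=\ZZ/p\ZZ$ and the order-$2p$ refinement is vacuous.

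For the three remaining cases $G$ is no longer cyclic of order $p$, and the content of the statement is to realize it as a commuting product $G=\ZZ/p\ZZ\times T$ with $T$ tame, of order $2$, $4$ (equal to $(\ZZ/2\ZZ)^2$), and $2$ respectively. I would let $T$ act linearly on $A=k[[u,v]]$ --- hence diagonalizably, since $|T|$ is prime to $p$ --- and let the factor $\ZZ/p\ZZ$ act by a wild Artin--Schreier-type automorphism as in \ref{moderately ramified}, chosen to commute with $T$, and then compute $A^G=(A^T)^{\ZZ/p\ZZ}$. The intermediate ring $A^T$ is either a formal power series ring in two variables or the $A_1$-cone $k[[X,Y,Z]]/(XY-Z^2)$; on it the induced wild $\ZZ/p\ZZ$-action is of the kind already analysed in \cite{LS1} and in the proof of Theorem \ref{resolution peskin}, and carrying out that invariant-theoretic computation $T$-equivariantly should identify $A^G$ with a member $B_\ttt$ of the family whose leading form is the prescribed Brieskorn polynomial. (The resulting morphism $\Spec A\to\Spec A^G$ will be ramified in codimension $1$, in accordance with the triviality of the local fundamental groups of $E_8^0$ and $E_6^0$: a quotient ramified precisely at the origin by a nontrivial group would yield a connected \'etale $G$-torsor over the punctured spectrum, hence a nontrivial local fundamental group.)

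The step I expect to be the main obstacle is this explicit invariant-ring computation: one must exhibit compatible tame and wild actions on $k[[u,v]]$ and verify that the ring of invariants simultaneously has the correct leading form --- so that finite determinacy matches it with the prescribed rational double point --- and is the quotient by a group of exactly the stated order. Determining the right group, in particular why the tame factor has order $4$ for $E_8^0$ in characteristic $3$ but order $2$ in the other two cases, is the delicate point; I would settle it by direct construction, guided by the structure of the binary polyhedral group ($2I$, of order $120$, for $E_8$, and $2T$, of order $24$, for $E_6$) whose quotient realizes the singularity in characteristic zero.
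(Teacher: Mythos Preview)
The proposition you are trying to prove was in fact \emph{withdrawn} from the paper: it sits inside an \verb|\if false ... \fi| block, together with a proof attempt and the authors' own annotations explaining why the argument fails. Your proposal follows essentially the same strategy as that abandoned proof, and it runs into the same obstruction.

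The paper's attempt proceeds in the opposite order from yours --- it starts from a known wild $\ZZ/p\ZZ$-quotient $B_\mu=A^{\ZZ/p\ZZ}$ (a moderately ramified quotient as in \ref{moderately ramified} for $p=3$, and Peskin's singularity for $p=5$), observes natural involutions on $B_\mu$ such as $x\mapsto -x$ or $y\mapsto -y$, and computes that the further quotient has the correct dual graph and Tjurina number to be $E_8^0$ or $E_6^0$. The authors then note, in comments left in the source, that ``the $\ZZ/2\ZZ$ actions do not lift correctly'': the involution on $B_\mu$ is \emph{not} induced by an automorphism of $A=k[[u,v]]$ commuting with the wild generator $\sigma$. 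For the $p=5$ case they are explicit: the only order-$2$ automorphism of $A$ that presents itself, namely $u\mapsto -u$, $v\mapsto -v$, induces $x\mapsto -x$, $y\mapsto -y$ on $B_\mu$ rather than the needed $y\mapsto -y$ alone, and the resulting $\ZZ/10\ZZ$-quotient has discriminant $10$, not $1$.

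Your plan is the mirror image: fix a linear $T$-action first and then look for a commuting wild $\ZZ/p\ZZ$-action ``of the kind in \ref{moderately ramified}''. But the moderately ramified generator $\sigma(u)=u+\mu a$, $\sigma(v)=v+\mu b$ with $a,b\in k[[x,y]]$ is very rigid, and the paper's failed lifting shows that one cannot simply arrange it to commute with the needed sign involutions while keeping the invariant ring of the correct shape. You correctly flag this commutativity step as ``the main obstacle'', but you do not actually carry out the construction; since this is precisely where the authors' own argument collapsed, the proposal as it stands has a genuine gap rather than a routine computation to fill in.
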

\proof
The case of $E_8^0$ when $p=2$ is treated in Theorem \ref{Brieskorn quotient sing} (i). 

Below the mistake is that the Z/2Z actions do not lift correctly. 
Consider now the case of $E_8^0$ when $p=3$. The wild ${\mathbb Z}/p{\mathbb Z}$-quotient singularity $z^3+(xy^3)^2z+x^4+y^{10}=0$ (see \ref{moderately ramified}) admits two involutions $x \mapsto -x$ and $y \mapsto -y$. The ring of invariants is given by the equation $z^3+(xy^3)z+x^2+y^5=0$, 
with Tjurina number $12$. A computation with Magma shows that the resolution of this singularity has dual graph $E_8$. Thus the singularity is isomorphic to $E_8^0$.
Similarly, consider the case of $E_6^0$ when $p=3$. The wild ${\mathbb Z}/p{\mathbb Z}$-quotient singularity $z^3+(xy)^2z+x^4+y^4=0$ admits the involution
$y \mapsto -y$. The ring of invariants is given by the equation $z^3+x^2yz+x^2+y^4=0$, 
with Tjurina number $9$. A computation with Magma shows that the resolution of this singularity has dual graph $E_6$. Thus the singularity is isomorphic to $E_6^0$.

Consider now the case of $E_8^0$ when $p=5$. Peskin's singularity introduced in \eqref{peskin equation}, with equation $z^5+2y^6-x^2 + z^2y^6 -2z^3y^4=0$ when $\mu=1$,
is a wild ${\mathbb Z}/p{\mathbb Z}$-quotient singularity, and it has the additional involution $y \to -y$. The ring of invariants is given by the equation
$z^5+2y^3-x^2 + z^2y^3 -2z^3y^2=0$. A computation with Magma shows that the resolution of this singularity has dual graph $E_8$. The Tjurina number of the singularity is $10$, and so this singularity is isomorphic to $E_8^0$.

Here the Z/2 action does not lift. There is a Z/10Z action, but the quotient has resolution of discriminant 10. 
The action is u--> -u, v--> -v, y--> -y, which induces x-->-x, so is not a lift of the y-->-y action used above.
\qed
\fi

 \begin{theorem} \label{thm.exhibit} Let $p $ be prime. Let $s\geq 0$. 
\begin{enumerate}[\rm (a)]
\item Assume that 
either 
$s \not \equiv 1 \mod{p}$, or that $p$ is odd and $s=1$. Then there exists a $\ZZ/p\ZZ$-quotient singularity $\Spec A^G$
with associated action ramified precisely at the origin, and such that the discriminant group of a resolution of the singularity has order $p^s$.
\item Assume that either $p$ is odd and that $s \equiv 1 \mod{p}$, or that $p=2$ and $s=1$. Then there exists a $\ZZ/p\ZZ$-quotient singularity $\Spec A^G$
with associated action ramified in codimension $1$ and such that the discriminant group of a resolution of the singularity has order $p^s$.
\end{enumerate}
\end{theorem}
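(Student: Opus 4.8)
The plan is to assemble the theorem from the explicit resolutions computed elsewhere in the paper together with the results of Artin and of Lorenzini--Mitsui recalled in the introduction; the only genuinely new ingredient is a short arithmetic choice of exponents for the Brieskorn family in part (b).

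For part (a), I would split into three cases according to $s$. If $s\geq 2$ and $s\not\equiv 1\pmod p$, the wild $\ZZ/p\ZZ$-quotient singularities on products of curves from \cite[Theorem 3.15]{Lorenzini 2018} already have $|\Phi_N|=p^s$ and arise from an action ramified precisely at the origin, so there is nothing to prove. If $s=1$ and $p$ is odd, I would take the singularity of Theorem \ref{resolution peskin} with $\ttt=1$: this is a wild $\ZZ/p\ZZ$-quotient singularity, its action is the unit case $\mu=1$ and hence is ramified precisely at the origin, and its resolution has $|\Phi_N|=p$. If $s=0$, I would use the family of \ref{moderately ramified} with $a=y^2$, $b=x$, and $\mu=1$: for $p$ odd this is precisely Theorem \ref{antidiagonal E8}, giving a trivial discriminant group together with an action ramified precisely at the origin (by \ref{moderately ramified}(a)), while for $p=2$ this is Artin's rational double point $E_8^2$ (\cite{Artin 1977}), again a wild $\ZZ/2\ZZ$-quotient singularity ramified precisely at the origin, whose resolution graph $E_8$ has trivial discriminant group. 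Since for $p=2$ the hypothesis $s\not\equiv 1\pmod p$ forces $s$ even, and for $p$ odd these three cases cover exactly the values $s\not\equiv 1\pmod p$ together with $s=1$, part (a) is settled.

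For part (b), the case $p=2$, $s=1$ is handled directly by Theorem \ref{Ap-1}, which produces a wild $\ZZ/2\ZZ$-action on $k[[u,v]]$, ramified in codimension $1$, whose quotient is the rational double point $A_1$ with discriminant group of order $2$ (Artin's $E_7$ would serve equally well). The main case is $p$ odd and $s\equiv 1\pmod p$; then $s\geq 1$, so $g:=s+1\geq 2$ and $g\equiv 2\pmod p$ is coprime to $p$. Here I would realize $p^s$ by the Brieskorn singularity $B=k[[x,y,z]]/(z^p+x^c+y^d)$ with $\gcd(c,d)=g$ and $c=pm+1$, $d=pn+1$ for suitable $m,n\geq 1$. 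Concretely: let $r\in\{1,\dots,p-1\}$ be the residue with $gr\equiv 1\pmod p$, and set $c:=gr$ and $d:=g(r+p)$. Then $c\equiv d\equiv 1\pmod p$, so $m:=(c-1)/p$ and $n:=(d-1)/p$ are integers; since $s\geq 1$ gives $gr>1$ and $gr\equiv 1\pmod p$ we have $gr\geq p+1$, hence $m\geq 1$ and $n=m+g\geq 1$; and $\gcd(c,d)=g\gcd(r,r+p)=g\gcd(r,p)=g$. Now $q=p$ is coprime to $cd$ and $\gcd(p,g)=1$, so Theorem \ref{thm.BrieskornResolution} yields a resolution whose discriminant group is killed by $p$ and has order $p^{g-1}=p^{s}$, and Theorem \ref{Brieskorn quotient sing}(i) shows $\Spec B$ is a wild $\ZZ/p\ZZ$-quotient singularity with trivial local fundamental group; as its proof identifies $B$ with a ring $\R_\mu$ in the coprime case of \ref{moderately ramified}(b), the associated action is ramified in codimension $1$.

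What remains is routine: verifying that the numerical hypotheses of the cited theorems hold in each instance, and confirming that the subcases of (a) and (b) jointly exhaust all pairs $(p,s)$ permitted by the hypotheses. The only spot needing genuine care is the arithmetic in the Brieskorn case—one must force $\gcd(pm+1,pn+1)$ to equal $s+1$ \emph{exactly}, not a proper multiple, while keeping $m,n\geq 1$ and $c\neq d$, which is why I take $d=c+gp$ rather than, say, $d=c$. I expect that small number-theoretic point, together with the case-coverage bookkeeping, to be the main thing to get right.
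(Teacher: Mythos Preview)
Your proof is correct and follows the same overall architecture as the paper's: part (a) is assembled from Theorem \ref{antidiagonal E8} ($s=0$), Theorem \ref{resolution peskin} ($s=1$, $p$ odd), and the Lorenzini--Mitsui results ($s\geq 2$, $s\not\equiv 1$); part (b) uses Theorem \ref{Ap-1} for the $p=2$ case and the Brieskorn family via Theorems \ref{thm.BrieskornResolution} and \ref{Brieskorn quotient sing} for $p$ odd. There is one organizational difference worth noting. The paper's Lemma \ref{alls} produces $m,n$ with $\gcd(pm+1,pn+1)=pr+2$ only for $r\geq 1$, so its Brieskorn construction covers $s\equiv 1\pmod p$ with $s\geq p+1$, and the remaining case $s=1$ (for $p$ odd) is handled separately by Theorem \ref{Ap-1}. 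Your modular-inverse construction $c=gr$, $d=g(r+p)$ is a bit more flexible: it works uniformly for every $s\equiv 1\pmod p$ with $s\geq 1$, so you do not need to appeal to Theorem \ref{Ap-1} for the odd-$p$ case at all. This is a cleaner bookkeeping, at the cost of making the proof of (b) depend entirely on the Brieskorn machinery; the paper's split has the mild advantage of exhibiting two independent sources (the $A_{p-1}$ quotient and the Brieskorn family) for the $s=1$ exponent. Your explicit treatment of $p=2$, $s=0$ via $E_8^2$ is also a helpful clarification, since Theorem \ref{antidiagonal E8} as stated assumes $p\geq 3$.
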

\proof (a) The cases $s=0$  and $s=1$ are covered by Theorem \ref{antidiagonal E8} and Theorem \ref{resolution peskin}, respectively.
The cases with $s \geq 2$ and $s \not \equiv 1 \mod{p}$ were obtained earlier in the papers \cite{Lorenzini 2018} and  \cite{Mit}.

(b) When $s \equiv 1 \mod{p}$ and $s\geq p+1$, we use the Brieskorn singularities  exhibited in Lemma \ref{alls}, and apply
Theorem \ref{thm.BrieskornResolution} and Theorem \ref{Brieskorn quotient sing}. The case $p=2$ and $s=1$ was noted by Artin 
and is discussed in section \ref{analogues E7}. The case $s=1$ is  treated in Theorem \ref{Ap-1}. \qed

\begin{lemma} 
\label{alls}
Let $p$ be an odd prime, and $r$ be any positive integer. 
Then there are  integers $m,n>0$ such that the discriminant group $\Phi_N$ of
the intersection matrix $N$ associated with the Brieskorn singularity $z^p+x^{pm+1} +y^{pn+1}=0$ 
described in {\rm \ref{thm.BrieskornResolution}} is isomorphic to $({\mathbb Z}/p{\mathbb Z})^{pr+1}$.
\end{lemma}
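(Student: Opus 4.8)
The plan is to reduce the statement to an elementary divisibility problem and then quote Theorem~\ref{thm.BrieskornResolution}. Take $q=p$, $c:=pm+1$, $d:=pn+1$, and write $g:=\gcd(c,d)$. Since $c\equiv d\equiv 1\pmod p$ we automatically have $p\nmid cd$, so the only hypothesis of Theorem~\ref{thm.BrieskornResolution} that needs checking is $\gcd(p,g)=1$. Granting it, that theorem says $\Phi_N$ is killed by $p$ and $|\Phi_N|=p^{g-1}$; since a finite abelian group killed by $p$ is an $\FF_p$-vector space, this already forces $\Phi_N\cong(\ZZ/p\ZZ)^{g-1}$. Hence it is enough to produce integers $m,n>0$ with
\[
\gcd(pm+1,\ pn+1)=pr+2,
\]
for then $g-1=pr+1$ and the lemma follows.

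Set $g_0:=pr+2$. Because $p$ is odd we have $\gcd(g_0,p)=\gcd(2,p)=1$, which is exactly the condition $\gcd(p,g)=1$ needed above; on the other hand $g_0\equiv 2\pmod p$, so $g_0$ is \emph{not} itself of the form $pm+1$, and the point of the construction is to pass to suitable multiples of $g_0$. I would put
\[
A:=\frac{p+1}{2},\qquad B:=\frac{3p+1}{2}=A+p,
\]
so that $2A\equiv 2B\equiv 1\pmod p$, i.e. $g_0A\equiv g_0B\equiv 1\pmod p$; moreover $\gcd(A,B)=\gcd(A,B-A)=\gcd\!\bigl(\tfrac{p+1}{2},p\bigr)=1$ since $1\le\tfrac{p+1}{2}<p$ and $p$ is prime. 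Then define
\[
m:=\frac{(p+1)r}{2}+1,\qquad n:=\frac{(3p+1)r}{2}+3,
\]
which are positive integers because $p+1$ and $3p+1$ are even. A direct computation gives $pm+1=g_0A$ and $pn+1=g_0B$, whence
\[
\gcd(pm+1,\ pn+1)=g_0\,\gcd(A,B)=g_0=pr+2.
\]
Thus, with $c=pm+1$ and $d=pn+1$, all hypotheses of Theorem~\ref{thm.BrieskornResolution} hold and it yields $|\Phi_N|=p^{g-1}=p^{pr+1}$ with $\Phi_N$ killed by $p$, so $\Phi_N\cong(\ZZ/p\ZZ)^{pr+1}$.

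There is no real obstacle here: once Theorem~\ref{thm.BrieskornResolution} is available the whole argument is $\gcd$-bookkeeping, and the only subtlety worth flagging is that $pr+2\not\equiv 1\pmod p$ for $p$ odd, so one genuinely has to realise $pr+2$ as the $\gcd$ of two multiples of it (one fixed multiple $A$, one multiple $B=A+p$ chosen coprime to $A$), rather than as a single value $pm+1$.
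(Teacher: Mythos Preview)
Your proof is correct and is essentially the same as the paper's: the paper takes $n=(pr+r+2)/2$ and $m=n+(pr+2)=(3pr+r+6)/2$, which are exactly your $m$ and $n$ with the roles swapped, yielding $pn+1=(pr+2)\tfrac{p+1}{2}$ and $pm+1=(pr+2)\tfrac{3p+1}{2}$. Your presentation via the coprime cofactors $A=\tfrac{p+1}{2}$ and $B=A+p$ makes the $\gcd$ computation slightly more transparent than the paper's, but the construction is identical.
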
      

\proof
In view of Theorem \ref{thm.BrieskornResolution},
we need to produce integers $n$ and $m$ such that $\gcd(pn+1,pm+1) =pr+2$. 
For this, it suffices to take 
$n:=(pr+r+2)/2$, so that $pn+1=(pr+2)(p+1)/2$, and to set $m=n+(pr+2)$ so that $m:=(3pr+r+6)/2$. 
\qed

\medskip
Note that not all elementary abelian $p$-groups appear as discriminant groups $\Phi_N$ attached 
to the intersection matrix $N$ associated with a Brieskorn singularity $z^p+x^{pm+1} +y^{pn+1}=0$. 
Indeed, for all $m,n>0$, the integer $g=\gcd(pm+1,pn+1)$ is never divisible by $p$. Thus in the above setting
 $\Phi_N$ cannot be isomorphic to $({\mathbb Z}/p{\mathbb Z})^{pr-1}$ for any $r>0$.

\if false
\begin{remark}  
The Brieskorn singularity $B:=k[[x,y,z]]/(z^p-x^c-y^d)$ is known to be sandwiched between two regular local rings with purely inseparable morphisms 
of degree $p$, as follows:
$$\Spec k[[u,v]] \to \Spec B \to \Spec k[[x,y]],$$ 
where we set 
$x:=u^p$, $y:=v^p$, and $z:=u^c+v^d$. It is clear that $z^p=x^c+y^d$. 
In fact,  there is an action of the group ${\mathbb \alpha}_p/k$ on $\Spec k[[u,v]]]$ such that $\Spec B$ is isomorphic over $k$ 
to the quotient $(\Spec k[[u,v]]])/{\mathbb \alpha}_p$.
Indeed, let $A:=k[[u,v]]$. Consider the following $k$-derivation $D: A \to A$. Set $D(u):=dv^{d-1}$ and $D(v):= -cu^{c-1}$. Then $D(z)=0$, and we find that ${\rm Ker}(D) \subseteq B$. We claim that in fact ${\rm Ker}(D) = B$. Such derivation can be used to endow $\Spec A$ with an action of the group scheme ${\mathbb \alpha}_p$, and   
the quotient $(\Spec A)/{\mathbb \alpha}_p$ can be identified with $\Spec B$. A summary of properties of such actions can be found in 
\cite{Schroeer 2007}, sections 1 and 2.
\end{remark}
\fi

\begin{remark} \label{remark.classgroup}
Let $B$ be a complete noetherian local  ring that is two-dimensional and normal, with algebraically closed residue field.
Consider a resolution  of singularities $X \to \Spec B$, with associated intersection matrix $N$. Recall that
there is a natural surjection ${\rm Cl}(B) \to \Phi_N$ (see \cite{Lip}, 14.4). In particular, when $\det(N) \neq 1$,
we obtain  a natural non-trivial finite quotient of ${\rm Cl}(B)$ from the computation of a resolution of $\Spec B$.

\if false
Let us recall here how the group $\Phi_N$ is related with the class group ${\rm Cl}(A^G)$. Consider a resolution $\pi:X \to \Spec A^G$. 
Let  $C_1,\dots, C_n$ denote the  irreducible components in the exceptional divisor of $\pi$.
Let $e(C_i)$ denote the geometric multiplicity of $C_i/k$. When $k$ is algebraically closed, $e(C_i)=1$ for $i=1,\dots, n$. As usual, $N:=((C_i \cdot C_j)_X)$ 
is the symmetric intersection matrix, and we let $N' \in M_n({\mathbb Z})$ denote the following modified matrix: 
$$N':={\rm diag}(1/e_1,\dots, 1/e_n)N.$$
Let $\Phi':={\mathbb Z}^n/{\rm Im}(N').$
Let $U:=X \setminus \{ \cup_{i=1}^n C_i\}$.
When $A^G$ is Henselian, recall that ${\rm Pic}(U)$ is isomorphic to ${\rm Cl}(A^G)$, and that we have a natural surjection
${\rm Pic}(U) \to \Phi'$ (see \cite{Lip}, 14.4).
\fi 

The study of the class group ${\rm Cl}(B)$ of 
$B:=k[[x,y]][z]/(z^p-x^c-y^d)$
was initiated by Samuel in \cite{Sam}, Proposition (3) in section 6 (see also \cite{Fos}, Chapter IV, section 17).
When $p=2$, Samuel is able to exhibit by a completely algebraic method a finite quotient of ${\rm Cl}(B)$ of order $p^{g-1}$, where $g:=\gcd(c,d)$.
Under the hypothesis of Theorem \ref{thm.BrieskornResolution}, $p^{g-1}$ would also be the order of the corresponding group $\Phi_N$.
\end{remark}

\if false
Let $k$ be a field of characteristic $p>0$. Set $R=k[[x,y]]$, and let $a,b\in R$ be
a system of parameters. In \cite{LS1}, we introduced the class of hypersurface singularities
given by an   equation of the form
$$
z^p-(ab)^{p-1}z -a^py + b^px =0.
$$
The resulting local ring $B=k[[x,y,z]]/(z^p-(ab)^{p-1}z -a^py + b^px)$ is the 
ring of invariants $B=A^G$ for some regular local ring $A=k[[u,v]]$
endowed with an action of $G=\ZZ/p\ZZ$ given by $u\mapsto u+a$ and $v\mapsto v+b$.
This action is ramified only at the origin and satisfies further peculiar properties.
We called such actions \emph{moderately ramified}.
More generally, we studied equations of the form
$$
z^p-(\mu ab)^{p-1}z -a^py + b^px =0,
$$
where $\mu\in R$ is an additional formal power series. 
If $\mu$ is non-zero and coprime to $a$ and $b$  
the local ring $B$ remains  the ring of invariants $B=A^G$,
where $A=k[[u,v]]$ and  the action is given by $u\mapsto u+\mu a$ and $v\mapsto v+\mu b$, which  is ramified
along the subscheme defined by the ideal $\mu(a,b)\subset A$.

In the special case $\mu=0$ and monomials $a=y^n$ and $b=x^m$, our equation becomes
$z^p-y^{pm+1} + x^{pm+1}=0$. This is a weighted homogeneous equation,
and the resulting singularities are called \emph{Brieskorn singularities}.
Their resolution over the complex numbers
is described in \cite{O-W}.

Let $f:X\ra \Spec(B)$ be the minimal   resolution of singularities, $E_1+\ldots+E_r\subset X$
be the   the exceptional divisor, $N=(E_i\cdot E_j)$
the resulting intersection matrix, $\Phi_N=\ZZ^r/N\ZZ^r$ the discriminant group,
and $\Gamma_N$ the dual graph for the exceptional divisor.
In characteristic two, the special case  $a=y^2$ and $b=x$ gives
rational double points of type $E_8$.
It follows from the algorithm given in \cite{Greuel; Kroening 1990}, Section
that is is an $E_8^2$ singularity if and only if $\mu\in R^\times$.
Otherwise, it is either $E_8^0$ or $E_8^1$, and the singularities become
simply-connected. In any case,   the discriminant group $\Phi_N$ is trivial.
Our main result here is that the triviality of the discriminant group extends to arbitrary primes $p>0$.
In particular, wild quotient singularities with $\det(N)=\pm 1$ indeed do exist.
Recall that they do  not  occur for diagonal actions on products of curves.
\fi

\section{Analogues of the \texorpdfstring{$E_6$}{E6} singularities}
\mylabel{analogues E6}

Let $k$ be an algebraically closed field of characteristic $p\geq 3$. 
Let $\ttt \in k[y]$, $\ttt \neq 0$.
Consider the  automorphism $\sigma$ of the polynomial ring $k[u,v,y]$ given by
$$
u\longmapsto u+\ttt v, \quad v\longmapsto v+\ttt y,\quadand y\longmapsto y.
$$
This automorphism has order $p$. We exclude the case $p=2$ in this section because when $p=2$, $\sigma$ has order $4$.
Let 
$$
N_u:= {\rm Norm}(u) =  \prod_{d=0}^{p-1} \sigma^d(u)  =   \prod_{d=0}^{p-1}\left(u+d\ttt v+\frac{d(d-1)}{2}\ttt^2 y\right),
$$
and 
$$
x:= {\rm Norm}(v) =v^p-(\ttt y)^{p-1}v.
$$
Finally, let
$$z:=v^2 -\ttt yv-2yu.
$$

Let $G:={\mathbb Z}/p{\mathbb Z}$ act on $k[u,v,y]$ through $\sigma$. When $\ttt =1$, the ring of invariants 
$ k[u,v,y]^G$ is   known to be generated by $x$, $y$, $z$, and $N_u$, subject to a single relation (see e.g., \cite{C-W}, 4.10). This relation was made explicit by Peskin who showed in \cite{Peskin 1983}, Lemma 5.6,  that   $h_{\ttt =1}(x,y,z, N_u)=0$, where,  
\begin{equation*}
h_{\ttt =1}(x,y,z, N_u) :=z^p +   2y^p N_u    - x^2 + \sum_{n=2}^{(p+1)/2} (-1)^nC_{n-1} y^{2p-2n}z^n.
\end{equation*}
Here  $C_{n-1}:=(2n-2)!/n!(n-1)!$ are the Catalan numbers.

When $\ttt \neq 1$, the above result can be used to show that  
$x$, $y$, $z$, and $N_u$, are subject to the relation  
\begin{equation*}
h(x,y,z, N_u) :=z^p +   2y^p N_u    - x^2 + \sum_{n=2}^{(p+1)/2} (-1)^nC_{n-1} (\ttt y)^{2p-2n}z^n=0.
\end{equation*}
Indeed, the morphism $k[u,v,y] \to k[U,V,y]$, which sends $u \mapsto \mu^2U$, $v \mapsto \mu V$, and $y \mapsto y$, is $G$-equivariant
when $k[u,v,y] $ is endowed with the action of $\sigma$, and $k[U,V,y]$ is endowed with the action of $\sigma_1$, with $\sigma_1(U)=U+V$ and $\sigma_1(V)=V+y$.

For any choice of $c(y) \in yk[y]$, we can consider the ring 
$$
\PeskinA:=k[u,v,y]/(N_u-c(y)).
$$
We will slightly abuse notation and denote again by $x$, $y$, $z$, $u$, $v$, the classes of these elements in $A_0$.
Clearly, the automorphism $\sigma$ fixes the polynomial $N_u-c(y)$, and thus induces an automorphism on
$\PeskinA$, again denoted by $\sigma$. This endows $\PeskinA$ with an action of $G$. 
Let $A$ denote the formal completion $\widehat{\PeskinA}$ of the ring $\PeskinA$ at the maximal ideal $(u,v,y)$. 

The fixed scheme of the $G$-action on $\Spec(\PeskinA)$ is given by the ideal $I:=(\ttt v,\ttt y)$.
When $\ttt \in k^*$,
$I=(v,y)=( u^p,v,y)$, and thus its radical is the maximal ideal $(u,v)$.
Hence, the morphism $\Spec A \to \Spec A^G$ is ramified precisely at the origin. 
When 
$\ttt \neq 0$   is not a unit,  the morphism $\Spec A \to \Spec A^G$ is ramified in codimension $1$. 

The study of the singularities of the rings $\Spec A^G$ when $\mu=1$ was initiated by Peskin
(\cite{Peskin 1980}, Chapter III, \S 4, and  \cite{Peskin 1983}, Section 5). 
In this section we treat the case where $c(y)=y$, and obtain in this way a family of wild quotient singularities $A^G$ of multiplicity $2$ whose discriminant groups have order $|\Phi|=p$.
For $p>3$, these singularities can be viewed as analogues of the rational double point of type $E_6^1$ in characteristic $p=3$,
which was shown to be a wild $\ZZ/3\ZZ$-quotient singularity by Artin \cite{Artin 1977}.

\begin{proposition}
\mylabel{peskin completion} Let $c(y):=y$. Let $\ttt \in k[y]$. 
Then the ring $\PeskinA $ is a domain, the formal completion $A$
is regular, and the canonical map $k[[u,v]]\ra A$ is bijective.
\end{proposition}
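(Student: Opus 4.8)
The plan is to read off everything from the elementary shape of the norm $N_u=\prod_{d=0}^{p-1}\bigl(u+d\ttt v+\tfrac{d(d-1)}{2}\ttt^2y\bigr)$, exploiting that $\ttt\in k[y]$ reduces to a scalar modulo the ideal $(v,y)$. Two features of $N_u$ will be decisive: modulo $(v,y)$ each factor becomes $u$, so $N_u\equiv u^p$; and the factor for $d=0$ is exactly $u$, so $N_u(0,v,y)=0$ and hence the constant term of $N_u$ as a polynomial in $u$ vanishes.

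First I would show that $f:=N_u-y$ is irreducible in $k[u,v,y]$, which gives that $\PeskinA=k[u,v,y]/(f)$ is a two-dimensional domain. View $f$ as a polynomial in $u$ over $R:=k[v,y]$; it is monic of degree $p$, its constant term in $u$ is $N_u(0,v,y)-y=-y$, and $f\equiv u^p\bmod(v,y)$. Suppose $f=g\,h$ is a nontrivial factorization; since $f$ is $u$-monic the $u$-leading coefficients of $g$ and $h$ are units of $R$, so after rescaling we may take $g,h\in R[u]$ monic in $u$, and then both have positive $u$-degree (a $u$-degree $0$ factor would divide $1$). Reducing modulo $(v,y)$ yields a factorization of $u^p$ into two monic polynomials of positive degree in $k[u]$, hence $\bar g=u^a$ and $\bar h=u^{p-a}$ with $1\le a\le p-1$. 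Then the constant-in-$u$ terms $g(0,v,y)$ and $h(0,v,y)$ both lie in $(v,y)$, so their product lies in $(v,y)^2$; but this product is the constant-in-$u$ term of $f$, namely $-y\notin(v,y)^2$, a contradiction. So $f$ is irreducible and $\PeskinA$ is a domain.

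Next I would identify $A=\widehat{\PeskinA}$. Since $\PeskinA$ is a two-dimensional domain finitely generated over $k$ and $(u,v,y)$ is a $k$-rational maximal ideal, $A$ is a complete noetherian local $k$-algebra of dimension $2$, and $\PeskinA$ embeds in $A$. The relation $y=N_u(u,v,y)$ holds in $A$, and each factor of $N_u$ lies in $\maxid_A$ because $u,v,y\in\maxid_A$; hence $y=N_u\in\maxid_A^p\subseteq\maxid_A^2$ as $p\ge3$. Thus $\maxid_A=(u,v,y)=(u,v)+\maxid_A^2$, so $\maxid_A=(u,v)$ by Nakayama, and therefore $\dim_k\maxid_A/\maxid_A^2\le2=\dim A\le\dim_k\maxid_A/\maxid_A^2$, forcing equality. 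So $A$ is regular of dimension $2$ with regular system of parameters $u,v$. The canonical map $k[[u,v]]\to A$, $u\mapsto u$, $v\mapsto v$, is then a continuous surjection of complete local rings (its image contains $u,v$, which generate $\maxid_A$, and maps onto the residue field) between regular local rings of the same dimension; its kernel is a prime of $k[[u,v]]$ which, if nonzero, would force $\dim A<2$, so the kernel vanishes and the map is bijective.

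The only step requiring attention is the bookkeeping on $N_u$ in the irreducibility argument, together with the essential use of the hypothesis $c(y)=y$: it is precisely the nonzero linear term of $c(y)$ that keeps $-y$ out of $(v,y)^2$, and for a general $c(y)\in yk[y]$ vanishing to higher order the argument (and presumably the conclusion) would fail. Everything else is standard commutative algebra; in particular no explicit extraction of roots of power series, nor a Hensel-type construction of $y$ as an element of $k[[u,v]]$, is needed, since the defining relation $y=N_u$ by itself forces $y\in\maxid_A^2$.
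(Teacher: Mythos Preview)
Your proof is correct, and the irreducibility argument takes a genuinely different route from the paper. The paper invokes the Eisenstein--Dumas criterion via the Newton polygon of $f=N_u-y$ with respect to the $y$-adic valuation on $k[v,y]$, asserting that this polygon is the single segment from $(0,0)$ to $(p,1)$. You instead reduce modulo the maximal ideal $(v,y)$ of $k[v,y]$: since each factor of $N_u$ reduces to $u$, you get $\bar f=u^p$, and any monic factorization $f=gh$ of positive $u$-degrees would then force $g(0,v,y),h(0,v,y)\in(v,y)$ while their product is $-y\notin(v,y)^2$. Your route is more elementary and in fact more robust: when $\mu(0)\ne 0$ one computes $N_u\equiv u^p-(\mu(0)v)^{p-1}u\pmod{y}$, so the $u$-coefficient of $f$ has $y$-adic valuation $0$ and the Newton polygon with respect to $y$ alone is \emph{not} the single segment the paper asserts. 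Your reduction modulo $(v,y)$ kills this $v^{p-1}$ term and sidesteps the issue entirely. The regularity and bijectivity steps are essentially the paper's argument (showing $y$ vanishes in $\maxid_A/\maxid_A^2$ and then matching cotangent spaces), and your closing remark that the hypothesis $c(y)=y$ enters precisely through $y\notin(v,y)^2$ is on point.
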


\proof
The expression $f(u,v,y):=N_u-y$ is a monic polynomial of degree $p$ in the variable $u$ over the factorial ring $k[v,y]$, 
with constant term $f(0,v,y)=-y$. Its Newton polygon with respect to the $y$-adic valuation is
thus the straight line from $(0,0)$ to $(p,1)$ in $\RR^2$, and we conclude
with the Eisenstein--Dumas Theorem \cite{Mott 1994} that $f$ is irreducible as a polynomial  
over $k(v,y)$, and with the Gau\ss{} Lemma that it is also irreducible as  a polynomial over  $k[v,y]$.

The ring $\PeskinA$ and its formal completion $A$ are thus two-dimensional domains.
To see that the local ring $A$ is regular, we have to check that the cotangent space $\maxid_A/\maxid_A^2$ has vector space
dimension at most two. Indeed, this vector space is generated by $u,v,y$.
In light of the relation $N_u-y=0$, the class of $y$ vanishes. 
In turn, the canonical map $k[[u,v]]\ra A$ between complete local rings
induces a bijection on cotangent spaces, and is thus bijective.
\qed

\medskip Let $\ttt \in k[y]$.
Abusing notation slightly, 
we let $h(x,y,z) \in k[x,y,z]$ be defined as
\begin{equation}
\label{peskin equation}
h(x,y,z) :=z^p +   2y^{p+1} - x^2 + \sum_{n=2}^{(p+1)/2} (-1)^nC_{n-1} (\ttt y)^{2p-2n}z^n.
\end{equation}
We let 
$B_{\ttt}:= k[[x,y,z]]/(h)$.

\begin{proposition} 
Let $\ttt \in k[y]$, $\ttt \neq 0$.
Then the canonical map $k[[x,y,z]]/(h)\ra A^G$ is bijective. In particular, the wild quotient singularity $A^G$ 
is a complete intersection of multiplicity two.
\end{proposition}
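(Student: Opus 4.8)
The plan is to identify both the source and the target with explicit two‑dimensional objects and then match them. By Proposition~\ref{peskin completion} the ring $A=\widehat{\PeskinA}$ is a regular local domain of dimension two, hence normal. Since $G$ is finite, every $a\in A$ is a root of the monic polynomial $\prod_{d=0}^{p-1}(T-\sigma^d(a))\in A^G[T]$, so $A$ is integral over $A^G$. Therefore $A^G$ is a local domain (its unique maximal ideal being $\maxid_A\cap A^G$) with $\dim A^G=\dim A=2$, and it is normal, being integrally closed in $\Frac(A^G)$ inside the normal domain $A$. Moreover $\PeskinA$ is module‑finite over $\PeskinA^G$ by Noether's finiteness theorem, the maximal ideal $(u,v,y)$ is the only maximal ideal of $\PeskinA$ lying over $(x,y,z)\subset\PeskinA^G$, and taking $G$‑invariants commutes with the flat base change $\PeskinA^G\to\widehat{\PeskinA^G}$; hence $A^G=\widehat{\PeskinA^G}$.

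For surjectivity I would invoke the description of the invariant ring. For $\ttt=1$ the ring $k[u,v,y]^G$ is generated over $k$ by $x$, $y$, $z$, and $N_u$ (recalled above from \cite{C-W}, 4.10); for general $\ttt\neq 0$ the same generation follows from the $G$‑equivariant substitution $u\mapsto\ttt^2U$, $v\mapsto\ttt V$, $y\mapsto y$ used earlier in this section. Since $c(y)=y$, in $\PeskinA=k[u,v,y]/(N_u-y)$ one has $N_u=y$, so $\PeskinA^G$ is already generated over $k$ by $x,y,z$. Thus the canonical surjection $k[x,y,z]\to\PeskinA^G$ induces, after completion, a surjection $k[[x,y,z]]\to\widehat{\PeskinA^G}=A^G$, and since the relation $h(x,y,z)=0$ (that is, $h_\ttt(x,y,z,N_u)=0$ evaluated at $N_u=y$) already holds in $A^G$, it factors through a surjection $\bar\psi\colon k[[x,y,z]]/(h)\to A^G$.

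It remains to show $\bar\psi$ is injective, for which I would prove that $h$ is irreducible in $k[[x,y,z]]$. Write $h=-x^2+g(y,z)$ with
$$
g(y,z):=z^p+2y^{p+1}+\sum_{n=2}^{(p+1)/2}(-1)^nC_{n-1}(\ttt y)^{2p-2n}z^n\in k[[y,z]].
$$
For $p\geq 3$, every monomial of $g$ other than $z^p$ has total degree $\geq p+1$, so the leading form of $g$ is $z^p$ and $\ord(g)=p$ is odd; hence $g$ is not a square in the regular local ring $k[[y,z]]$. As $\operatorname{char}(k)=p\neq 2$, the monic quadratic $x^2-g$ has no root in $k[[y,z]]$ and is therefore irreducible in $k[[y,z]][x]$; being monic in $x$, Weierstrass division yields a ring isomorphism $k[[x,y,z]]/(x^2-g)\simeq k[[y,z]][x]/(x^2-g)$, so $(h)=(x^2-g)$ is a prime ideal of height one and $k[[x,y,z]]/(h)$ is a two‑dimensional domain. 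Then $\bar\psi$ is a surjection of two‑dimensional domains, and since $k[[x,y,z]]/(h)$ is a catenary domain its kernel is a prime of coheight two, i.e. the zero ideal; hence $\bar\psi$ is an isomorphism. Finally $A^G\simeq k[[x,y,z]]/(h)$ is a hypersurface, hence a complete intersection, of multiplicity $\ord(h)=\min\{2,\ord(g)\}=\min\{2,p\}=2$.

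The main obstacle is the surjectivity step: one must know that the invariant ring is generated by $x,y,z$ not only for $\ttt=1$ but for all $\ttt\neq 0$, and that this generation persists after completion (through the identification $A^G=\widehat{\PeskinA^G}$). Once $x,y,z$ are known to generate, the irreducibility of $h$ and the dimension count are routine.
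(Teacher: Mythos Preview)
Your injectivity argument via irreducibility of $h$ is clean, and the dimension count that turns surjectivity into bijectivity is correct. The problem lies exactly where you flag it: surjectivity. The $G$-equivariant substitution $u\mapsto\ttt^2U$, $v\mapsto\ttt V$, $y\mapsto y$ is an isomorphism only when $\ttt\in k^\times$; for $\ttt\in yk[y]$ it is merely an injective map, so knowing that $X_1,y,Z_1,N_{U,1}$ generate $k[U,V,y]^{G_1}$ tells you nothing about generators of $k[u,v,y]^G$. The paper uses this substitution only to transport the \emph{relation} $h$, not the generation statement. There is a second, subtler gap: even for $\ttt=1$, the implication ``$k[u,v,y]^G$ is generated by $x,y,z,N_u$'' $\Rightarrow$ ``$\PeskinA^G$ is generated by the images of $x,y,z$'' presumes that taking $G$-invariants commutes with the quotient by $(N_u-y)$, i.e.\ that $k[u,v,y]^G/(N_u-y)\to(k[u,v,y]/(N_u-y))^G$ is surjective. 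In the modular setting $|G|=p=\operatorname{char}(k)$ this is not automatic and would need its own justification.

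The paper sidesteps both issues by a different and shorter route. Rather than prove surjectivity, it observes that both $k[[x,y,z]]/(h)$ and $A^G$ are finite free $k[[x,y]]$-algebras of rank~$p$ (the first because $h$ is monic of degree~$p$ in $z$, the second because $A^G$ is normal of dimension~$2$, hence Cohen--Macaulay, and $[\Frac(A^G):k((x,y))]=p$). It then shows that $h=0$ defines an \emph{isolated} singularity, so $k[[x,y,z]]/(h)$ is normal; the canonical map is therefore an injection of normal domains inducing an isomorphism on fraction fields (both sides have degree~$p$ over $k((x,y))$), and Zariski's Main Theorem finishes. This argument never needs to know generators of the invariant ring. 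If you want to repair your approach, the cleanest fix is to import precisely this rank-$p$ observation: once you know both sides are free of rank~$p$ over $k[[x,y]]$ and the source is a domain, your map is injective with the same fraction field, and then normality of $A^G$ (which you did prove) forces it to be onto.
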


\proof
Both local rings $k[[x,y,z]]/(h)$ and $A^G$ are Cohen--Macaulay, and  finite $k[[x,y]]$-algebras of rank $p$.
One easily sees that $h(x,y,z)=0$ defines an isolated singularity, by using the relations $h_x=-2x$ and  $2z(\mu+y\mu_y)h_z+yh_y=2y^{p+1}$
between partial derivatives.
It follows that $k[[x,y,z]]/(h)$ is normal, and that the canonical map induces a bijection on the field of fractions.
The map in question is thus bijective, by Zariski's Main Theorem.
Clearly, the monomial $x^2$ is the lowest term in $h(x,y,z)$, and it follows that the complete intersection $A^G$ has multiplicity two.
\qed

\begin{theorem}
\label{resolution peskin} 
Let $\mu \in k[y]$.
Let $X\ra\Spec(B_{\ttt})$ be the minimal resolution of singularity, with associated intersection matrix $N$.
Then the dual graph $\Gamma_N$ is independent of $\ttt$, and   
takes
the form:
$$
\begin{tikzpicture}
[node distance=1cm, font=\small] 
\tikzstyle{vertex}=[circle, draw, fill, inner sep=0mm, minimum size=1.1ex]
\node[vertex]	(v1)  	at (0,0) 	[label=below:{}] 		{};
\node[]	(dummy1)		[right of=v1, label=above:{}]	{};
\node[vertex]	(v2)			[right of=dummy1, label=below:{}]	{};
\node[vertex]	(v2)			[right of=dummy1, label=below:{}]	{};
\draw [decorate,decoration={brace, raise=6pt}] (v1)--(v2) node [black, midway,xshift=-0cm,yshift=0.5cm] {$^{p-1}$};
\node[vertex]	(v3)			[right of=v2, label=below:{}]	{};
\node[vertex]	(v)			[above of=v3,  label=above:{ $^{-(p+1)/2}$}]	{};
\node[vertex]	(v4)			[right of=v3, label=below:{}]	{};
\node[]	(dummy2)		[right of=v4, label=above:{}]	{};
\node[vertex]	(v5)			[right of=dummy2, label=below:{}]	{};
\draw [decorate,decoration={brace, raise=6pt}] (v4)--(v5) node [black, midway,xshift=-0cm,yshift=0.5cm] {$^{p-1}$};
\draw [thick, dashed] (v1)--(v2);
\draw [thick] (v2)--(v3)--(v4);
\draw [thick] (v)--(v3);
\draw[thick, dashed] (v4)--(v5);
\end{tikzpicture}
$$
The associated discriminant group $\Phi_N$ has order $p$.
\end{theorem}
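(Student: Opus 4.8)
The plan is to prove that the combinatorial type of the resolution of $\Spec(B_\mu)$ is independent of $\mu$, by isolating the weighted-homogeneous leading form of $h$, resolving the associated Brieskorn singularity via Theorem \ref{thm.BrieskornResolution}, and then checking that the remaining terms of $h$ affect neither the exceptional configuration nor its self-intersection numbers. Recall from the preceding propositions that $B_\mu$ is a normal complete intersection surface singularity of multiplicity two, so that it has a well-defined minimal resolution and intersection matrix.

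First I would extract the leading form. Give $x,y,z$ the weights $p(p+1)$, $2p$, $2(p+1)$; then $h_0:=z^p-x^2+2y^{p+1}$ is homogeneous of weight $2p(p+1)$, whereas each remaining monomial $(\mu y)^{2p-2n}z^n$ with $2\le n\le (p+1)/2$ has weight at least $(2p-2n)(2p)+2n(p+1)=2p(p+1)+2(p-1)(p-n)$, which is strictly larger than $2p(p+1)$ because $n<p$ (and it only grows if $\mu$ is assigned positive weight). Thus $h=h_0+(\text{terms of strictly larger weight})$.

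Next I would resolve the Brieskorn singularity $\Spec(B_0)$ with $B_0:=k[[x,y,z]]/(h_0)$. Over the algebraically closed field $k$, rescaling $x$ and $y$ turns $h_0$ into $z^p+x^2-y^{p+1}$, a Brieskorn polynomial with $(q,c,d)=(p,2,p+1)$; here $q$ is coprime to $cd$ and $\gcd(p,g)=1$, where $g:=\gcd(c,d)=2$, since $p$ is odd. Theorem \ref{thm.BrieskornResolution} then gives a star-shaped resolution with intersection matrix $N\bigl(s_0\mid a_1/b_1,\,a_2/b_2,\,a_0/b_0,\,a_0/b_0\bigr)$, where $a_1=c/g=1$ (so the term $a_1/b_1$ is deleted), $a_2=d/g=(p+1)/2$ and $a_0=q=p$; using $\ell_0=cd/g=p+1\equiv 1\pmod p$ and $\ell_2=cq/g=p\equiv-1\pmod{(p+1)/2}$ one gets $b_0=p-1$ and $b_2=1$, and the displayed formula for $s_0$ evaluates to $s_0=2$. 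Since $p/(p-1)=[2,\dots,2]$ with $p-1$ entries and $(p+1)/2=[(p+1)/2]$, the two $a_0/b_0$-chains are $A_{p-1}$-chains, the $a_2/b_2$-chain is a single vertex of self-intersection $-(p+1)/2$, and the central node has self-intersection $-2$: this is exactly the dual graph in the statement. The last clause of Theorem \ref{thm.BrieskornResolution} (the case $q=p$) gives $|\Phi_N|=p^{g-1}=p$, and since every self-intersection number is $\le -2$ there is no $(-1)$-curve, so this is the minimal resolution.

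The remaining task, and the one I expect to be the main obstacle, is to transfer this picture from $B_0$ to $B_\mu$. I would run the very same sequence of normalized blowing-ups on $\Spec(B_\mu)$, using Propositions \ref{blowing-up computation}, \ref{correction degree} and \ref{correction self-intersection}, and verify that on every chart the strict transform of $h$ differs from that of $h_0$ only by terms lying in a strictly higher power of the ideal of the exceptional divisor — which is precisely what the weight estimate above ensures. Then the normalizations agree, the Hirzebruch--Jung types that appear are the same, and all self-intersection numbers are unchanged, so $\Spec(B_\mu)$ has the same dual graph $\Gamma_N$ and the same discriminant group $\Phi_N$ of order $p$. The delicate point is to keep track of the blowing-ups carefully enough to confirm that the higher-weight $\mu$-terms never contribute, especially on the chart carrying the central node, where — as in the proof of Theorem \ref{intersection graph} — one extracts a $p$-th root of a unit via Hensel's Lemma and must check that the perturbed unit still admits one, which is where the coprimality hypotheses involving $p$ are used.
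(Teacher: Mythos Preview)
Your computation for $\mu=0$ is clean and correct: the weight assignment, the application of Theorem~\ref{thm.BrieskornResolution} with $(q,c,d)=(p,2,p+1)$, the identification $s_0=2$, $a_0/b_0=p/(p-1)$, $a_2/b_2=(p+1)/2$, and the check that no $(-1)$-curve appears, all go through, and this is a genuinely different and more conceptual route than the paper's. The paper never invokes Theorem~\ref{thm.BrieskornResolution} here; instead it blows up the maximal ideal of $B_\mu$ once, normalizes, finds on the $y$-chart a single double point whose quadratic initial form factors, applies Lemma~\ref{detection rdp} to identify it as $A_m$, computes the Tjurina number (uniformly in $\mu$) to force $m\in\{2p-1,2p\}$, and then uses the involution $x\mapsto -x$ to prove $m=2p-1$ and that the strict transform attaches at the middle vertex. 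Your approach \emph{explains} the $\mu$-independence via the weighted-homogeneous leading form; the paper's approach \emph{proves} it by a computation that simply never sees $\mu$ in the decisive steps.

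The gap is in your transfer step. You propose to ``run the very same sequence of normalized blowing-ups'' from the proof of Theorem~\ref{thm.BrieskornResolution}, but that proof begins with the birational substitution $Z=zxy$ identifying $B_0$ as the normalization of $k[[x,y,Z]]/(Z^p-x^py^p(y^{p+1}-x^2))$, and only then feeds into Theorem~\ref{intersection graph}. This substitution does not extend to $B_\mu$: the terms $(\mu y)^{2p-2n}z^n$ become $\mu^{2p-2n}y^{2p-3n}x^{-n}Z^n$, which are not regular. So there is no ``same sequence'' available to run, and the weight estimate alone, while correct, does not substitute for it. To salvage your strategy you would have to replace the proof of Theorem~\ref{thm.BrieskornResolution} by a direct weighted blow-up of $\Spec k[[x,y,z]]$ with weights $(p(p+1)/2,\,p,\,p+1)$, show that the strict transform of $h_\mu$ on each chart differs from that of $h_0$ only by terms in a higher power of the exceptional ideal, and then check that the residual Hirzebruch--Jung singularities and their resolutions are insensitive to these perturbations (this is where your Hensel's-Lemma remark enters, and it must be checked, not asserted). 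That is a complete proof in its own right, independent of Theorem~\ref{thm.BrieskornResolution}, and of comparable length to the paper's direct argument; as written, your proposal stops at the point where the real work begins.
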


\proof
Consider the blow-up  $Z \to \Spec(B_{\ttt})$ of $\Spec(B_{\ttt})$ with respect to the ideal $(x,y,z)$. Let $Y \to Z$ denote the normalization of $Z$.
Let $E$ denote the exceptional divisor of the blow-up, and let $D$ denote its schematic preimage in $Y$.

The blow-up $Z$ is covered by three charts that we call the $x$-chart, $y$-chart, and $z$-chart.
We consider in detail below the $y$-chart and show that its normalization contains a unique singular point $y_0$. 
Proceeding in an analogous way as for the $y$-chart, 
the reader will check that the normalizations of the $x$-chart and the $z$-chart are regular.

On the $y$-chart, the  strict transform of $h(x,y,z)=0$ becomes
$$
\left(\frac{z}{y}\right)^py^{p-2} + 
2y^{p-1} - \left(\frac{x}{y}\right)^2 +
\sum_{n=2}^{(p+1)/2} (-1)^nC_{n-1} \ttt^{2p-2n} y^{2p-n-2}\left(\frac{z}{y}\right)^n  =0.
$$
The fraction $x/y^{(p-1)/2}$ satisfies the integral equation
\begin{equation}
\label{normalized blowing-up}
\left(\frac{z}{y}\right)^py  + 2y^2 - \left(\frac{x}{y^{(p-1)/2}}\right)^2 +
\sum_{n=2}^{(p+1)/2} (-1)^nC_{n-1} \ttt^{2p-2n} y^{p-n+1}\left(\frac{z}{y}\right)^n =0.
\end{equation}
Write $g=(z/y)^py + 2y^2 -  (x/y^{(p-1)/2})^2 + \ldots$ for the   polynomial on the left.
Up to radical, its jacobian ideal contains $y$, because this defines the exceptional divisor on the $y$-chart
and there are no singularities outside. Obviously it also contains $x/y^{(p-1)/2}$. 
Using the partial derivative $g_y = (z/y)^p + \ldots $, we see that it furthermore contains $z/y$.
Thus the normalization of the $y$-chart is given by the three variables $z/y,y,x/y^{(p-1)/2}$
and the equation $g=0$. 

{\it We claim that $D_{\red}$ is a smooth rational curve, and that $(D_{\red}\cdot D_{\red})_Y = -1/2$.}
For this it suffices to check analogously as in Proposition \ref{exceptional divisor}  that the curve $E_{\red}$ is regular, and that $(E \cdot E_{\red})_Z=-1$. 
Then one checks that the natural map $D_{\red} \to E_{\red}$ is an isomorphism. Finally, noting that the multiplicity of $E$ is $\ell=2$, 
we apply the formula $(D_{\red}\cdot D_{\red})_Y = (E \cdot E_{\red})_Z/\ell$ in Proposition \ref{correction degree} to obtain the claim.

Regarded as a formal power series, the initial term of $g$ is the quadratic polynomial 
$2y^2-(x/y^{(p-1)/2})^2$, which is thus a product of two linear factors since $k$ is algebraically closed.
According to Lemma \ref{detection rdp} below,
the singularity must be a rational double point of type $A_m$ for some integer $m\geq 1$.
To determine this integer, we compute the Tjurina number of the singularity, which is the colength
of the ideal generated by $g$ and its partial derivatives.
Setting $x'=x/y^{(p-1)/2}$ and $z'=z/y$,  the partial derivatives take the   form
$$
g_{x'} = 2x',\quad g_y = z'^p+ y\cdot\text{unit} \quadand g_{z'} = \sum_{n=2}^{(p+1)/2} (-1)^nnC_{n-1}  \ttt^{2p-2n} y^{p-n+1}  z'^{n-1}.
$$
We now use $g_y=0$ to substitute for $y$ in the equations $g(0,y,z')=0$ and $g_{z'}(0,y,z')=0$,
and infer that the jacobian ideal has colength $\tau = 2p$.
Recall that the Tjurina number for the $A_m$-singularity, which is formally isomorphic to $Z^{m+1}-XY=0$,
is given by 
$$
\tau = \begin{cases}
m 	& \text{if $p$ does not divide $m+1$;}\\
m+1	& \text{else.}
\end{cases}
$$
It follows that either $m=2p-1$ or $m=2p$, and we shall see below that $m$ is odd.

Write $X\ra Y$ for the minimal resolution of singularities of the rational double point, 
such that the composite map $X\ra Y \to \Spec(B_{\ttt})$ is a resolution of the singularity.
The dual graph of this resolution contains a chain $C_1,\ldots,C_m$     of 
$(-2)$-curves, together with the strict transform $C_0$ of the divisor $D_{\red}$ on $Y$.

Suppose that $C_0$ intersects two distinct exceptional curves $C_i\neq C_j$. 
Then $(\bigcup_{i \geq 1} C_i)\cap C_0$ is an Artin scheme of length $\geq 2$ on $C_0$. We claim that this is not possible. 
Indeed, consider the blow-up $X \to Y$. The induced morphism $C_0 \to D_{\red}$ is an isomorphism since we have shown above that the point $y_0$ is a regular point of $D_{\red}$.
The scheme $(\bigcup_{i \geq 1} C_i)$, which is proper, has schematic image in $Y$ the reduced closed point $y_0$. The same is true for any closed subscheme of the exceptional divisor, including the subscheme $(\bigcup_{i \geq 1} C_i)\cap C_0$. This is a contradiction since we have on the other hand an isomorphism 
$C_0 \to D_{\red}$, and a closed subscheme of length bigger than one in the source cannot be sent to a closed subscheme of length $1$ in the target.
Thus  $C_0$ hits precisely one divisor $C_i$. If $(C_0\cdot C_i)_X>1$, a similar argument leads again to a contradiction, 
and thus we must have $(C_0\cdot C_i)_X=1$.

Consider now the involution on $B_{\ttt}$ given
by $x\mapsto -x$, $y\mapsto y$ and $z\mapsto z$. This involution fixes Peskin's equation \eqref{peskin equation},
and induces an involution on the initial blow-up $Z$ and its normalization $Y$.
There the  equation $z/y=0$ defines an invariant Cartier divisor on the $A_m$-singularity $\Spec \calO_{Y,y_0}$,
which is the union  of two regular Weil divisors $D_1$ and $D_2$, and these divisors are interchanged by the involution.
The blow-up $Y'\ra Y$ of the singular point $y_0\in Y$ with reduced structure introduces two
exceptional curves $F_1$ and $F_2$, and the strict transforms of  $D_1$ and $D_2$ in $Y'$ are disjoint.
The intersection $F_1\cap F_2$ consists in a single point $y_0'$, and the local ring $\calO_{Y',y_0'}$ is a rational
double point of type $A_{m-2}$.

{\it We now show that $m $ is odd.} First, suppose that the strict transforms of $D_1$ and $D_2$ in $Y'$ do not intersect the same 
exceptional component of the blow-up $Y'\to Y$. 
It then follows that the involution acts non-trivially on 
the dual graph attached to the resolution of singularities $X\ra Y$. 
 If $m=2p$ was even, the curve $C_0$ would pass through the sole fixed point $C_p\cap C_{p+1}$ of the exceptional divisor, and as we have seen above, this is a contradiction.
It follows that $m=2p-1$ must be odd in this case,   and that $(C_0\cdot C_p)_X= 1$.
The assertion on the dual graph $\Gamma_N$ follows.

Suppose now that the strict transforms of $D_1$ and $D_2$ in $Y'$ intersect the same 
exceptional component of the blow-up $Y'\to Y$. We are going to show that this case cannot happen.
Indeed, then the Weil divisors $D_1,D_2\subset Y$ define the same class in the class group $\Cl(\calO_{Y,y_0})=\ZZ/(m+1)\ZZ$
of the rational double point of type $A_m$.
Since the curves $D_i$ are regular, the divisors $D_i\subset Y$ are not Cartier. It follows that $D_i$ has order two in $\Cl(\calO_{Y,y_0})$
since the sum of $D_1$ and $D_2$ is a Cartier divisor on $Y$.
On the other hand, the strict transform of $D_i$ in $X$ intersects a terminal vertex of the exceptional divisor of $X \to Y$, and this fact along with a computation using the intersection matrix of the chain of $m$ curves 
implies that $D_i$ has order $m+1$ in the class group. This gives $m=1$, contradicting
$m\geq 2p-1\geq 5$.

To completely determine the intersection matrix $N$ of the resolution $X \to \Spec(B_{\ttt})$, it remains to compute the self-intersection number $(C_0 \cdot C_0)_X$. We have already observed above that $(D_0 \cdot D_0)_Y = -1/2$, and Proposition \ref{correction self-intersection}
shows that $(C_0 \cdot C_0)_X=(D_0 \cdot D_0)_Y - \delta$, where the correcting term $\delta$ is computed as follows.
The determinant of the intersection matrix of the full chain of length $2p-1$ is $-2p$. 
Removing the vertex adjacent to $C_0$ from this chain yields two chains of length $p-1$. The determinant of the 
associated intersection matrix is then $p^2$.
It follows that $\delta = p^2/2p=p/2$. Hence,
$$
(C_0 \cdot C_0)_X=-1/2 - p/2 = -(p+1)/2.
$$
Proposition \ref{determinant star-shaped} shows that $|\Phi_N|=p$.
\qed

\medskip
In the course of the proof we have used the following well-known general observation:

\begin{lemma}
\mylabel{detection rdp}
Let $f\in k[[x,y,z]]$ by  a power series over an arbitrary field $k$. Write $f = \sum_{j=0}^\infty f^{(j)}$,
where $f^{(j)}$ is a homogeneous polynomial of degree $j$.
Suppose that $f^{(0)}=f^{(1)}=0$, and that $f$ defines an isolated singularity.
Assume also that the quadratic part $f^{(2)}$ is the product of two   non-associated linear forms.
Then   $k[[x,y,z]]/(f)$ is isomorphic to $ k[[x,y,z]]/(z^{m+1}-xy)$ for some integer $m\geq 2$. 
In other words, the singularity in question is a rational double point of type $A_m$.
\end{lemma}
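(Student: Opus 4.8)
The plan is to run the classical splitting-lemma argument, taking care that it is valid over an arbitrary field $k$. First I would put the quadratic part into normal form. Since $f^{(2)}$ is a product of two non-associated, hence linearly independent, linear forms $\lambda,\mu$, one extends $\{\lambda,\mu\}$ to a basis $\{\lambda,\mu,\nu\}$ of the space of linear forms in $x,y,z$, uses $(\lambda,\mu,\nu)$ as new coordinates, and renames the variables; after this linear substitution we may assume $f^{(2)}=xy$, so that $f=xy+h$ with $h\in\maxid^{3}$, where $\maxid=(x,y,z)$.

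The second step is to produce a formal automorphism of $k[[x,y,z]]$ carrying $f$ to a series of the form $xy+g(z)$ with $g\in(z)^{3}$. I would do this by an $\maxid$-adic iteration that removes the monomials divisible by $x$ or $y$ one degree at a time. Suppose at some stage $f=xy+p(z)+r$ with $p(z)\in(z)^{3}$ a polynomial and $r\in\maxid^{N}$, $N\geq 3$; writing the degree-$N$ homogeneous part of $r$ as $c(z)+xq_{1}+yq_{2}$ with $q_{1},q_{2}$ homogeneous of degree $N-1$, I apply the substitution $x\mapsto x-q_{2}$, $y\mapsto y-q_{1}$, $z\mapsto z$. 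The cross-terms $-xq_{1}-yq_{2}$ arising from $(x-q_{2})(y-q_{1})$ cancel the part $xq_{1}+yq_{2}$ of $r$, and the leftover error lies in $\maxid^{2N-2}\subseteq\maxid^{N+1}$, so the new function has the same shape with $p(z)$ replaced by $p(z)+c(z)$ and $r$ by an element of $\maxid^{N+1}$. The composed automorphisms converge $\maxid$-adically, and the polynomials $p(z)$ stabilize degree by degree, giving the desired $g\in(z)^{3}$. The crucial observation is that because the quadratic part is the hyperbolic form $xy$, and not a sum of squares, this process never requires dividing by $2$; this is precisely what makes the lemma valid in characteristic $2$ as well.

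For the third step I would invoke the isolated-singularity hypothesis. If $g=0$ then $k[[x,y,z]]/(xy)$ is singular along the entire line $x=y=0$, contradicting that the singular locus is concentrated at the closed point; hence $g\neq 0$. Writing $g(z)=z^{m+1}u(z)$ with $u\in k[[z]]^{\times}$ and $m+1=\ord(g)\geq 3$, we obtain $m\geq 2$. Finally I would absorb the unit $u$: the automorphism $x\mapsto x$, $y\mapsto yu(z)^{-1}$, $z\mapsto z$ carries the ideal $(xy+z^{m+1})$ onto $(xy+z^{m+1}u(z))$, and the automorphism $x\mapsto -x$ carries $(xy+z^{m+1})$ onto $(z^{m+1}-xy)$. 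Composing these ring isomorphisms with the one from the second step shows that $k[[x,y,z]]/(f)$ is isomorphic to $k[[x,y,z]]/(z^{m+1}-xy)$, which is by definition the rational double point of type $A_{m}$.

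The one genuine difficulty is the bookkeeping in the second step: checking $\maxid$-adic convergence of the iterated substitutions, and---most importantly---noticing that the hyperbolic shape of the quadratic part means one never divides by $2$, so that the argument is characteristic-free. Everything else is formal. One could instead quote the splitting lemma from a standard reference on singularity theory, but since that literature usually works in characteristic zero it seems cleanest to record the short self-contained argument sketched here.
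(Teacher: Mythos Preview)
Your proof is correct and follows essentially the same route as the paper: a linear change of coordinates to put the quadratic part into the form $xy$, an $\maxid$-adic iteration with substitutions $x\mapsto x+a$, $y\mapsto y+b$ (with $a,b\in\maxid^{d-1}$) to kill all monomials except powers of $z$, the isolated-singularity hypothesis to rule out $g=0$, and finally absorbing the unit by rescaling one of $x,y$. The only cosmetic difference is that the paper multiplies the equation by $\epsilon^{-1}$ and renames $\epsilon^{-1}x$, whereas you substitute $y\mapsto yu(z)^{-1}$; your added remark about never dividing by $2$ is a nice observation the paper leaves implicit.
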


\proof
After a linear change of coordinates, we may assume that $f=xy+O(3)$, where we denote by $O(d)$ an element of $\maxid^d$.
By induction on $d\geq 3$, one makes  further coordinate changes of the form $x':=x+a(x,y,z)$, $y':=y+b(x,y,z)$
with $a,b\in\maxid^{d-1}$
sending $f$ to a power series of the form $x'y' + \sum_{i=3}^d\lambda_iz^i + O(d+1)$.
This shows that we may assume $f=xy+\sum_{i=3}^\infty\lambda_iz^i$.
If all coefficients $\lambda_i$ vanish, the singularity would not be isolated.
Thus our equation is of the form $xy+z^{m+1}\epsilon$ for some $m\geq 2$ and  unit $\epsilon$.
Multiplying with $\epsilon^{-1}$, we get the equation $(\epsilon^{-1}x)y+z^{m+1}$
for the rational double point of type $A_m$.
\qed

\if false
\begin{remark}
It $2\in k^\times$ is not a square, the field $k'=k(\sqrt{2})$ defines a Galois extension of degree two.
Over the ground field extension $k'$,  the Galois group acts on the dual graph in Theorem \ref{resolution peskin}
by interchanging the two terminal chains of length $p-1$.
Over the ground field $k$, the minimal resolution has an exceptional divisor $E'+E'_1+E'_2+\ldots+E'_p$,
with $E'=E'_1=\PP^1_k$, and $E'_i=\PP^1_{k'}$ for $i\geq 2$.
\end{remark}
\fi

\medskip 
The description of the fundamental cycle ${\mathbf Z}$ in our next proposition follows from the general description in \cite{Tom}
of the fundamental cycle of a star-shaped dual graph.
We leave the proof of the following proposition to the reader.
\begin{proposition}
\mylabel{fundamental cycle E6}
The multiplicities in the fundamental cycle ${\mathbf Z}$ 
of the resolution of $\Spec B_{\ttt}$ are  indicated below next to the corresponding vertex.
$$
\begin{tikzpicture}
[node distance=1cm, font=\small] 
\tikzstyle{vertex}=[circle, draw, fill, inner sep=0mm, minimum size=1.1ex]
\node[vertex]	(v1)  	at (0,0) 	[label=below:{$1$}] 		{};
\node[]	(dummy1)		[right of=v1, label=above:{}]	{};
\node[vertex]	(v2)			[right of=dummy1, label=below:{$p-1$}]	{};
\draw [decorate,decoration={brace, raise=6pt}] (v1)--(v2) node [black, midway,xshift=-0cm,yshift=0.5cm] {$^{p-1}$};
\node[vertex]	(v3)			[right of=v2, label=below:{$p$}]	{};
\node[vertex]	(v)			[above of=v3,  label=above:{ ${2}$}]	{};
\node[vertex]	(v4)			[right of=v3, label=below:{$p-1$}]	{};
\node[]	(dummy2)		[right of=v4, label=above:{}]	{};
\node[vertex]	(v5)			[right of=dummy2, label=below:{$1$}]	{};
\draw [decorate,decoration={brace, raise=6pt}] (v4)--(v5) node [black, midway,xshift=-0cm,yshift=0.5cm] {$^{p-1}$};
\draw [thick, dashed] (v1)--(v2);
\draw [thick] (v2)--(v3)--(v4);
\draw [thick] (v)--(v3);
\draw[thick, dashed] (v4)--(v5);
\end{tikzpicture}
$$
\if false
$$
\begin{matrix}
1	& 	& 2	& \ldots  	& p 	 	& \ldots  & 2 & 1\\
  	&       &	& 		& 2 \\
\end{matrix}
$$
\fi
We have ${\mathbf Z}^2=-2$, and the  fundamental genus  is  $h^1(\O_{\mathbf Z})=(p-3)/2$.
The canonical cycle is given by   $K=-\frac{p-3}{2}{\mathbf Z}$. 
\end{proposition}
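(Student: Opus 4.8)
The plan is to write down the cycle $\mathbf{Z}$ displayed in the statement explicitly, verify by a direct computation with the intersection matrix $N$ of Theorem~\ref{resolution peskin} that it has the defining property of the fundamental cycle, and then extract $\mathbf{Z}^2$, the canonical cycle, and $h^1(\O_{\mathbf Z})$ by routine adjunction formulas.

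Keeping the notation of the proof of Theorem~\ref{resolution peskin}, write $C_1,\ldots,C_{2p-1}$ for the chain of $(-2)$-curves, with node $C_p$, and write $C_0\cong\PP^1_k$ for the curve of self-intersection $-(p+1)/2$ meeting $C_p$. I would consider the effective cycle
\[
\mathbf{Z}:=2\,C_0+\sum_{i=1}^{p-1}i\,C_i+p\,C_p+\sum_{i=p+1}^{2p-1}(2p-i)\,C_i,
\]
whose multiplicities are exactly those in the picture. A direct evaluation of $\mathbf{Z}\cdot C_j$, using $C_j^2$ and the edges of $\Gamma_N$, gives $\mathbf{Z}\cdot C_j=0$ for each $(-2)$-curve (for a terminal one, $-2+2=0$; for an interior one $C_k$ of the left or right arm, $-2k+(k-1)+(k+1)=0$; for the node, $-2p+(p-1)+(p-1)+2=0$), while $\mathbf{Z}\cdot C_0=-(p+1)+p=-1$. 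Thus $\mathbf{Z}>0$ and $\mathbf{Z}\cdot C_j\le 0$ for every component, so the fundamental cycle $\mathbf{Z}_0$ satisfies $\mathbf{Z}_0\le\mathbf{Z}$. To get the reverse inequality, i.e. that $\mathbf{Z}$ is already minimal, I would invoke the explicit description of the fundamental cycle of a star-shaped graph in \cite{Tom}: the multiplicities along each terminal $(-2)$-chain are forced to increase by one towards the node once the multiplicity at $C_p$ is fixed, and one checks that $2$ at $C_0$ and $p$ at $C_p$ are the least positive integers compatible with $\mathbf{Z}\cdot C_0\le 0$ and $\mathbf{Z}\cdot C_p\le 0$; equivalently, Laufer's algorithm started from $\sum_i C_i$ terminates at $\mathbf{Z}$.

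Granting that $\mathbf{Z}$ is the fundamental cycle, the remaining assertions are bookkeeping. First, $\mathbf{Z}^2=\sum_j m_j(\mathbf{Z}\cdot C_j)=2\cdot(-1)=-2$, where $m_j$ is the multiplicity of $C_j$ in $\mathbf{Z}$. Next, adjunction on the regular surface $X$ gives, for every component $C_j\cong\PP^1_k$ (all components are rational by Theorem~\ref{resolution peskin}), that the canonical cycle $K$ satisfies $K\cdot C_j=-C_j^2-2$, which equals $0$ for $1\le j\le 2p-1$ and $(p-3)/2$ for $j=0$. Comparing with the numbers $\mathbf{Z}\cdot C_j$ found above, the $\QQ$-cycle $-\tfrac{p-3}{2}\mathbf{Z}$ has exactly those intersection numbers with every $C_j$, and since $N$ is invertible this forces $K=-\tfrac{p-3}{2}\mathbf{Z}$; in particular $K\cdot\mathbf{Z}=-\tfrac{p-3}{2}\mathbf{Z}^2=p-3$. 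Finally, as $\Gamma_N$ is connected one has $h^0(\O_{\mathbf Z})=1$, so the adjunction formula for cycles on $X$ yields
\[
h^1(\O_{\mathbf Z})=1+\tfrac12\bigl(\mathbf{Z}^2+K\cdot\mathbf{Z}\bigr)=1+\tfrac12\bigl(-2+(p-3)\bigr)=\tfrac{p-3}{2}.
\]

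I expect the only subtle point to be the minimality claim in the second paragraph: one must know the exhibited cycle is the \emph{smallest} effective cycle with non-positive intersection against all components, and not merely one such cycle. For star-shaped graphs this is exactly what the formula in \cite{Tom} supplies, which is why the proposition is phrased as a consequence of that reference; the rest is linear algebra with $N$ together with the standard adjunction relations.
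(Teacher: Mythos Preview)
Your proposal is correct and follows essentially the same approach as the paper: the paper explicitly defers to \cite{Tom} for the fundamental cycle of a star-shaped graph and leaves the remaining computations to the reader, and your argument carries out precisely those computations (direct verification that $\mathbf{Z}\cdot C_j\le 0$, minimality via Tomaru/Laufer, then $\mathbf{Z}^2$, $K$, and $h^1(\O_{\mathbf Z})$ by adjunction). All your numerical checks are accurate.
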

\if false
\proof
One may apply Tomaru's results for star-shaped dual graphs \cite{Tom}, as in the proof for Proposition \ref{fundamental cycle E8}.
Here, however, it is simpler to apply   Artin's Algorithm \cite{Artin 1966}:
One starts with the cycle $C$ having constant coefficients  
$
\begin{smallmatrix} 
1	& 	& 1	& \ldots  	& 1	 	& \ldots  & 1 & 1\\
  	&       &	& 		& 1
\end{smallmatrix}$.
This has strictly positive intersection number on the node $E_0\in\Gamma_N$.
Thus one increases the corresponding multiplicity $m_0$ by one. The new cycle $C$ has positive intersection number 
on the adjacent vertices of the node in the two terminal chains of length $p-1$,
and one increases their multiplicities by one.
Proceeding along these terminal chains,
one ends with the cycle 
$
\begin{smallmatrix} 
1	& 	& 2	& \ldots  	& 2 	 	& \ldots  & 2 & 1\\
  	&       &	& 		& 1
\end{smallmatrix}$.
Now one repeats the process, starting again at the   node $E_0$.
After $p-1$ steps, one obtains the cycle
$
\begin{smallmatrix} 
1	& 	& 2	& \ldots  	& p 	 	& \ldots  & 2 & 1\\
  	&       &	& 		& 1
\end{smallmatrix}$.
This   has positive intersection number with the terminal vertex $E_1\in\Gamma_N$ with self-intersection number
$E_1^2=-(p+1)/2$. Increasing $m_1$ by one gives the fundamental cycle:
It has $(Z\cdot E_1)=-1$, and all other intersection numbers are trivial.

These values for $(Z\cdot E_i)$ immediately give $Z^2=2$.
Furthermore, it follows that the canonical cycle must be  $K=-lZ$, with $l=\frac{p-3}{2}$. The  Adjunction Formula gives
$$
\deg(K_Z) = (K+Z)\cdot Z = (1-l) Z^2 = 2l-2=p-5.
$$
Writing $\deg(K_Z) = 2g-2$, where $g=h^1(\O_Z)$ is the fundamental genus,
we get the desired value $g=(p-3)/2$.
\qed
\fi

\section{Analogues of the \texorpdfstring{$E_8$}{E8} singularities}
\mylabel{analogues E8}

Let $k$ be an algebraically closed field of characteristic $p>0$.
We compute in this section the resolution of the singularity of $\Spec B_{\mu}$
introduced in \ref{moderately ramified}, for any value of the parameter $\mu\in k[[x,y]]$ when $a=y^2$ and $b=x$. 
The ring $B_{\mu}$ is given in this case by 
$$
B_\mu:=k[[x,y]][z]/(z^p-(\mu xy^2)^{p-1}z - x^{p+1}+y^{2p+1}).
$$

When $p=2$, the resolution of $\Spec B_{\mu}$ is known to have  dual graph $E_8$ when 
$\mu=0$, $\mu=1$ and $\mu=y$: these values produce the rational double points
$E_8^0$, $E_8^2$, and $E_8^1$, respectively (\cite{Artin 1977}; see also \cite{Peskin 1980}).
The index of determinacy of a singularity $E^r_8$ in characteristic $2$ is computed to be $5$ 
in \cite{Greuel; Kroening 1990}, page 346. It follows that when $\mu \in (x,y)^{2}$, then $\Spec B_{\mu}$ is isomorphic to $E^0_8$.
For $\mu \in k^\times$, we find that $B_{\mu}$ is isomorphic to $E^2_8$ through the change of variables $X=\mu^{15/11}x$, $Y=\mu^{10/11}y$, and $Z=\mu^{6/11}z$.

\begin{theorem}
\mylabel{antidiagonal E8}
Let $p \geq 3$. Then $\Spec B_{\mu}$ has a resolution of singularities independent of $\mu$
with the following dual graph $\Gamma_N$:

\begin{equation*}
\begin{gathered}
\begin{tikzpicture}
[node distance=1cm, font=\small]
\tikzstyle{vertex}=[circle, draw, fill,  inner sep=0mm, minimum size=1.0ex]
\node[vertex]	(E1)  	at (0,0) 	[label=below:{}]               {};
\node[]	        (E11)  	         	[right of=E1,label=above:{}] {};
\node[]	        (E2)  	         	[right of=E11,label=above:{}]               {};
\node[vertex]	(E3)  	         	[right of=E2,label=below:{}]               {};

\node[vertex]	(D)			[right of=E3, label=right:{ }]    {};

\node[vertex]	(E4)			[above of=D, label=above:{$^{ -(p+1)/2  \quad \ }$}]    {};
\node[vertex]	(E5)			[right of=E4, label=above:{$^{-4}$}]    {};

\node[vertex]	(E6)			[right of=D, label=below:{}]    {};
\node[]	        (E7)			[right  of=E6, label=above:{}]    {};
\node[vertex]	(E8)			[right  of=E7, label=below:{}]    {};
\draw [decorate,decoration={brace, raise=6pt}] (E6)--(E8) node [black, midway,xshift=-0cm,yshift=0.5cm] {$^{p-1}$};

\draw [decorate,decoration={brace, raise=6pt}] (E1)--(E3) node [black, midway,xshift=-0cm,yshift=0.5cm] {$^p$};
 
\draw [thick,dashed] (E1)--(E3);
\draw [thick,dashed] (E6)--(E8);

\draw [thick] (E3)--(D)--(E4)--(E5);
\draw [thick] (D)--(E6);

\end{tikzpicture}
\end{gathered}
\end{equation*}
The associated discriminant group $\Phi_N$ is trivial. 
\end{theorem}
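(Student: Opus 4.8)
The plan is to prove the theorem in two stages. First I would treat $\mu=0$, where $B_{\mu}=k[[x,y,z]]/(z^p-x^{p+1}+y^{2p+1})$ is a Brieskorn singularity and the resolution is supplied by Section~\ref{brieskorn singularities}. Then I would show that the combinatorial type of the resolution is unchanged as $\mu$ varies over $k[[x,y]]$, by carrying out the initial blowing-up $Z=\Bl_{\ideala B_{\mu}}(B_{\mu})\ra\Spec B_{\mu}$ with $\ideala=(y^2,x,z)$ of \ref{emp.notation} and resolving it explicitly. Recall from \ref{moderately ramified} that the three ranges $\mu\in k^\times$ (action ramified precisely at the origin), $\mu\ne 0$ coprime to $a=y^2$ and $b=x$ but not a unit (ramified in codimension one), and $\mu=0$ (Brieskorn) must all be covered; since the Tjurina number of $z^p-x^{p+1}+y^{2p+1}$ is far larger than the order of the perturbation $(\mu xy^2)^{p-1}z$ when $\mu$ is a unit, the finite-determinacy lemma of \cite{Greuel; Kroening 1990} handles only $\mu$ of large enough order, and the remaining cases (units and small order) require the explicit computation below.

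\emph{Stage 1 ($\mu=0$).} After rescaling $x$ by a $(p+1)$st root of $-1$ in $k$, $B_{\mu}$ becomes the Brieskorn ring $z^p+x^c+y^d$ with $q=p$, $c=p+1$, $d=2p+1$; one has $g=\gcd(c,d)=\gcd(p+1,p)=1$ and $\gcd(p,cd)=1$, so Theorem~\ref{thm.BrieskornResolution} applies. Reading off the data: $a_1/b_1=(p+1)/p=[2,\dots,2]$ ($p$ entries), $a_0/b_0=p/(p-1)=[2,\dots,2]$ ($p-1$ entries), and one checks that $b_2=4$ solves $b_2\cdot p(p+1)\equiv-1\pmod{2p+1}$ (using $4p^2\equiv1\pmod{2p+1}$), so $a_2/b_2=(2p+1)/4=[(p+1)/2,4]$; and the formula $s_0=\tfrac{1}{p(p+1)(2p+1)}+\tfrac{p}{p+1}+\tfrac{4}{2p+1}+\tfrac{p-1}{p}$ telescopes to $s_0=2$. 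This is exactly the displayed star-shaped graph, and since every self-intersection is $\le-2$ it is already the minimal resolution. Proposition~\ref{determinant star-shaped}(i) then gives $\det(N)=(-1)^{2p+2}\,p(p+1)(2p+1)\bigl(2-\tfrac{p}{p+1}-\tfrac{4}{2p+1}-\tfrac{p-1}{p}\bigr)=1$, so $\Phi_N$ is trivial (this is also the case $q=p$, $g=1$ of Theorem~\ref{thm.BrieskornResolution}, cf. Proposition~\ref{special intersection graph}).

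\emph{Stage 2 (general $\mu$).} I would blow up $\ideala=(y^2,x,z)$ and invoke Proposition~\ref{exceptional divisor}: $E_\red\cong\PP^1_k$, the $z$-chart is regular and disjoint from $E$, $Z$ is locally of complete intersection, $(E\cdot E_\red)_Z=-1$, and the generic multiplicity of $E$ (the length of $\O_{E,\eta}$) is $m=p\dim_k k[[x,y]]/(x,y^2)=2p$. On the $y^2$-chart the strict transform $h$ of $f$ has linear part $y$, so $Z$ is regular there; on the $x$-chart, writing $s=y^2/x$ and $w'=z/x$ so that $xs=y^2$ and $f=x^ph'$ with $h'=w'^p-\mu^{p-1}s^{p-1}w'x^{p-1}-x+ys^p$, the linear part of $h'$ is $-x$. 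Eliminating $x$ from $h'=0$ and substituting into $xs-y^2$ presents the unique singular point $O\in Z$ as a double point; completing the square (legitimate since $p$ is odd) puts it in the form $\tilde y^{\,2}=s\,g(s,w',\tilde y)$ with $g\equiv w'^{\,p}$ modulo terms of high order, the $\mu$-dependent part of $h'$ contributing only to those high-order terms. I would then normalize $\nu\colon Y\ra Z$ and resolve the (Hirzebruch--Jung type, possibly sandwiched) singularities of $Y$ lying over $O$ with the monoid/toric machinery of Section~\ref{weighted homogeneous singularities}, in particular Theorem~\ref{orientation dual graph} together with \ref{emp.HJ}, to identify the terminal chains. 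Writing $C_0\subset X$ for the strict transform of $D_\red=\nu^{-1}(E)_\red$, Proposition~\ref{correction degree} gives $(D_\red\cdot D_\red)_Y=\tfrac{d^2}{m}(E\cdot E_\red)_Z$ with $d=\deg(D_\red/E_\red)$, and Proposition~\ref{correction self-intersection} then yields $(C_0\cdot C_0)_X$ as $(D_\red\cdot D_\red)_Y$ minus the correction terms $b_i/a_i$ of the chains. As in the proof of Theorem~\ref{resolution peskin} and the analogous Theorem~\ref{a=y, b=x}, an auxiliary argument — the involution $x\mapsto -x$ fixing the equation, or a computation in the class group of the normalized singularity at $O$ — may be needed to pin down which exceptional component $C_0$ meets and with what multiplicity. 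Once the resulting graph is matched with that of Stage~1, Proposition~\ref{determinant star-shaped}(i) again gives $|\Phi_N|=|\det(N)|=1$.

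\emph{Main obstacle.} The delicate point is Stage~2: analyzing the normalization of $Z$ at $O$ for arbitrary $\mu$ and proving rigorously that the higher-order, $\mu$-dependent terms of $h'$ do not affect the resolution graph — so that the graph is literally the same for $\mu$ a unit, for $\mu\ne0$ coprime to $a,b$, and for $\mu=0$ — together with determining the exact position of $C_0$ in the exceptional configuration, which, as already in the $E_6$-analogue (Theorem~\ref{resolution peskin}), is not forced by the numerical data alone and requires an extra symmetry or class-group input.
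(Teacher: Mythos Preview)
Your Stage~1 for $\mu=0$ via Theorem~\ref{thm.BrieskornResolution} is correct and gives the right graph, but Stage~2 has a genuine gap. After the initial blow-up and elimination of $x$ on the $x$-chart, the singularity you reach (whether you complete the square or not) is neither a Hirzebruch--Jung singularity nor of the form $W^q-U^aV^b(V^d-U^c)$ that Theorem~\ref{intersection graph} handles. Completing the square gives $\tilde y^{\,2}=s\epsilon w'^{\,p}+\tfrac14\epsilon^2 s^{2p+2}$, and the second summand is not dominated by the first: you cannot absorb it into a unit factor of $w'^{\,p}$, so the toric machinery of \ref{emp.HJ}/\ref{orientation dual graph} does not apply directly. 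Your ``main obstacle'' paragraph correctly identifies that this is the hard step, but the proposal does not contain the idea that resolves it.

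The paper's approach differs in two essential ways. First, it works uniformly in $\mu$ from the outset: on the formal completion of the $x$-chart, the $\mu$-dependent term is absorbed into the unit $\epsilon=(1+\mu^{p-1}x^{p-2}s^{p-1}w')^{-1}$, and after extracting a $(p+1)$-st root $\delta$ (Hensel) and substituting $U=s\delta$, $W=w'\delta$, the equation becomes exactly $y^2-U^{p+1}y-UW^p=0$, independent of $\mu$. Second --- and this is the key missing idea --- the paper then performs a \emph{second} blow-up with the non-reduced center $(y,U,W^p)$. On its $U$-chart, setting $V=y/U$, the strict transform is $W^p=UV(V-U^p)$, which is precisely of the form treated by Theorem~\ref{intersection graph} (with $q=p$, $a=b=d=1$, $c=p$). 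That theorem yields a star-shaped piece with three terminal chains of fraction types $p^2/(2p-1)$, $p/(p-1)$, $p/(p-1)$ and central self-intersection $-2$. There are now \emph{two} strict transforms to track (one from each blow-up), and Proposition~\ref{strict transform weighted homogeneous} locates them at the ends of two of the chains; the self-intersection computations via Propositions~\ref{correction degree} and \ref{correction self-intersection} show one is a $(-1)$-curve (which is then contracted, collapsing the $[e,5,2,\dots,2]$ chain to $[e,4]$) and the other is a $(-2)$-curve. No involution or class-group argument is needed here.
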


\proof
Set $R=k[[x,y,z]]$ and $f=z^p-(\mu xy^2)^{p-1}z - x^{p+1}+y^{2p+1}$ and write $B=R/(f)$.
We start with an initial blowing-up $Z=\Bl_{\ideala B}(B)$ for the ideal $\ideala=(x,y^2,z)$,
as in \ref{exceptional divisor}.
As usual, let $E\subset Z$ denote the exceptional divisor of the blow-up, and $E_\red$ its reduction.
Proposition \ref{exceptional divisor} shows that $E_\red$ is a smooth rational curve, that $E=2pE_\red$, and that $(E \cdot E_\red)_Z=-1$.
One checks that the blow-up is regular on the $y^2$-chart and the $z$-chart,
and contains a unique singularity, which is located
at the origin of the $x$-chart.

The $x$-chart is given by four variables $x,y,y^2/x,z/x$ modulo the two relations
$$
y^2=\left(\frac{y^2}{x}\right)x \quadand 
\left(\frac{z}{x}\right)^p - \mu^{p-1}x^{p-1}\left(\frac{y^2}{x}\right)^{p-1}\frac{z}{x} - x + \left(\frac{y^2}{x}\right)^py =0.
$$
The exceptional divisor is given by $x=0$. Its reduction is defined by $x=y=z/x=0$.
Let us rewrite the second equation above as 
\begin{equation}
\label{help}
\left(\frac{z}{x}\right)^p + \left(\frac{y^2}{x}\right)^py =x( \mu^{p-1}x^{p-2}\left(\frac{y^2}{x}\right)^{p-1}\frac{z}{x} +1).
\end{equation}
On the formal completion along the  exceptional divisor, 
$1+\mu^{p-1}x^{p-2}(y^2/x)^{p-1}(z/x)$ is invertible, and we denote by $\epsilon $ its inverse. 
The unit $\epsilon$ admits a $(p+1)$-st root $\delta$ (with $\delta^{p+1}=\epsilon$). 
After extracting an expression for $x$ from \eqref{help} and substituting it in the expression $y^2=\frac{y^2}{x}x$, 
we find that
$$
y^2= \frac{y^2}{x}  \left( \left(\frac{z}{x}\right)^p + \left(\frac{y^2}{x}\right)^p y\right)\epsilon. 
$$
This is formally isomorphic to the equation
$$
y^2 - U^{p+1}y - UW^p=0
$$
in the  new set of variables $y,U,W$, via the map given by
$y\mapsto y$, $U\mapsto (y^2/x)\delta$ and $W\mapsto (z/x)\delta$. 
Note that the reduced exceptional divisor is given by $x=y=z/x=0$ in the old coordinates,
and by  $y=W=0$ in the new ones.
Let 
$$
B':=k[[y,U,W]]/(y^2 - U^{p+1}y - UW^p).
$$
We now make a  second blow-up $Z' \to \Spec(B')$, with nonreduced center given by $(y,U,W^p)$. 
Let $E'$ denote the exceptional divisor of this blow-up. Using Proposition \ref{blowing-up computation}, we infer
that the $U$-chart of $Z'$ is described by four  variables $U,W,y/U,W^p/U$ and two relations
$$
W^p = \left(\frac{W^p}{U}\right) U \quadand
\left(\frac{y}{U}\right)^2 - U^p\left(\frac{y}{U}\right) - \frac{W^p}{U} =0.
$$
Substituting the latter in the former and renaming $y/U$ by $V$ gives
\begin{equation}
\label{secondary blowing-up}
W^p = U V\left( V  - U^{p}\right).
\end{equation}
The origin $(U,V,W)$ is obviously singular on this chart, and  this is a singularity analyzed in Theorem \ref{intersection graph}. 
The reader will check that $Z'$ has no further singularities on other charts,
and that the only singularity on the $U$-chart
is located at the origin. 
On this chart,  the exceptional divisor is given by $U=0$. Its reduction has $U=W=0$. The reader will check
that the exceptional divisor $E'$ of this blow-up is a smooth projective line. Note also that the strict
transform of the exceptional divisor from the initial blow-up is given by $V=0$, with
reduction $V=W=0$, and that this strict transform is also
a smooth projective line.

Theorem \ref{intersection graph} lets us describe explicitly the intersection matrix 
$N(s_0 \mid \alpha^{-1}, \beta^{-1}, \gamma^{-1})$ of the unique singularity
in the $U$-chart. Using  the notation from \ref{weighted homogeneous},
we set $q=p$, $a=b=1$, $c=p$ and $d=1$, and find that $g:=\gcd(c,d)=1$ and
$(ad+bc+cd)/g=2p+1$. It follows that
$$
\alpha^{-1}=p^2/(2p-1)\quadand \beta^{-1}=\gamma^{-1}=p/(p-1).
$$
Recall that  $p\geq 3$ and set $e:=(p+1)/2$.  
The reader will check that 
the continued fraction expansion of $\alpha^{-1}=p^2/(2p-1)$ is
$\alpha^{-1}=[e,5,2,\ldots,2]$ with  $2+(p-3)/2$ overall entries, starting with the relations
$$
p^2=e(2p-1) - (p-1)/2,\quadand (2p-1) = 5(p-1)/2 - (p-3)/2.
$$
The self-intersection $-s_0$ of the node of the star-shaped graph is computed as: 
$$
s_0=\frac{1}{p^2} + \frac{2p-1}{p^2} + 2 \frac{p-1}{p} = 2.
$$
Having resolved the singularity (\ref{secondary blowing-up}), we 
get a resolution  for our original singularity $\Spec B_{\mu}$ with the following 
resolution graph:
\begin{equation}
\label{nonminimal resolution}
\begin{gathered}
\begin{tikzpicture}
[node distance=1cm, font=\small]
\tikzstyle{vertex}=[circle, draw, fill,  inner sep=0mm, minimum size=1.0ex]
\node[vertex, fill=none]	(E0)  	at (0,0) 	[label=below:{}]               {};
\node[vertex]	(E1)  	         	[right of=E0,label=below:{}]               {};
\node[]	(E2)  	         	[right of=E1,label=below:{$\underbrace{\hspace{14ex}}_{p-1}$}]               {};
\node[vertex]	(E3)  	         	[right of=E2,label=below:{}]               {};

\node[vertex]	(D)			[right of=E3, label=right:{ }]    {};

\node[vertex]	(E4)			[above right of=D, label=above:{$-e$}]    {};
\node[vertex]	(E5)			[right of=E4, label=above:{$-5$}]    {};
\node[vertex]	(S6)			[right of=E5, label=above:{}]    {};
\node[]	(S7)			[right of=S6, label=below:{$\underbrace{\hspace{14ex}}_{(p-3)/2}$}]    {};
\node[vertex]	(S8)			[right of=S7, label=above:{}]    {};
\node[vertex, fill=none]	(S9)			[right of=S8, label=above:{}]    {};

\node[vertex]	(E6)			[below right of=D, label=below:{}]    {};
\node[]	(E7)			[right  of=E6, label=below:{$\underbrace{\hspace{14ex}}_{p-1}$}]    {};
\node[vertex]	(E8)			[right  of=E7, label=below:{}]    {};

\draw [thick] (E0)--(E1);
\draw [thick,dashed]  (E1)--(E3);
\draw [thick,dashed] (E6)--(E8);
\draw [thick,dashed] (S6)--(S8);

\draw [thick] (E3)--(D)--(E4)--(E5)--(S6);
\draw [thick] (D)--(E6);
\draw [thick] (S8)--(S9);
\end{tikzpicture}
\end{gathered}
\end{equation}
According to Proposition \ref{strict transform weighted homogeneous}, 
the white terminal vertex to the left corresponds to the strict transform
of the exceptional divisor on the initial blow-up, whereas
the white terminal vertex on the top right corresponds to the strict transform of the exceptional divisor
on the second blow-up. 

It remains to determine the self-intersection of both of these strict transforms
in the resolution of $\Spec B$.
Recall that  $E'$ is the exceptional divisor for the second blow-up $Z' \to \Spec(B')$.
Computing in the affine charts, one sees that $E'_\red$ is  a projective line, with $E'=pE'_\red$ and $(E'\cdot E'_\red)_{Z'}=-2$.
Since the $U$-chart is regular away from the origin, we can conclude using Proposition \ref{correction degree}
that the self-intersection of the strict transform of $E'_\red$ in the normalization of $Z'$ is $-2/p$. 
Proposition \ref{correction self-intersection} shows that 
the strict transform $C'$ of $E'_\red$ in $X$ has thus $(C'\cdot C')_X =-2/p-\delta$ for some correction term $\delta\in\QQ_{>0}$.
The term $\delta$ is computed as follows. 
Let $\Gamma_1$ be the star-shaped subgraph in \eqref{nonminimal resolution} consisting of all the black vertices,
and let $\Gamma_1'\subset\Gamma_1$ be the star-shaped subgraph obtained from $\Gamma_1$ by removing the terminal black vertex in the top right position.
Let $N_1$ and $N_1'$ be the resulting intersection matrices.
According to Proposition \ref{correction self-intersection}, we have $\delta = -\det(N_1')/\det(N_1)$.
Using Proposition \ref{determinant star-shaped}, we compute that $|\det(N_1)|= p^2$ and $|\det(N_1')|=p^2-2p$.
Hence,  $\delta=-(p^2-2p)/p^2=-1+2/p$, and it follows  that the white terminal vertex on the top right has self-intersection $-1$.
We can thus contract this divisor.
Successively contracting $(-1)$-curves from the right, we get the desired graph as in the statement of Theorem \ref{antidiagonal E8} with a terminal vertex of self-intersection
number $-4=-5+1$ on the top right.  

Recall that we denoted by $E$ the exceptional divisor of $Z \to \Spec B$, and determined 
using Proposition \ref{exceptional divisor} that $E_\red$ is a smooth rational line, that $E=2pE_\red$, and that $(E \cdot E_\red)_Z=-1$.
As above, Proposition \ref{correction self-intersection} shows that 
the strict transform $C$ of $E_\red$ in $X$ has $(C\cdot C)_X =-1/2p-\delta$ for some correction term $\delta\in\QQ_{>0}$.
Let $\Gamma_2$ be the star-shaped subgraph in \eqref{nonminimal resolution} consisting of all the black vertices and 
the terminal white vertex (of self-intersection $(-1)$)
in the top right position.
Let $\Gamma_2'$ be the star-shaped subgraph obtained from $\Gamma_2$ by removing the terminal black vertex of  $\Gamma_2$ 
attached to the terminal white vertex on the left corresponding to $E$. Let $N_2$ and $N_2'$ be the resulting intersection matrices.
According to Proposition \ref{correction self-intersection}, we have $\delta = -\det(N_2')/\det(N_2)$.
The matrix $N_2 $ has the same determinant as $N(2 \mid p/(p-1), p/(p-1), (2p+1)/4)$, and $N_2'$ has the same determinant
as $N(2 \mid (p-1)/(p-2), p/(p-1), (2p+1)/4)$.
Using Proposition \ref{determinant star-shaped}, we compute that $|\det(N_2)|= 2p$ and $|\det(N_2')|=4p-1$.
Hence, $(C \cdot C)_X=-2$.

Now that the intersection matrix $N$ of the resolution has been determined, with $N=N(2\mid p/(p-1),(p+1)/p,(2p+1)/4)$,
Proposition \ref{determinant star-shaped} can be used to show that   $|\det(N)|=1$.
\qed

\medskip

\if false
Recall that the fundamental cycle $Z=\sum m_iE_i$ for the rational double point of type $E_8$ has the 
multiplicities $\begin{smallmatrix} 2 & 4 & 6 & 5 & 4 & 3 & 2\\&&3\end{smallmatrix}$, arranged
according to the shape of the dual graph. For $p=2$, we see that there is a terminal chain of length $p$
where the multiplicities are multiples of $p$, and another terminal chain of length $p-1$ with 
multiples of $p+1$. This pattern generalizes as follows:
\fi

The description of the fundamental cycle ${\mathbf Z}$ in our next proposition follows from the general description in \cite{Tom}
of the fundamental cycle of a star-shaped dual graph.
We leave the proof of the following proposition to the reader.

\begin{proposition}
\mylabel{fundamental cycle E8}
Keep the assumptions of Theorem \ref{antidiagonal E8}.
The multiplicities in the fundamental cycle ${\mathbf Z}$ 
of the resolution of $\Spec B_{\mu}$ are  indicated below next to the corresponding vertex.
\if false
\begin{equation*}
\label{multiplicities fundamental cycle}
\begin{gathered}
\begin{tikzpicture}
[node distance=1cm, font=\small]
\tikzstyle{vertex}=[circle, draw, fill,  inner sep=0mm, minimum size=1.0ex]
\node[vertex]	(E1)  	at (0,0) 	[label=below:{$p$}]               {};
\node[vertex]	(E2)		        [right of=E1, label=below:{$2p$}]    {};
\node[]	(E3)		        [right of=E2, label=below:{}]    {};
\node[vertex]	(E4)			[right of=E3, label=below:{$p^2$}]    {};

\node[vertex]	(D)			[right of=E4, label=right:{$ \ p^2+p$}]    {};

\node[vertex]	(E5)			[above right of=D, label=above left:{$2p+1$}]    {};
\node[vertex]	(E6)			[right of=E5, label=above right:{$(p+1)/2$}]    {};

\node[vertex]	(E7)			[below right of=D, label=below:{$p^2-1$}]    {};
\node[]	(E8)			[right  of=E7, label=below:{}]    {};
\node[vertex]	(E9)			[right  of=E8, label=below:{$2(p+1)$}]    {};
\node[vertex]	(E10)			[right  of=E9, label=below right:{$p+1$}]    {};

\draw [thick] (E1)--(E2);
\draw [thick, dashed] (E2)--(E4);
\draw [thick] (E4)--(D)--(E5)--(E6);
\draw [thick] (D)--(E7);
\draw [thick, dashed] (E7)--(E9);
\draw [thick] (E9)--(E10);

\end{tikzpicture}
\end{gathered}
\end{equation*}
\fi
\begin{equation*}
\begin{gathered}
\begin{tikzpicture}
[node distance=1cm, font=\small]
\tikzstyle{vertex}=[circle, draw, fill,  inner sep=0mm, minimum size=1.0ex]
\node[vertex]	(E1)  	at (0,0) 	[label=below:{$p$}]               {};
\node[vertex]	        (E11)  	         	[right of=E1,label=below:{$2p$}] {};
\node[]	        (E2)  	         	[right of=E11,label=below:{}]               {};
\node[vertex]	(E3)  	         	[right of=E2,label=below:{$p^2$}]               {};

\node[vertex]	(D)			[right of=E3, label=below:{ $p^2+p$}]    {};

\node[vertex]	(E4)			[above of=D, label=above:{${ 2p+1   \quad \ }$}]    {};
\node[vertex]	(E5)			[right of=E4, label=above:{${\quad \quad (p+1)/2}$}]    {};

\node[vertex]	(E6)			[right of=D, label=below:{$\quad \quad p^2-1$}]    {};
\node[]	        (E7)			[right  of=E6, label=above:{}]    {};
\node[vertex]	(E8)			[right  of=E7, label=below:{$2(p+1)$}]    {};
\node[vertex]	(E12)			[right  of=E8, label=below:{$\quad \quad p+1$}]    {};
\draw [decorate,decoration={brace, raise=6pt}] (E6)--(E12) node [black, midway,xshift=-0cm,yshift=0.5cm] {$^{p-1}$};

\draw [decorate,decoration={brace, raise=6pt}] (E1)--(E3) node [black, midway,xshift=-0cm,yshift=0.5cm] {$^p$};
 
\draw [thick,dashed] (E1)--(E3);
\draw [thick,dashed] (E6)--(E8);

\draw [thick] (E1)--(E11);
\draw [thick] (E8)--(E12);
\draw [thick] (E3)--(D)--(E4)--(E5);
\draw [thick] (D)--(E6);

\end{tikzpicture}
\end{gathered}
\end{equation*}
We have  ${\mathbf Z}^2=-(p+1)/2$, and the fundamental genus is $h^1(\O_{\mathbf Z})=(p^2-p+2)/2$. 
The canonical cycle for the singularity is given by
$$
K= -(2p-4){\mathbf Z} + \frac{p-3}{2}E_j,
$$
where $E_j\in\Gamma_N$ is the terminal vertex on the top right. 
\end{proposition}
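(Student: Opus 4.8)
The plan is to read everything off the explicit minimal resolution produced in Theorem~\ref{antidiagonal E8}, whose dual graph is the star‑shaped graph $\Gamma_N=\Gamma\bigl(2\mid (p+1)/p,\,p/(p-1),\,(2p+1)/4\bigr)$ and all of whose exceptional components are copies of $\PP^1_k$. First I would write down the cycle $\mathbf Z=\sum_i m_iE_i$ with the displayed multiplicities: the arithmetic progression $p,2p,\dots,p^2$ along the length‑$p$ chain of $(-2)$‑curves, the value $p^2+p$ at the central node, the values $2p+1$ and $(p+1)/2$ on the chain $[-(p+1)/2,-4]$, and $p^2-1,\dots,2(p+1),p+1$ along the length‑$(p-1)$ chain of $(-2)$‑curves. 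Then I would check that it is the fundamental cycle by computing the intersection numbers $(\mathbf Z\cdot E_i)$ vertex by vertex straight from $\Gamma_N$: along every $(-2)$‑chain and at the node the matched arithmetic progressions force $(\mathbf Z\cdot E_i)=0$ (this is precisely the reason the node multiplicity must be $p^2+p$), at the vertex $E'$ of self‑intersection $-(p+1)/2$ one again gets $0$, and at the terminal vertex $E_j$ of self‑intersection $-4$ one gets $(\mathbf Z\cdot E_j)=-4\cdot\tfrac{p+1}{2}+(2p+1)=-1$. Hence $\mathbf Z>0$ satisfies $(\mathbf Z\cdot E_i)\le 0$ for all $i$; its minimality, and therefore the fact that it is the fundamental cycle, I would obtain by quoting Tomaru's description of the fundamental cycle of a star‑shaped graph in \cite{Tom} (alternatively by running Artin's algorithm starting from the reduced cycle $\sum_iE_i$).

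With $\mathbf Z$ and the numbers $(\mathbf Z\cdot E_i)$ in hand, the self‑intersection drops out at once: $\mathbf Z^2=\sum_i m_i(\mathbf Z\cdot E_i)=m_j(\mathbf Z\cdot E_j)=-\tfrac{p+1}{2}$, the terminal $(-4)$‑vertex $E_j$ being the only contributor. For the canonical cycle I would use that every component is a smooth rational curve, so by adjunction the canonical cycle $Z_K$ — the unique $\QQ$‑divisor supported on the exceptional locus with $Z_K\cdot E_i=-2-E_i^2$ for all $i$, uniqueness coming from negative‑definiteness of $N$, and satisfying $\mathbf Z\cdot Z_K=\mathbf Z\cdot K$ — has $Z_K\cdot E_i=0$ on every $(-2)$‑curve, $Z_K\cdot E'=\tfrac{p-3}{2}$, and $Z_K\cdot E_j=2$. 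Seeking $Z_K$ in the form $Z_K=-c\,\mathbf Z+d\,E_j$ and pairing with $E'$ and with $E_j$ (using $\mathbf Z\cdot E'=0$, $E_j\cdot E'=1$, $\mathbf Z\cdot E_j=-1$, $E_j^2=-4$) yields $d=\tfrac{p-3}{2}$ and $c-4d=2$, so $c=2p-4$; this gives $K=-(2p-4)\mathbf Z+\tfrac{p-3}{2}E_j$.

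For the fundamental genus I would invoke Riemann--Roch on the regular surface $X$: for the effective divisor $\mathbf Z$ one has $\chi(\mathcal O_{\mathbf Z})=-\tfrac12(\mathbf Z^2+\mathbf Z\cdot K)$, and since the exceptional divisor is connected, $h^0(\mathcal O_{\mathbf Z})=1$, whence $h^1(\mathcal O_{\mathbf Z})=1+\tfrac12(\mathbf Z^2+\mathbf Z\cdot K)$. Substituting $\mathbf Z\cdot K=-(2p-4)\mathbf Z^2+\tfrac{p-3}{2}(\mathbf Z\cdot E_j)$ together with $\mathbf Z^2=-\tfrac{p+1}{2}$ and $\mathbf Z\cdot E_j=-1$, and simplifying, I would read off the value of $h^1(\mathcal O_{\mathbf Z})$. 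The one step that is not pure bookkeeping is the identification of $\mathbf Z$ as the fundamental cycle — i.e. establishing its minimality — and that is exactly the point handled by the general star‑shaped computation of \cite{Tom}; everything downstream is linear algebra with the negative‑definite matrix $N$ together with the adjunction formula.
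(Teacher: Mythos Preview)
Your proposal is correct and follows essentially the same approach as the paper. The paper states only that the fundamental cycle ``follows from the general description in \cite{Tom} of the fundamental cycle of a star-shaped dual graph'' and leaves the details to the reader; your plan---writing down the claimed multiplicities, checking $(\mathbf Z\cdot E_i)\le 0$ vertex by vertex, invoking Tomaru for minimality, then reading off $\mathbf Z^2$, determining $K$ via the ansatz $-c\mathbf Z+dE_j$, and finishing with adjunction---is exactly the intended verification, and matches the paper's own (suppressed) argument.
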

\if false
\proof
The self-intersection numbers along the three terminal chains in the dual graph $\Gamma_N$ yield continued fractions
$$
\frac{p+1}{p}=[2,\ldots,2]\quadand \frac{p}{p-1} = [2,\ldots,2]\quadand \frac{2p+1}{4}=[(p+1)/2,4].
$$
Write $E_0\in \Gamma_N$ for the central vertex. According to \cite{Tom}, equation (3.4) on page 282, its multiplicity $m_0\geq 1$
in the fundamental cycle is the smallest integer $m\geq 1$ such that 
\begin{equation}
\label{inequality}
2m - \lceil \frac{mp}{p+1}\rceil - \lceil \frac{m(p-1)}{p}\rceil - \lceil \frac{4m}{2p+1}\rceil \geq 0.
\end{equation}
Here the ceiling  $\lceil r/s\rceil$ denotes the smallest integer $n\geq r/s$.
For $m=(p+1)p$ the first two fractions on the left are integers, whereas the last summand becomes
$$
\lceil \frac{4m}{2p+1}\rceil = \lceil 2p + \frac{2p}{2p+1}\rceil = 2p+1,
$$
and it follows that  \eqref{inequality} holds.
To get the desired multiplicity  $m_0=(p+1)p$, we now assume  $m<(p+1)p$ and have to verify that \eqref{inequality}  fails.
Since the fraction $p/(p+1)$ and $(p-1)/p$ are reduced, and one of the integers  $p$ or $p+1$ does  not divide $m$,
one  of the fractions $mp/(p+1)$ and $m(p-1)/p$ is not an integer. Using $1/p>1/(p+1)$, we obtain
$$
\lceil \frac{mp}{p+1}\rceil + \lceil \frac{m(p-1)}{p}\rceil\geq  \frac{mp}{p+1} + \frac{m(p-1)}{p} + \frac{1}{p+1}.
$$
In turn, the left hand side of \eqref{inequality} is bounded above by 
$$
2m - \frac{mp}{p+1}-\frac{m(p-1)}{p}-\frac{4m}{2p+1} -\frac{1}{p+1}  = \frac{m}{p(p+1)(2p+1)}-\frac{1}{p+1}.
$$
This is  bounded above by $1/(2p+1)-1/(p+1)<0$, because $m<p(p+1)$. As desired, the  \eqref{inequality} fails.

Summing up, the multiplicity   for the central vertex $E_0$ in the fundamental cycle   is $m_0=p(p+1)$.
By \cite{Tom}, Lemma 3.4 we have $(Z\cdot E_i)=0$ for each vertex $E_i\in\Gamma_N$ belonging
to the terminal chains comprising $(-2)$-curves. Since the intersection matrix $N$ is negative-definite,
this property determines the multiplicities. The given multiplicities in \eqref{multiplicities fundamental cycle} 
indeed have this property.

It remains to determine the multiplicity at the terminal chain of length two.
Without loss of generality, we may assume that $E_1,E_2\in \Gamma$ are the vertices, with self-intersection numbers
$E_1^2=-(p+1)/2$ and $E_2^2=-4$. Then
$$
0\geq (Z\cdot E_1) = (m_0E_0+ m_1E_1+m_2E_2)\cdot E_1 =  p(p+1)- m_1(p+1)/2 + m_1.
$$
This gives $m_1\geq 2(p^2+p+1)/(p+1)$, and thus $m_1\geq 2p+1$.
A similar argument shows $m_2\geq (p+1)/2$. Taking  $m_1=2p+1$ and $m_2=(p+1)/2$
one indeed gets $(Z\cdot E_i)=0$ for $i\neq 2$, and $(Z\cdot E_2)=-1$.
In turn, the multiplicites for the fundamental cycle are as in \eqref{multiplicities fundamental cycle}.
Futhermore, we have $Z^2=(Z\cdot m_2E_2)= -(p+1)/2$. 

The canonical cycle $K=\sum n_iE_i$ is uniquely determined by the Adjunction Formula
$(K+E_i)\cdot E_i=-2$. For the $(-2)$-curves, this reduces to  $(K\cdot E_i)=0$.
The latter holds for all cycles of the form  $rZ+sE_2$.
Making an Ansatz, one computes that for the coefficients
$r=-(2p-4)$ and $s=(p-3)/2$ we indeed get the canonical cycle.
The formula
$$
2g-2=(K+Z)\cdot Z = (r+1)Z^2 +s(E_2\cdot Z) = (r+1)\frac{p+1}{2} -s
$$
directly gives the  fundamental genus $g=h^1(\O_Z)$.
\qed

\begin{remark} 
Artin showed in \cite{Artin 1977} that $ E_8$ occurs as  the  dual graph
of  wild $\ZZ/2\ZZ$-quotient singularities for two  different characteristics, $p=2$ and $q=5$.
Since in such case the order of the discriminant group is a power of both $p$ and $q$, this could happen only
with $\|\Phi_N|=1$. Furthermore, the fundamental cycle $Z$ satisfies $|Z^2|\leq p,q$
according to \cite{Lorenzini 2013}, 2.4.
So far, we see no further example of this phenomenon.
Peskin  mentions in \cite{Peskin 1980}, page 111, 
that in characteristic $q=5$ there we are lacking an  explicit description
of the action of $G=\ZZ/q\ZZ$ on the formal power series ring $A=k[[u,v]]$ whose ring of invariants
is the rational double point of type $E_8^1$.
\end{remark}
\fi

\section{Analogues of the \texorpdfstring{$E_7$}{E7} singularities}
\mylabel{analogues E7}

When $p=2$, the blow-up  at the maximal ideal of the ${\mathbb Z}/2{\mathbb Z}$-quotient singularity $E_8^2$ given by 
$$z^2+x y^2z+x^3+y^5=0$$
has a new singularity,
namely the singularity $E^1_7$ given by 
the equation 
$$z^2+xy^2z+yx^3+y^3=0$$
(see for instance \cite{Roc}, 1.1). The singularity $E_8^2$ has resolution graph the Dynkin diagram $E_8$ with trivial discriminant group, while the resolution of $E^1_7$ has resolution graph $E_7$ with discriminant group of order $2$.

Artin (\cite{Artin 1977}, bottom of page 18, or \cite{Peskin 1980}, (2.16), page 104) shows that the Dynkin diagram $E_7$ 
cannot be obtained 
as the resolution graph of a wild ${\mathbb Z}/2{\mathbb Z}$-quotient singularity whose associated action is ramified precisely at the origin. He shows however that 
the singularity $E^1_7$ does occur as the resolution graph of  a wild ${\mathbb Z}/2{\mathbb Z}$-quotient singularity for an action 
that is    ramified in codimension $1$. 

When $p=2$, we have not been able to exhibit any wild ${\mathbb Z}/2{\mathbb Z}$-quotient singularity whose action is {\it ramified precisely at the origin} and whose associated intersection matrix has discriminant group of order $2^s$ with $s$ odd. 
We suggest in \ref{explicit8} for each $s$ odd the existence of explicit examples with group 
$({\mathbb Z}/2{\mathbb Z})^s$. In each case, these wild ${\mathbb Z}/2{\mathbb Z}$-quotient singularities are associated to  actions that are ramified in codimension 1.

The above considerations have analogues for any prime $p$. Indeed,  
consider the singularity at the maximal ideal of $\Spec \SB_n$, where 
$$\SB_n:=k[[x,y,z]]/(z^p - (x y^n)^{p-1}z - y^{pn+1} + x^{p+1}).$$
This singularity is a special case of the singularity recalled in \ref{moderately ramified}, where we have set $a=y^n$ and $b=x$. 
In particular, this singularity is a ${\mathbb Z}/p{\mathbb Z}$-quotient singularity whose moderately ramified action is ramified precisely at the origin.
When $n=p=2$, this singularity is $E_8^2$.

Consider the blow-up of $\Spec \SB_n$ at the maximal ideal $(x,y,z)$. Then the chart defined by the variables $y, x/y, z/y,$
has a singular point whose local ring is isomorphic to the local ring $\SC_n$, where 
\begin{equation} \label{eqC2}
\SC_n:= k[[x,y,z]]/(z^p-(x y^{n})^{p-1}z- y^{(n-1)p+1} + y x^{p+1}).
\end{equation}
When $n>1$, the closed point of $\Spec \SC_n$ is singular,
 and we show below in \ref{pro.wild} that 
the singularity of $\Spec \SC_n$ is again a ${\mathbb Z}/p{\mathbb Z}$-quotient singularity, but for an action that is ramified in codimension $1$. 

When $n=2$, the singularity of $\Spec \SB_2$ is treated in Theorem \ref{antidiagonal E8} and generalizes the $E_8^2$-singularity. 
The singularity of $\Spec \SC_2$ is the $E_7^1$-singularity when $p=2$, and thus $\Spec \SC_2$ is a natural 
generalization for all primes $p$ of  the $E_7^1$-singularity. Our educated guess for the resolution of $\Spec \SC_2$ is discussed in Example \ref{ex.C2}.
In the examples that we were able to compute,
the discriminant groups $\Phi_{\SB_n}$ and $\Phi_{\SC_n}$ 
of the intersection matrices of the resolutions of $\Spec \SB_n$ and $\Spec \SC_n$
when $n>1$ satisfy 
$|\Phi_{\SC_n}|= p |\Phi_{\SB_n}|$.

\if false
\begin{lemma} \label{BlowupNewQuotient} 
Consider the blow-up of $\Spec \SB_n$ at the maximal ideal $(x,y,z)$. Then the chart defined by the variables $y, x/y, z/y,$
has a singular point whose local ring is isomorphic to the local ring $\SC_n$.
\end{lemma}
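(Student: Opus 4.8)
The plan is to compute the $y$-chart of the blow-up explicitly, using the machinery of Section~\ref{Generalities blowing-ups}, and to recognise the strict transform of the defining equation of $\SB_n$ as the defining equation of $\SC_n$.

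Set $R := k[[x,y,z]]$, $\ideala := (x,y,z)$, and $f := z^p - (xy^n)^{p-1}z - y^{pn+1} + x^{p+1}$, so that $\SB_n = R/(f)$. Since $R$ is regular (hence locally of complete intersection), $x,y,z$ is a regular sequence generating $\ideala$, and $f$ is a regular element of $R$ lying in $\ideala$, Proposition~\ref{blowing-up computation} applies to the blow-ups $Z := \Bl_{\ideala\SB_n}(\SB_n)\to\Spec\SB_n$ and $Z' := \Bl_\ideala(R)\to\Spec R$. By \eqref{blowing-up ring}, the $y$-chart $D_+(y)$ of $Z'$ has coordinate ring $R[x/y,z/y]$, whose completion at the origin $(x/y,y,z/y)$ of the chart is the regular ring $k[[x/y,y,z/y]]$ (with $x$ and $z$ recovered as $y\cdot(x/y)$ and $y\cdot(z/y)$). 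First I would substitute $x=y\cdot(x/y)$ and $z=y\cdot(z/y)$ into $f$ and collect powers of $y$, obtaining
$$
f/1 = y^p(z/y)^p - y^{(n+1)(p-1)+1}(x/y)^{p-1}(z/y) - y^{pn+1} + y^{p+1}(x/y)^{p+1}.
$$
For $p\geq 2$ and $n\geq 1$ the exponent $p$ of $y$ in the first term is strictly smaller than the other three exponents $(n+1)(p-1)+1=np+p-n$, $pn+1$, and $p+1$, so the maximal factorization is $f/1=(y/1)^p h_2$ with
$$
h_2 = (z/y)^p - \bigl((x/y)y^n\bigr)^{p-1}(z/y) - y^{(n-1)p+1} + y(x/y)^{p+1}.
$$

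Next I would verify the codimension hypothesis of Proposition~\ref{blowing-up computation}: modulo $y$ one has $h_2\equiv(z/y)^p$, hence $\sqrt{(h_2,y/1)}=(z/y,y/1)$ and $V(h_2,y/1)$ is one-dimensional inside the three-dimensional $D_+(y)$. Proposition~\ref{blowing-up computation} then gives that $Z$ coincides on the $y$-chart with the effective Cartier divisor $V(h_2)$ and is there locally of complete intersection. Completing the local ring of $Z$ at the origin of the chart and renaming $x/y,y,z/y$ as $x,y,z$ yields
$$
k[[x,y,z]]/\bigl(z^p - (xy^n)^{p-1}z - y^{(n-1)p+1} + yx^{p+1}\bigr) = \SC_n,
$$
which is the desired isomorphism, the local ring in the statement being read as the completed local ring at the origin of the chart. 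That this point is singular when $n>1$ follows because, for $n\geq 2$, every monomial occurring in $h_2$ has degree $\geq p\geq 2$: the term $y^{(n-1)p+1}$ has degree $(n-1)p+1\geq p+1$, the term $((x/y)y^n)^{p-1}(z/y)$ has degree $p+n(p-1)$, and $y(x/y)^{p+1}$ has degree $p+2$; hence $h_2$ lies in the square of the maximal ideal.

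The computation involves no serious obstacle: the entire content is the bookkeeping of the exponents of $y$ that identifies $h_2$ with the equation defining $\SC_n$, together with the elementary codimension-two check needed to invoke Proposition~\ref{blowing-up computation}. The only point deserving a word of care is the passage between the blow-up of the complete local ring $\SB_n$ and its realisation as a strict transform inside $\Bl_\ideala(R)$; this, and the convention on completed local rings, are both covered by the results of Section~\ref{Generalities blowing-ups}. If one wants in addition that this is the \emph{only} singular point of the chart, one argues as in the proof of Proposition~\ref{exceptional divisor} that $D_+(y)\cap Z$ is regular away from the origin, but this is not needed for the statement as given.
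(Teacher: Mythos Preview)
Your proof is correct and is exactly the computation the paper has in mind: the paper states this fact in the running text without proof (the formal lemma is in fact suppressed), and your argument---substituting $x=y(x/y)$, $z=y(z/y)$, extracting $y^p$, and identifying the strict transform with the defining equation of $\SC_n$ via Proposition~\ref{blowing-up computation}---is precisely the routine verification that was left to the reader.
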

 \fi

We can further generalize the ring $C_n$ as follows.
Let $a,b \in k[[x,y]]$, not both $0$. Set
$$A_0:=k[[x,y]][U,V]/(U^p-(ay)^{p-1}U-y, V^p-(by)^{p-1}V-xy).$$
 Let $L$ denote the field of fractions of $A_0$. 
The  ring $A_0$ and the field $L$ are endowed with an automorphism $\sigma $ of order  $p$ fixing $k[[x,y]]$ and with 
$$
\begin{array}{rcl}
\sigma(U)&:=&U+  ay, \\
\sigma(V)&:=&V+ by.
\end{array}
$$
As usual, we set $G:=\langle \sigma \rangle$.
Let $z:=aV-bU$. Then $\sigma(z)=z$, and we find that
\begin{equation}
\label{E7modified}
z^p-(aby)^{p-1}z-a^pxy+b^py=0.
\end{equation}
Let $B$ denote the subring $k[[x,y]][z]$ of $A_0$. Let $A$ denote the subring $A_0[\frac{V}{U}]$ of $L$. The group $G$ acts on $A$, since $\sigma(V/U)=(V/U + b y/U)(1+ay/U)^{-1}$ and $1+ay/U$ is a unit in $A_0$.

\begin{proposition} \label{pro.wild}
Keep the above notation. The ring homomorphism $A \to k[[u,v]] $, which sends $U$ to $u$ and $V/U $ to $v$, is a $k$-isomorphism.
In the special case where either $a=x^m$ and $b=y^n$, or $a=y^n$ and $b=x^m$ for some integers $m,n \geq 1$, then the ring of invariants  $A^{G}$ is equal to the ring $B$. In particular, $\Spec C_n$ is a wild ${\mathbb Z}/p{\mathbb Z}$-quotient singularity.
\end{proposition}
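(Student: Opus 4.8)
The plan is to establish the three assertions in turn, the first being purely an explicit computation, the second a normalization argument, and the third a corollary.

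First I would prove that the map $A \to k[[u,v]]$ sending $U \mapsto u$ and $V/U \mapsto v$ is a $k$-isomorphism. Set $u := U$ and $v := V/U$. By construction $A = A_0[V/U]$, and since $V = Uv$, the ring $A$ is generated over $k[[x,y]]$ by $u$ and $v$. The defining relations become $u^p - (ay)^{p-1}u - y = 0$ and $(uv)^p - (by)^{p-1}(uv) - xy = 0$; the first one shows that $y \in (u,v) \cdot k[[u,v]]$ and that $y$ can be expressed as a power series in $u$ alone (Weierstrass-style, since the relation is monic in $u$ with $y$-constant term vanishing, exactly as in Proposition \ref{peskin completion} via the Eisenstein--Dumas / Newton polygon argument). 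Substituting this expression for $y$ into the second relation, one solves for $x$ as a power series in $u,v$ (the constant term of $x$ vanishes, and the relation is, up to a unit, of the form $x \cdot y = (\text{power series with no } x)$, with $y$ already known as a non-unit in $u$). Thus $x,y \in k[[u,v]]$, giving a surjection $k[[u,v]] \to A$; comparing cotangent spaces — $\maxid_A/\maxid_A^2$ is generated by $u,v$ since the images of $x$ and $y$ lie in $\maxid^2$ once the relations are used — shows this is an isomorphism of complete regular local rings, exactly as in the proof of \ref{peskin completion}.

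Next I would identify $A^G$ with $B = k[[x,y]][z]$ in the monomial case. Clearly $B \subseteq A^G$, since $x,y$ are $G$-invariant and $z = aV - bU$ satisfies $\sigma(z) = z$. For the reverse inclusion, note that $A$ is free of rank $p$ over $B$: indeed $A$ is finite over $k[[x,y]]$ (both $U$ and $V/U$ are integral over it by their defining monic relations after the substitution above) and $[L:\Frac(B)] = p$ because $\sigma$ has order $p$ and fixes $B$, while $[\Frac(B):\Frac(k[[x,y]])] = p$ from \eqref{E7modified} being irreducible (a Newton-polygon/Eisenstein argument in the variable $z$ over $k[[x,y]]$, using that $a = y^n, b = x^m$ or $a = x^m, b = y^n$ makes the constant term $\pm a^p x y$ or $\pm b^p y$ have the right $y$-adic or $(x,y)$-adic valuation — this is where the monomial hypothesis is used). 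Then $B \subseteq A^G \subseteq A$ with $[\Frac(A):\Frac(B)] = p = |G|$, so $\Frac(A)^G = \Frac(B)$, and since $B$ is normal (it is $k[[x,y]][z]/\eqref{E7modified}$, a hypersurface that one checks is normal via Serre's criterion — the singular locus has codimension $\geq 2$) and $A^G$ is integral over $B$ and contained in $\Frac(B)$, we get $A^G = B$.

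Finally, the statement that $\Spec C_n$ is a wild $\ZZ/p\ZZ$-quotient singularity follows by specializing $a = y^n$ and $b = x$ (or the symmetric choice) in \eqref{E7modified}: the resulting equation is $z^p - (xy^n)^{p-1}z - y^{(n-1)p+1} + yx^{p+1} = 0$, which is precisely the defining equation \eqref{eqC2} of $C_n$, so $C_n \cong A^G$ with $A = k[[u,v]]$ and the $G$-action given by $\sigma$; wildness is automatic since $|G| = p = \mathrm{char}(k)$. I expect the main obstacle to be the irreducibility and normality verifications for $B$ via \eqref{E7modified} — getting the Newton polygon / Eisenstein--Dumas argument to apply cleanly for both coordinate choices of $a,b$, and checking the codimension of the singular locus — but these are of the same flavor as the arguments already carried out in \ref{peskin completion} and \ref{emp.notation}, so no genuinely new idea should be needed.
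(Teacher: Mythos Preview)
Your approach is correct and essentially the same as the paper's: both establish regularity of $A$ by showing its maximal ideal is generated by $U$ and $V/U$, and both identify $A^G$ with $B$ by checking that the hypersurface ring defined by \eqref{E7modified} is normal (Cohen--Macaulay plus Jacobian criterion in codimension~$1$) and has the right fraction field.

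Two small corrections. First, in part~1 your ``solve $xy=\ldots$ for $x$'' step requires dividing by the non-unit $y$; the paper handles this more cleanly by observing that $y/U^p$ is a \emph{unit} in $A_0$ (from $y/U^p=1-(ay/U)^{p-1}$), so that dividing the relation $V^p-(by)^{p-1}V-xy=0$ by $U^p$ yields the integral equation $(V/U)^p-(by/U)^{p-1}(V/U)-(y/U^p)x=0$, from which $x$ lies in $(V/U)$ directly. Second, your specialization is off: with $a=y^n$, $b=x$ one gets $(aby)^{p-1}=(xy^{n+1})^{p-1}$ and $a^pxy=xy^{np+1}$, which is not the equation \eqref{eqC2} of $C_n$. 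The paper takes $a=-x$, $b=-y^{n-1}$ (i.e.\ the case $a=x^m,\,b=y^n$ with $m=1$ and $n$ shifted), which does give \eqref{eqC2} on the nose.
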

\proof 
The equation $U^p-(ay)^{p-1}U-y=0$ first shows that $y/U$ is in the maximal ideal of $A_0$, and then that $y/U^p$ is in $A_0$ and is a unit.
The ring $A_0$ is not integrally closed, 
since it is clear from the equation $V^p-(by)^{p-1}V-xy=0$
that 
$$ (\frac{V}{U})^p-(\frac{by}{U})^{p-1}(\frac{V}{U})-\frac{y}{U^p}x=0$$
is an integral relation for $\frac{V}{U}$ over $A_0$.  Since $x$ and $y$ can be expressed in terms of $U$ and $V/U$, we find that 
$A:=A_0[\frac{V}{U}]$, viewed as a subring of $L$, is in fact isomorphic to the power series ring $k[[u,v]]$, with $u:=U$ and $v:=V/U$.

Consider the ring $B':=k[[x,y]][Z]/(Z^p-(aby)^{p-1}Z-a^pxy+b^py)$ and the natural map $\varphi:B' \to A^G$ which sends $Z$ to $z$.
Assume that either $a=x^n$ and $b=y^m$, or that $a=y^n$ and $b=x^m$ for some integers $m,n \geq 1$.
We claim that $\varphi$ is an isomorphism. One can show that $B'$ is an integral domain, and that its field of fractions injects in $\Frac(A)$, and has image
by degree considerations equal to $\Frac(A^{G})$. The ring $B'$ is Cohen--Macaulay since it is free as a module over the regular ring $k[[x,y]]$. 
Thus $B'$ is normal as soon as it is regular in codimension $1$. This can be shown, because of the special forms of $a$ and $b$, using the Jacobian criterion.
Let $f:=Z^p-(aby)^{p-1}Z-a^pxy+b^py$. Then if a prime ideal $\primid $ of $B'$  contains the classes of $f$, 
and of the partial derivatives $f_x, f_y, f_Z$, then 
$\primid$ 
contains $(x,y,Z)$.

The reader will check that the ring $C_n$ is isomorphic to $B$ when $a=-x$ and $b=-y^{n-1}$. 
When $p=n=2$, the proposition is proved in \cite{Peskin 1980}, (2.16), page 104. 
\qed

\begin{example} \label{ex.second} 
Let $p=2$. Computations with Magma and Singular indicate that $\Spec \SB_3$ admits a resolution with smooth rational curves and 
dual graph drawn on the left below, with trivial discriminant group,

$$
\begin{tikzpicture}
[node distance=1cm, font=\small] 
\tikzstyle{vertex}=[circle, draw, fill,  inner sep=0mm, minimum size=1.1ex]
\node[vertex]	(v1)  	at (0,0) 	[label=below:{$-2$}] 		{};
\node[vertex]	(v2)			[right of=v1, label=below:{$-1$}]	{};
\node[vertex]	(v3)			[above of=v2, label=left:{$-7$}]	{};
\node[vertex]	(v6)			[right of=v2, label=below:{$-3$}]	{};
\draw[thick] (v1)--(v2);
\draw[thick] (v2)--(v3);
\draw[thick] (v2)--(v6);
\node[]	(dummy1)		[right of=v6, label=above:{}]	{};
\node[]	(dummy2)		[right of=dummy1, label=above:{}]	{};
\node[]	(dummy3)		[right of=dummy2, label=above:{}]	{};
\node[vertex]	(w1)  	  	[right of=dummy3, label=below:{$-2$}] 		{};
\node[vertex]	(w2)			[right of=w1, label=below:{$-1$}]	{};
\node[vertex]	(w3)			[above of=w2, label=left:{$-8$}]	{};
\node[vertex]	(w6)			[right of=w2, label=below:{$-3$}]	{};
\draw[thick] (w1)--(w2);
\draw[thick] (w2)--(w3);
\draw[thick] (w2)--(w6);
\end{tikzpicture}
$$
 while 
$\Spec \SC_3$ admits a resolution with smooth rational curves and 
dual graph drawn on the right above, with discriminant group of order $2$.
\end{example}

\begin{example} \label{ex.star}   
We show in this example that there are (many) intersection matrices $N$ with $\Phi_N$ killed by $2$ and of order $2^s$ with $s$ odd.
Since our interest is to provide evidence that there may exist wild ${\mathbb Z}/2{\mathbb Z}$-quotient singularities whose resolutions 
have discriminant groups of order $2^s$ with $s$ odd, we note that any such resolution must also have an intersection matrix $N$ whose fundamental cycle $Z$ satisfies
$|Z^2|\leq 2$ (\cite{Lorenzini 2013}, 2.4). This is a non-trivial restriction on the possible matrices $N$, and we exhibit below matrices that also satisfy this restriction.

Recall that a star-shaped graph with $n\geq 4$ vertices is called a {\it star}, or the  {\it complete bipartite graph} $K_{1,n-1}$, if 
it consists in a single node and $n-1$ terminal vertices attached to the node.
We write the intersection matrix $N$ of a star on $n$ vertices as $N=N(s_0\mid s_1/1,\dots, s_{n-1}/1) $, 
where $-s_0$ denotes the self-intersection of the node, and $-s_i$ denotes the self-intersection of the $i$-th terminal vertex when $i>0$. 
The Dynkin diagram $D_4$ is a star on $4$ vertices, and so are the two graphs in  Example \ref{ex.second}.


Consider any intersection matrix $N=N(s_0\mid s_1/1,\dots, s_{n-1}/1) $ such that one of the $s_j$ with $j\geq 1$ is even and at most one of the $s_j$ with $j\geq 1$ is divisible by $4$.
Assume in addition that $\Phi_N$ is killed by $2$, and that
the fundamental cycle $Z$ of $N$ satisfies $|Z^2|\leq 2$.
Define the matrix $N_i(s_0 \mid s_1/1,\dots, s_{n-1}/1,s_n/1)$, $i=1,2$, by
 $$s_n := i+ (\prod_{j=1}^{n-1} s_j)/|\Phi_N|.$$
{\it We claim that the two intersection matrices $N_1$ and $N_2$ have graphs that are stars on $n+1$ vertices with $|\text{\rm det}(N_i)|=i |\text{\rm det}(N)|$.
Moreover, both groups $\Phi_{N_i}$ are killed by $2$, and both fundamental cycles $Z_i$ of $N_i$ satisfy  $|Z_i^2|\leq 2$.}

\proof Let $\ell_{n-1}:=\lcm(s_1, \dots, s_{n-1})$. Then the order of the node in $\Phi_N$ is equal to $\ell_{n-1}(s_0 - \sum_{j=1}^{n-1} 1/s_j)$ (use \ref{determinant star-shaped} (ii)). This order equals $1$ since we assume that one of the $s_j$ is even (use \ref{determinant star-shaped} (v)).
It follows that
$|\Phi_N| = (\prod_{j=1}^{n-1}s_j)/\ell_{n-1}$ (use \ref{determinant star-shaped} (i)). In particular, $(\prod_{j=1}^{n-1} s_j)/|\Phi_N|= \ell_{n-1}$ is an integer.  
The equality $|\text{\rm det}(N_i)|=i |\text{\rm det}(N)|$
follows from an easy computation.

We find that $\lcm(s_1,\dots, s_{n-1},\ell_{n-1}+i) = \lcm(\ell_{n-1}, \ell_{n-1}+i)$, which equals $ \ell_{n-1} (\ell_{n-1}+1)$ when $i=1$, 
and $ \ell_{n-1} (\ell_{n-1}/2+1)$ when $i=2$.
Hence, the node is trivial in $\Phi_{N_i}$ since its order is
$$\lcm(s_1,\dots, s_{n-1},\ell_{n-1}+i)(s_0 - \sum_{j=1}^{n-1} 1/s_j -1/(\ell_{n-1}+i))=1.$$
Let $R \in {\mathbb Z}^{n+1}$ denote the transpose of vector $(\ell_{n-1}, \ell_{n-1}/s_1,\dots, \ell_{n-1}/s_{n-1}, 1)$. 
Then $N_iR = -ie_{n+1}$. Since all coefficients of $R$ are positive and $N_iR$ has non-positive coefficients, we find that $R$ is an upperbound for the fundamental cycle $Z_i$ of $N_i$. Then $|Z_i^2| \leq |R^2| \leq i$, as desired.

To show that $\Phi_{N_i}$ is killed by $2$,  it suffices to show that the classes of the standard vectors have order $1$ or $2$ in $\Phi_{N_i}$ 
for each terminal vertex of the graph. This is clear for a terminal vertex $v_j$ with $s_j$ odd or exactly divisible by $2$, since the column of $N_i$ corresponding to $v_j$ shows that the class of $s_jv_j$ is equal to the class of the node. 
We note now that the construction implies that there can be at most one terminal vertex $v_j$ with $s_j$ divisible by $4$. If the corresponding class in $\Phi_{N_i}$ has order divisible by $4$, we would find using the first column of the matrix $N_i$ that this unique class is equal to the sum of classes which all have order $1$ or $2$, a contradiction. This ends the proof of the claim. \qed

\medskip
The sequence $ \{ s_n \}_{n \geq 1}$ with $s_1=2$ and $s_n:= \lcm(s_1,\dots, s_{n-1}) +1$ is called {\it Sylvester's sequence} $\{2,3,7,43,\dots\}$ in the literature,
and produces the only intersection matrices $N(1 \mid s_1/1,\dots, s_{n-1}/1)$ with trivial group $\Phi_N$ in the above construction.  
An example of a star with intersection matrix $N$ such that $\Phi_N$ is killed by $2$
but $|Z^2| >2$ is given by $N=N(1\mid 2/1,3/1,10/1,16/1)$, with  group $\Phi_N= ({\mathbb Z}/2{\mathbb Z})^2$
and $|Z^2| =4$. 
\end{example}
 
\begin{example} \label{explicit8} 
Let $p=2$. Fix an integer $n \geq 1$. Consider the star graph with a central node of self-intersection $-(n+1)$ attached to $2n+1$ terminal vertices of self-intersection $-2$. Denote by $N_0$
its intersection matrix. Proposition \ref{determinant star-shaped} (iv) shows that $\Phi_{N_0} = ({\mathbb Z}/2{\mathbb Z})^{2n}$. We remark in passing that this matrix does occur as the intersection matrix attached to a quotient singularity (use the equation $z^2-xy(x^{2n-1}-y^{2n-1})$ and Theorem \ref{Brieskorn quotient sing} (ii)).

Starting with $N_0$, the construction in Example \ref{ex.star}  
produces  two intersection matrices, the matrix $N_1(n):=N(n+1\mid 2/1,\ldots,2/1,3/1)$ with group of order $2^{2n}$ and whose graph is represented on the left below, and
the matrix $N_2(n):=N(n+1\mid 2/1,\ldots,2/1,4/1)$ with group of order $2^{2n+1}$ and whose graph is represented below on the right. 
$$
\begin{tikzpicture}
[node distance=1cm, font=\small] 
\tikzstyle{vertex}=[circle, draw, fill,  inner sep=0mm, minimum size=1.1ex]
\node[vertex]	(v1)  	at (0,0) 	[label=below:{}] 		{};
\node[vertex]	(v2)			[right of=v1, label=below:{$^{-(n+1)}$}]	{};
\node[vertex]	(v3)			[above of=v1, label=below:{}]	{};
\node[vertex]	(v4)			[above of=v2, label=right:{}]	{};
\node[vertex]	(v6)			[right of=v2, label=above:{$-3$}]	{};
\draw[thick] (v1)--(v2);
\draw[thick] (v2)--(v3);
\draw[thick] (v2)--(v4);
\draw[thick] (v2)--(v6);
\draw [decorate,decoration={brace, raise=5pt}] (v3)--(v4) node [black, midway,xshift=-0cm,yshift=0.5cm] {$^{2n}$};
\draw [decorate,decoration={}] (v3) (v4) node [black, midway,xshift=0.5cm,yshift=1.0cm] {$\hdots$};
\node[]	(dummy1)		[right of=v6, label=above:{}]	{};
\node[]	(dummy2)		[right of=dummy1, label=above:{}]	{};
\node[]	(dummy3)		[right of=dummy2, label=above:{}]	{};
\node[vertex]	(w1)  	  	[right of=dummy3, label=below:{}] 		{};
\node[vertex]	(w2)			[right of=w1, label=below:{$^{-(n+1)}$}]	{};
\node[vertex]	(w3)			[above of=w1, label=below:{}]	{};
\node[vertex]	(w4)			[above of=w2, label=right:{}]	{};
\node[vertex]	(w6)			[right of=w2, label=above:{${-4}$}]	{};
\draw[thick] (w1)--(w2);
\draw[thick] (w2)--(w3);
\draw[thick] (w2)--(w4);
\draw[thick] (w2)--(w6);
\draw [decorate,decoration={brace, raise=5pt}] (w3)--(w4) node [black, midway,xshift=-0cm,yshift=0.5cm] {$^{2n}$};
\draw [decorate,decoration={}] (w3) (w4) node [black, midway,xshift=6.5cm,yshift=1.0cm] {$\hdots$};
\end{tikzpicture}
$$
When $n=1$, the intersection matrices $N_1$ and $N_2$
are the matrices of the resolutions of the wild quotient singularities $\Spec \SB_4$ and $\Spec \SC_4$, respectively.
\if false
Recall that the defining equation for $\SB_4$ is 
$z^2 - x y^4 z - y^{9} + x^{3}$. 
Using Theorem \ref{thm.BrieskornResolution}, we find that the associated  Brieskorn singularity
given by $z^2 - y^{9} + x^{3}=0$ has the above graph on the left as resolution graph. The defining equation for $C_4$ is $z^2 - x y^4 z - y^{7} + yx^{3}$.
Using Theorem \ref{intersection graph}, we find that the associated  singularity
given by $z^2 - y^{7} + x^{3}y=0$ has the  graph on the right as resolution graph.
\fi

When  $n\geq 1$, consider the equation $
f:=z^p-(aby)^{p-1}z-a^pxy+b^py$ introduced in \eqref{E7modified}, and set 
$a:=x^n$ and $b:=y^{2n+1}$. Let $B:=k[[x,y,z]]/(f)$.
Proposition \ref{pro.wild} shows that this equation defines a wild ${\mathbb Z}/2{\mathbb Z}$-quotient singularity. We conjecture that  $\Spec B$ has a
resolution $X \to \Spec B$ with a dual graph equal to the dual graph of $N_2(n)$ represented on the right above. The conjecture thus provides examples of wild ${\mathbb Z}/2{\mathbb Z}$-quotient singularities  with discriminant group of order $2^{2n+1}$ for all $n \geq 1$. These quotient singularities are associated with actions that are ramified in codimension 1.
\end{example}

 \begin{example} \label{ex.C2} (Analogues of $E_7$.) Let $p$ be prime.
Computations suggest that the resolution of the wild $\ZZ/p\ZZ$-quotient singularity $\Spec \SC_2$ (see \eqref{eqC2}) has intersection matrix (notation as in \ref{notation.starshaped})
$$N=N(2 \mid \frac{p}{p-1}, \frac{p+1}{p}, \frac{p^2}{2p-1})$$
with group $\Phi_N={\mathbb Z}/p{\mathbb Z}$. When $p$ is odd, 
the intersection matrix $N$ has the following graph: 

\if false
\begin{center}
{\centering
\begin{graph}(10,4) 
\autodistance{2}
\opaquetextfalse

\roundnode{DV}(1,2)
\edge{DV}{P1}
\roundnode{P1}(2,2)
\roundnode{P2}(3,2)
\edge{P1}{P2}[\graphlinedash{2}]
\bowtext{DV}{P2}{-.3}{$\underbrace{\phantom{aaaaaaaaaaa}}_{p}$}

\roundnode{CN}(4,2)
\edge{CN}{P2}
\edge{CN}{Ra1}
\edge{CN}{Rb1}

\roundnode{Ra1}(5,3)
\autonodetext{Ra1}[n]{$-e$}
\roundnode{Ra2}(6,3)
\autonodetext{Ra2}[n]{$-5$}
\roundnode{Ra3}(7,3)
\roundnode{Ra4}(8,3)

\edge{Ra1}{Ra2}
\edge{Ra2}{Ra3}
\edge{Ra3}{Ra4}[\graphlinedash{2}]
\bowtext{Ra3}{Ra4}{-.5}{$\underbrace{\phantom{aaaaaa}}_{(p-3)/2}$}

\roundnode{Rb1}(5,1)
\roundnode{Rb2}(6,1)
\edge{Rb1}{Rb2}[\graphlinedash{2}]
\bowtext{Rb1}{Rb2}{-.5}{$\underbrace{\phantom{aaaaaa}}_{p-1}$}

\end{graph}
}
\end{center}
\fi

\begin{equation*}
\begin{gathered}
\begin{tikzpicture}
[node distance=1cm, font=\small]
\tikzstyle{vertex}=[circle, draw, fill,  inner sep=0mm, minimum size=1.0ex]
\node[vertex]	(E1)  	at (0,0) 	[label=below:{}]               {};
\node[]	        (E11)  	         	[right of=E1,label=above:{}] {};
\node[]	        (E2)  	         	[right of=E11,label=above:{}]               {};
\node[vertex]	(E3)  	         	[right of=E2,label=below:{}]               {};

\node[vertex]	(D)			[right of=E3, label=right:{ }]    {};

\node[vertex]	(E4)			[above of=D, label=above:{$^{ -(p+1)/2  \quad \ }$}]    {};
\node[vertex]	(E5)			[right of=E4, label=above:{$^{-5}$}]    {};
\node[vertex]	(E9)			[right of=E5, label=below:{}]    {};
\node[vertex]	(E10)			[right of=E9, label=below:{}]    {};
\draw [decorate,decoration={brace, raise=6pt}] (E9)--(E10) node [black, midway,xshift=-0cm,yshift=0.5cm] {$^{(p-3)/2}$};
\draw [thick,dashed] (E9)--(E10);
\draw [thick] (E5)--(E9);

\node[vertex]	(E6)			[right of=D, label=below:{}]    {};
\node[]	        (E7)			[right  of=E6, label=above:{}]    {};
\node[vertex]	(E8)			[right  of=E7, label=below:{}]    {};
\draw [decorate,decoration={brace, raise=5pt}] (E6)--(E8) node [black, midway,xshift=-0cm,yshift=0.4cm] {$^{p-1}$};

\draw [decorate,decoration={brace, raise=5pt}] (E1)--(E3) node [black, midway,xshift=-0cm,yshift=0.5cm] {$^p$};
 
\draw [thick,dashed] (E1)--(E3);
\draw [thick,dashed] (E6)--(E8);

\draw [thick] (E3)--(D)--(E4)--(E5);
\draw [thick] (D)--(E6);

\end{tikzpicture}
\end{gathered}
\end{equation*}
The resolution of $\Spec \SB_2$ is discussed in Theorem \ref{antidiagonal E8}.
\end{example}

\begin{remark} \label{E7 homogeneous singularities} 
Consider the equation $
z^p-(aby)^{p-1}z-a^pxy+b^py=0$ introduced in \eqref{E7modified}, and set 
$a=y^n$ and $b=x^m$ 
for some integers $m,n \geq 1$. Proposition \ref{pro.wild} shows that this equation defines a wild ${\mathbb Z}/p{\mathbb Z}$-quotient singularity.
Computations with Magma \cite{Magma} suggest that for such $a$ and $b$, the resolution of the singularity at the origin of 
$z^p-(aby)^{p-1}z-a^pxy+b^py=0$ has the same intersection matrix as the resolution of the singularity of $z^p -a^pxy+b^py=0$.

When $a=y^n$ and $b=x^m$, this latter singularity has the form  
$z^p-xy(y^{pn}-x^{pm-1})=0$, and Theorem \ref{intersection graph} provides an explicit resolution for it. When $p=2$, we find that $g:=\gcd(pn,pm-1)$ 
is always odd, so the discriminant group of this resolution, which has order $2^{g+1}$ by \ref{special intersection graph}, is always of the form
 $|\Phi_N|=2^{s}$ with $s $ even. Thus the quotient singularity \eqref{E7modified} in this case is unlikely to provide examples of discriminant groups
 of order $|\Phi_N|=2^{s}$ with $s $ odd.

When $p=2$,  \eqref{E7modified} in the case $b=x$ and $a=y^n$ gives the equation of the singularity $D_{2(2n+1)}^{n}$ with resolution graph the Dynkin diagram 
$D_{2(2n+1)}$ (notation as in \cite{Artin 1977}, section 3).
\end{remark}

\section{\texorpdfstring{$D_{4}$}{D4} and \texorpdfstring{$A_{p-1}$}{Ap-1}}
\mylabel{The Ap-1 singularity}
 
 We compute in this section the resolution of the singularity of $\Spec B_{\mu}$
introduced in \ref{moderately ramified}, for any value of the parameter $\mu$ when $a=y$ and $b=x$. The ring $B_{\mu}$ is given in this case by 
$$B_\mu:=k[[x,y]][z]/(z^p-(\mu xy)^{p-1}z - x^{p+1}+y^{p+1}).
$$
Let $\YYY \ra \Spec(B_{\mu})$ be the blow-up of the ideal $\idealb=(x,y,z)$, as in \ref{exceptional divisor}.
We note in Theorem \ref{Ap-1} that $Z$ has $p+1$ singularities, each again $\ZZ/p\ZZ$-quotient singularities, with resolution graph $A_{p-1}$ and associated discriminant group $\ZZ/p\ZZ$.

 \begin{remark}
 When $k$ contains a third root of unity $\zeta$ with $\zeta^2+\zeta+1=0$, 
the change of variables $X:=x+\zeta y$ and $Y:=x+\zeta^2 y$ produces $x^3+y^3=-\zeta XY(X+\zeta Y)$. In particular, the singularity 
$z^q-(x^3+y^3)=0$ is always isomorphic over $k$ to the singularity $z^q-(x^2y-xy^2)=0$.
When in addition $p=2$, we find that 
$B_{\mu=0}$  is isomorphic over ${\mathbb F}_4$ to the singularity $D^0_4$, given by the equation $z^2  + x^{2}y+xy^{2}=0 $.
The dual graph of its resolution is the Dynkin diagram $D_4$. The Tjurina number of this singularity is equal to $8$.

The resolution of $\Spec B_{\mu=1}$ when $p=2$ is also known to have  dual graph $D_4$ over an algebraically closed field. Indeed, the equation when $\mu=1$ is stated to be equivalent to $D_4^1$ in \cite{Peskin 1980}, page 102, where $D_4^1$ is given by the equation $z^2 +xyz + x^{2}y+xy^{2}=0 $. 
This can be seen indirectly as follows. By Theorem \ref{a=y, b=x}, the resolution  of $\Spec B_{\mu=1}$ is of type $D_4$. According to  Artin's classification
\cite{Artin 1977}, there are only two possible isomorphism types of singularities with resolution $D_4$ when $p=2$, namely $D_4^0$ and $D_4^1$.
Since the Tjurina number of $B_{\mu=1}$
is equal to $6$, this ring must then be isomorphic  to the $D_4^1$ singularity over the algebraic closure of $k$.
The quotient singularity $\Spec B_{\mu=1}$ when $p>2$ can thus be considered as a generalization of $D_4^1$.
\end{remark}

\begin{theorem} \label{a=y, b=x}  $\Spec \R_{\mu}$ has a resolution of singularities 
with star-shaped dual graph $\Gamma_N$ having $p+1$ identical terminal chains with $p-1$ vertices as follows:
\begin{equation*}
\begin{gathered}
\begin{tikzpicture}
[node distance=1cm, font=\small]
\tikzstyle{vertex}=[circle, draw, fill,  inner sep=0mm, minimum size=1.0ex]
\node[vertex]	(C)  	at (0,0) 	[label=above:{-p}]               {};
\node[]	(dummyS)  	         	[below  of=C,label=above:{$\hdots$}]               {};

\node[vertex]	(E1)  	         	[left of=C,label=above:{ }]               {};
\node[vertex]	(D1)  	         	[left of=E1,label=above:{ }]               {};

\node[vertex]	(E2)  	         	[below of=E1,label=below:{}]               {};
\node[vertex]	(D2)  	         	[left of=E2,label=below:{}]               {};

\node[vertex]	(E4)  	         	[right of=C,label=above:{ }]               {};
\node[vertex]	(D4)  	         	[right of=E4,label=above:{ }]               {};

\node[vertex]	(E3)  	         	[below of=E4,label=below:{}]               {};
\node[vertex]	(D3)  	         	[right of=E3,label=below:{}]               {};

\draw [thick] (C)--(E1);
\draw [thick, dashed] (E1)--(D1);
\draw [thick] (C)--(E2);
\draw [thick, dashed] (E2)--(D2);
\draw [thick] (C)--(E3);
\draw [thick, dashed] (E3)--(D3);
\draw [thick] (C)--(E4);
\draw [thick, dashed] (E4)--(D4);
\draw [decorate,decoration={brace, mirror, raise=4pt}] (E1)--(D1) node [black, midway,xshift=0cm,yshift=0.4cm] {$^{p-1}$};
\draw [decorate,decoration={brace, mirror, raise=4pt}] (D4)--(E4) node [black, midway,xshift=0cm,yshift=0.4cm] {$^{p-1}$};
\end{tikzpicture}
\end{gathered}
\end{equation*}
\if false
\begin{equation*}
\begin{gathered}
\begin{tikzpicture}
[node distance=1cm, font=\small]
\tikzstyle{vertex}=[circle, draw, fill,  inner sep=0mm, minimum size=1.0ex]
\node[vertex]	(C)  	at (0,0) 	[label=above:{-p}]               {};
\node[]	(dummyW)  	         	[left   of=C,label=below:{}]               {};
\node[]	(dummyWW)  	         	[left  of=dummyW,label=below:{}]               {};
\node[]	(dummyE)  	         	[right  of=C,label=below:{}]               {};
\node[]	(dummyEE)  	         	[right  of=dummyE,label=below:{}]               {};
\node[]	(dummyS)  	         	[below  of=C,label=below:{$\hdots$}]               {};

\node[vertex]	(E1)  	         	[below of=dummyWW,label=below:{}]               {};
\node[vertex]	(D1)  	         	[below  of=E1,label=below:{}]               {};

\node[vertex]	(E2)  	         	[below of=dummyW,label=below:{}]               {};
\node[vertex]	(D2)  	         	[below of=E2,label=below:{}]               {};

\node[vertex]	(E3)  	         	[below of=dummyE,label=below:{}]               {};
\node[vertex]	(D3)  	         	[below of=E3,label=below:{}]               {};

\node[vertex]	(E4)  	         	[below of=dummyEE,label=below:{}]               {};
\node[vertex]	(D4)  	         	[below of=E4,label=below:{}]               {};

\draw [thick] (C)--(E1);
\draw [thick, dashed] (E1)--(D1);
\draw [thick] (C)--(E2);
\draw [thick, dashed] (E2)--(D2);
\draw [thick] (C)--(E3);
\draw [thick, dashed] (E3)--(D3);
\draw [thick] (C)--(E4);
\draw [thick, dashed] (E4)--(D4);
\draw [decorate,decoration={brace, mirror, raise=6pt}] (E1)--(D1) node [black, midway,xshift=-0.8cm,yshift=0.0cm] {$^{p-1}$};
\end{tikzpicture}
\end{gathered}
\end{equation*}
\fi
The associated discriminant group $\Phi_N$ has order $p^p$.
\end{theorem}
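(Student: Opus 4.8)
The plan is to follow the strategy of the proof of Theorem~\ref{antidiagonal E8}: resolve $\Spec B_\mu$ by first performing the initial blowing-up $Z=\Bl_{\ideala B}(B)$ with $\ideala=(x,y,z)$ (so $\ideala=(a,b,z)$ with $a=y$, $b=x$), and then resolving the surface singularities appearing on $Z$. Since $\dim_k k[[x,y]]/(y,x)=1$, the argument of Proposition~\ref{exceptional divisor} goes through here unchanged, also for $\mu=0$, and shows that the reduced exceptional divisor $E_\red$ is a copy of $\PP^1_k$, that $Z$ is locally of complete intersection, that $(E\cdot E_\red)_Z=-1$, and that $E=pE_\red$. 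On the $y$-chart, Proposition~\ref{blowing-up computation} identifies $Z$ with the hypersurface $V(h)$ in $\Spec k[[y]][x/y,z/y]$, where, writing $W=z/y$,
$$
h=W^p+y\bigl(1-(x/y)^{p+1}\bigr)-\mu^{p-1}y^{p-1}(x/y)^{p-1}W;
$$
the $x$-chart is analogous and the $z$-chart is disjoint from $E$. Because $B$ is a normal two-dimensional ring (it is $A^G$ when $\mu\neq 0$ and a Brieskorn singularity when $\mu=0$), $\Spec B$ has an isolated singularity, so $Z$ is regular off $E_\red$; a direct computation of the Jacobian ideal of $h$ along $E_\red$ then shows that the singular points of $Z$ are exactly the $p+1$ points $P_\zeta\in E_\red$ with $x/y=\zeta$ a $(p+1)$-st root of unity (there are $p+1$ of them since $\gcd(p,p+1)=1$). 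As $V(h)$ is a hypersurface in a regular scheme with only isolated singularities, it is normal, so the normalization $Y\to Z$ is an isomorphism.

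The heart of the argument is to identify each $P_\zeta$ as an $A_{p-1}$ rational double point. Setting $\tilde\xi=x/y-\zeta$ and writing $1-(\zeta+\tilde\xi)^{p+1}=-\tilde\xi\,v(\tilde\xi)$ with $v(0)=(p+1)\zeta^p\neq 0$ a unit of $k[[\tilde\xi]]$, the completed local ring of $Z$ at $P_\zeta$ is $k[[y,\tilde\xi,W]]/(h)$ with $h=W^p-v(\tilde\xi)y\tilde\xi-\mu^{p-1}(\zeta+\tilde\xi)^{p-1}y^{p-1}W$. I would then put $Y_1:=v(\tilde\xi)y$ and $\tilde\xi_1:=\tilde\xi+\mu^{p-1}(\zeta+\tilde\xi)^{p-1}v(\tilde\xi)^{-(p-1)}Y_1^{p-2}W$, which is a legitimate change of regular system of parameters (also when $p=2$, where the exponent $p-2$ is $0$), and check that it transforms $h$ into $W^p-Y_1\tilde\xi_1$: precisely the $A_{p-1}$ singularity. (Alternatively, invoke Lemma~\ref{detection rdp} and then pin down the type by a Tjurina-number count.) Thus $Z=Y$ carries $p+1$ disjoint $A_{p-1}$ double points, all lying on the smooth rational curve $E_\red$, with $E_\red$ passing through each $P_\zeta$ along one of the two smooth Weil-divisor branches of the double point.

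Letting $X\to Z$ be the minimal resolution of these $p+1$ double points, $X\to\Spec B_\mu$ is a resolution whose exceptional divisor is the strict transform $C_0$ of $E_\red$ — still $\cong\PP^1_k$, since the strict transform of a smooth curve through an $A$-singularity is unchanged — together with $p+1$ chains of $p-1$ $(-2)$-curves, each meeting $C_0$ at one end with intersection number $1$ (the end of the $A_{p-1}$ chain met by the chosen branch). For the self-intersection of $C_0$ I would use Proposition~\ref{correction self-intersection}: since $Y=Z$, Proposition~\ref{correction degree} gives $(E_\red\cdot E_\red)_Z=\tfrac1p(E\cdot E_\red)_Z=-\tfrac1p$, and each terminal chain, corresponding to $p/(p-1)=[2,\dots,2]$, contributes the correction term $(p-1)/p$, so $(C_0\cdot C_0)_X=-\tfrac1p-(p+1)\tfrac{p-1}{p}=-\tfrac{1+(p^2-1)}{p}=-p$. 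Hence $N=N\bigl(p\mid p/(p-1),\dots,p/(p-1)\bigr)$ with $p+1$ terminal chains, which is the stated graph; and since $a_j=p$ for all $j$ and $ps_0-\sum_j b_j=p^2-(p+1)(p-1)=1$, Proposition~\ref{determinant star-shaped}(iv) yields that $\Phi_N$ is killed by $p$ with $|\Phi_N|=p^{(p+1)-1}=p^p$. The step I expect to be the main obstacle is the $A_{p-1}$-identification carried out uniformly in $\mu$ (and its degenerate instances $p=2$ and $\mu=0$); the remaining steps are routine bookkeeping with the propositions of Sections~\ref{intersection matrices}--\ref{Generalities blowing-ups}.
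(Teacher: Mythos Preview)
Your argument is correct and follows essentially the same route as the paper's proof: the initial blow-up of $(x,y,z)$, the identification via Proposition~\ref{exceptional divisor} of $E_\red\cong\PP^1_k$ with $E=pE_\red$ and $(E\cdot E_\red)_Z=-1$, the location of the $p+1$ singular points on the $x$-chart (you use the symmetric $y$-chart), and then Propositions~\ref{correction self-intersection} and~\ref{determinant star-shaped} for the self-intersection $-p$ and the order $p^p$ of $\Phi_N$. The one genuine difference is in the identification of the $A_{p-1}$ points: the paper observes that for $p\geq 3$ the quadratic part is $x(y/x-\zeta^j)$, invokes Lemma~\ref{detection rdp} to get an $A_m$-singularity, and determines $m=p-1$ by carrying out the coordinate changes of that lemma; your explicit substitution $Y_1=v(\tilde\xi)y$, $\tilde\xi_1=\tilde\xi+\mu^{p-1}(\zeta+\tilde\xi)^{p-1}v^{-(p-1)}Y_1^{p-2}W$ reaches $W^p-Y_1\tilde\xi_1=0$ directly and has the advantage of handling $p=2$ and $\mu=0$ in one stroke (for $p=2$ the quadratic part is nondegenerate and Lemma~\ref{detection rdp} does not apply verbatim). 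Your remark that $Z$ is already normal (so $Y=Z$) is correct and slightly streamlines the use of Proposition~\ref{correction degree}.
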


\proof
Let $\YYY \ra \Spec(B_{\mu})$ be the blow-up of the ideal $\ideala B=(a,b,z)=(x,y,z)$, as in Proposition \ref{exceptional divisor}.
Let as usual $E$ denote the exceptional divisor.
We find from Proposition \ref{exceptional divisor} that $E_{\red}$ is a smooth rational curve over $k$, and that $(E \cdot E_{\red})_Z=-1$.
In addition, $E=pE_{\red}$, and the $z$-chart is regular.

The blow-up $Z$ is covered by three affine charts, and we see that the $x$-chart is generated
by the expressions $x,y/x,z/x$ modulo the relation
$$
\left(\frac{z}{x}\right)^p - x\left(1+\mu^{p-1} x^{p-2}\left(\frac{y}{x}\right)^{p-1}\frac{z}{x}  - \left(\frac{y}{x}\right)^{p+1}\right) =0.
$$
Clearly, this chart is regular at the origin. Let $Y \to \YYY$ denote the normalization of $\YYY$.
Let $D$ denote as usual the pull-back of the exceptional divisor of $Z$. It follows from the regularity at the origin that
 the induced morphism $D_{\red} \to E_{\red}$ is an isomorphism.
Hence, we can conclude from Proposition \ref{correction degree} that
$(D_{\red} \cdot D_{\red})_Y= -1/p$. 

Using partial derivatives, one sees that
the singular locus on the $x$-chart is given by $x=z/x=0$ and $(y/x)^{p+1}=1$. Let $\zeta$ denote a primitive root of the equation $u^{p+1}=1$.
When rewriting the above equation defining the $x$-chart in terms of the expressions $x$, $y/x-\zeta^j$, and $z/x$,
we obtain a polynomial of the form $x(y/x-\zeta^j)+ O(3)$. Using the changes of variables discussed in the proof of \ref{detection rdp},
we find that the singularity is in fact a rational double point
of type $A_{p-1}$.

Let $X \to Y$ denote a resolution of the singularities of $Y$. 
Let $C$ denote the strict transform of $D_{\red}$ in $X$.
It follows from Proposition \ref{correction self-intersection} 
that $(C \cdot C)_{X}= -1/p- (p+1)\delta$, where $\delta $ is the correcting term 
associated with the rational double point $A_{p-1}$. As noted in \ref{chain}, $\delta=(p-1)/p$, and we find that $(C \cdot C)_{X}=-p$.
The associated discriminant group is computed with Proposition \ref{determinant star-shaped}.
\qed

\if false
Let $E\subset X$ be the exceptional divisor of the blow-up with $\maxid_A$. Since $X$ is regular, $E$ is a smooth projective line with
self-intersection number $(E\cdot E)_X=-1$.
The exceptional divisor of the blow-up of $\ideala A$  is the divisor 
$pE$, with self-intersection number $(pE)^2=-p^2$.
Now consider the exceptional divisor $D\subset Y$. According to Proposition \ref{exceptional divisor}, we have $D=pD_\red$,
with rational self-intersection number $(D_\red \cdot D_{\red})_Y=-1/p$. 
Using  \cite{Kleiman 1966}, Proposition 6 on page 299,  
we have $(pE)^2=p\cdot D^2$, which is another way to arrive at $D^2_\red=-1/p$.
Each of the  $p+1$ rational double points of type $A_{p-1}$ contributes a correction term
$\delta=(p-1)/p$. The self-intersection number of the strict transform of $D_\red$ on
the resolution of singularities $\tilde{Y} \ra Y $ is then $-s_0=-1/p -(p+1)(p-1)/p = -p$ (\ref{correction self-intersection}). We have thus completely determined
the intersection matrix and dual graph of the resolution $\tilde{Y} \ra Y $.
\fi

\medskip
Let $R:=k[[x,y]]$. As recalled in \ref{moderately ramified}, 
let 
$$A:=k[[u,v]] =R[u,v]/(u^p-(\mu y)^{p-1}u - x, v^p-(\mu x)^{p-1}v - y),$$
and let $\sigma$ be the automorphism defined by $\sigma(u)=u+\mu y$ and $\sigma(v)=v+\mu x$. 
Let $G:=\langle \sigma \rangle$. The element $z :=xu-yv$ is invariant, and we can identify the ring $B_{\mu}$ with $A^G$.

Let $Z'\ra\Spec(A)$ be the blow-up of the induced ideal $\ideala A$, and let $Y' \to Z'$ denote the normalization of $Z'$. We have  
the commutative diagram
$$
\begin{CD}
@. Y'	@>>>  Z'	@>>> 	\Spec(A)\\
@. @VVV	@VVV		@VVV\\
X 	@>>> Y	@>>> \YYY	@>>> 	\Spec(A^G).
\end{CD}
$$
Let $y_i$, $i=1,\dots, p+1,$ denote the  rational double points
in $Y$ of type $A_{p-1}$. We show below that these points are in fact ${\mathbb Z}/p{\mathbb Z}$-quotient singularities.
\begin{lemma} The scheme $Y'$ is regular, and the morphism
$Y'\ra\Spec(A)$ coincides with the blow-up of the maximal ideal $\maxid_A=(u,v)$.
\end{lemma}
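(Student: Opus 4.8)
The plan is to prove the stronger assertion that the ideal $\ideala A=(x,y,z)A$ has integral closure $\maxid_A^p$; the lemma then follows formally. First I would record the orders of the generators with respect to the $\maxid_A$-adic filtration on the regular local ring $A=k[[u,v]]$, whose associated graded ring $\gr_{\maxid_A}(A)$ is the polynomial ring $k[U,V]$. From the defining relations $x=u^p-(\mu y)^{p-1}u$ and $y=v^p-(\mu x)^{p-1}v$ in $A$ one sees, using only the trivial bounds $\ord(x),\ord(y)\ge 1$ and $\ord(\mu)\ge 0$, that $\ord\big((\mu y)^{p-1}u\big)\ge(p-1)+1=p$, hence $\ord(x)\ge p$; feeding this back in, the cross term has order $\ge(p-1)p+1>p$, so in fact $\ord(x)=p$ with leading form $U^p$, and likewise $\ord(y)=p$ with leading form $V^p$. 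Since $z=xu-yv$, we get $\ord(z)=p+1$ with leading form $U^{p+1}-V^{p+1}$. In particular $\ideala A\subseteq\maxid_A^p$. All of this is uniform in $\mu$, covering the cases $\mu\in k^{\times}$, $\mu$ coprime to $x$ and $y$, and $\mu=0$.

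Next I would compare Hilbert--Samuel multiplicities. The leading forms $U^p,V^p$ form a regular sequence in $k[U,V]=\gr_{\maxid_A}(A)$, so $x,y$ is a regular sequence in $A$ and $\dim_k A/(x,y)A=\dim_k k[U,V]/(U^p,V^p)=p^2$; as $A$ is Cohen--Macaulay, the multiplicity of the parameter ideal $(x,y)A$ equals $p^2$. Because $(x,y)A\subseteq\ideala A\subseteq\maxid_A^p$ and $e(\maxid_A^p;A)=p^2 e(\maxid_A;A)=p^2$ (as $\dim A=2$ and $A$ is regular), monotonicity of multiplicity forces $p^2=e((x,y)A;A)\ge e(\ideala A;A)\ge e(\maxid_A^p;A)=p^2$, so $e(\ideala A;A)=e(\maxid_A^p;A)$. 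By Rees's theorem, applicable since $A$ is regular, hence formally equidimensional, an inclusion of $\maxid_A$-primary ideals with the same multiplicity has the same integral closure; hence $\overline{\ideala A}=\overline{\maxid_A^p}=\maxid_A^p$, the last equality because powers of the maximal ideal of a regular local ring are integrally closed.

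Finally I would invoke the fact that the normalization of a blowing-up $\Bl_J(\Spec A)$ of a normal noetherian domain depends only on the integral closure of $J$: it is $\Proj$ of $\bigoplus_{n\ge 0}\overline{J^n}$, and $\overline{J^n}$ depends only on $\overline{J}$. Applying this with $J=\ideala A$ and with $J=\maxid_A^p$ shows that $Y'$, the normalization of $Z'=\Bl_{\ideala A}(\Spec A)$, equals the normalization of $\Bl_{\maxid_A^p}(\Spec A)$; and $\Bl_{\maxid_A^p}(\Spec A)\cong\Bl_{\maxid_A}(\Spec A)$, since blowing up an ideal agrees with blowing up any of its powers. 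But $A$ is regular, so $\Bl_{\maxid_A}(\Spec A)$ is a regular scheme, in particular normal, hence equal to its own normalization. Therefore $Y'\cong\Bl_{\maxid_A}(\Spec A)$ and the structure morphism $Y'\to\Spec A$ is the blow-up of $\maxid_A=(u,v)$, as claimed.

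I expect the main obstacle to be purely expository: since $x$ and $y$ are given only implicitly, each in terms of the other, one must argue that the order computation needs no more than the crude a priori bounds, after which the orders are pinned down exactly by the strict inequality between the two terms in each defining relation. The remaining ingredients --- regular sequences of initial forms, Rees's multiplicity criterion, and the description of the normalized Rees algebra --- are standard.
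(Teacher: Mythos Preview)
Your argument is correct, but it takes a different route from the paper. The paper shows directly that $\ideala A=(u^p,v^p)$ as an \emph{equality of ideals}: from $u^p=x+(\mu y)^{p-1}u$ and $v^p=y+(\mu x)^{p-1}v$ one gets $u^p,v^p\in\ideala A$, and substituting the relations into each other yields $x\cdot(\text{unit})=u^p-\mu^{p-1}v^{p(p-1)}u\in(u^p,v^p)$ and similarly for $y$, whence $\ideala A\subseteq(u^p,v^p)$. The normalization of $\Bl_{(u^p,v^p)}(\Spec A)$ is then read off on charts: on the $u^p$-chart one adjoins $v^p/u^p$, hence $v/u$ is integral, and adjoining it gives the $u$-chart of $\Bl_{\maxid_A}(\Spec A)$. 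By contrast, you only establish $\overline{\ideala A}=\maxid_A^p$ --- a weaker statement about integral closures --- via Rees's multiplicity criterion, and then invoke the general principle that the normalized blow-up depends only on the integral closure of the center. The paper's approach is shorter and more elementary, avoiding multiplicity theory entirely and giving the sharper ideal-theoretic conclusion $\ideala A=(u^p,v^p)\subsetneq\maxid_A^p$. Your approach is more conceptual and would adapt to situations where an exact equality of ideals is unavailable but the integral closures still agree; it also makes transparent why the answer must be $\Bl_{\maxid_A}(\Spec A)$ without any chart-by-chart verification.
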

\proof
Indeed, using the relations 
\begin{equation}
\label{implicit formula}
u^p-(\mu y)^{p-1}u=x\quadand v^p-(\mu x)^{p-1}v=y,
\end{equation}
we get $u^p,v^p\in\ideala A$. Since the finite ring extension $R\subset A$ is flat of degree $p^2$,
we must have $\ideala A=(u^p,v^p)$. More precisely, substituting the equations \eqref{implicit formula} into each others
one obtains
$$
x\cdot\text{unit} = u^p - \mu^{p-1}v^{p(p-1)}u\quadand y\cdot\text{unit} = v^p-\mu^{p-1}u^{p(p-1)}v,
$$
showing explicitly that $(x,y)A \subseteq (u^p,v^p)$. Since $z=xu-yv$, we have $(u^p,v^p)=\ideala A$. 

The blow-up $Z'$ of the ideal $(u^p,v^p)$ in $\Spec(A)$ is  covered by two charts. The $u^p$-chart has generators
$u,v$, and $v^p/u^p$, so $v/u$ satisfies an obvious integral equation, and  we also have $v=v/u\cdot u$.
It follows that on the normalization the chart becomes regular.
The situation on the $v^p$-chart is similar, and we see that the scheme $Y'$ is regular.
\qed

\begin{theorem}  \label{Ap-1}  The preimage of each $y_i$ under the map $Y' \to Y$ consists of a single regular point $x_i \in Y'$,  
and  $\O_{Y,y_i}=(\O_{Y',x_i})^G$. Thus $y_i$ is a $\ZZ/p\ZZ$-quotient singularity whose resolution has dual graph $A_{p-1}$
 and associated discriminant group $\ZZ/p\ZZ$.
The morphism $\Spec \O_{Y',x_i} \to \Spec (\O_{Y',x_i})^G$ is ramified in codimension $1$
and the punctured spectrum of the rational double point $y_i$
has trivial fundamental group.
\end{theorem}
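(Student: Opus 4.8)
The plan is to realize $Y\to Z\to\Spec(A^G)$, locally near the points $y_i$, as the quotient of the regular surface $Y'$ by $G$, and then to transport the structure of the regular local rings $\O_{Y',x_i}$ to $\O_{Y,y_i}$. The group $G$ acts on $Z'=\Bl_{\ideala A}(\Spec A)$, since the ideal $\ideala=(x,y,z)$ is $\sigma$-invariant, hence also on the normalization $Y'$; by the Lemma above, $Y'$ is regular and equals the blow-up of $\maxid_A=(u,v)$, so its exceptional locus is a single curve $E'\cong\PP^1_k$ and $Y'\to\Spec A$ is an isomorphism over $\Spec A\smallsetminus\{\maxid_A\}$. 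The morphism $Z'\to Z$ from the commutative diagram above is finite: it factors as the strict-transform closed immersion $Z'\hookrightarrow Z\times_{\Spec(A^G)}\Spec A$ followed by the projection $Z\times_{\Spec(A^G)}\Spec A\to Z$, which is finite as a base change of the finite map $\Spec A\to\Spec(A^G)$. As $Y'\to Z'$ is a normalization, the composite $Y'\to Z$ is finite and dominant, so $Y'$ being normal it factors through the normalization $Y\to Z$, giving a finite $G$-equivariant morphism $\pi\colon Y'\to Y$ of degree $|G|=p$. Then $(\pi_*\O_{Y'})^G$ is a finite $\O_Y$-algebra, integral over the normal ring $\O_Y$ and with the same fraction field, so $\O_Y=(\pi_*\O_{Y'})^G$; that is, $Y=Y'/G$, and every fiber of $\pi$ is a single $G$-orbit.

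Next I would pin down the $x_i$. The composite $Y'\to\Spec A\to\Spec(A^G)$ contracts exactly $E'$, and the exceptional curve $D_{\red}\subset Y$ of $Y\to Z$ maps to the closed point of $\Spec(A^G)$ (it maps isomorphically onto $E_{\red}\subset Z$, which the blow-up $Z\to\Spec(A^G)$ contracts). Since $y_i\in D_{\red}$, the orbit $\pi^{-1}(y_i)$ lies in the fiber of $Y'\to\Spec(A^G)$ over the closed point, which is supported on $E'$. A short computation with the defining relations $u^p-(\mu y)^{p-1}u=x$ and $v^p-(\mu x)^{p-1}v=y$ shows that $x,y\in\maxid_A^p$; since $E'$ carries the coordinate $v/u$ on one chart and $\mu x,\mu y$ then vanish along $E'$ to order $\ge p$, the formulas $\sigma(u)=u+\mu y$ and $\sigma(v)=v+\mu x$ show that $\sigma$ restricts to the identity on $E'$, while $\sigma$ is not the identity on $\O_{Y',x_i}$ (otherwise $(\O_{Y',x_i})^G=\O_{Y,y_i}$ would be regular, contradicting that $y_i$ is singular). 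Hence the orbit $\pi^{-1}(y_i)$ consists of $\sigma$-fixed points and so reduces to a single point $x_i\in E'$; it is a regular point of $Y'$, its complete local ring is a two-dimensional regular complete local ring, and $\O_{Y,y_i}=(\O_{Y',x_i})^G$. As $|G|=p=\operatorname{char}(k)$, this exhibits $y_i$ as a wild $\ZZ/p\ZZ$-quotient singularity.

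The remaining assertions are then quickly assembled. That a resolution of $y_i$ has dual graph $A_{p-1}$ was already established in the proof of Theorem~\ref{a=y, b=x} (the $y_i$ are rational double points of type $A_{p-1}$), whence $\Phi_N$ is cyclic of order $p$ by~\ref{chain}. For the ramification: $\sigma$ fixes the divisor $E'\ni x_i$ pointwise without being the identity, so the degree-$p$ morphism $\Spec\O_{Y',x_i}\to\Spec\O_{Y,y_i}$ is ramified in codimension one along $E'$; equivalently, by the theorem of Ito--Schr\"oer~\cite{IS} an action ramified precisely at the origin would force a vertex of valency at least three in the resolution graph, which the chain $A_{p-1}$ does not have. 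Finally, because the action is ramified in codimension one, Corollary~1.2(ii) of Artin~\cite{Artin 1977} gives that the punctured spectrum of the rational double point $\O_{Y,y_i}$ has trivial fundamental group.

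The step I expect to be the main obstacle is the identification $Y=Y'/G$ together with the determination that $\pi^{-1}(y_i)$ is a single point: since $A$ is not flat over $A^G$, one cannot simply base-change the blow-up, and the argument has to proceed instead through finiteness of $\pi$, normality, and the explicit contraction behavior of $Y'\to\Spec A$. The rest is either a direct local computation or a citation.
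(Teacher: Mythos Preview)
Your proof is correct and follows essentially the same route as the paper: establish $Y=Y'/G$ by normality and finiteness, observe that the $G$-action is trivial along the exceptional curve $E'$ so that each $y_i$ has a unique preimage $x_i$, and then invoke Artin's Corollary~1.2 for the triviality of the local fundamental group. You supply considerably more detail than the paper does (in particular the explicit verification that $x,y\in\maxid_A^p$ and hence that $\sigma$ restricts to the identity on $E'$, and the argument for why $\pi$ is finite), whereas the paper simply asserts $Y=Y'/G$ and that $E'\to D_{\red}$ is purely inseparable of degree~$p$. Your phrasing in terms of ``$\sigma$ fixes $E'$ pointwise'' and the paper's phrasing in terms of ``$E'\to D_{\red}$ purely inseparable of degree~$p$'' are two sides of the same computation: since $y/x$ restricts to $(v/u)^p$ on $E'$, the quotient map on exceptional curves is the Frobenius, which is exactly what both formulations encode. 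The optional appeal to Ito--Schr\"oer is a nice alternative for the ramification claim that the paper does not use.
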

\proof
The $G$-action on the ring $A$ induces a $G$-action on the normalized blow-up $Y'$, which on the field of fractions
of the $u$-chart is given by
$$
u\mapsto u+ \mu y\quadand v/u\mapsto (v+\mu x)/(u+\mu y).
$$
Since $Y$ is normal, the induced morphism $Y'\ra Y$ yields an identification $Y=Y'/G$.

Let $E'$ denote the exceptional divisor of the blow-up $Y' \to \Spec(A)$ of the maximal ideal. 
Then the natural map $E' \to D_{\red}$ induced by $Y' \to Y$ is purely inseparable of degree $p$, 
and, hence, the morphism $\Spec \O_{Y',x_i} \to \Spec (\O_{Y',x_i})^G$ is ramified at the codimension $1$ point corresponding to $E'$.
It follows from  \cite[Corollary 1.2]{Artin 1977} that
the punctured spectrum of the rational double point  
 $y_i$ 
 has trivial fundamental group. \qed

\if false
so the $G$-action on the regular local rings  $\O_{X,x_i}$ must be ramified in codimension one.
Indeed, since the preimage of $D\subset Y$ is $pE\subset X$ the $G$-action fixes $E$.
Furthermore, the action on the function field of $E$ is trivial, because $E\ra D$ has degree one.
\fi
\begin{remark}
The occurrence of the $A_{p-1}$-singularities $y_i$ on the quotient $Y=Y'/G$ is
caused by points $x_i \in Y'$ where the ideal of the fixed scheme $Y'^G\subset Y'$ is not a Cartier divisor.
Indeed, using Theorem 2 in \cite{K-L}, we find that when the action of $\sigma $ on the local ring $A=k[[u,v]]$ is such that the ideal $(\sigma(u)-u, \sigma(v)-v)$
of the fixed scheme  is principal, then the fixed ring $A^{\langle \sigma \rangle}$ is regular.
\end{remark}

\if false
\begin{remark} When $p=2$, the $A_{p-1}$-singularity has a smooth exceptional divisor consisting in a single irreducible component of self-intersection $-2$. 
When $p>2$, we do now know if there exists a quotient singularity $\Spec A^G$ whose resolution has an exceptional divisor consisting in a single smooth irreducible component of self-intersection $-p$.
\end{remark}
\fi

\if false 
For $p=2$, this becomes the rational double point of type $D_4^1$.
Using Artin's Algorithm, one quickly computes the fundamental cycle $Z$, and the 
Adjunction Formula yields the canonical cycle $K$ and 
the fundamental genus $h^1(\O_Z)$. This all suggests to regard the wild quotient singularity $A^G$
as an analogue of the rational double point of type $D_4^1$:
\fi

We leave the proof of the following proposition to the reader.
\begin{proposition} \label{pro.NumericallyGorensteinEquality}
The multiplicities in the fundamental cycle ${\mathbf Z}$ 
of the resolution of $\Spec \R_{\mu}$ are strictly decreasing along each terminal chain, as indicated below next to the corresponding vertex. 
\begin{equation*}
\begin{gathered}
\begin{tikzpicture}
[node distance=1cm, font=\small]
\tikzstyle{vertex}=[circle, draw, fill,  inner sep=0mm, minimum size=1.0ex]
\node[vertex]	(C)  	at (0,0) 	[label=above:{p}]               {};
\node[]	(dummyS)  	         	[below  of=C,label=above:{$\hdots$}]               {};

\node[vertex]	(E1)  	         	[left of=C,label=above:{$p-1$}]               {};
\node[vertex]	(D1)  	         	[left of=E1,label=above:{$1$}]               {};

\node[vertex]	(E2)  	         	[below of=E1,label=below:{}]               {};
\node[vertex]	(D2)  	         	[left of=E2,label=below:{}]               {};

\node[vertex]	(E4)  	         	[right of=C,label=above:{$p-1$}]               {};
\node[vertex]	(D4)  	         	[right of=E4,label=above:{$1$}]               {};

\node[vertex]	(E3)  	         	[below of=E4,label=below:{}]               {};
\node[vertex]	(D3)  	         	[right of=E3,label=below:{}]               {};

\draw [thick] (C)--(E1);
\draw [thick, dashed] (E1)--(D1);
\draw [thick] (C)--(E2);
\draw [thick, dashed] (E2)--(D2);
\draw [thick] (C)--(E3);
\draw [thick, dashed] (E3)--(D3);
\draw [thick] (C)--(E4);
\draw [thick, dashed] (E4)--(D4);
\end{tikzpicture}
\end{gathered}
\end{equation*}
The fundamental genus
is $h^1(\O_{\mathbf Z})=(p-2)(p+1)/2$, and ${\mathbf Z}^2=-p$. Moreover, the canonical cycle is $K=-(p-2){\mathbf Z}$.
\end{proposition}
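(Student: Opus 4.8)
The resolution $X\to\Spec\R_{\mu}$ of Theorem~\ref{a=y, b=x} has, in the notation of \ref{notation.starshaped}, the star-shaped dual graph $\Gamma_N=\Gamma(p\mid p/(p-1),\dots,p/(p-1))$ with $p+1$ terminal chains, each of which is the $A_{p-1}$-chain of $p-1$ curves of self-intersection $-2$ (recall $p/(p-1)=[2,\dots,2]$ from \ref{chain}), all attached to a central node $C_0$ with $C_0^2=-p$. The plan is to write down the candidate cycle ${\mathbf Z}$ displayed in the statement --- namely $m_0=p$ on $C_0$ and $m_i=p-i$ on the $i$-th vertex of each terminal chain counted from the one adjacent to $C_0$ --- and to confirm that it is the fundamental cycle, exactly as in the proofs of Propositions \ref{fundamental cycle E6} and \ref{fundamental cycle E8}. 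Following Tomaru's description of the fundamental cycle of a star-shaped graph \cite{Tom}, the multiplicity of the central node is the least integer $m\geq 1$ with $pm-(p+1)\lceil m(p-1)/p\rceil\geq 0$; a direct check shows $m=p$ works (the bracket equals $p-1$, so the expression is $p^2-(p+1)(p-1)=1$), whereas for $1\leq r\leq p-1$ and $m=p-r$ the bracket equals $p-r$ and the expression is $-(p-r)<0$, so $m_0=p$. By Tomaru's lemma the remaining multiplicities are forced by $({\mathbf Z}\cdot C_i)=0$ for every vertex $C_i$ of the $(-2)$-chains; along a chain of $(-2)$-curves this reads $m_{i-1}-2m_i+m_{i+1}=0$, and together with the boundary relation at the terminal vertex the unique solution with $m_0=p$ is the arithmetic progression $m_i=p-i$. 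Alternatively one may simply run Artin's algorithm \cite{Artin 1966} from the reduced cycle and check that it stabilises at ${\mathbf Z}$.

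The remaining assertions are then mechanical. Intersecting ${\mathbf Z}$ with the components gives $({\mathbf Z}\cdot C_0)=-p^2+(p+1)(p-1)=-1$ and $({\mathbf Z}\cdot C_i)=0$ for all other $C_i$, so ${\mathbf Z}^2=\sum_i m_i({\mathbf Z}\cdot C_i)=-p$, and the monotonicity along the terminal chains is immediate from $m_i=p-i$. For the canonical cycle, recall it is characterised by the adjunction equalities $(K\cdot C_i)=-2-C_i^2$, which here are $(K\cdot C_i)=0$ on the $(-2)$-curves and $(K\cdot C_0)=p-2$. Since $-(p-2){\mathbf Z}$ has integral coefficients and, using the intersection data above, satisfies the very same linear system, negative-definiteness of $N$ forces $K=-(p-2){\mathbf Z}$. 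Finally the fundamental genus is obtained from the adjunction formula $2h^1(\O_{{\mathbf Z}})-2=(K+{\mathbf Z})\cdot{\mathbf Z}$, substituting $K=-(p-2){\mathbf Z}$, ${\mathbf Z}^2=-p$, and $h^0(\O_{{\mathbf Z}})=1$, the latter always holding for the fundamental cycle.

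I do not expect a serious obstacle: once the dual graph of Theorem~\ref{a=y, b=x} is in hand, everything is linear algebra on the negative-definite matrix $N$, of the same flavour as the computations behind \ref{determinant star-shaped}. The only step where a small computation is genuinely unavoidable is the one establishing that the displayed cycle is the \emph{minimal} positive cycle with nonpositive intersection against every component --- that is, that it really is the fundamental cycle rather than merely an anti-nef cycle; this is precisely what Tomaru's formula (or the termination of Artin's algorithm) supplies, and it reduces to the inequality pinning down $m_0$ together with the boundary analysis along the $(-2)$-chains.
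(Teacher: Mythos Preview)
Your approach is correct and is exactly what the paper intends: the paper attributes the description of $\mathbf Z$ to Tomaru's general result \cite{Tom} and leaves the verification to the reader, just as for Propositions~\ref{fundamental cycle E6} and~\ref{fundamental cycle E8}. One remark worth making: when you actually carry out your adjunction step you obtain
\[
2h^1(\O_{\mathbf Z})-2=(K+{\mathbf Z})\cdot{\mathbf Z}=(1-(p-2)){\mathbf Z}^2=(3-p)(-p)=p^2-3p,
\]
hence $h^1(\O_{\mathbf Z})=(p-1)(p-2)/2$, not the printed value $(p-2)(p+1)/2$; the two coincide only for $p=2$, so the stated fundamental genus appears to contain a misprint ($p+1$ in place of $p-1$), while your computation of ${\mathbf Z}$, ${\mathbf Z}^2$, and $K$ confirms the rest of the statement.
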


\section{Numerically Gorenstein intersection matrices}
\mylabel{numerically gorenstein}

All wild ${\mathbb Z}/p{\mathbb Z}$-quotient singularities resolved in this article are hypersurface singularities. 
We prove in this section that all wild ${\mathbb Z}/2{\mathbb Z}$-quotient singularities are hypersurface singularities. 
We then recall  
that the intersection matrix associated with a hypersurface singularity is always numerically Gorenstein. 
We show in Proposition \ref{thm.p=2NumGor}  that 
any intersection matrix $N$
whose discriminant group $\Phi_N$ is killed by $2$ is automatically numerically Gorenstein. 
We exhibit in \ref{ex.notnumericallyGorenstein} an example when $p>2$ of a wild ${\mathbb Z}/p{\mathbb Z}$-quotient singularity which is not numerically Gorenstein.

\begin{proposition}\label{pro.p=2completeintersection}
 Let $p=2$. Let $A=k[[u,v]]$, endowed with a non-trivial action of $G={\mathbb Z}/2{\mathbb Z}$. Then there exists a power series ring $R:=k[[x,y]]$ such that $A^G$ is $k$-isomorphic to $R[z]/(z^2+sz+t)$, with $s,t \in R$.
\end{proposition}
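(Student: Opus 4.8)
Here is my proposed approach; the idea is to build the regular subring $R$ and the generator $z$ explicitly out of norms of coordinate functions, and then let formal commutative algebra do the rest.

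Write $\sigma$ for a generator of $G$ and set $D:=\sigma-\id$. Since $p=2$, the additive operator $D$ satisfies $D^2=0$ and $D(A)\subseteq A^{G}=:B$, because $D\circ D=0$. First I would normalize the coordinates on $A$ with respect to the linearization of the action: the operator induced by $\sigma$ on $\maxid_A/\maxid_A^2$ is unipotent of order dividing $2$, so $D$ induces on this $2$-dimensional space either the zero map or a single nilpotent Jordan block. After a $k$-linear change of generators we may therefore assume that $u,v$ is a regular system of parameters of $A$ with $\sigma(u)=u+f$, $\sigma(v)=v+g$, where either (Case A) $f,g\in\maxid_A^2$, or (Case B) $f-v\in\maxid_A^2$ and $g\in\maxid_A^2$. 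Note that $f=\sigma(u)-u$ and $g=\sigma(v)-v$ are $\sigma$-invariant, so $f,g\in B\cap\maxid_A=:\maxid_B$.

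Next set $x:=u\sigma(u)$ and $y:=v\sigma(v)$, which lie in $\maxid_B$. The key point is the colength computation
$$
\dim_k A/(x,y)A=4.
$$
Indeed $x=u^2+uf$ and $y=v^2+vg$ with $uf,vg\in\maxid_A^3$, so in Case A the leading forms of $x,y$ in $\gr_{\maxid_A}(A)=k[u,v]$ are $u^2$ and $v^2$, and in Case B they are $u(u+v)$ and $v^2$. In either case the two leading forms are coprime in $k[u,v]$, hence form a regular sequence there; thus they are the initial forms of a regular sequence $x,y$ in $A$, and $\dim_k A/(x,y)A$ equals the $k$-dimension of $k[u,v]$ modulo the ideal generated by the leading forms, which is $2\cdot 2=4$. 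In particular $x,y$ is a system of parameters of $A$. Since $A$ is integral over $B$, the morphism $\Spec A\to\Spec B$ is finite and surjective, so $x,y$ is also a system of parameters of $B$; moreover, $A/(x,y)A$ being finite-dimensional and $A$ being $\maxid_A$-adically complete, $A$ is module-finite over $R:=k[[x,y]]$, and hence so is $B$. Finally $x,y$ is automatically analytically independent over $k$ (a system of parameters of a $2$-dimensional complete local domain), so $R$ is a genuine power-series ring and $\dim_k A/\maxid_R A=\dim_k A/(x,y)A=4$.

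The remainder is routine. The ring $A$ is regular and $B$ is $2$-dimensional normal, hence Cohen--Macaulay; both are therefore maximal Cohen--Macaulay, and so free, over the regular local ring $R$. Thus $A$ is free over $R$ of rank $\dim_k A/\maxid_R A=4$, while $\Frac(A)/\Frac(B)$ is Galois of degree $2$, so $\rank_R(B)=\rank_R(A)/\rank_B(A)=4/2=2$. The local Artinian $k$-algebra $B/\maxid_R B$ has $k$-dimension $2$ and residue field $k$, hence equals $k[\bar z]$ for any generator $\bar z$ of its maximal ideal; lifting $\bar z$ to $z\in\maxid_B$ and invoking Nakayama gives $B=R+Rz$, which by the rank count is a free decomposition $B=R\oplus Rz$. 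Then $z^2\in B$ is uniquely $-t-sz$ for some $s,t\in R$, so $z^2+sz+t=0$, and the surjection $R[Z]/(Z^2+sZ+t)\to B$ with $Z\mapsto z$ is a surjection of free $R$-modules of the same rank $2$, hence an isomorphism. Since $R\cong k[[x,y]]$ as $k$-algebras, this is the asserted presentation.

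The step I expect to demand the most care is the coordinate normalization together with the identity $\dim_k A/(x,y)A=4$: one must confirm that the two Jordan cases are exhaustive, that passing to the associated graded ring is legitimate (i.e.\ that the initial forms genuinely form a regular sequence in $k[u,v]$), and that the degenerate possibilities — such as $f=0$, where $u\in B$, or $u$ and $\sigma(u)$ being associates in $A$ — are still covered by the leading-form argument. Everything downstream (module-finiteness over $R$, freeness over $R$, and the rank bookkeeping) is standard.
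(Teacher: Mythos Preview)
Your proof is correct and follows essentially the same approach as the paper: define $x=u\sigma(u)$, $y=v\sigma(v)$, show $A$ is free of rank $4$ over $R=k[[x,y]]$, deduce $A^G$ is free of rank $2$, and extract the quadratic generator $z$. The only difference is that the paper outsources the coordinate normalization and the rank-$4$ claim to \cite{LS1} (Propositions 2.3 and 2.9), whereas you supply a self-contained argument via the Jordan form of $\sigma$ on $\maxid_A/\maxid_A^2$ and the initial-forms computation in $\gr_{\maxid_A}(A)$; your version is thus more elementary and independent of the companion paper.
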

\begin{proof} Let $\sigma$ denote the generator of $G$. Proposition 2.9 in \cite{LS1} allows us, if necessary, to replace the system of parameters $(u,v) $ for $A$
with a new system of parameters (again denoted by $(u,v)$ below) with the following properties (use  \cite[Proposition 2.3]{LS1}): 
let $x:=u \sigma(u)$ and $y:=v\sigma(v)$. Let $R:=k[[x,y]]$ be the subring of $A$ generated by $k, x$, and $y$. 
Then $A$ is a free $R$-module of rank $4$. 

We have the inclusions $R \subset A^G \subset A$, and the fraction field of $A^G$ is then of degree $2$ over the fraction field of $R$. Since $R$ is regular and $A^G$ is Cohen-Macaulay  because it is normal of dimension $2$, we find that $A^G$ is a free $R$-module of rank $2$. Thus, $R$ is a direct summand of $A^G$, with quotient $A^G/R$ free of rank $1$. 
We can therefore find an element $z \in A^G$ which generates the quotient  $A^G/R$. 
It follows that the natural map $R[Z] \to A^G$ with $Z \mapsto z$  is surjective. Since $z \notin R$, it satisfies a quadratic equation $z^2+sz+t=0$, with $s,t \in R$ and $Z^2+sZ+t$ irreducible in $R[Z]$.  Since $R[Z]$ is a UFD, we find that 
$R[Z]/(Z^2+sZ+t) \to A^G$ is an isomorphism.
\end{proof}

\begin{emp} \label{NumericallyGor} Let $N=(c_{ij}) \in \Mat_n(\ZZ)$ be an intersection matrix. 
Let $H_0 \in {\mathbb Z}^n$ be the integer vector whose $i$-th coefficient is $h_i:=-c_{ii}-2$ for $i=1,\dots,n$.
Since $N$ is invertible, there exist a vector $K \in {\mathbb Q}^n$ such that $NK=H_0$. The vector $K$ is called the {\it canonical cycle} of $N$.
We say that $N$ is
{\it numerically Gorenstein} if  $K \in{\mathbb Z}^n$.
\end{emp} 
When $N$ is the intersection matrix associated with a collection of irreducible curves $C_i$, $i=1,\dots,n$ on a surface, each component
$C_i$ has an arithmetical genus $p_a(C_i)$. Our definition of numerically Gorenstein coincides with the usual one (see for instance \cite{PPS}, (2.5))
when all arithmetical genera are equal to $0$. 
\if false
Otherwise, the canonical cycle is defined to take into account the $p_a(C_i)$'s: it is the vector $K$ with $NK=H$, where $H$ is such that $h_i:=-c_{ii}+2p_a(C_i)-2$. It is shown in \cite{PPS} that $K$ has non-negative coefficients (denoted by $K \geq 0$) when $c_{ii} \neq -1$ for all $i=1,\dots,n$, and that if in addition the intersection matrix is not
the matrix attached to a Dynkin diagram $A_n, D_n, E_6, E_7$, or $E_8$, then the coefficients of $K$ are strictly positive (denoted by $K > 0$).  
\fi

\begin{lemma} \label{Gor} Let $k$ be a field of characteristic $p$. Let $\R$ denote a complete 
local ring of dimension $2$, isomorphic to $k[[x,y,z]]/(f)$ for some $f \in (x,y,z)$, and formally smooth outside its closed point.
Let $X \to \Spec \R$ be a resolution of the singularity, with associated intersection matrix $N$. Assume that all the irreducible components in the exceptional locus of the resolution are smooth rational curves. Then $N$ is numerically Gorenstein.

\end{lemma}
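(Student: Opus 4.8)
The plan is to reduce the statement to the fact that a hypersurface singularity is Gorenstein, and then to identify the purely linear--algebraic canonical cycle $K$ from {\rm \ref{NumericallyGor}} with an honest \emph{integral} divisor class on $X$, namely that of the dualizing sheaf $\omega_X$. We may assume $f\neq 0$, since otherwise $\R$ is regular, $N$ is the empty matrix, and there is nothing to prove. Write $R:=k[[x,y,z]]$, a regular local ring of dimension three with $\omega_R\iso R$. As $f\in\maxid_R$ is a nonzerodivisor, the two--term free resolution $0\to R\xrightarrow{\,f\,}R\to \R\to 0$ shows that $\R$ is Cohen--Macaulay of dimension two and that
\[
\omega_{\R}\iso\Ext^1_R(\R,\omega_R)\iso\Ext^1_R(\R,R)\iso R/(f)=\R ;
\]
thus $\R$ is Gorenstein with \emph{trivial} dualizing module. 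Moreover, the hypothesis that $\R$ is formally smooth outside $\maxid_{\R}$ makes $\R$ regular in codimension one, hence normal (it is also $S_2$), so that ``resolution of singularities'' has its usual meaning: $\pi\colon X\to\Spec \R$ is proper birational, $X$ is regular, and $\pi$ restricts to an isomorphism $X\smallsetminus E\xrightarrow{\ \sim\ }U$, where $E=\pi^{-1}(\maxid_{\R})=C_1\cup\dots\cup C_n$ and $U:=\Spec \R\smallsetminus\{\maxid_{\R}\}$.

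Next I would transport the triviality of $\omega_{\R}$ up to $X$. Under the identification $X\smallsetminus E\iso U$ one has $\omega_X|_{X\smallsetminus E}\iso\omega_U\iso\omega_{\R}|_U\iso\O_U$, so the line bundle $\omega_X$ on the regular surface $X$ is trivial on the complement of $E$. Hence there is a divisor $Z_K=\sum_{i=1}^n m_i C_i$ supported on $E$, with $m_i\in\ZZ$, such that $\omega_X\iso\O_X(Z_K)$, i.e.\ $K_X$ is linearly equivalent to $Z_K$; concretely, pick a generator $\eta$ of $\omega_{\R}$, which is a nowhere--vanishing regular $2$--form on $\Spec \R$, so that its pullback is a rational canonical form on $X$ that is regular and nonvanishing on $X\smallsetminus E$, and set $Z_K:=\operatorname{div}_X(\eta)$. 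The integers $m_i$ are well defined: if $\sum m_i C_i\sim\sum m_i' C_i$, their difference is the divisor of a rational function on $X$ with support in $E$, hence of an element of $\O(U)^\times=\R^\times$ (using normality of $\R$), which is a unit on $X$ and so has trivial divisor.

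Finally I would intersect $Z_K$ with the exceptional curves. Since $Z_K$ is linearly equivalent to $K_X$, for each $j$ we get $(Z_K\cdot C_j)_X=(K_X\cdot C_j)_X$, and the adjunction formula on the regular surface $X$ gives $(K_X\cdot C_j)_X+(C_j\cdot C_j)_X=\deg\omega_{C_j}=-2$, because $C_j$ is a smooth rational curve. Therefore $(Z_K\cdot C_j)_X=-c_{jj}-2=h_j$ in the notation of {\rm \ref{NumericallyGor}}, i.e.\ the integer vector $^t(m_1,\dots,m_n)$ satisfies $N\cdot{}^t(m_1,\dots,m_n)=H_0$. Since $N$ is negative--definite, hence invertible, the canonical cycle is $K=N^{-1}H_0={}^t(m_1,\dots,m_n)\in\ZZ^n$, so $N$ is numerically Gorenstein. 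The only geometrically delicate ingredient is the displayed identification $\omega_X|_{X\smallsetminus E}\iso\O_{X\smallsetminus E}$ --- that $\pi$ is an isomorphism over the punctured spectrum and that $\omega_X$ restricts there to $\omega_{\R}$ --- which is standard for resolutions of normal surface singularities; everything else is bookkeeping with adjunction and the invertibility of $N$.
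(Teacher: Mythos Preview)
Your proof is correct and follows the same underlying strategy as the paper: identify the combinatorial canonical cycle $K$ with an honest integral exceptional divisor representing $K_X$, using that $\R$ is Gorenstein with trivial dualizing module and then applying adjunction to each $C_j$.

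The one genuine difference is in how this is set up. The paper does not work directly on $\Spec\R$; instead it invokes Artin approximation to produce an algebraic scheme $S$ of finite type over $k$ and a point $s\in S$ whose completed local ring is $\R$, passes to a Gorenstein open neighborhood $U\subset S$ where the canonical sheaf is trivial, resolves there, and then runs the adjunction argument on that finite-type model. You bypass this entirely by computing $\omega_{\R}\iso\R$ from the two-term free resolution and arguing directly that $\omega_X$ is trivial off $E$. Your route is more self-contained and avoids the Artin approximation machinery; the paper's detour has the advantage that all dualizing-sheaf statements are invoked in the finite-type setting where they are most commonly formulated, so no care is needed about whether the relevant identifications (e.g.\ $\omega_X|_{X\smallsetminus E}\iso\omega_U$) hold over a complete local base. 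You correctly flag this as the only delicate point, and it is indeed standard.
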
 
\proof We first use \cite{Artin 1969}, 3.8, to find an algebraic scheme $S$ over $k$ and a point $s \in S$ such that the completion of ${\mathcal O}_{S,s}$ is isomorphic to $\R$.
The ring ${\mathcal O}_{S,s}$ is Gorenstein since its completion $\R$ is (\cite{Eis}, 21.18). Thus there exists an open set $U$ of $S$, containing $s$, and such that $U$ is everywhere Gorenstein (\cite{G-M}, 1.5). It follows that $U$ has a canonical sheaf that is trivial. Consider a resolution $\pi:V \to U$ 
of the singularity $s \in U$. Then the canonical sheaf $K_V$ on $V$ is supported on the exceptional divisor of $\pi$. The adjunction formula for each irreducible component $E_i$ shows that $(K_V \cdot E_i)+(E_i \cdot E_i) = 2p_a(E_i)-2$. Since $K_V$ is equal to a linear combination of the $E_i$, 
we find that the intersection matrix $N$ of the exceptional locus is numerically Gorenstein.
\qed
\medskip

Let $N =(c_{ij}) \in \Mat_n(\ZZ)$ be an intersection matrix with discriminant group $\Phi_N$. As usual, denote by $e_1,\dots, e_n$ the standard basis of 
${\mathbb Z}^n$, and let $p_i$ denote the order of the class of $e_i$ in $\Phi_N$. For each $i=1,\dots,n$, let $R_i \in {\mathbb Z}^n$ denote the unique positive vector such 
that $NR_i=-p_ie_i$. Let $(R_i)_j$ denote the $j$-th coefficient of $R_i$, and define 
$$g_i:=\sum_{j=1}^n (R_i)_j (|c_{jj}|-2)= (^t\! R_i) H_0.$$
If the matrix $N$ is such that $ c_{jj}\leq -2$ for all $j=1,\dots, n$, then $g_i\geq 0$. 
 
\begin{lemma} \label{lem.numGor} Let $N$ be an intersection matrix. 
Then $^tK=(-g_1/p_1,\dots, -g_n/p_n)$. In particular, 
the matrix $N$ is numerically Gorenstein if and only if $p_i$ divides $g_i$ for each $i=1,\dots, n$.
\end{lemma}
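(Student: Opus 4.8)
The plan is to compute the canonical cycle $K$ coordinate by coordinate and to identify each coordinate with $-g_i/p_i$ by exploiting the symmetry of $N$. By definition $K$ satisfies $NK = H_0$, so $K = N^{-1}H_0$, and its $i$-th coordinate is $K_i = {}^te_i\,N^{-1}H_0$.

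First I would rephrase the vectors $R_i$ in terms of $N^{-1}$: from $NR_i = -p_ie_i$ one obtains $R_i = -p_iN^{-1}e_i$, equivalently $N^{-1}e_i = -\tfrac{1}{p_i}R_i$. (This also accounts for the integrality and positivity claimed for $R_i$ in the statement: $-N^{-1}$ has positive entries since $N$ is negative-definite with non-negative off-diagonal entries and connected graph, while $p_i$ is by construction the smallest positive integer clearing denominators in $p_iN^{-1}e_i$.) Since $N$ is symmetric, so is $N^{-1}$, whence
\[
K_i = {}^te_i\,N^{-1}H_0 = {}^t(N^{-1}e_i)\,H_0 = -\tfrac{1}{p_i}\,{}^tR_i\,H_0 = -\frac{g_i}{p_i},
\]
the last equality being precisely the definition $g_i = {}^tR_i\,H_0$. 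This yields the asserted formula ${}^tK = (-g_1/p_1,\ldots,-g_n/p_n)$.

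The second assertion is then immediate: $N$ is numerically Gorenstein exactly when $K\in\ZZ^n$, and by the displayed identity this holds if and only if $p_i\mid g_i$ for every $i=1,\dots,n$. The argument is purely formal linear algebra; the only mild point of caution is bookkeeping the sign convention forced by the negative-definiteness of $N$ (so that $N^{-1}$ and the $R_i$ carry the opposite sign to $H_0$), and I do not anticipate any genuine obstacle.
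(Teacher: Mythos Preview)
Your proof is correct and is essentially the same as the paper's: both compute $K_i$ by pairing against $R_i$ and using the symmetry of $N$. The paper writes this as the single line ${}^tR_i\,NK = -p_iK_i = g_i$, while you pass through $N^{-1}$ explicitly, but the underlying computation is identical.
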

\proof  
By hypothesis, we have $NK=H_0$ for some vector $K \in {\mathbb Q}^n$. It follows that
$^t\! R_i NK= -p_iK_i = g_i$, and we find that $K_i=-g_i/p_i$.  
 \qed
 
 \begin{proposition} \label{thm.p=2NumGor} 
 Let $N =(c_{ij}) \in \Mat_n(\ZZ)$ be an intersection matrix with discriminant group $\Phi_N$ killed by $2$. 
 Then $N$ is numerically Gorenstein.
 \end{proposition}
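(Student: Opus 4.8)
The plan is to reduce the statement to Lemma \ref{lem.numGor}, which asserts that $N$ is numerically Gorenstein if and only if $p_i$ divides $g_i$ for every $i=1,\dots,n$, where $p_i$ denotes the order of the class of $e_i$ in $\Phi_N$, where $R_i\in\ZZ^n$ is the unique positive vector with $NR_i=-p_ie_i$, and where $g_i = {}^tR_i H_0 = \sum_{j=1}^n (R_i)_j(|c_{jj}|-2)$. Since $\Phi_N$ is killed by $2$, each $p_i$ equals $1$ or $2$, and when $p_i=1$ there is nothing to check. So I would fix an index $i$ with $p_i=2$ and show that $g_i$ is even.

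The key computation is the following. From $NR_i=-2e_i$ we get ${}^tR_iNR_i = -2(R_i)_i$, an even integer. Expanding the left-hand side and using that $N$ is symmetric gives
$$
{}^tR_iNR_i = \sum_{j=1}^n (R_i)_j^2\, c_{jj} + 2\sum_{j<k}(R_i)_j(R_i)_k\, c_{jk},
$$
so the off-diagonal part is divisible by $2$, and therefore $\sum_{j}(R_i)_j^2\, c_{jj}\equiv 0\pmod 2$. Since $t^2\equiv t\pmod 2$ for every integer $t$, this yields $\sum_{j}(R_i)_j\, c_{jj}\equiv 0\pmod 2$. Because the diagonal entries satisfy $c_{jj}<0$, i.e.\ $|c_{jj}|=-c_{jj}$, we conclude
$$
g_i = \sum_{j}(R_i)_j|c_{jj}| - 2\sum_{j}(R_i)_j \equiv \sum_{j}(R_i)_j|c_{jj}| = -\sum_{j}(R_i)_j\, c_{jj} \equiv 0 \pmod 2,
$$
so $p_i=2$ divides $g_i$, as needed. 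Together with the trivial case $p_i=1$, Lemma \ref{lem.numGor} then gives that $N$ is numerically Gorenstein.

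I do not expect a genuine obstacle here: the argument is a short $\bmod\,2$ manipulation of the quadratic form attached to $N$. The only points requiring care are the sign bookkeeping (all $c_{jj}$ are negative) and the passage from the diagonal quadratic form $\sum_j (R_i)_j^2 c_{jj}$ to the linear form $\sum_j (R_i)_j c_{jj}$ via $t^2\equiv t\pmod 2$, which is exactly what makes the hypothesis ``$\Phi_N$ killed by $2$'' (hence $p_i\in\{1,2\}$) the relevant one.
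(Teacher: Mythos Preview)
Your proof is correct and follows essentially the same approach as the paper: reduce to Lemma~\ref{lem.numGor}, note $p_i\in\{1,2\}$, and for $p_i=2$ expand ${}^tR_iNR_i=-2(R_i)_i$ to deduce $\sum_j c_{jj}(R_i)_j\equiv 0\pmod 2$ via $t^2\equiv t\pmod 2$, whence $g_i$ is even.
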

 \proof Our hypothesis implies that  $p_i=1$ or $2$, for all $i=1,\dots, n$. We use the criterion given in \ref{lem.numGor}: to show that $N$ is numerically Gorenstein, it suffices to show, for each $i$, that the integer $g_i$ is even when $p_i=2$. 
Assume then that $p_i=2$. Then by construction, 
$$^t\! R_i N R_i= -p_i(R_i)_i.$$
We now compute explicitly the term $^t\! R_i N R_i$ and obtain
$$^t\! R_i N R_i= \sum_{j=1}^n c_{jj} (R_i)_j^2 + 2 \sum_{j< k} c_{jk} (R_i)_j (R_i)_k.$$
Since $p_i $ is even and $(R_i)_j^2 \equiv (R_i)_j \pmod{2}$, we find that $\sum_{j=1}^n c_{jj} (R_i)_j$ is even, and so is $g_i$, as desired. \qed

\if false
We claim that the matrix $N$ with graph $\Gamma_N$ can be extended to an arithmetical graph
 $M$ with graph $\Gamma_M$ as follows. 
 
 If the coefficient $(R_i)_i$ is even, then the graph $\Gamma_M$ is obtained by attaching  to the vertex $v_i$ of $\Gamma_N$, by a single edge, a unique vertex $v_0$. We set the coefficient $M_{00}$ to be $-(R_i)_i/2$. We let $S$ denote the integer vector with $S_j=(R_i)_j$ if $j>0$, 
 and we set $S_0= 2$. It follows that $MS=0$, and then by definition $(\Gamma_M, M,S)$ is an arithmetical graph.
 
 If the coefficient $(R_i)_i$ is odd, then the graph $\Gamma_M$ is obtained by attaching  to the vertex $v_i$ of $\Gamma_N$, by a single edge, a vertex $v_0$, and then attaching to $v_0$, again with a single edge,
  a vertex $v_{-1}$. We set the coefficient $M_{00}$ to be $-((R_i)_i+1)/2$, and we set the coefficient $M_{-1,-1}$ to be $-2$.
  We let $S$ denote the integer vector with $S_j=(R_i)_j$ if $j>0$, 
 and we set $S_0= 2$ and $S_{-1}=1$. It follows that $MS=0$.
 
  We then use the easy fact that for an arithmetical graph $(\Gamma_M, M,S)$ on $s$ vertices (where by definition $S \in {\mathbb Z}^s$ and  $MS=0$), the sum
  $$\sum_{j=1}^s S_j (|M_{jj}|-2)$$ is always an even integer (\cite{Lor1989}, page 490). 
  
 When the coefficient $(R_i)_i$ is even, we then find that 
 $$\sum_{j=1}^n (R_i)_j (|N_{jj}|-2) + S_0 (|M_{00}|-2) = \sum_{j=1}^n (R_i)_j (|N_{jj}|-2) + (R_i)_i -4$$ 
 is even. It follows that $g_j=\sum_{j=1}^n (R_i)_j (|N_{jj}|-2)$ is also even.
 
 When the coefficient $(R_i)_i$ is odd, we then find that 
 $$\sum_{j=1}^n (R_i)_j (|N_{jj}|-2) + S_0 (|M_{00}|-2) + S_{-1} (|M_{-1-1}|-2)= \sum_{j=1}^n (R_i)_j (|N_{jj}|-2) + ((R_i)_i+1) -8 $$ 
 is even. It follows that $g_j=\sum_{j=1}^n (R_i)_j (|N_{jj}|-2)$ is also even.
 \qed
\fi 
\begin{remark} 
Let $N =(c_{ij}) \in \Mat_n(\ZZ)$ be an intersection matrix associated with the resolution of a hypersurface singularity, all of whose exceptional components are smooth rational curves. Assume that $c_{ii}\leq -2$ for all $i=1,\dots,n$.   
Laufer in \cite{Lau}, 3.7, provides additional constraints on the canonical vector $K$ associated with such $N$, with an improvement
by M. Tomari stated in the Addendum on page 496. A further improvement was found by Yau in \cite{Yau}, Theorems B and C,
which show that for such $N$, 
$$g_i/p_i \geq  (|{\mathbf Z} \cdot {\mathbf Z}|-2)z_i,$$
where $^t{\mathbf Z}=(z_1,\dots,z_n)$ is the fundamental cycle of $N$. In other words, we have $-K \geq (|{\mathbf Z} \cdot {\mathbf Z}|-2){\mathbf Z}$. Note that the singularity in Proposition \ref{pro.NumericallyGorensteinEquality} satisfies $-K = (|{\mathbf Z} \cdot {\mathbf Z}|-2){\mathbf Z}$.

In the context of wild ${\mathbb Z}/2{\mathbb Z}$-quotient singularities treated in this article, 
the resolution of such a singularity  has intersection matrix $N$ with $\Phi_N$ killed by $2$ and with $|{\mathbf Z} \cdot {\mathbf Z}|\leq 2$. 
Proposition \ref{thm.p=2NumGor} shows that any such $N$ is always numerically Gorenstein, and since $|{\mathbf Z} \cdot {\mathbf Z}|\leq 2$ and ${\mathbf Z}>0$, Laufer's constraints are also automatically satisfied. 
\end{remark}
 
\begin{example}  \label{ex.notnumericallyGorenstein}
We exhibit below a wild ${\mathbb Z}/p{\mathbb Z}$-quotient singularity that is not numerically Gorenstein. 
For this, note first the following. Suppose that $N=(c_{ij})$ is an intersection matrix such that $\Phi_N$ is killed by $p$, and such that $c_{ii} =-2$
except for a unique vertex $C$ with $2< |(C \cdot C)|$ and $\gcd(p, |C\cdot C|-2)=1$.
{\it Then $N$ is numerically Gorenstein if and only if the class of $e_C$ is trivial in $\Phi_N$}. Indeed, if the class of $e_C$ is trivial in $\Phi_N$,
then there exists an integer vector $K_C$ with $NK_C=e_C$. It follows that $K:=(|C\cdot C|-2)K_C$ is such that $NK=H_0$.
If the class of $e_C$ is not trivial, then it must have order $p$, and so there exists an integer vector $R$ such that $NR=pe_C$. 
If there also exists an integer vector $K$ with $NK=H_0= (|C\cdot C|-2)e_C$, then
the class of $e_C$ in $\Phi_N$ has order dividing $|C\cdot C|-2$, which is a contradiction since $\gcd(p, |C\cdot C|-2)=1$. 

Let $p>2$ be prime and consider now the wild ${\mathbb Z}/p{\mathbb Z}$-quotient singularity
 in \cite{Lorenzini 2014}, 6.8, with resolution graph with $r_1(i)=1$. This resolution graph
 has a single vertex of self-intersection different from $-2$, namely
the terminal vertex $C$ with $r_1(i)=1$ and self-intersection $-p$, represented as the top center vertex in the graph below. 
The class of $e_C$ is not trivial in $\Phi_N$, since the vector $R$ whose multiplicities are indicated below is such that 
 $NR=-pe_C$
and    the greatest common divisor of the coefficients of $R$ is equal to $1$. 
Then the above argument 
shows that $N$ cannot be numerically Gorenstein. In fact, the canonical vector $K$ is $-(p-2)R/p$. The fundamental cycle is given in 
\cite{Lorenzini 2018}, 4.4, and it is shown in \cite{Lorenzini 2018}, 4.1, that this singularity is rational.
$$
\begin{tikzpicture}
[node distance=1cm, font=\small] 
\tikzstyle{vertex}=[circle, draw, fill, inner sep=0mm, minimum size=1.1ex]
\node[vertex]	(v1)  	at (0,0) 	[label=below:{$1$}] 		{};
\node[]	(dummy1)		[right of=v1, label=above:{}]	{};
\node[vertex]	(v2)			[right of=dummy1, label=below:{$p-1$}]	{};
\draw [decorate,decoration={brace, raise=6pt}] (v1)--(v2) node [black, midway,xshift=-0cm,yshift=0.5cm] {$^{p-1}$};
\node[vertex]	(v3)			[right of=v2, label=below:{$p$}]	{};
\node[vertex]	(v)			[above of=v3,  label=above:{ ${2}$}, label=left:{ ${-p}$}]	{};
\node[vertex]	(v4)			[right of=v3, label=below:{$p-1$}]	{};
\node[]	(dummy2)		[right of=v4, label=above:{}]	{};
\node[vertex]	(v5)			[right of=dummy2, label=below:{$1$}]	{};
\draw [decorate,decoration={brace, raise=6pt}] (v4)--(v5) node [black, midway,xshift=-0cm,yshift=0.5cm] {$^{p-1}$};
\draw [thick, dashed] (v1)--(v2);
\draw [thick] (v2)--(v3)--(v4);
\draw [thick] (v)--(v3);
\draw[thick, dashed] (v4)--(v5);
\end{tikzpicture}
$$

\end{example}

\if false
\begin{example} \label{ex.Phi=2}
\if false
Let $N \in \Mat_n(\ZZ)$ be an intersection matrix such that $|\Phi_N|=2$. Let $Z$ denote its fundamental cycle. 
We leave it to the reader to check the following facts:

(a) The intersection matrix $N$ is numerically Gorenstein if and only if 
the graph of $N$ has an even number  of vertices $C$ whose self-intersection $(C \cdot C)$ is odd and whose class is not trivial in the group $\Phi_N$. 

(b) If the vertex $C$ is a terminal vertex on a terminal chain of the graph, and if the class of $C$ is trivial in $\Phi_N$, then the class of every vertex on the terminal chain, including the node to which it is attached, is trivial in $\Phi_N$.

(c) Label by $C_1, C_2,\dots$ the vertices of the graph, and suppose that $C_1$ is a terminal vertex on a chain of the graph, and that $C_2$ is the unique vertex 
of the graph to which $C_1$ is attached. Suppose given $R \in {\mathbb Z}^n$ such that $NR=-e_1$ and such that the first coefficient of $R$ is $2$. 
Then the class of $e_1$ is trivial in $\Phi_N$, and $|(Z \cdot Z)| \leq 2$. Note that the equality $NR=-e_1$ proves that $N$ is negative definite as soon as $\det(N) \neq 0$, and that $|(Z \cdot Z)| \leq |(R \cdot R)| \leq 2$.

The above facts can be used to obtain many intersection matrices $N$  with the three properties in \ref{three properties} (b) and $|\Phi_N|=2$, and which are numerically Gorenstein. 
\fi
We present in this example an infinite family of intersection matrices $N_t$, $t \geq 2$,
with $|\Phi_N|=2$, whose associated graph is a tree, whose fundamental cycle $Z$ is such 
that $|Z^2|\leq 2$, and which have a distinguished vertex. When $t=2$, the matrix $N_1$ corresponds to the Dynkin diagram $E_7$. It is natural to wonder whether a matrix $N_t$ with $t>2$ can occur as the intersection matrix attached to the resolution of a wild 
${\mathbb Z}/2{\mathbb Z}$-quotient singularity.

For each integer $t \geq 2$, the matrix $N_t$ has one single node and  three terminal chains.  
Denote by  $A$, $B$, and $C$, the three terminal vertices. 
The central node $D$ has self-intersection $-2$. 
The terminal chain started by $A$ has $2^t-2$ vertices of self-intersection $-2$. The terminal chain started by $B$ has one vertex only, namely $B$, with self-intersection $-2$. The terminal chain started by $C$ has three vertices only, namely $C$, with self-intersection $-2$, linked to $C_1$, with self-intersection $-(2^{t-2}+1)$, itself linked to $C_2$, with self-intersection $-2$. The vertex $C_2$ is linked to the central node  $D$.

Consider the vector $R$ with coefficient $2^t$ on the node $D$, 
$2^{t-1}$ on $B$, $1$ on $C$, $2$ on $C_1$, and $2^{t-1}+1$ on $C_2$. Let $2$ be the coefficient of $A$, and on the chain from $A$ to the node, let the coefficients increase: $2,3,\dots, 2^{t}-1$.  It is easy to check that $NR=-e_A$, where $e_A$ is the standard basis vector corresponding to $A$. In particular, $|(Z \cdot Z)| \leq |(R \cdot R)| \leq 2$, where as usual, $Z$ denotes the fundamental cycle of $N$. Theorem 3.14 in \cite{Lorenzini 2013} shows then that $|\Phi_N|=2$. Since  $NR=-e_A$, the class of $A$ is trivial in $\Phi_N$. It follows that the classes of all vertices on the chain started by $A$ are trivial, including the class of the node. Thus, using \ref{rem.trivialdistinguished}, we find that all these vertices are distinguished. 
\if false
Note that both the classes of $B$ and $C_2$ are not trivial (and equal) in $\Phi_N$. 
Note also that the matrix $N$ has all its self-intersections equal to $-2$, except at the vertex $C_1$. 

Let us now build an intersection matrix $N'_t$ with the above four properties, and such that the graph of $N'_t$ has two nodes. We leave it to the reader to modify this construction to obtain graphs with many nodes. To build $N_t'$, we consider the graph of $N_t$, and glue to it at the vertex $C$ an arithmetical graph with trivial component group. There are many ways of doing so, and we exhibit here the most straightforward choice. The graph of $N'_t$ is the graph of $N_t$ to which 
we attach at $C$ a vertex $E_1$ of self-intersection $-1$, and to $E_1$, we attach two vertices $E_2$ and $E_3$, of self-intersection $-2$ and $-3$, respectively. 
Thus, $E_1$ is a second node on the graph of $N'_t$. In $N'_t$, the self-intersection of $C$ is set to be $-4$. All other self-intersections of $N'_t$ are as those in $N_t$. Consider the vector $R'$ such that $N_t'R'=-e_A$, with the coefficient of $E_1$ being $6$, and the coefficients of $E_2$ and $E_3$ being $3$ and $2$, respectively. The other coefficients are the same as in $R$. The computation of $\Phi_{N'_t}$ follows as in the star-shaped case, and similarly for the inequality $|(Z' \cdot Z')| \leq |(R' \cdot R')| \leq 2$. 
\fi
\end{example}
\fi 

\if false
Suppose given an intersection matrix $N$ whose group $\Phi_N$ is killed by $p$, whose associated graph is a tree, and  whose fundamental cycle $Z$ is such 
that $|Z^2|\leq p$.  It is natural to wonder, in view of the results \ref{emp.graphtree}, \ref{emp.discriminantgroup}, and \ref{emp.fundamentalcycle},
whether there exists a wild ${\mathbb Z}/p{\mathbb Z}$-quotient singularity $\Spec A^{{\mathbb Z}/p{\mathbb Z}}$ with $\Spec A \to \Spec A^{{\mathbb Z}/p{\mathbb Z}}$ ramified precisely at the maximal ideal, and whose resolution of singularities has an associated matrix equal to $N$, up to permutation.  When the quotient singularity $A^{{\mathbb Z}/p{\mathbb Z}}$ comes from a moderately ramified action as in \ref{initial blow-up}, we found in \ref{cor.distinguishedvertex} that the answer to this question can only be positive if in addition $N$ also has a distinguished vertex. 

It turns out that the four combinatorial conditions on $N$  mentioned above are not sufficient to insure a positive answer to this question  when $p=2$. Indeed, Artin \cite{Artin 1977} shows that the Dynkin diagram $E_7$, with group $\Phi_{E_7} ={\mathbb Z}/2{\mathbb Z}$ and $|Z^2|=2$, cannot be obtained 
as the resolution graph of a wild ${\mathbb Z}/2{\mathbb Z}$-quotient singularity ramified precisely at the origin, even though its intersection matrix satisfies all four of the combinatorial properties listed above. The case of $E_7$ is further discussed in next section in \ref{emp.E7}.

When $p=2$, we have not been able to exhibit any wild ${\mathbb Z}/2{\mathbb Z}$-quotient singularities ramified precisely at the origin and whose associated intersection matrices have determinant of order $2^s$ with $s$ odd. (See for instance \cite{Lor2010}, 4.3, for a related question on a specific example where $s=3$). It is thus natural to wonder whether there is a structural obstruction that would explain why such example cannot be exhibited.
Artin \cite{Artin 1977} shows that the Dynkin diagram $E_7$, with group $\Phi_{E_7} ={\mathbb Z}/2{\mathbb Z}$ and $|Z^2|=2$, cannot be obtained 
as the resolution graph of a wild ${\mathbb Z}/2{\mathbb Z}$-quotient singularity ramified precisely at the origin. He shows however that 
it does occur  

We discuss in this section an additional combinatorial property that the intersection matrix associated with the resolution of a moderately ramified ${\mathbb Z}/p{\mathbb Z}$-quotient singularity is likely to satisfy. 
Unfortunately, this additional property does not impose any new condition when $p=2$, as we show in \ref{thm.p=2NumGor}.
 \fi



\begin{thebibliography}{ccccc}



\bibitem{Artin 1969} M.\ Artin, {\it Algebraic approximation of structures over complete local rings},
 Inst. Hautes \'Etudes Sci. Publ. Math. {\bf  36} (1969), 23--58.
 
 
\bibitem{Artin 1975}
M.\ Artin, {\it
Wildly ramified $Z/2$ actions in dimension two},
Proc.\ Amer.\ Math.\ Soc.\ {\bf 52} (1975), 60--64.

\bibitem{Artin 1977}
M.\ Artin, {\it
Coverings of the rational double points in characteristic $p$},
In: W.\ Baily, T.\ Shioda (eds.), 
Complex analysis and algebraic geometry, pp.\ 11--22.
Iwanami Shoten, Tokyo, 1977. 



\bibitem{SGA 6}
P.\ Berthelot, A.\ Grothendieck, and L.\ Illusie (eds.),
{\it Th\'eorie des intersections et th\'eor\`eme de Riemann--Roch} (SGA 6),
Springer, Berlin, 1971.

\bibitem{B-L} J. Bochnak and S. \L ojasiewicz, {\it  Remarks on finitely determined analytic germs}, 1971 Proceedings of Liverpool Singularities—Symposium, I (1969/70) pp. 262--270, Lecture Notes in Mathematics, Vol. 192, Springer, Berlin.

\bibitem{Magma} W. Bosma, J. Cannon, and C. Playoust, 
{\it The Magma algebra system. I. The user language}, J. Symbolic Comput. {\bf 24} (1997), 235--265.
\url{http://magma.maths.usyd.edu.au/magma/}

\bibitem{C-W} E. Campbell and D. Wehlau, {\it Modular invariant theory}, Encyclopaedia of Mathematical Sciences, 139. Invariant Theory and Algebraic Transformation Groups, 8. Springer-Verlag, Berlin, 2011.

\bibitem{CES} B. Conrad, B. Edixhoven, and W. Stein, {$J_1(p)$  has connected fibers}, Doc. Math. {\bf 8} (2003), 331--408.

\bibitem{Cox; Little; Schenck 2011}
D.\ Cox, J.\ Little, and H.\ Schenck,
{\it Toric varieties},
American Mathematical Society, Providence, RI, 2011.

\bibitem{Danilov} 
V. I. Danilov, {\it
The geometry of toric varieties},
Russian Math. Surveys {\bf 33} (1978), no. 2, 97--154.

\bibitem{Singular}
W. Decker, G.-M. Greuel,  G. Pfister,  and H. Sch{\"o}nemann, 
\newblock {\sc Singular} {3-1-6} --- {A} computer algebra system for polynomial computations.
\newblock {http://www.singular.uni-kl.de} (2012).

\bibitem{Eis} D. Eisenbud, {\it Commutative algebra. With a view toward algebraic geometry}, Graduate Texts in Mathematics, 150. Springer-Verlag, New York, 1995.

\bibitem{Fos}
R. Fossum, {\it The divisor class group of a Krull domain},  
Ergebnisse der Mathematik und ihrer Grenzgebiete, Band 74. Springer-Verlag, New York--Heidelberg, 1973.
 

\bibitem{Ful} W.\ Fulton, {\it Intersection theory}, Second edition. Ergebnisse der Mathematik und ihrer Grenzgebiete, Springer-Verlag, Berlin, 1998.


\bibitem{G-M} S.\ Greco and M.\ Marinari, {\it 
Nagata's criterion and openness of loci for Gorenstein and complete intersection},
Math. Z. {\bf 160} (1978), no. 3, 207--216. 

\bibitem{Greuel; Kroening 1990}
G.-M.\ Greuel and  H.\  Kr\"oning, {\it
Simple singularities in positive characteristic},
Math.\ Z.\ {\bf 203} (1990), 339--354.

\bibitem{EGA} A. Grothendieck and J. Dieudonn\'e,
{\it \'El\'ements de g\'eom\'etrie alg\'ebrique}, 
Publ. Math. IH\'ES 
{\bf 4} (Chapter 0, 1--7, and I, 1--10), 
{\bf 8} (II, 1--8), 
{\bf 11} (Chapter 0, 8--13, and III, 1--5), 
{\bf 17} (III, 6--7),
{\bf 20} (Chapter 0, 14--23, and IV, 1), {\bf 24} (IV, 2--7), {\bf 28} (IV, 8--15),  and  {\bf 32} (IV, 16--21), 1960--1967.





\bibitem{H-J}
F. Hirzebruch and K. J\"anich, {\it
Involutions and singularities}, in  Algebraic Geometry 
(Internat. Colloq., Tata Inst. Fund. Res., Bombay, 1968) pp. 219--240, Oxford Univ. Press, London, 1969.



\bibitem{IS}
H.\ Ito and S.\ Schr\"oer, {\it
Wild quotient surface singularities whose dual graphs are not star-shaped},
Asian J.\ Math.\ {\bf 19} (2015), 951--986.


\bibitem{Kempf et al. 1973}
G.\ Kempf, F.\ Knudsen, D.\ Mumford, and B.\ Saint-Donat,
{\it Toroidal embeddings I},
Springer Lecture Notes 339, Berlin, 1973.

\bibitem{K-L} F.\ Kir\`aly and W.\ L\"uktebohmert, {\it  
Group actions of prime order on local normal rings},
Algebra Number Theory {\bf 7} (2013), no. 1, 63--74.

\bibitem{Kleiman 1966}
S.\ Kleiman,
{\it Toward a numerical theory of ampleness},
Ann.\ Math.\  {\bf 84}  (1966), 293--344.

\bibitem{Lau77} H. Laufer, {\it
On minimally elliptic singularities}
Amer. J. Math. {\bf 99} (1977), no. 6, 1257--1295. 

\bibitem{Lau} H.\ Laufer, {\it The multiplicity of isolated two-dimensional hypersurface singularities},
 Trans. Amer. Math. Soc. {\bf 302} (1987), no. 2, 489--496. 

 
\bibitem{Lip} J.\ Lipman, {\it Rational singularities, with applications to algebraic surfaces and unique factorization}, Inst. Hautes \'Etudes Sci. Publ. Math. {\bf 36} (1969), 195--279. 

\bibitem{Lip2} J.\ Lipman, {\it Desingularization of two-dimensional schemes}, Ann. Math. (2) {\bf 107} (1978), no. 1, 151--207. 

 
\bibitem{Lor1992}
D. Lorenzini, {\it Jacobians with potentially good $\ell$-reduction}, J. reine angew. Math. {\bf 430} (1992), 151--177. 

\bibitem{Lorenzini 2013}
D.\ Lorenzini, {\it 
Wild quotient singularities of surfaces}, 
Math.\ Z.\ {\bf 275} (2013),  211--232. 

\bibitem{Lorenzini 2014}
D. Lorenzini, {\it Wild models of curves},  Algebra Number Theory {\bf 8} (2014), no. 2, 331--367.

\bibitem{Lorenzini 2018}
D.\ Lorenzini, {\it 
Wild quotients of products of curves}, 
 Eur. J. Math. {\bf 4} (2018), no. 2, 525--554.


\bibitem{LS1} D. Lorenzini and S. Schr\"oer, {\it Moderately ramified  actions in positive characteristic}, 
Math. Z.  {\bf 295} (2020), no. 3-4, 1095--1142.


\bibitem{Micali} A.\ Micali, {\it Sur les alg\`ebres universelles}, Ann. Inst. Fourier (Grenoble) {\bf 14} (1964) fasc. 2, 33--87. 
 
 
\bibitem{Mit} K. Mitsui, {\it Quotient singularities of products of two curves}, Preprint, 2017.


\bibitem{M-R} M. Miyanishi and P. Russell, {\it
Purely inseparable coverings of exponent one of the affine plane},
J. Pure Appl. Algebra {\bf 28} (1983), no. 3, 279--317. 

\bibitem{Mott 1994}
J.\ Mott, {\it
Eisenstein-type irreducibility criteria},
In:   D.\ Anderson, E.\ Dobbs (eds.), Zero-dimensional commutative rings, pp.\ 307--329.
Dekker, New York, 1995. 

\bibitem{Mumford 1961}
D.\ Mumford, {\it
The topology of normal singularities of an algebraic surface and a criterion for simplicity},
Publ.\ Math., Inst.\ Hautes \'Etud.\ Sci.\ {\bf 9} (1961), 5--22.


\bibitem{O-W} P. Orlik and P. Wagreich, {\it Isolated singularities of algebraic surfaces with ${\mathbb C}^*$ action}, Ann. of Math. (2) {\bf 93} (1971), 205--228. 

\bibitem{Orlik; Wagreich 1971b} P. Orlik and P. Wagreich,
{\it Singularities of algebraic surfaces with ${\mathbb C}^*$ action}, Math. Ann. {\bf 193} (1971), 121--135.

\bibitem{Orlik; Wagreich 1977}
P.\ Orlik and  P.\ Wagreich,
{ \it Algebraic surfaces with $k^*$-action},
Acta Math.\ {\bf 138} (1977),   43--81.

 
\bibitem{Peskin 1980} 
B.\ Peskin, {\it 
Quotient-singularities in characteristic $p$},
Thesis, Massachussetts Institute of Technology, 1980.

\bibitem{Peskin 1983} 
B.\ Peskin, {\it 
Quotient-singularities and wild $p$-cyclic actions}, 
J.\ Algebra  {\bf 81} (1983), 72--99.

\bibitem{PPS} P.\ Popescu-Pampu and J.\ Seade, {\it
A finiteness theorem for dual graphs of surface singularities},
Internat. J. Math. {\bf 20} (2009), no. 8, 1057--1068. 


\bibitem{Roc}
M.\ Roczen, {\it
Recognition of simple singularities in positive characteristic},
Math. Z. {\bf 210} (1992), no. 4, 641--653. 


\bibitem{Sam} P.\ Samuel, {\it
Classes de diviseurs et d\'eriv\'ees logarithmiques}, 
Topology {\bf 3} (1964) suppl. 1, 81--96. 

\bibitem{Schroeer 2009}
S.\ Schr\"oer, {\it
The Hilbert scheme of points for supersingular  abelian surfaces},
Arkiv Mat.\ {\bf 47}  (2009), 143--181.

\bibitem{Schroeer 2019}
S.\ Schr\"oer, {\it A higher-dimensional generalization of Mumford's rational pullback for Weil divisors}, J. Singul. {\bf 19} (2019), 53--60.

\bibitem{Tom} T. Tomaru, {\it On Gorenstein surface singularities with fundamental genus $p_f \geq 2$ which satisfy some minimality conditions}, Pacific J. Math. {\bf 170} (1995), no. 1, 271--295. 

 
\bibitem{Yau} S.-T.\ Yau, {\it 
The multiplicity of isolated two-dimensional hypersurface singularities: Zariski problem},
Amer. J. Math. {\bf 111} (1989), no. 5, 753--767.


\end{thebibliography}
\end{document}